\documentclass[11pt,a4paper]{article}
\NeedsTeXFormat{LaTeX2e}
\usepackage{amsmath,amssymb,amsthm,epsfig,yfonts,stmaryrd}

\usepackage{diagrams}
\usepackage{pinlabel}
\usepackage{rotating}

%
%

\DeclareFontFamily{OT1}{ptm}{}
\DeclareFontShape{OT1}{ptm}{m}{n} { <-> ptmr}{}
\DeclareFontShape{OT1}{ptm}{m}{it}{ <-> ptmri}{}
\DeclareFontShape{OT1}{ptm}{m}{sl}{ <->ptmro}{}
\DeclareFontShape{OT1}{ptm}{m}{sc}{ <-> ptmrc}{}
\DeclareFontShape{OT1}{ptm}{b}{n} { <-> ptmb}{}
\DeclareFontShape{OT1}{ptm}{b}{it}{ <-> ptmbi}{}     
\DeclareFontShape{OT1}{ptm}{bx}{n} {<->ssub * ptm/b/n}{}
\DeclareFontShape{OT1}{ptm}{bx}{it}{<->ssub * ptm/b/it}{}

\DeclareSymbolFont{bold}{OT1}{ptm}{b}{n}
\DeclareMathAlphabet{\mathbf}{OT1}{ptm}{b}{n}  
\DeclareMathAlphabet{\mathrm}{OT1}{ptm}{m}{n}

\DeclareFontFamily{OT1}{psy}{}      
\DeclareFontShape{OT1}{psy}{m}{n}{ <-> s * [0.9] psyr}{}
\DeclareFontFamily{OMS}{ptm}{}     
\DeclareFontShape{OMS}{ptm}{m}{n}{ <8> <9> <10> gen * cmsy }{}
\DeclareFontFamily{OMS}{cmtt}{}     
\DeclareFontShape{OMS}{cmtt}{m}{n}{ <8> <9> <10> gen * cmsy }{}

\SetSymbolFont{operators}{normal}{OT1}{ptm}{m}{n}   
\SetSymbolFont{operators}{bold}{OT1}{ptm}{b}{n}     
\DeclareSymbolFont{emsy}{OT1}{ptm}{m}{it}
\DeclareSymbolFont{emsr}{OT1}{ptm}{m}{n}
\DeclareSymbolFont{emcmr}{OT1}{cmr}{m}{n}   
\DeclareSymbolFont{emsymb}{OT1}{psy}{m}{n}  
\DeclareMathSymbol a{\mathalpha}{emsy}{"61}
\DeclareMathSymbol b{\mathalpha}{emsy}{"62}
\DeclareMathSymbol c{\mathalpha}{emsy}{"63}
\DeclareMathSymbol d{\mathalpha}{emsy}{"64}
\DeclareMathSymbol e{\mathalpha}{emsy}{"65}
\DeclareMathSymbol f{\mathalpha}{emsy}{"66}
\DeclareMathSymbol g{\mathalpha}{emsy}{"67}
\DeclareMathSymbol h{\mathalpha}{emsy}{"68}
\DeclareMathSymbol i{\mathalpha}{emsy}{"69}
\DeclareMathSymbol j{\mathalpha}{emsy}{"6A}
\DeclareMathSymbol k{\mathalpha}{emsy}{"6B}
\DeclareMathSymbol l{\mathalpha}{emsy}{"6C}
\DeclareMathSymbol m{\mathalpha}{emsy}{"6D}
\DeclareMathSymbol n{\mathalpha}{emsy}{"6E}
\DeclareMathSymbol o{\mathalpha}{emsy}{"6F}
\DeclareMathSymbol p{\mathalpha}{emsy}{"70}
\DeclareMathSymbol q{\mathalpha}{emsy}{"71}
\DeclareMathSymbol r{\mathalpha}{emsy}{"72}
\DeclareMathSymbol s{\mathalpha}{emsy}{"73}
\DeclareMathSymbol t{\mathalpha}{emsy}{"74}
\DeclareMathSymbol u{\mathalpha}{emsy}{"75}
\DeclareMathSymbol v{\mathalpha}{emsy}{"76}
\DeclareMathSymbol w{\mathalpha}{emsy}{"77}
\DeclareMathSymbol x{\mathalpha}{emsy}{"78}
\DeclareMathSymbol y{\mathalpha}{emsy}{"79}
\DeclareMathSymbol z{\mathalpha}{emsy}{"7A}
\DeclareMathSymbol A{\mathalpha}{emsy}{"41}
\DeclareMathSymbol B{\mathalpha}{emsy}{"42}
\DeclareMathSymbol C{\mathalpha}{emsy}{"43}
\DeclareMathSymbol D{\mathalpha}{emsy}{"44}
\DeclareMathSymbol E{\mathalpha}{emsy}{"45}
\DeclareMathSymbol F{\mathalpha}{emsy}{"46}
\DeclareMathSymbol G{\mathalpha}{emsy}{"47}
\DeclareMathSymbol H{\mathalpha}{emsy}{"48}
\DeclareMathSymbol I{\mathalpha}{emsy}{"49}
\DeclareMathSymbol J{\mathalpha}{emsy}{"4A}
\DeclareMathSymbol K{\mathalpha}{emsy}{"4B}
\DeclareMathSymbol L{\mathalpha}{emsy}{"4C}
\DeclareMathSymbol M{\mathalpha}{emsy}{"4D}
\DeclareMathSymbol N{\mathalpha}{emsy}{"4E}
\DeclareMathSymbol O{\mathalpha}{emsy}{"4F}
\DeclareMathSymbol P{\mathalpha}{emsy}{"50}
\DeclareMathSymbol Q{\mathalpha}{emsy}{"51}
\DeclareMathSymbol R{\mathalpha}{emsy}{"52}
\DeclareMathSymbol S{\mathalpha}{emsy}{"53}
\DeclareMathSymbol T{\mathalpha}{emsy}{"54}
\DeclareMathSymbol U{\mathalpha}{emsy}{"55}
\DeclareMathSymbol V{\mathalpha}{emsy}{"56}
\DeclareMathSymbol W{\mathalpha}{emsy}{"57}
\DeclareMathSymbol X{\mathalpha}{emsy}{"58}
\DeclareMathSymbol Y{\mathalpha}{emsy}{"59}
\DeclareMathSymbol Z{\mathalpha}{emsy}{"5A}
\DeclareMathSymbol{\bullet}{\mathalpha}{emsymb}{"B7}
\DeclareMathSymbol{\regis}{\mathalpha}{emsymb}{"D2}
\def\Bullet{\leavevmode\unkern{$\m@th\bullet$}\kern.32em\ignorespaces}
\def\Regis{\leavevmode\raise.5ex\hbox{$\m@th\regis$}}
\DeclareMathSymbol +{\mathbin}{emcmr}{`+}
\DeclareMathSymbol ={\mathrel}{emcmr}{`=}  
\DeclareMathSymbol{\Gamma}{\mathalpha}{emcmr}{"00}
\DeclareMathSymbol{\Delta}{\mathalpha}{emcmr}{"01}
\DeclareMathSymbol{\Theta}{\mathalpha}{emcmr}{"02}
\DeclareMathSymbol{\Lambda}{\mathalpha}{emcmr}{"03}
\DeclareMathSymbol{\Xi}{\mathalpha}{emcmr}{"04}
\DeclareMathSymbol{\Pi}{\mathalpha}{emcmr}{"05}
\DeclareMathSymbol{\Sigma}{\mathalpha}{emcmr}{"06}
\DeclareMathSymbol{\Upsilon}{\mathalpha}{emcmr}{"07}
\DeclareMathSymbol{\Phi}{\mathalpha}{emcmr}{"08}
\DeclareMathSymbol{\Psi}{\mathalpha}{emcmr}{"09}
\DeclareMathSymbol{\Omega}{\mathalpha}{emcmr}{"0A}
\DeclareMathSizes{7.6}{8}{6}{5}

%
%
\headsep 23pt
\footskip 35pt
\hoffset -4truemm
\voffset 12.5truemm
\newskip\stdskip                      
\stdskip=6.6pt plus3.3pt minus3.3pt    
\setlength{\textheight}{7.5in}          
\setlength{\textwidth}{5.2in}         
\flushbottom                           
\setlength{\parindent}{0pt}            
\setlength{\parskip}{\stdskip}
\setlength{\medskipamount}{\stdskip}
\setlength{\mathsurround}{0.8pt}     
\setlength{\labelsep}{0.75em} 


\title{\vspace{-2cm}Introduction to the Basics of Heegaard Floer Homology} 

%
\author{Bijan Sahamie\vspace{0.3cm}\\
Mathematisches Institut der LMU M\"unchen\\
Theresienstrasse 39\\
80333 M\"unchen} 

\theoremstyle{plain} 
\newtheorem{theorem}{Theorem}[section]   
\newtheorem{lem}[theorem]{Lemma}         
\newtheorem{prop}[theorem]{Proposition}
\newtheorem{cor}[theorem]{Corollary}

\theoremstyle{definition}
\newtheorem{definition}[theorem]{Definition}   
\newtheorem*{rem}{Remark}

\newtheorem{example}{Example}[section]            
\hyphenation{mani-fold}
\hyphenation{homo-logy}
\hyphenation{homo-topy}

\numberwithin{equation}{section}

\newarrow{Mto}{|}{-}{-}{-}{>}

\date{sahamie@math.lmu.de}

\begin{document}
\newcommand{\sone}{\mathbb{S}^1}
\newcommand{\lra}{\longrightarrow}
\newcommand{\lmt}{\longmapsto}
%

%
%
\newcommand{\ztwo}{\mathbb{Z}_2}
\newcommand{\Z}{\mathbb{Z}}
\newcommand{\RP}{\mbox{\rm RP}}
\newcommand{\R}{\mathbb{R}}
\newcommand{\N}{\mathbb{N}}
\newcommand{\C}{\mathbb{C}}
\newcommand{\Q}{\mathbb{Q}}
\newcommand{\spinc}{\mbox{\rm Spin}^c}
\newcommand{\spiny}{\mbox{\rm Spin}^c_3}
\newcommand{\homology}{\mathcal{H}}
\newcommand{\phitilde}{\widetilde{\phi}}
\newcommand{\phibar}{\overline{\phi}}
\newcommand{\grading}{\mbox{\rm gr}}
\newcommand{\riemop}{\partial_{\com_s}}
\newcommand{\banbund}{\mathcal{B}}

\def\co{\colon\thinspace}
\newcommand{\stwo}{\mathbb{S}^2}
\newcommand{\pd}{\text{PD}}
\newcommand{\sprung}{\\[0.3cm]}
\newcommand{\fund}{\pi_1}
\newcommand{\kerg}{\mbox{\rm Ker}_G\,}
\def\Ker#1{\mbox{\rm Ker}_{#1}\,}
\newcommand{\im}{\mbox{\rm Im}\,}
\newcommand{\id}{\mbox{\rm id}}
\newcommand{\sothree}{\mathbb{SO}_3}
\newcommand{\sthree}{\mathbb{S}^{3}}
\newcommand{\disc}{\mbox{\rm D}}
\newcommand{\systwo}{C^{\infty}(Y,\stwo)}
\newcommand{\inner}{\text{int}}
\newcommand{\bk}{\backslash}

%
%
\newcommand{\contstand}{(\mathbb{R}^3,\xi_0)}
\newcommand{\cont}{(M,\xi)}
\newcommand{\sym}{\xi_{sym}}
\newcommand{\xistd}{\xi_{std}}
\newcommand{\cyl}{\mbox{\rm Cyl}_{r_0}^\mu}
\newcommand{\stap}{\mbox{\rm S}_+}
\newcommand{\stam}{\mbox{\rm S}_-}
\newcommand{\stapm}{\mbox{\rm S}_\pm}
\newcommand{\modulo}{\;\;\mbox{\rm mod}\,}

%
%
\newcommand{\crit}{\mbox{\rm Crit}}
\newcommand{\MC}{\mbox{\rm MC}}
\newcommand{\bmorse}{\partial^{\mbox{\rm \begin{tiny}M\!C\end{tiny}}}}
\newcommand{\M}{\mathcal{M}}
\newcommand{\stable}{\mbox{\rm W}^s}
\newcommand{\unstable}{\mbox{\rm W}^u}
\newcommand{\ind}{\mbox{\rm ind}}
\newcommand{\Mhat}{\widehat{\M}}
\newcommand{\modphi}{\mathcal{M}_\phi}
%
%
%

%
%
\newcommand{\com}{\mathcal{J}}
\newcommand{\moduli}{\mathcal{M}_{\mathcal{J}_s}(x,y)}
\newcommand{\modulit}{\mathcal{M}_{J_{s,t}}}
\newcommand{\modulittau}{\mathcal{M}_{\com_{s,t}(\tau)}}
\newcommand{\modulittauphi}{\mathcal{M}_{\com_{s,t}(\tau),\phi}}
\newcommand{\modulittaubig}{\mathcal{M}_{\com_{s,t}(\tau)}}
\newcommand{\modulittaubigphi}{\mathcal{M}_{\com_{s,t}(\tau),\phi}}
\newcommand{\modhat}{\widehat{\mathcal{M}}_{J_s}(x,y)}
\newcommand{\modhatxy}{\widehat{\mathcal{M}}(x,y)}
\newcommand{\modhatyw}{\widehat{\mathcal{M}}(y,w)}
\newcommand{\modhatone}{\widehat{\mathcal{M}}_{J_{s,1}}}
\newcommand{\modhatzero}{\widehat{\mathcal{M}}_{J_{s,0}}}
\newcommand{\modhatphi}{\widehat{\mathcal{M}}_{\phi}}
\newcommand{\modhatphib}{\widehat{\mathcal{M}}_{[\phi]}}
\newcommand{\moduliiso}{\mathcal{M}^{t}}
\newcommand{\modspace}{\mathcal{M}}
\newcommand{\modfamily}{\mathcal{M}_{\com_{s,t}}}
\newcommand{\modfamilyphi}{\mathcal{M}_{\com_{s,t},\phi}}
\newcommand{\modtriangle}{\mathcal{M}^{\Delta}}

\newcommand{\fglue}{f_{\mbox{\rm {\tiny glue}}}}
\newcommand{\phihat}{\widehat{\phi}}
\newcommand{\Phihat}{\widehat{\Phi}}
\newcommand{\Psihat}{\widehat{\Psi}}
\newcommand{\Phiinfty}{\Phi^\infty}
\newcommand{\Hhat}{\widehat{H}}
\newcommand{\D}{\mathbb{D}}
\newcommand{\Dhat}{\widehat{\D}}
\newcommand{\cops}{\partial_{J_s}}
\newcommand{\talpha}{\mathbb{T}_\alpha}
\newcommand{\tbeta}{\mathbb{T}_\beta}
\newcommand{\tgamma}{\mathbb{T}_\gamma}
\def\marge#1{\marginpar{\scriptsize{#1}}}
\def\br#1{\begin{rotate}{90}#1\end{rotate}}
\newcommand{\hfhat}{\widehat{\mbox{\rm HF}}}
\newcommand{\sfh}{\mbox{\rm SFH}}
\newcommand{\sbottom}{\underline{s}}
\newcommand{\tbottom}{\underline{t}}
\newcommand{\cfhat}{\widehat{\mbox{\rm CF}}}
\newcommand{\cfinfty}{\mbox{\rm CF}^\infty}
\def\cfbb#1{\mbox{\rm CF}^+_{\leq #1}}
\def\cfbo#1{\mbox{\rm CF}^-_{\geq -#1}}
\newcommand{\cfleq}{\mbox{\rm CF}^{\leq 0}}
\newcommand{\cfcirc}{\mbox{\rm CF}^\circ}
\newcommand{\cfkinfty}{\mbox{\rm CFK}^{\infty}}
\newcommand{\cfkhat}{\widehat{\mbox{\rm CFK}}}
\newcommand{\cfkminus}{\mbox{\rm CFK}^{-}}
\newcommand{\cfkpstar}{\mbox{\rm CFK}^{+,*}}
\newcommand{\cfkostar}{\mbox{\rm CFK}^{0,*}}
\newcommand{\hfkcirc}{\mbox{\rm HFK}^\circ}
\newcommand{\hfkhat}{\widehat{\mbox{\rm HFK}}}
\newcommand{\hfkplus}{\mbox{\rm HFK}^+}
\newcommand{\hfkminus}{\mbox{\rm HFK}^-}
\newcommand{\hfkinfty}{\mbox{\rm HFK}^\infty}
\def\cfinftyfilt#1#2{\mbox{\rm CFK}^{#1,#2}}
\newcommand{\hfinfty}{\mbox{\rm HF}^\infty}
\newcommand{\hfinftwist}{\underline{\mbox{\rm {HF}}}^\infty}
\newcommand{\fhat}{\widehat{f}}
\newcommand{\fcirc}{f^\circ}
\newcommand{\Fhat}{\widehat{F}}
\newcommand{\Fcirc}{F^\circ}
\newcommand{\hattheta}{\widehat{\Theta}}
\newcommand{\shattheta}{\widehat{\theta}}
\newcommand{\cfminus}{\mbox{\rm CF}^-}
\newcommand{\hfminus}{\mbox{\rm HF}^-}
\newcommand{\cfplus}{\mbox{\rm CF}^+}
\newcommand{\hfplus}{\mbox{\rm HF}^+}
\newcommand{\hfcirc}{\mbox{\rm HF}^\circ}
\def\hfbb#1{\hfplus_{\leq #1}}
\def\hfbo#1{\hfminus_{\geq -#1}}
\newcommand{\Hs}{\mathcal{H}_s}
\newcommand{\gr}{\mbox{gr}}
\newcommand{\parinfty}{\partial^\infty}
\newcommand{\parhat}{\widehat{\partial}}
\newcommand{\parplus}{\partial^+}
\newcommand{\parminus}{\partial^-}
\newcommand{\symg}{\mbox{\rm Sym}^g(\Sigma)}
\newcommand{\symgg}{\mbox{\rm Sym}^{2g}(\Sigma)}
\newcommand{\symc}{\mbox{\rm Sym}^g(\mathbb{C})}
\newcommand{\pitwo}{\pi_2}
\newcommand{\pitwoham}{\pi_2^{t}}
\newcommand{\symcon}{\mbox{\rm Sym}^g(\Sigma_1\#\Sigma_2)}
\newcommand{\symgone}{\mbox{\rm Sym}^{g_1}(\Sigma_1)}
\newcommand{\symgtwo}{\mbox{\rm Sym}^{g_2}(\Sigma_2)}
\newcommand{\symgmo}{\mbox{\rm Sym}^{g-1}(\Sigma)}
\newcommand{\symggmo}{\mbox{\rm Sym}^{2g-1}(\Sigma)}
\newcommand{\dom}{\mathcal{D}}
\newcommand{\bigtrans}{\left.\bigcap\hspace{-0.27cm}\right|\hspace{0.1cm}}
\newcommand{\tlt}{\times\ldots\times}
\newcommand{\Isotopy}{\widehat{\Gamma}_{\Psi_t}}
\newcommand{\Isotopyinverse}{\widehat{\Gamma}_{\Psi_{1-t}}}
\newcommand{\orient}{\mathnormal{o}}
\newcommand{\ob}{\mathnormal{ob}}
\newcommand{\SL}{\mbox{\rm SL}}
\newcommand{\rhotilde}{\widetilde{\rho}}
\newcommand{\domstar}{\dom_*}
\newcommand{\domststar}{\dom_{**}}
\newcommand{\betaprime}{\beta'}
\newcommand{\betapp}{\beta''}
\newcommand{\betatilde}{\widetilde{\beta}}
\newcommand{\deltaprime}{\delta'}
\newcommand{\tbetaprime}{\mathbb{T}_{\beta'}}
\newcommand{\talphaprime}{\mathbb{T}_{\alpha'}}
\newcommand{\tdelta}{\mathbb{T}_{\delta}}
\newcommand{\phidelta}{\phi^{\Delta}}
\newcommand{\domtilde}{\widetilde{\dom}}
\newcommand{\loss}{\widehat{\mathcal{L}}}
\newcommand{\bargamma}{\overline{\Gamma}}
\newcommand{\alphaprime}{\alpha'}
\newcommand{\ga}{\Gamma_{\alpha;\beta',\beta''}}
\newcommand{\gbone}{\Gamma_{\alpha;\beta,\widetilde{\beta}}^{w,1}}
\newcommand{\gbtwo}{\Gamma_{\alpha;\beta,\widetilde{\beta}}^{w,2}}
\newcommand{\gbthree}{\Gamma_{\alpha;\beta,\widetilde{\beta}}^{w,3}}
\newcommand{\gbfour}{\Gamma_{\alpha;\beta,\widetilde{\beta}}^{w,4}}
\newcommand{\gcone}{\Gamma_{\alpha;\delta,\delta'}^{w,1}}
\newcommand{\gctwo}{\Gamma_{\alpha;\delta,\delta'}^{w,2}}
\newcommand{\gcthree}{\Gamma_{\alpha;\delta,\delta'}^{w,3}}
\newcommand{\gcfour}{\Gamma_{\alpha;\delta,\delta'}^{w,4}}
\newcommand{\xpi}{x^+_i}
\newcommand{\xmi}{x^-_i}
\newcommand{\xp}{x^+}
\newcommand{\xm}{x^-}
\newcommand{\deltatilde}{\widetilde{\delta}}
\newcommand{\tbetatilde}{\mathbb{T}_{\widetilde{\beta}}}

\newcommand{\eab}{\epsilon_{\alpha\beta}}
\newcommand{\ead}{\epsilon_{\alpha\delta}}
\newcommand{\hqhat}{\widehat{\mbox{\rm HQ}}}
\newcommand{\cupb}{\cup_\partial}
\newcommand{\oa}{\overline{a}}
\newcommand{\ab}{\alpha\beta}
\newcommand{\ad}{\alpha\delta}
\newcommand{\adb}{\alpha\delta\beta}
\newcommand{\tila}{\widetilde{a}}
\newcommand{\tilb}{\widetilde{b}}
\def\pdehn#1#2{D_{#1}^{+,#2}} 
\def\ndehn#1#2{D_{#1}^{-,#2}}

%
%
\newcommand{\bund}{\mathcal{P}}
\newcommand{\diag}{\Delta^{\!\!E}}
\newcommand{\inter}{m_{\diag}}
\newcommand{\ozs}{Ozsv\'{a}th}
\newcommand{\sza}{Szab\'{o}}
%

%
%
\newcommand{\pictureONE}
{
\begin{figure}[ht!]
\centerline{\psfig{file=moduli,height=5cm}}
\caption{A $1$-dimensional moduli space with broken ends.}
\label{Fig:figOne}
\end{figure}
}

%
%
\newcommand{\pictureTWO}
{
\begin{figure}[ht!]
\labellist\small\hair 2pt
\pinlabel {$\hattheta_{\gamma\delta}$} [l] at 198 383
\pinlabel {$\hattheta_{\beta\delta}$} [b] at 188 318
\pinlabel {$\hattheta_{\beta\gamma}$} [l] at 175 235
\pinlabel {$\hattheta^-_{\beta\delta}$} [r] at 500 285
\pinlabel {$\dom_1$} [tr] at 24 328
\pinlabel {$\dom_2$} [l] at 392 377
\pinlabel {$\dom_3$} [r] at 526 385
\pinlabel {$z$} [l] at 469 214
\pinlabel {$\gamma$} [b] at 193 158
\pinlabel {$\delta$} [l] at 160 69
\pinlabel {$\beta$} [br] at 122 67
\endlabellist
\includegraphics[height=7cm]{}
\caption{The Heegaard surface cut open along the $\beta$-curves.}
\label{Fig:figTwo}
\end{figure}
}

%
%
\newcommand{\pictureTHREE}
{
\begin{figure}[ht!]
\centerline{\psfig{file=,height=3cm}}
\caption{Bubbling of spheres.}
\label{Fig:figThree}
\end{figure}
}

%
%
\newcommand{\pictureFOUR}
{\begin{figure}[ht!]
\labellist\small\hair 2pt
\pinlabel {$n$} [Bl] at 3 182
\pinlabel {$n\!+\!\mu$} [Bl] at 93 182
\pinlabel {$n$} [Bl] at 175 182
\pinlabel {$n$} [Bl] at 323 182
\pinlabel {$n$} [Bl] at 463 182
\pinlabel {$n$} [Bl] at 624 182
\pinlabel {$n$} [Bl] at 723 182
\pinlabel {$K$} [l] at 3 8
\pinlabel {$K$} [l] at 93 8
\pinlabel {$K$} [l] at 175 8
\pinlabel {$K$} [l] at 323 8
\pinlabel {$K$} [l] at 463 8
\pinlabel {$K$} [l] at 624 8
\pinlabel {$K$} [l] at 723 8
\pinlabel {$-1$} [t] at 143 80
\pinlabel {$0$} [t] at 294 80
\pinlabel {$-1$} [t] at 431 80
\pinlabel {$-1$} [t] at 595 80
\pinlabel {$\mu$} [B] at 143 101
\pinlabel {$\mu$} [B] at 294 101
\pinlabel {$\mu$} [B] at 431 101
\pinlabel {$\mu$} [B] at 595 101
\pinlabel {$\nu$} [l] at 516 110
\pinlabel {$-1$} [Bl] at 495 127
\pinlabel {$\nu$} [l] at 679 110
\pinlabel {$0$} [Bl] at 663 127
\endlabellist
\centering
\includegraphics[height=3cm]{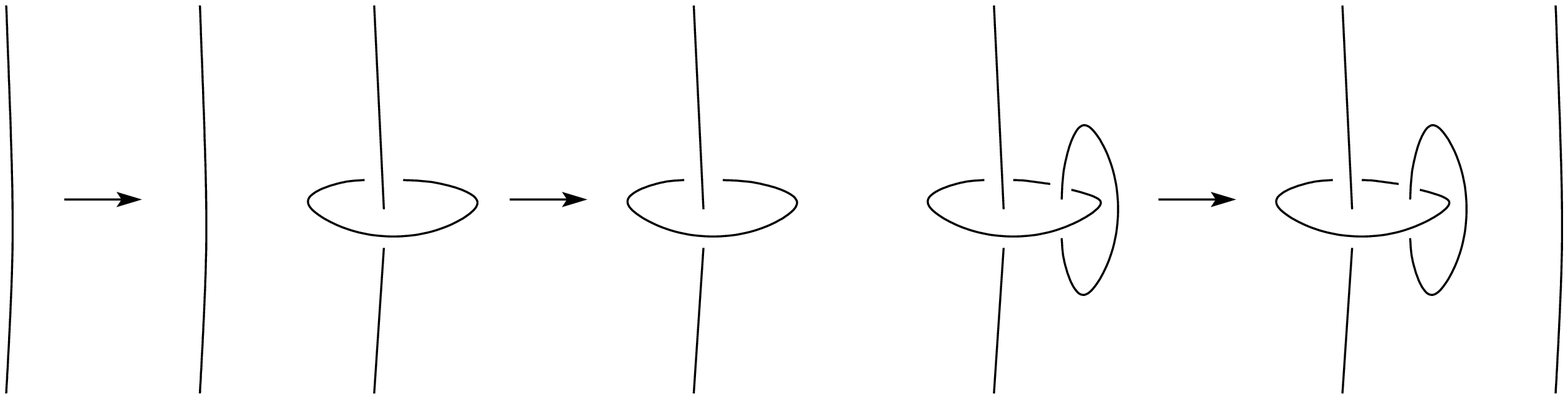}
\caption{The topological situation in the exact triangle.}
\label{Fig:figFour}
\end{figure}
}

%
%
\newcommand{\pictureFIVE}
{\epsfig{file=splitting02,height=4cm,angle=0}}

%
%
\newcommand{\pictureSIX}
{\begin{figure}[ht!]
\centerline{\psfig{file=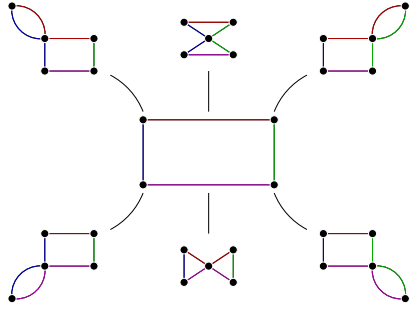,height=4cm}}
\caption{Ends of the moduli space of holomorphic rectangles.}
\label{Fig:figSix}
\end{figure} 
}

%
%
\newcommand{\pictureSEVEN}
{\begin{figure}[ht!]
\labellist\small\hair 2pt
\pinlabel {$\talpha$} [r] at 10 65
\pinlabel {$\tbeta$} [B] at 79 110
\pinlabel {$\tgamma$} [t] at 79 25
\pinlabel {$\tdelta$} [l] at 155 65
\endlabellist
\centering
\includegraphics[height=2cm]{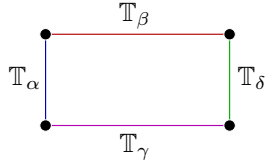}
\caption{The boundary conditions of rectangles for the definition of $H$.}
\label{Fig:figSeven}
\end{figure}
}

%
%
\newcommand{\pictureEIGHT}
{\begin{figure}[ht!]
\labellist\small\hair 2pt
\pinlabel {Page $P\!\times\!\{1/2\}$ of the open book} [bl] at 29 187
\pinlabel {$z$} [bl] at 189 112
\pinlabel {$a_i$} [t] at 76 22
\pinlabel {$b_i$} [t] at  153 22
\endlabellist
\centering
\includegraphics[height=3cm]{}
\caption{Positioning of the point $z$ and choice of $b_i$.}
\label{Fig:figzpoint}
\end{figure}}

%
%
\newcommand{\pictureNINE}
{\begin{figure}[ht!]
\labellist\small\hair 2pt
\pinlabel {$\alpha$} [l] at 279 334
\pinlabel {$x$} [Bl] at 283 244
\pinlabel {$z$} [l] at 422 219
\pinlabel {$\dom_1$} [l] at 302 183
\pinlabel {$\dom_2$} [l] at 200 310
\pinlabel {$y$} [tl] at 284 121
\pinlabel {$\beta$} [r] at 146 44
\endlabellist
\centering
\includegraphics[width=6cm]{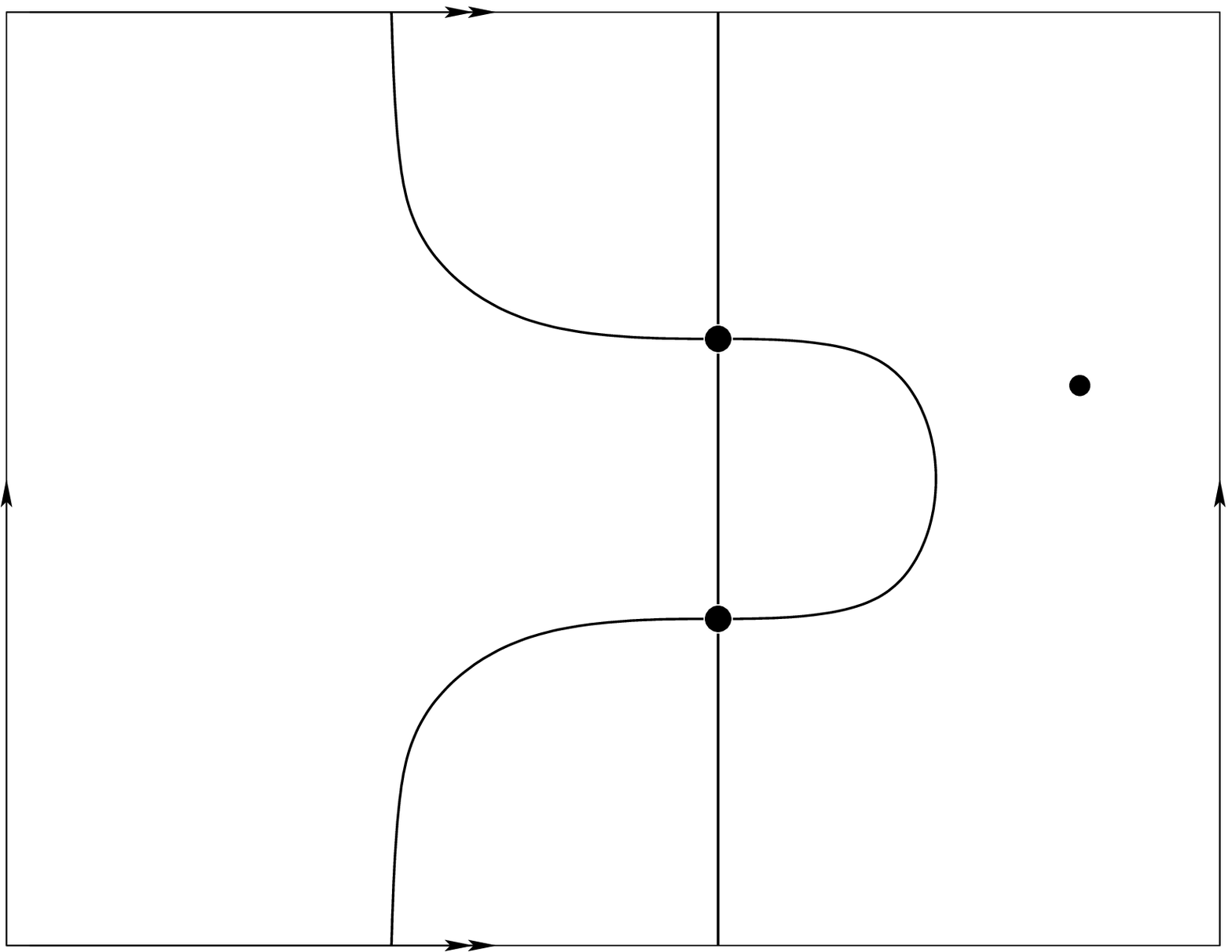}
\caption{An admissible Heegaard diagram for $\stwo\times\sone$.}
\label{Fig:figNine}
\end{figure}}

%
%
\newcommand{\pictureTEN}
{\begin{figure}[ht!]
\labellist\small\hair 2pt
\pinlabel {$z$} [l] at 240 141
\pinlabel {$x_1$} [l] at 94 116
\pinlabel {$y_1$} [l] at 385 120
\pinlabel {$x_2$} [l] at 89 74
\pinlabel {$y_2$} [l] at 380 74
\pinlabel {$\dom_1$} [t] at 161 26
\pinlabel {$\dom_2$} [l] at 105 10
\pinlabel {$\dom_3$} [tr] at 265 68
\pinlabel {$\dom_4$} [r] at 320 12
\endlabellist
\centering
\includegraphics[width=12cm]{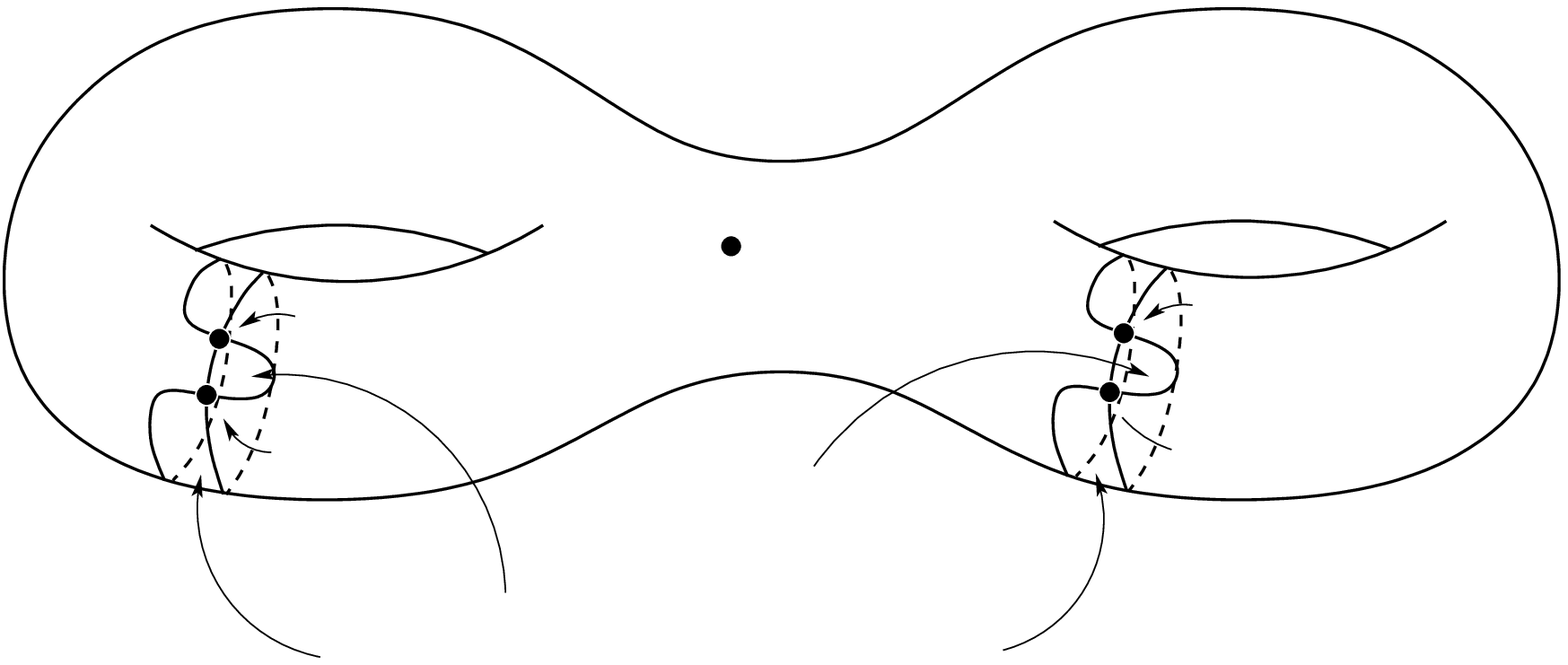}
\caption{An admissible Heegaard diagram for $\stwo\times\sone\#\stwo\times\sone$.}
\label{Fig:figTen}
\end{figure}}

%
%
\newcommand{\pictureELEVEN}
{\begin{figure}[ht!]
\labellist\small\hair 2pt
\pinlabel {$x'_1$} [B] at 149 230
\pinlabel {$z$} [r] at 201 239
\pinlabel {$\dom_z$} [l] at 255 242
\pinlabel {$\gamma_1$} [B] at 272 167
\pinlabel {$\beta_1$} [t] at 12 129
\pinlabel {$\hattheta_1$} [t] at 97 129
\pinlabel {$x_1$} [t] at 147 73
\pinlabel {$\alpha_1$} [t] at 283 97
\pinlabel {Domain of a holomorphic triangle} [t] at 133 23
\pinlabel {$_1$} [t] at 38 208
\pinlabel {$_2$} [l] at 70 236
\endlabellist
\centering
\includegraphics[height=4.5cm]{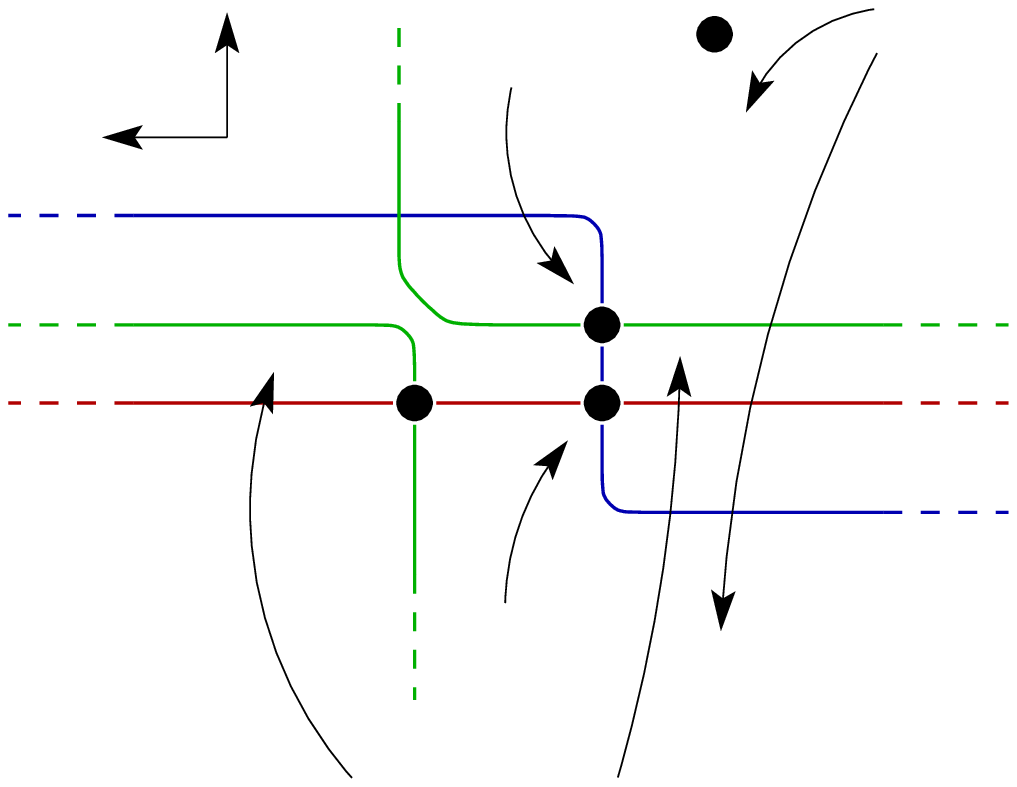}
\caption{Significant part of the Heegaard triple diagram.}
\label{Fig:figEleven}
\end{figure}}

%
%
\newcommand{\pictureFOURTEEN}
{\begin{figure}[ht!]
\labellist\small\hair 2pt
\pinlabel {$w$} [tr] at 7 34
\pinlabel {$x$} [B] at 126 235
\pinlabel {$y$} [tl] at 247 27
\pinlabel {$\talpha$} [Br] at 71 136
\pinlabel {$\tbeta$} [Bl] at 191 136
\pinlabel {$\tgamma$} [t] at 126 26
\endlabellist
\centering
\includegraphics[width=4cm]{}
\caption{A Whitney triangle and its boundary conditions.}
\label{Fig:figFourteen}
\end{figure}}

%
%
\newcommand{\pictureNINETEEN}
{\begin{figure}[ht!]
\labellist\small\hair 2pt
\pinlabel {$\hattheta_{\beta\gamma}$} [B] at 268 202
\pinlabel {$\hattheta_{\gamma\delta}$} [B] at 348 228
\pinlabel {$z$} [l] at 233 162
\pinlabel {$\delta_1$} [t] at 4 52
\pinlabel {$\beta_1$} [l] at 95 16
\pinlabel {$\hattheta_{\gamma\delta}$} [l] at 145 10
\pinlabel {$\gamma_1$} [t] at 183 45
\pinlabel {$\beta_2$} [t] at 254 37
\pinlabel {$\gamma_2$} [t] at 275 32
\pinlabel {$\delta_2$} [t] at 294 20
\endlabellist
\centering
\includegraphics[width=12cm]{}
\caption{Heegaard triple diagram for computation of 
$\fhat_{\beta\gamma\delta}(\hattheta_{\beta\gamma}\otimes
\hattheta_{\gamma\delta})$. }
\label{Fig:figNineteen}
\end{figure}
}

\maketitle
\begin{abstract}    
\noindent This paper provides an introduction to the basics of Heegaard Floer homology
with some emphasis on the hat theory and to the contact geometric invariants in the theory. The 
exposition is designed to be comprehensible to people without any prior knowledge of 
the subject.
\end{abstract}
\tableofcontents
\section{Introduction}
Heegaard Floer homology was introduced by Peter Ozsv\'{a}th and Zoltan Szab\'{o} at the
beginning of the new millennium. Since then it developed very rapidly due to its
various contributions to low-dimensional topology, particularly knot theory
and contact geometry. The present paper is designed to give an introduction to
{\bf the basics} of Heegaard Floer theory with some emphasis on the hat theory. We try
to provide all details necessary to communicate a complete and comprehensible
picture. We would like to remark that there already are introductory 
articles to this subject (see \cite{OsZa08}, \cite{OsZa09} and \cite{OsZa10}).
The difference between the existing articles and the present article is threefold: 
First of all we present a lot more details. We hope that these details will provide
a complete picture of the basics of the theory. Our goal is to focus on those only which 
are relevant for the understanding of Heegaard Floer homology. Secondly, our exposition 
is not designed to present any applications and, in fact, we do not present any. Explaining
applications to the reader would lead us too far away from the basics and would force us to 
make some compromise to the exposition. We felt that going into advanced elements
would be disturbing to the goal of this paper. And  thirdly, we have a slight contact geometric 
focus. 

We think that the reader will profit the most from this paper when reading it 
completely rather than selecting a few elements: We start with a low-paced exposition 
and gain velocity as we move on. In this way we circumvent the creation of too many redundancies 
and it enables us to focus on the important facts at each stage of the paper. We expect the 
reader to have some knowledge about algebraic topology and surgery theory. As standard references we 
suggest \cite{Bredon} and \cite{GoSt}.\vspace{0.3cm}\\
In \S\ref{roadto} and \S\ref{genhomology} we start with Heegaard diagrams and introduce everything
necessary to construct the homology theory. We included a complete discussion of the invariance
of Heegaard Floer theory (cf.~\S\ref{topoinvariance}) for two reasons: Firstly, the isomorphisms defined
for showing invariance appear very frequently in the research literature. Secondly, the proof is 
based on constructions which can be called {\it the standard
constructions of the theory}. Those who are impatient may just read \S\ref{popprod} and
skip the rest of \S\ref{topoinvariance}. However, the remainder of
the article refers to details of \S\ref{topoinvariance} several times. The following paragraph, i.e.~\S\ref{knotfloerhomology}, is devoted to the knot theoretic variant of 
Heegaard Floer theory, called knot Floer homology. In \S\ref{parcobmaps} and \S\ref{parsurextri} we
outline how to assign to a $4$-dimensional cobordism a map between the Floer homologies of
the boundary components and derive the surgery exact triangle. This triangle is one of the
most important tools, particularly for the contact geometric applications. Finally, the article 
focuses on the definition of the contact geometric invariants.\vspace{0.3cm}\\
We are aware of the fact that there is a lot of material missing in this article. However, the 
presented theory provides a solid groundwork for understanding of what we omitted. We would
like to outline at least some of the missing material: First of all the homology groups as well 
as the cobordism maps refine with respect to $\spinc$-structures. We indicate this fact in \S\ref{roadto} but do not outline any details. The standard reference is the article \cite{OsZa01} of Ozsv\'{a}th and
Szab\'{o}. However, we suggest the reader first to familiarize with $\spinc$-structures, especially with 
their interpretation as homology classes of vector fields (cf.~\cite{Tur}). Furthermore, there is an absolute $\Q$-grading on these homology groups (see~\cite{OsZa03})
and in case of knot Floer homologies for homologically trivial knots an additional $\Z$-grading (see~\cite{OsZa04}). Both gradings carry topological information and may appear as a help in explicit calculations, especially in combination with the surgery exact triangles. The knot Floer homologies admit additional exact sequences besides the surgery exact sequence. An example is the skein exact sequence (see~\cite{OsZa04} and \cite{OsZa11}). For 
contact geometric applications the adjunction inequalities play a central 
role as they give a criterion for the vanishing of cobordism maps (see~\cite{OsZa02} or cf.~\cite{OzbSti}). 
Going a bit further, there are other flavors of Heegaard Floer homology: Andr\'{a}s Juhasz defined the so-called Sutured Floer homology of sutured manifolds (see~\cite{AJu}) and Ozsv\'{a}th, Lipshitz and Thurston defined a variant of Heegaard Floer homology for manifolds with parameterized boundary (see~\cite{LiOzTh}).

\section{Introduction to $\hfhat$ as a Model for Heegaard Floer Theory}\label{roadto}

\subsection{Heegaard Diagrams}\label{heegdiag}
One of the major 
results of Morse theory is the development of surgery and handle
decompositions. Morse theory captures the manifold's topology
in terms of a decomposition of it into topologically 
easy-to-understand pieces called {\bf handles} (cf.~\cite{GoSt}).
In the case of closed $3$-manifolds the handle decomposition can
be assumed to be very symmetric. This symmetry allows us to
describe the manifold's diffeomorphism type by a small amount
of data. Heegaard diagrams are omnipresent in low-dimensional topology. 
Unfortunately there is no convention what precisely to
call a Heegaard diagram; the definition of this notion 
underlies slight variations in different sources. Since Heegaard Floer
Homology intentionally uses a non-efficient version of Heegaard
diagrams, i.e.~we fix more information than needed to describe the
manifold's type, we shortly discuss, what is to be understood as
Heegaard diagram throughout this article.\vspace{0.3cm}\\
A short summary of what we will discuss would be that we fix the
data describing a handle decomposition relative to a splitting 
surface. Let $Y$ be a closed oriented $3$-manifold and 
$\Sigma\subset Y$ a {\bf splitting surface}, i.e.~a surface of
genus $g$ such that $Y\backslash\Sigma$ decomposes into two
handlebodies $H_0$ and $H_1$. We fix a handle decomposition 
of $\overline{Y\backslash H_1}$ relative to this splitting 
surface $\Sigma$, i.e.~there are $2$-handles $h^2_{1,i}$, $i=1,\dots,g$,
and a $3$-handle $h^3_1$ such that (cf.~\cite{GoSt})
\begin{equation}
Y\backslash H_1
\cong
(\Sigma\times[0,1])
\cupb
(h^2_{1,1}\cupb \dots\cupb h^2_{1,g}\cupb h^3_1).
\end{equation}
We can rebuild $Y$ from this by gluing 
in $2$-handles $h^2_{0,i}$, $i=1,\dots,g$, and a $3$-handle $h^3_0$.
Hence, $Y$ can be written as
\begin{equation}
  Y\cong 
  (h^3_0\cupb h^2_{0,1}\cupb\dots\cupb h^2_{0,g})
  \cupb 
  (\Sigma\times[0,1])
  \cupb
  (h^2_{1,1}\cupb \dots\cupb h^2_{1,g}\cupb h^3_1).
  \label{handledecomp}
\end{equation}
Collecting the data from this decomposition we obtain a triple
$(\Sigma,\alpha,\beta)$ where $\Sigma$ is the splitting surface
of genus $g$, $\alpha=\{\alpha_1,\dots,\alpha_g\}$ are the
images of the attaching circles of the $h^2_{0,i}$ interpreted as
sitting in $\Sigma$ and $\beta=\{\beta_1,\dots,\beta_g\}$ the
images of the attaching circles of the $2$-handles $h^2_{1,i}$
interpreted as sitting in $\Sigma$. This will be called
a {\bf Heegaard diagram of $Y$}. Observe that these data determine
a Heegaard decomposition {\it in the classical sense} by
dualizing the $h^2_{0,i}$. Dualizing a $k$-handle $D^k\times D^{3-k}$
means to reinterpret this object as $D^{3-k}\times D^k$. Both objects are
diffeomorphic but observe that the former is a $k$-handle and the latter
a $(3-k)$-handle. Observe that the $\alpha$-curves are
the co-cores of the $1$-handles in the dualized picture, and that
sliding $h^1_{0,i}$ over $h^1_{0,j}$ means, in the dual picture,
that $h^2_{0,j}$ is slid over $h^2_{0,i}$.

\subsection{Introduction 
to $\hfhat$ --- Topology and Analysis}\label{symproduct}
Given a closed, oriented $3$-manifold $Y$, we fix a Heegaard diagram 
$(\Sigma,\alpha,\beta)$ of $Y$ as defined in \S\ref{heegdiag}.
We can associate to it the triple $(\symg,\talpha,\tbeta)$ which we 
will explain now:\vspace{0.3cm}\\
By $\symg$ we denote the {\bf $g$-fold symmetric product} of $\Sigma$,
defined by taking the quotient under the canonical action of $S_g$ on 
$\Sigma^{\times g}$, i.e.
\[
  \symg=\Sigma^{\times g}/S_g.
\]
Although the action of $S_g$ has fixed points, the symmetric product
is a manifold. The local model is given by $\symc$ which itself 
can be identified with the set of normalized polynomials of degree
$g$. An isomorphism is given by sending a point
$[(p_1,\ldots,p_g)]$ to the normalized polynomial uniquely determined
by the zero set $\{p_1,\ldots,p_g\}$. Denote by 
\[
  \pi\co\Sigma^{\times g}\lra\symg
\]
the projection map.

The attaching circles $\alpha$ and $\beta$ define submanifolds 
\[
  \talpha
  =
  \alpha_1\times\ldots\times\alpha_g
  \;\;\mbox{\rm and }\;\;
  \tbeta=\beta_1\times\ldots\times\beta_g
\]
in $\Sigma^{\times g}$. Obviously, the projection $\pi$ embeds
these into the symmetric product. In the following we will
denote by $\talpha$ and $\tbeta$ the manifolds embedded into
the symmetric product.
\subsubsection{The chain complex}
Define $\cfhat(\Sigma,\alpha,\beta)$ as the free 
$\Z$-module (or $\ztwo$-module) generated by the intersection 
points $\talpha\cap\tbeta$ inside $\symg$. 
\begin{definition} A map $\phi$ of the $2$-disc $\D^2$ (regarded 
as the unit $2$-disc in $\C$) into the symmetric product $\symg$ is 
said to \textbf{connect} two points $x,y\in\talpha\cap\tbeta$ if 
\begin{eqnarray*}
\phi(i)&=&x,\\
\phi(-i)&=&y,\\  
\phi(\partial\D\cap\{z\in\C\,|\,Re(z)<0\})&\subset&\talpha,\\ 
\phi(\partial\D\cap\{z\in\C\,|\,Re(z)>0\})&\subset&\tbeta.
\end{eqnarray*} 
Continuous mappings of the $2$-disc into the 
symmetric product $\symg$ that connect two intersection points
 $x,y\in\talpha\cap\tbeta$ are called 
\textbf{Whitney discs}. The set of homotopy classes of Whitney 
discs connecting $x$ and $y$ is denoted by $\pitwo(x,y)$ in case $g>2$.
\end{definition}
In case $g\leq2$ we have to define the object $\pitwo(x,y)$ slightly different.
However, we can always assume, without loss of generality, that $g>2$ 
and, thus, we will omit discussing this case at all. We point the interested 
reader to \cite{OsZa01}.\vspace{0.3cm}\\
Fixing a point $z\in\Sigma\backslash(\alpha\cup\beta)$, we can construct a differential
\[
  \parhat_z\co\cfhat(\Sigma,\alpha,\beta)\lra\cfhat(\Sigma,\alpha,\beta)
\] 
by defining it on the generators 
of $\cfhat(\Sigma,\alpha,\beta)$. Given a point $x\in\talpha\cap\tbeta$,
we define $\parhat_z x$ to be a linear combination 
\[
  \parhat_z x
  =
  \sum_{y\in\talpha\cap\tbeta} 
  \left.\parhat_z x\right|_y\cdot y
\]
of all intersection points $y\in\talpha\cap\tbeta$. The definition of
the coefficients will occupy the remainder of this paragraph. The
idea resembles other Floer homology theories. The goal is to
define $\left.\parhat_z x\right|_y$ as a signed count of holomorphic 
Whitney discs connecting $x$ and $y$ which are rigid 
up to reparametrization. First we have to introduce almost complex
structures into this picture. A more detailed discussion of these
will be given in \S\ref{structmoduli}. For the moment it will be
sufficient to say that we choose a generic path 
$(\com_s)_{s\in[0,1]}$ of almost complex structures on the symmetric 
product. Identifying the unit disc, after taking out the points $\pm i$, 
in $\C$ with $[0,1]\times\R$ we define $\phi$ to be {\bf holomorphic} if 
it satisfies for all $(s,t)\in[0,1]\times\R$ the equation
\begin{equation}
  \frac{\partial\phi}{\partial s}(s,t)
  +\com_s\bigl(\frac{\partial\phi}{\partial t}(s,t)\bigr)=0.
  \label{holomeq}
\end{equation}
Looking into $(\ref{holomeq})$ it is easy to see that a
holomorphic Whitney disc $\phi$ can be reparametrized by
a constant shift in $\R$-direction without violating 
$(\ref{holomeq})$.
\begin{definition} Given two points $x,y\in\talpha\cap\tbeta$, we
denote by $\moduli$ the set of holomorphic Whitney discs connecting
$x$ and $y$. We call this set {\bf moduli space of holomorphic Whitney
discs} connecting $x$ and $y$. Given a homotopy class 
$[\phi]\in\pitwo(x,y)$, denote by
$\mathcal{M}_{\mathcal{J}_s,[\phi]}$ the space of holomorphic 
representatives in the
homotopy class of $\phi$.
\end{definition}
In the following the generic path of almost complex structures will
not be important and thus we will suppress it from the notation. 
Since the path is chosen generically (cf.~\S\ref{structmoduli} or
see \cite{OsZa01}) the moduli spaces are manifolds. The constant 
shift in $\R$-direction induces a free $\R$-action on the moduli 
spaces. Thus, if $\mathcal{M}_{[\phi]}$ is non-empty its dimension
is greater than zero. We take the quotient of $\mathcal{M}_{[\phi]}$ 
under the $\R$-action and denote the resulting spaces by
\[
  \modhatphib=\mathcal{M}_{[\phi]}/\R
  \;\;\mbox{\rm and}\;\;
  \modhatxy=\mathcal{M}(x,y)/\R.
\]
The so-called {\bf signed count} of $0$-dimensional components of
$\modhatxy$ means in case of $\ztwo$-coefficients simply to 
count mod $2$. In case of $\Z$-coefficients we have to introduce 
{\bf coherent orientations} on the moduli spaces. We will roughly sketch
this process in the following.\vspace{0.3cm}\\
Obviously, in case of $\Z$-coefficients we cannot simply count
the $0$-dimensional components of $\modhatxy$. The defined
morphism would not be a differential. To circumvent this problem we have
to introduce signs appropriately attached to each component. 
The $0$-dimensional components of $\modhatxy$
correspond to the $1$-dimensional components of $\mathcal{M}(x,y)$.
Each of these components carries a canonical orientation induced
by the free $\R$-action given by constant shifts. We introduce 
orientations
on these components. Comparing the artificial orientations with the 
canonical shifting orientation we can associate to each component, i.e.~each 
element in $\modhatxy$, a sign. The signed count will respect the
signs attached. There is a technical condition 
called {\bf coherence} (see \cite{OsZa01} 
or cf.~\S\ref{structmoduli}) one has to impose on the orientations. 
This technical condition ensures that the morphism $\parhat_z$ is a 
differential.\vspace{0.3cm}\\
The chosen point $z\in\Sigma\backslash(\alpha\cup\beta)$ will be part
of the definition. The path $(\com_s)_{s\in[0,1]}$ is chosen in such a
way that
\[
  V_z=\{z\}\times\symgmo\hookrightarrow\symg
\]
is a complex submanifold. For a Whitney disc (or its homotopy class)
$\phi$ define $n_z(\phi)$ as the intersection number of $\phi$ with
the submanifold $V_z$. We define
\[
  \left.\parhat_z x\right|_y=\#\modhatxy^0_{n_z=0},
\]
i.e.~the signed count of the $0$-dimensional components of
the unparametrized moduli spaces of holomorphic Whitney discs
connecting $x$ and $y$ with the property that their intersection
number $n_z$ is trivial. 
\begin{theorem}[see \cite{OsZa01}]\label{wdefined} The 
assignment $\parhat_z$ is well-defined.
\end{theorem}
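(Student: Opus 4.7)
The plan is to verify the two finiteness statements implicit in calling $\parhat_z$ an assignment into $\cfhat(\Sigma,\alpha,\beta)$: that only finitely many $y\in\talpha\cap\tbeta$ contribute a nonzero coefficient, and that each coefficient $\left.\parhat_z x\right|_y$ is itself a finite (signed) count. The first of these is immediate, since after a small generic perturbation of the attaching circles the tori $\talpha$ and $\tbeta$ meet transversely in the compact manifold $\symg$, so $\talpha\cap\tbeta$ is a finite set.

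For the second, fix $x,y\in\talpha\cap\tbeta$; the task is to show that the $0$-dimensional part of $\modhatxy$ with $n_z=0$ is finite. I would split this into two claims. First, only finitely many homotopy classes $[\phi]\in\pitwo(x,y)$ with $n_z([\phi])=0$ and Maslov index $\mu([\phi])=1$ can admit a holomorphic representative. Since $V_z=\{z\}\times\symgmo$ is pseudo-holomorphic for the chosen path, positivity of intersections forces the domain of any holomorphic $\phi$ to have non-negative multiplicities in all regions of $\Sigma\setminus(\alpha\cup\beta)$, and the condition $n_z([\phi])=0$ makes the multiplicity in the region containing $z$ vanish. Together with the a priori area bound dictated by fixing the index, this leaves only finitely many domains, hence finitely many classes with a chance of contributing.

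For each such $[\phi]$, Gromov compactness would show that $\modhatphib$ is a compact $0$-manifold. A limit configuration could a priori consist of a broken flowline $u_1*\cdots *u_k$ or carry a sphere bubble. Because the path $(\com_s)$ is chosen generically, every nonconstant factor in a broken flowline satisfies $\mu([\phi_i])\geq 1$ after reparametrization, so the identity $\sum_i\mu([\phi_i])=1$ forces $k=1$, ruling out strip breaking. For sphere bubbling, non-negativity of $n_z$ on each holomorphic component together with $n_z([\phi])=0$ forces any bubble to lie in $\symg\setminus V_z$, and for a generic path the locus of such bubbling sits in real codimension at least $2$ of the parameter space, so it is avoided by the $0$-dimensional stratum.

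Combining these steps shows that $\parhat_z x$ is a finite $\Z$-linear combination of intersection points and thus lies in $\cfhat(\Sigma,\alpha,\beta)$. In the $\Z$-coefficient setting one further needs the signed count to be an unambiguous integer rather than a sign-dependent one; this is taken care of by equipping the moduli spaces with a coherent system of orientations, whose construction is deferred to \S\ref{structmoduli}. The main obstacle in the argument is the compactness step, and within it the exclusion of sphere bubbles: it rests on a careful interplay between the pseudo-holomorphicity of $V_z$, positivity of intersections, and transversality for the evaluation map at the bubble point under generic perturbations of $(\com_s)$.
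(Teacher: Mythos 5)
There is a genuine gap at the step where you claim that only finitely many homotopy classes can contribute. You argue that positivity of intersections (forcing $\dom(\phi)\geq 0$) together with ``the a priori area bound dictated by fixing the index'' leaves only finitely many domains. No such bound exists in general: if $b_1(Y)>0$ there are nontrivial periodic classes $\mathcal{P}\in\pitwo(x,x)$ with $n_z(\mathcal{P})=0$, and by Theorem \ref{calcdim} those with $\left<c_1(s),\homology(\mathcal{P})\right>=0$ have $\mu(\mathcal{P})=0$. Adding such a class to a given $\phi$ changes neither $\mu$ nor $n_z$, so the index places no restriction, and it changes the area by the (possibly positive) area of $\mathcal{P}$. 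If the Heegaard diagram happens to carry a periodic domain all of whose coefficients are non-negative, then $\dom(\phi)+k\cdot\dom(\mathcal{P})\geq 0$ for all $k\geq 0$ and you obtain infinitely many non-negative domains with $\mu=1$, $n_z=0$; your finiteness claim fails. This is exactly why the paper introduces the weak admissibility condition (Definition \ref{admissintro}): admissibility forces every relevant periodic domain to have both positive and negative coefficients, and it is this hypothesis -- not an index-induced area bound -- that yields the finiteness of non-negative domains with prescribed $\mu$ and $n_z$. The paper's proof for $b_1(Y)\not=0$ then consists only of the observation you do make correctly, namely that holomorphic discs intersect the complex submanifolds $\{z_i\}\times\symgmo$ non-negatively, so $\dom(\phi)\geq 0$; finiteness is quoted from admissibility. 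Without invoking admissibility (or restricting to a fixed $\spinc$-structure on an admissible diagram) the statement you are proving is simply not true as an unconditional claim.

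For $b_1(Y)=0$ your argument can be repaired much more cheaply, and this is what the paper does: $\pitwo(x,y)$ is a principal homogeneous space over $\pitwo(x,x)\cong\Z\left<S\right>$ with $\mu(S)=2$ and $n_z(S)=1$, so there is at most one homotopy class with $\mu=1$ and $n_z=0$, and no analytic input (positivity, area, or genericity) is needed for the finiteness of classes. Your compactness discussion for each fixed class (excluding breaking by index additivity and sphere bubbling via $n_z(S)=1$ and positivity) is consistent with what the paper defers to \S\ref{structmoduli}; note, though, that the paper rules out bubbles in the $n_z=0$ theory purely because any bubble is a multiple of $S$ and hence has $n_z\not=0$, so no codimension or evaluation-map transversality argument is required.
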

%
%
%
%
\begin{theorem}[see \cite{OsZa01}]\label{differential} 
The morphism $\parhat_z$ is a differential.
\end{theorem}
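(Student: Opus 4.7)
The plan is to establish $\parhat_z\circ\parhat_z = 0$ by identifying the matrix coefficient $\langle \parhat_z^2 x,w\rangle$ with a signed count of boundary points of a compactified one-dimensional moduli space, and then invoking the fact that such a count must vanish.

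First, I would unwind the definition: by construction,
$$
\langle \parhat_z^2 x,w\rangle = \sum_{y\in\talpha\cap\tbeta}\;\sum_{\substack{[\phi_1]\in\pitwo(x,y)\\ [\phi_2]\in\pitwo(y,w)}} \#\widehat{\mathcal{M}}_{[\phi_1]}\cdot \#\widehat{\mathcal{M}}_{[\phi_2]},
$$
where each $[\phi_i]$ contributes only if it has Maslov index $1$, $n_z=0$, and a rigid (after $\R$-quotient) holomorphic representative. Concatenating, $[\phi_1]\ast[\phi_2]$ yields a homotopy class $\Phi\in\pitwo(x,w)$ of Maslov index $2$ with $n_z(\Phi)=0$, and such broken pairs are the candidate ends of the unparametrized moduli space $\widehat{\mathcal{M}}_{\Phi}$.

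Second, I would study $\widehat{\mathcal{M}}_{\Phi}=\mathcal{M}_{\Phi}/\R$ for each such $\Phi$. By the generic choice of the path $(\com_s)$, this is a smooth one-manifold; the aim is to describe its Gromov compactification. The main obstacle is to show that the only ends of $\widehat{\mathcal{M}}_{\Phi}$ are the broken two-story trajectories indexed by intermediate points $y$. Three competing phenomena have to be controlled. The first is sphere bubbling: any non-constant holomorphic sphere in $\symg$ must intersect the complex submanifold $V_z=\{z\}\times\symgmo$ non-negatively, and the generator of $\pi_2(\symg)$ has strictly positive intersection with $V_z$; since $n_z(\Phi)=0$ and the intersection number is additive under bubbling, a sphere bubble would force the residual disc component to have $n_z<0$, violating positivity of intersection. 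The second is a boundary degeneration in which a disc whose boundary lies entirely on $\talpha$ or on $\tbeta$ pinches off; such pieces carry Maslov index at least $2$, leaving at most a zero-dimensional residual component, and are excluded for a generic $(\com_s)$ combined with the (weak) admissibility hypothesis that also ensures finiteness of the sum defining $\parhat_z$. The third is the usual non-compactness in the $\R$-direction of the strip domain, which is precisely what yields the broken flow lines enumerated above.

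Third, once the end structure is understood, the coherence of the orientations sketched in \S\ref{symproduct} guarantees that the signed count of boundary points of the compact one-manifold obtained from $\widehat{\mathcal{M}}_{\Phi}$ is zero. Summing over all $\Phi\in\pitwo(x,w)$ of Maslov index $2$ with $n_z(\Phi)=0$, and reindexing by the middle intersection point $y$, one concludes $\langle\parhat_z^2 x,w\rangle=0$ for every pair $x,w$, hence $\parhat_z^2=0$. The decisive difficulty is thus the compactness and bubbling analysis inside $\symg$: the $n_z=0$ constraint cleanly disposes of sphere bubbles via $V_z$, but ruling out boundary degenerations and establishing the coherence of orientations that supports the signed-count argument requires the full analytic apparatus of \ozs{} and \sza{} developed in \cite{OsZa01}.
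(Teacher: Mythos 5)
Your proposal is correct and follows the paper's own argument: the coefficient of $w$ in $\parhat_z\circ\parhat_z(x)$ is identified with the broken ends of the one-dimensional unparametrized moduli spaces of index-two classes with $n_z=0$, sphere bubbling is excluded by positivity of intersection with $V_z$ together with $n_z(S)=1$, gluing plus Gromov compactness show these broken ends exhaust the boundary, and the mod-$2$ (respectively coherently oriented) count of boundary points of a compact one-manifold vanishes. The only divergence is the treatment of boundary degenerations, which the paper rules out more directly by noting that a degenerate disc's domain has boundary only on the $\alpha$-curves and hence covers all of $\Sigma$, forcing $n_z>0$; your index-plus-genericity argument is workable, but admissibility plays no role there (it only ensures finiteness of the contributing homotopy classes).
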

We will give sketches of the proofs of the last two theorems later
in \S\ref{structmoduli}. At the moment we do not know enough about 
Whitney discs and the symmetric product to prove it.
\begin{definition} We denote by $\cfhat(\Sigma,\alpha,\beta,z)$
the chain complex given by the data 
$(\cfhat(\Sigma,\alpha,\beta),\partial_z)$. Denote by 
$\hfhat(Y)$ the induced homology theory $H_*(\cfhat(\Sigma,\alpha,\beta),\partial_z)$.
\end{definition}
The notation should indicate that the homology theory does not depend
on the data chosen. It is a topological invariant of the manifold $Y$,
although this is not the whole story. The theory depends on the choice
of coherent system of orientations. For a manifold $Y$ there are
$2^{b_1(Y)}$ numbers of non-equivalent systems of coherent orientations.
The resulting homologies can differ (see Example \ref{example01}). Nevertheless 
the orientations are not written down. We guess there are two reasons: The first 
would be that most of the time it is not really important which system
is chosen. All reasonable constructions will work for every coherent
orientation system, and in case there is a specific choice needed this
will be explicitly stated. The second reason would be that it is possible to 
give a convention for the choice of coherent orientation systems. 
Since we have not developed the mathematics to state the convention 
precisely we point the reader to Theorem \ref{cohblah}.
\subsubsection{On Holomorphic Discs in the Symmetric Product}
In order to be able to discuss a first example we briefly introduce some
properties of the symmetric product.
\begin{definition}\label{maslovindex} For a Whitney disc $\phi$ we denote by $\mu(\phi)$
the {\bf formal dimension} of $\mathcal{M}_\phi$. We also call $\mu(\phi)$ the {\bf Maslov index}
of $\phi$.
\end{definition}
For the readers that have not heard anything about Floer homology at
all, just think of $\mu(\phi)$ as the dimension of the space 
$\mathcal{M}_\phi$, although even in case $\mathcal{M}_\phi$ is 
not a manifold the number $\mu(\phi)$ is 
defined (cf.~\S\ref{structmoduli}). Just to give some intuition, note that the
moduli spaces are the zero-set of a section, $S$ say, in a Banach bundle 
one associates to the given setup. The linearization of this section
at the zero set is a Fredholm operator. Those operators carry a property
called Fredholm index. The number $\mu$ is the Fredholm index of 
that operator. Even if the moduli spaces are no manifolds this 
number is defined. It is called formal dimension or 
{\bf expected dimension} since in case the section $S$ intersects the
zero-section of the Banach-bundle transversely (and hence the moduli spaces
are manifolds) the Fredholm index $\mu$ equals the dimension of the
moduli spaces. So, negative indices are possible and make sense in some
situations. One can think of negative indices as the number
of missing degrees of freedom to give a manifold.
\begin{lem}\label{howsphere} In case $g(\Sigma)>2$ the 2nd homotopy group
$\pitwo(\symg)$ is isomorphic to $\Z$. It is generated 
by an element $S$ with $\mu(S)=2$ and $n_z(S)=1$, where $n_z$ is defined
the same way as it was defined for Whitney discs.
\end{lem}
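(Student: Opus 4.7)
The plan is to compare $\symg$ with its Jacobian via the Abel--Jacobi map $u\co\symg\lra\mathrm{Jac}(\Sigma)\cong T^{2g}$, exploiting the fact that $\pi_k(T^{2g})=0$ for $k\geq 2$. A standard argument, obtained by pushing loops off the fat diagonal (which has real codimension two), shows that $u$ induces the abelianization isomorphism $\pi_1(\symg)\cong H_1(\Sigma;\Z)\cong\Z^{2g}$. Passing to universal covers one obtains a proper holomorphic map $\widetilde{u}\co\widetilde{\symg}\lra\C^g$, and since $\widetilde{\symg}$ is simply connected the Hurewicz theorem reduces the problem to computing $H_2(\widetilde{\symg})$.

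For $g>2$, Brill--Noether theory tells us that the locus of effective divisors $D$ of degree $g$ with $h^0(D)\geq 2$ has complex codimension at least two in $\symg$, and over this exceptional locus the fibers of $u$ are projective spaces $\C P^{h^0(D)-1}$. Away from this exceptional set $\widetilde{u}$ is a biholomorphism, so the non-trivial second homology of the universal cover can only come from these projective fibers. One thereby identifies $H_2(\widetilde{\symg})$ with $\Z$, generated by the class of a projective line $\C P^1$ sitting inside such a fiber; this is the desired generator $S$.

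To compute $\mu(S)$, I use that for a holomorphic sphere $\mu(S)=2\langle c_1(T\symg),[S]\rangle$. Viewing $S$ as a line in a complete linear system $|D|$, the tangent bundle $T\symg$ restricted to $S$ splits so that $T\C P^1$ contributes $2$ and the transverse directions contribute $0$; after applying adjunction on $\C P^1\subset\symg$ one reads off $c_1[S]=1$, hence $\mu(S)=2$. To verify $n_z(S)=1$, I observe that $V_z=\{z\}\times\symgmo$ is a smooth complex codimension-one submanifold whose Poincar\'e dual generates the cokernel of $u^*\co H^2(T^{2g})\to H^2(\symg)$; by choosing the supporting divisor $D$ so that its support avoids the point $z$, one arranges that $S$ meets $V_z$ transversely in a single point, giving $n_z(S)=1$.

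The main obstacle in this plan is showing that $\pitwo(\symg)$ is exactly $\Z$ and not larger: this demands either a careful stratification of the Abel--Jacobi map to control the contribution of the exceptional set to $H_2$ of the universal cover, or an appeal to Macdonald's explicit formulas for $H^*(\symg;\Z)$ combined with a Leray--Serre style computation to extract the universal-cover piece. Everything else --- the existence of the generator $S$ and the normalizations $\mu(S)=2$ and $n_z(S)=1$ --- reduces to standard transversality and Chern-class computations in the symmetric product once this topological identification has been made.
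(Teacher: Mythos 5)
The core assertion of the lemma --- that $\pitwo(\symg)$ is isomorphic to $\Z$ and no larger when $g>2$ --- is precisely the step you declare to be ``the main obstacle'' and defer, so the proposal does not yet prove the statement. Worse, the codimension estimate you would use to close that gap is false as stated: the locus of effective degree-$g$ divisors with $h^0(D)\geq 2$ has complex codimension \emph{one} in $\symg$ (it is generically a $\C P^1$-bundle over $W^1_g$, and $\dim W^1_g=g-2$ since $W^1_g=K-W_{g-2}$); what has codimension two is its image $W^1_g$ in the Jacobian, and any argument that contracts the Abel--Jacobi fibers to compute $H_2$ of the universal cover must be phrased in terms of that image, not of the locus upstairs. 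The Chern-number bookkeeping is also internally inconsistent: if $T\C P^1$ contributed $2$ and the transverse directions $0$, you would get $\langle c_1(T\symg),[S]\rangle=2$ and hence $\mu(S)=4$. In fact the normal bundle of a line in a fiber has total degree $-1$ (for $g=2$, $\mbox{\rm Sym}^2(\Sigma)$ is the blow-up of the Jacobian and $S$ is the exceptional sphere), which is how one arrives at $\langle c_1(T\symg),[S]\rangle=1$ and $\mu(S)=2$. Your verification of $n_z(S)=1$ is essentially right, though the condition you need is that $z$ is not a base point of the pencil, rather than that the support of $D$ avoids $z$.

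By contrast, the paper is far more economical: it does not recompute $\pi_2$ at all (the isomorphism $\pitwo(\symg)\cong\Z$ is taken from \cite{OsZa01}) but instead exhibits an explicit representative of the generator, namely the sphere obtained from an involution $\eta\co\Sigma\lra\Sigma$ with $\Sigma/\eta\cong\stwo$, sending $y$ to a divisor containing $y$ and $\eta(y)$; from this representative $n_z(S)=1$ is read off directly (the two solutions $y=z$ and $y=\eta(z)$ are identified in the quotient sphere), and only the additivity of $\mu$ and $n_z$ is used afterwards. Your Abel--Jacobi/Brill--Noether route can be made to work and would give the same normalizations, but it requires the corrected stratification statement above together with an honest computation of $H_2$ of the universal cover (e.g.\ via Macdonald's description of $H^*(\symg)$), which is exactly the part you have left open.
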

Let $\eta\co\Sigma\lra\Sigma$ be an involution such that $\Sigma/\eta$
is a sphere. The map
\[
  \stwo\lra\symg,\,\;y\lmt\{(y,\eta(y),y,\dots,y)\}
\]
is a representative of $S$. Using this representative it is easy to
see that $n_z(S)=1$. It is a property of $\mu$ as an index that it 
behaves additive under concatenation. Indeed the intersection 
number $n_z$ behaves additive, too. 
To develop some intuition for the holomorphic spheres in the 
symmetric product we state the following result from \cite{OsZa01}.
\begin{lem}[see \cite{OsZa01}]\label{holspheres} There is 
an exact sequence 
\[
  0\lra
  \pitwo(\symg)
  \lra\pitwo(x,x)
  \lra
  \mbox{\rm ker}(n_z)
  \lra 0.
\]
The map $n_z$ provides a splitting for the sequence.
\end{lem}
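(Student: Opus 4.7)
The plan is to construct both maps by hand and verify exactness algebraically, using the intersection number $n_z$ as the key topological input; the splitting will then be read off as a byproduct.

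First I would define the left-hand map $\iota\co\pitwo(\symg)\to\pitwo(x,x)$ by interpreting a sphere as a Whitney disc with degenerate boundary. Given $[\sigma]\in\pitwo(\symg)$ based at $x$, precompose $\sigma$ with the quotient map $\D\to\stwo$ that collapses $\partial\D$ to a point. The two boundary arcs of $\D$ are then mapped constantly to $x\in\talpha\cap\tbeta$, so they lie in $\talpha$ and $\tbeta$ automatically, and $\phi(\pm i)=x$. This assignment descends to homotopy classes.

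Next I would note that $n_z$ makes sense on both $\pitwo(\symg)$ and $\pitwo(x,x)$ via algebraic intersection with the complex submanifold $V_z$, that it is additive under sphere sum and disc concatenation, and that by Lemma~\ref{howsphere} the generator $S$ satisfies $n_z(S)=1$. Consequently $n_z\circ\iota$ is the identity on $\pitwo(\symg)\cong\Z$. Two conclusions follow at once: $\iota$ is injective, and the retraction $n_z\co\pitwo(x,x)\to\Z\cong\pitwo(\symg)$ will serve as the splitting claimed by the lemma. For the right-hand map $q\co\pitwo(x,x)\to\ker(n_z)$, set
\[
  q(\phi)=\phi-n_z(\phi)\cdot\iota(S).
\]
Additivity of $n_z$ gives $n_z(q(\phi))=0$, so $q$ indeed lands in $\ker(n_z)\subset\pitwo(x,x)$. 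Exactness at $\pitwo(x,x)$ is then a one-line check: $q(\phi)=0$ if and only if $\phi=n_z(\phi)\cdot\iota(S)\in\im(\iota)$, and the composition $q\circ\iota$ vanishes by construction. Surjectivity of $q$ is automatic, since any $\psi\in\ker(n_z)$ satisfies $q(\psi)=\psi$.

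The main obstacle will not be this algebra, which is essentially formal, but verifying its prerequisites: one must know that $\pitwo(x,x)$ is a well-defined abelian group and that $\iota$ and $n_z$ are homomorphisms. The cleanest route is to identify $\pitwo(x,x)$ with the relative homotopy group $\pitwo(\symg,\talpha\cup\tbeta,x)$ in a cornered model of the disc; the group structure and its commutativity then follow from the standard Eckmann-Hilton argument applied to the two independent concatenations along the $\talpha$- and $\tbeta$-boundary arcs, and $\iota$ arises as the natural map in the long exact sequence of that pair.
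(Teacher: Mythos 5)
Your construction of the two maps and the splitting is essentially the paper's: the paper (following Ozsv\'ath--Szab\'o, to whom the lemma is attributed) also takes the natural map $\pitwo(\symg)\lra\pitwo(x,x)$ obtained by viewing a sphere based at $x$ as a Whitney disc with constant boundary (equivalently, as a loop of paths based at the constant path $x$), and takes the splitting to be $\phi\lmt n_z(\phi)\cdot S$, using $n_z(S)=1$ and additivity exactly as you do. Your retraction argument for injectivity and the formal check of exactness via $q(\phi)=\phi-n_z(\phi)\cdot\iota(S)$ are fine as far as they go, and they make explicit what the paper leaves as an observation.

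The genuine gap is in your last paragraph, where you try to supply the prerequisites. First, $\pitwo(x,x)$ is \emph{not} the relative homotopy group $\pitwo(\symg,\talpha\cup\tbeta,x)$: a Whitney disc must send one specified boundary arc into $\talpha$ and the other into $\tbeta$, with the corners at $\pm i$ landing on intersection points, and homotopies must preserve this decomposition; in the relative group the boundary may wander arbitrarily inside $\talpha\cup\tbeta$, so the two sets of homotopy classes do not coincide. Second, the Eckmann--Hilton argument does not apply: in the strip model $[0,1]\times\R$ there is only one concatenation (stacking in the $\R$-direction), since gluing ``along the boundary arcs'' would have to match an $\talpha$-boundary with a $\tbeta$-boundary, which the boundary conditions forbid; correspondingly, relative $\pi_2$ of a pair is in general non-abelian, so no formal argument of this kind can give commutativity. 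The correct framework, used in the paper, is to regard $\pitwo(x,x)$ as $\pi_1$ of the space of paths from $\talpha$ to $\tbeta$ based at the constant path $x$; this yields the group structure, but abelianness for $g>2$ (or at the very least centrality of the class $\iota(S)$) is a nontrivial result of Ozsv\'ath--Szab\'o depending on the topology of $\symg$, not a formality. This matters for your own argument: written multiplicatively, $q(\phi)=\phi\cdot\iota(S)^{-n_z(\phi)}$ is a homomorphism only if $\iota(S)$ is central, so your ``one-line'' exactness check silently invokes exactly the fact your final paragraph fails to establish. As it stands, that prerequisite has to be imported from \cite{OsZa01} (as the paper does) rather than derived by your proposed route.
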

Observe that we can interpret a Withney disc in $\pitwo(x,x)$ as 
a family of paths in $\symg$ based at the constant path $x$. We can also interpret an 
element in $\pitwo(\symg)$ as a family of paths in $\symg$ based at the
constand path $x$. Interpreted in this way there is a natural map 
from $\pitwo(\symg)$ into $\pitwo(x,x)$. The map $n_z$ provides a splitting for the 
sequence as it may be used to define the map 
\[
  \pitwo(x,x)\lra\pitwo(\symg)
\]
sending a Whitney disc $\phi$ to $n_z(\phi)\cdot S$. This obviously defines a splitting 
for the sequence.
\begin{lem}\label{kernz} The Kernel of $n_z$ interpreted as 
a map on $\pitwo(x,x)$ is isomorphic to $H^1(Y;\Z)$.
\end{lem}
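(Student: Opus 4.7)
The plan is to identify $\ker(n_z)\subset\pitwo(x,x)$ with the group of \emph{periodic domains} on $\Sigma$ having zero multiplicity at the basepoint $z$, then to show this group is isomorphic to $H_2(Y;\Z)$, which equals $H^1(Y;\Z)$ by Poincar\'e duality on the closed oriented $3$-manifold $Y$.

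First I would associate to each Whitney disc $\phi\in\pitwo(x,x)$ its \emph{domain}
\[
  \dom(\phi)=\sum_i n_{p_i}(\phi)\cdot D_i,
\]
where $\{D_i\}$ are the connected components of $\Sigma\bk(\alpha\cup\beta)$ and $n_{p_i}(\phi)$ is the intersection number of $\phi$ with $\{p_i\}\times\symgmo$ for any $p_i$ in the interior of $D_i$. Because both endpoints of $\phi$ equal $x$, the boundary $\partial\dom(\phi)$ taken on $\Sigma$ is a $1$-chain consisting of full $\alpha$- and $\beta$-curves with integer multiplicities; these are the periodic domains. The representative of $S\in\pitwo(\symg)$ in Lemma~\ref{howsphere} has domain $[\Sigma]$, so the splitting of Lemma~\ref{holspheres} identifies the subgroup $\ker(n_z)\subset\pitwo(x,x)$ bijectively with periodic domains $P$ satisfying $n_z(P)=0$; the reverse construction, recovering $\phi$ from $P$, uses connectedness of $\alpha\cup\beta$ in the Heegaard surface to assemble $P$'s boundary arcs into the boundary data of a disc mapping into $\symg$.

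Next I would construct a homomorphism $\Phi$ from periodic domains with $n_z=0$ to $H_2(Y;\Z)$. Writing $\partial P=\sum_i a_i\alpha_i+\sum_j b_j\beta_j$ on $\Sigma$ and choosing compressing discs $D_i^\alpha\subset H_0$ with $\partial D_i^\alpha=\alpha_i$ and $D_j^\beta\subset H_1$ with $\partial D_j^\beta=\beta_j$, the $2$-chain
\[
  \Phi(P)=P-\sum_i a_i D_i^\alpha-\sum_j b_j D_j^\beta
\]
is closed in $Y$. Different choices of compressing discs alter $\Phi(P)$ by $2$-cycles supported inside $H_0$ or $H_1$, which are null-homologous because handlebodies have trivial $H_2$, so $\Phi$ descends to a well-defined homomorphism to $H_2(Y;\Z)$.

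To prove $\Phi$ is an isomorphism I would apply the Mayer--Vietoris sequence for $Y=H_0\cupb H_1$: since $H_2(H_\ell;\Z)=0$, the connecting map gives $H_2(Y;\Z)\cong\ker(H_1(\Sigma;\Z)\to H_1(H_0;\Z)\oplus H_1(H_1;\Z))$, and this kernel is precisely $\langle[\alpha_i]\rangle\cap\langle[\beta_j]\rangle$ inside $H_1(\Sigma;\Z)$. For a class represented by $\sum a_i[\alpha_i]=\sum b_j[\beta_j]$, the $1$-cycle $\sum a_i\alpha_i-\sum b_j\beta_j$ is null-homologous on $\Sigma$ and hence bounds a $2$-chain, uniquely determined once we impose $n_z=0$ (the indeterminacy being a multiple of $[\Sigma]$), producing the inverse of $\Phi$. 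Combining with Poincar\'e duality $H_2(Y;\Z)\cong H^1(Y;\Z)$ finishes the proof. The main obstacle is the two-sided identification in the first step: extracting a periodic domain from $\phi$ is purely intersection-theoretic, but reconstructing a Whitney disc from an arbitrary periodic domain with $n_z=0$ requires genuine topological input about $\symg$, so one must either cite this standard construction or carry out the assembly by hand using the combinatorics of the Heegaard diagram.
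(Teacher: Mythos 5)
Your overall route---pass from a class in $\ker(n_z)\subset\pitwo(x,x)$ to its domain, cap the boundary multiples of the $\alpha$- and $\beta$-curves with compressing discs, and identify the result with $H_2(Y;\Z)\cong H^1(Y;\Z)$ via Mayer--Vietoris and Poincar\'e duality---is exactly the identification the paper has in mind: it is how the map $\mathcal{H}$ of (\ref{nice}) is described afterwards (closing up domains by the core discs of the $2$-handles), and the paper itself offers no proof of Lemma \ref{kernz}, taking it from Ozsv\'{a}th--Szab\'{o}. Your second and third steps are sound: well-definedness of $\Phi$ uses that handlebodies have trivial $H_2$, and the Mayer--Vietoris identification of $H_2(Y;\Z)$ with $\langle[\alpha_i]\rangle\cap\langle[\beta_j]\rangle$ in $H_1(\Sigma;\Z)$, together with the observation that a $2$-chain with prescribed boundary is unique once the multiplicity at the region of $z$ is set to zero, correctly matches periodic domains with $n_z=0$ with $H_2(Y;\Z)$.

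The genuine gap is the first step, the asserted bijection between $\ker(n_z)$ and periodic domains with $n_z=0$. The splitting of Lemma \ref{holspheres} only says $\pitwo(x,x)\cong\Z\langle S\rangle\oplus\ker(n_z)$; it contains no information about domains. Surjectivity of $\phi\mapsto\dom(\phi)$ can indeed be patched roughly as you gesture: for a periodic domain $P$ the cycle $\sum a_i\alpha_i+\sum b_j\beta_j$ bounds $P$ in $\Sigma$, so by Lemmas \ref{fundone} and \ref{fundtwo} the corresponding loop along $\talpha$ followed by one along $\tbeta$ is null-homotopic in $\symg$, and after subtracting a suitable multiple of $S$ the resulting disc has domain exactly $P$. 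But injectivity---that a class in $\pitwo(x,x)$ all of whose local multiplicities vanish is homotopically trivial---is precisely the nontrivial content of the lemma, and you assert it without argument. It does not follow from Lemmas \ref{howsphere}--\ref{fundtwo}, and appealing to the paper's later lemma that two Whitney discs are homotopic if and only if their domains agree would be circular, since that lemma is proved using the sequence (\ref{nice}), i.e.\ using Lemma \ref{kernz}. What is actually needed is the homotopy-theoretic computation of \cite{OsZa01} (Proposition 2.15): one studies the evaluation fibration from the space of paths from $\talpha$ to $\tbeta$ to $\talpha\times\tbeta$, with fibre the based loop space of $\symg$, and uses $\pi_2(\symg)\cong\Z$, $\pi_2(\talpha\times\tbeta)=0$ and $g>2$ to conclude that a disc whose boundary classes in $H_1(\talpha)\oplus H_1(\tbeta)$ vanish is a multiple of $S$. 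Without this input (or an equivalent argument) your construction only produces a surjection from $\ker(n_z)$ onto $H^1(Y;\Z)$, not an isomorphism.
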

With the help of concatenation we are able to define an
action 
\[
  *\co\pitwo(x,x)\times\pitwo(x,y)\lra\pitwo(x,y),
\]
which is obviously free and transitive. Thus, we have an identification
\begin{equation}
\begin{diagram}[size=1.5em,labelstyle=\scriptstyle]
\pitwo(x,y) &&
\rTo^\cong  &&
\pitwo(x,x) &
\cong\Z\oplus H^1(Y;\Z)\\
 &
\rdTo& &
\ldTo& & \\
&&
\{*\}&& &
\end{diagram}\label{pbident}
\end{equation}
as principal bundles over a one-point space, which is another way of
saying that the concatenation action endows $\pitwo(x,y)$ with a group
structure after fixing a unit element in $\pitwo(x,y)$. To address the 
well-definedness of $\parhat_z$ we have to show that the sum 
in the definition of $\parhat_z$ is finite. For the moment 
let us assume that for a generic choice of path 
$(\com_s)_{s\in[0,1]}$ the moduli spaces $\modhatphi$ with
$\mu(\phi)=1$ are compact manifolds (cf.~Theorem \ref{modmani}), 
hence their signed count is finite. Assuming this property we 
are able to show well-definedness of $\parhat_z$ in case $Y$ 
is a homology sphere.
\begin{proof}[Proof of Theorem \ref{wdefined} for $b_1(Y)=0$]
Observe that 
\begin{equation}
  \modhatxy^0_{n_z=0}=\bigsqcup_{\phi\in H(x,y,1)}\modhatphi,
  \label{modsplit}
\end{equation}
where $H(x,y,1)\subset\pitwo(x,y)$ is the subset of homotopy classes
admitting holomorphic representatives with $\mu(\phi)=1$ and $n_z=0$. 
We have to show that $H(x,y,1)$ is a finite set. Since $b_1(Y)=0$
the cohomology $H^1(Y;\Z)$ vanishes. By our preliminary discussion, given 
a reference disc $\phi_0\in\pitwo(x,y)$, any $\phi_{xy}\in\pitwo(x,y)$ can
be written as a concatenation
$\phi_{xy}=\phi*\phi_0$, where $\phi$ is an element in $\pitwo(x,x)$. 
Since we are looking for
discs with index one we have to find all $\phi\in\pitwo(x,x)$ satisfying
the property $\mu(\phi)=1-\mu(\phi_0)$.
Recall that $Y$ is a homology sphere and thus 
$\pitwo(x,x)\cong\Z\otimes\{S\}$. Hence, the
disc $\phi$ is described by an integer $k\in\Z$, i.e.~$\phi=k\cdot S$. The
property $\mu(S)=2$ tells us that
\[
  1-\mu(\phi_0)=\mu(\phi)=\mu(k\cdot S)=k\cdot\mu(S)=2k.
\]
There is at most one $k\in\Z$ satisfying this equation, so there is at most
one homotopy class of Whitney discs satisfying the property $\mu=1$ and
$n_z=0$.
\end{proof}
In case $Y$ has non-trivial first cohomology we need an additional
 condition to make the proof work. The given argument obviously 
breaks down in this case. To fix this we impose a topological/algebraic
condition on the Heegaard diagram. Before we can define these
{\it admissibility} properties we have to go into the theory a bit
more.\vspace{0.3cm}\\
There is an obstruction to finding Whitney discs connecting two
given intersection points $x,y$. The two points $x$ and $y$ can
certainly be connected via paths inside $\talpha$ and $\tbeta$.
Fix two paths $a\co I\lra\talpha$ and $b\co I\lra\tbeta$
such that $-\partial b=\partial a=y-x$. This is the same as saying
we fix a closed curve $\gamma_{xy}$ based at $x$, going to $y$ along
$\talpha$, and moving back to $x$ along $\tbeta$. Obviously
$\gamma_{xy}=b+a$. Is it possible to extend the curve $\gamma_{xy}$, after
possibly homotoping it a bit, to a disc? If so, this would be
a Whitney disc. Thus, finding an obstruction can be reformulated
as: Is $[\gamma_{xy}]=0\in\pi_1(\symg)$?
\begin{lem}[see \cite{OsZa01}]\label{fundone} The 
group $\pi_1(\symg)$ is abelian.
\end{lem}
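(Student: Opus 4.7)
The plan is to show directly that any two based loops in $\pi_1(\symg,x)$ commute, by homotoping them so that they act on disjoint coordinates of $\Sigma^{\times g}$.

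Fix a basepoint $x=\{p_1,\ldots,p_g\}\in\symg$ with the $p_i$ pairwise distinct, and let $\Delta\subset\symg$ be the big diagonal (configurations where at least two entries coincide). In the local model $\symc$ the diagonal strata are complex submanifolds of complex codimension $1$, hence real codimension $2$, so by general position any based loop $\gamma$ is homotopic (rel basepoint) to one whose image lies in $\symg\setminus\Delta$. There the projection $\pi\colon\Sigma^{\times g}\setminus\pi^{-1}(\Delta)\to\symg\setminus\Delta$ is an unbranched covering with deck group $S_g$, so $\gamma$ lifts to a path $\widetilde{\gamma}=(\widetilde{\gamma}_1,\ldots,\widetilde{\gamma}_g)$ in $\Sigma^{\times g}$ starting at $(p_1,\ldots,p_g)$ and ending at a permutation of this tuple.

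Next I would show that, up to homotopy in $\symg$, $\gamma$ is a concatenation of \emph{elementary} loops $\gamma_a^{(i)}(t)=\{p_1,\ldots,p_{i-1},a(t),p_{i+1},\ldots,p_g\}$, each of which moves only a single coordinate. One achieves this by rescaling the coordinate paths $\widetilde{\gamma}_i$ so that they are traversed one after another on successive subintervals (using connectedness of $\Sigma$ and $g\geq 3$ to keep all auxiliary motions off $\Delta$, and to realize the terminal permutation as a product of transpositions that are themselves homotopic to elementary loops). It then suffices to check that two elementary loops commute. For $\gamma_a^{(i)}$ and $\gamma_b^{(j)}$ with $i\neq j$, there is an explicit homotopy between $\gamma_a^{(i)}\ast\gamma_b^{(j)}$ and $\gamma_b^{(j)}\ast\gamma_a^{(i)}$ obtained by continuously interpolating the relative timing of the two moves; since the two loops modify disjoint coordinates, the image never meets $\Delta$ and the homotopy stays inside $\symg$. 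The case $i=j$ reduces to the previous one by conjugating with an elementary loop that temporarily swaps coordinate $i$ with an unused coordinate.

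The main obstacle is making rigorous the ``one-coordinate-at-a-time'' decomposition: this requires careful handling of the lifted path when the monodromy permutation is nontrivial, and a verification that every auxiliary motion introduced during the rearrangement can be chosen to avoid $\Delta$ throughout the homotopy.
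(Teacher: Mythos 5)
Your overall strategy (decompose a loop into moves of one coordinate at a time, then commute moves with disjoint support by reparametrizing time) is the classical route to $\pi_1(\symg)\cong H_1(\Sigma;\Z)$, and the disjoint-index commutation is fine as a homotopy in $\symg$ -- note the paper itself gives no proof but defers to Ozsv\'{a}th--Szab\'{o}. However, there is a genuine gap, and it sits exactly where the nonabelianness has to die. You frame the whole argument as taking place away from the diagonal $\Delta$ (``keep all auxiliary motions off $\Delta$'', ``the image never meets $\Delta$ and the homotopy stays inside $\symg$''). That cannot work: the complement $\symg\setminus\Delta$ is the unordered configuration space, whose fundamental group is the surface braid group, which is \emph{not} abelian. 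Any correct proof must use homotopies that cross $\Delta$ in an essential way. The key fact that makes this possible is that the local model $\mbox{\rm Sym}^2(\C)\cong\C^2$ is smooth and simply connected, so a loop in which two points swap positions along an arc (supported in a disc neighborhood of that arc) is null-homotopic in $\symg$ -- a homotopy that necessarily passes through $\Delta$. You never invoke this, and without it two of your steps are unsupported: (i) the claim that the terminal permutation can be corrected by ``transpositions that are themselves homotopic to elementary loops'' is false in the diagonal complement (there a transposition is a nontrivial braid), and only becomes true -- indeed, swaps become null-homotopic -- after crossing $\Delta$; (ii) the same-index case, where you must show $\gamma^{(i)}_a$ and $\gamma^{(i)}_b$ commute even when $a,b$ do not commute in $\pi_1(\Sigma,p_i)$, is handled by ``conjugating with an elementary loop that temporarily swaps coordinate $i$ with an unused coordinate''. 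As stated this is not a well-defined move: an elementary loop displaces a single coordinate and cannot effect a swap and return to the basepoint; the object you need is a genuine swap loop (moving two coordinates), together with the fact that it is null-homotopic in $\symg$, so that $\gamma^{(i)}_b$ can be transported to a loop moving coordinate $j\neq i$ and the disjoint-support argument applied. Incidentally, in the disjoint-index case your assertion that the interpolating homotopy misses $\Delta$ can fail (the arcs $a$ and $b$ may intersect each other or the stationary points), but that is harmless since the homotopy is valid in $\symg$ regardless; the real issue is that avoiding $\Delta$ is not a virtue here but an obstruction, and the proof must be reorganized around the simple connectivity of $\mbox{\rm Sym}^2$ of a disc.
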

Given a closed curve $\gamma\subset\symg$ in general
position (i.e.~not meeting the diagonal of $\symg$), we can 
lift this curve to 
\[
  (\gamma_1,\dots,\gamma_g)\co \sone\lra\Sigma^{\times g}.
\]
Projection onto each factor $\Sigma$ defines a $1$-cycle. We define
\[
  \Phi(\gamma)=\gamma_1+\dots+\gamma_g.
\]
\begin{lem}[see \cite{OsZa01}]\label{fundtwo} 
The map $\Phi$ induces an isomorphism 
\[\Phi_*\co H_1(\symg)\lra H_1(\Sigma;\Z).\]
\end{lem}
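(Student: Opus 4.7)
The plan is to exhibit an explicit inverse and check the two compositions. Fix basepoints $p_2,\ldots,p_g\in\Sigma$ in general position and define a candidate inverse
\[
\Psi\co H_1(\Sigma;\Z)\lra H_1(\symg),\quad [\sigma]\lmt[\pi(\sigma\times\{p_2\}\times\cdots\times\{p_g\})].
\]
Before checking compositions I would verify well-definedness of both maps. For $\Psi$ this is immediate from functoriality of homology and path-connectedness of $\Sigma$ (changing basepoints changes the cycle by a boundary). For $\Phi$, the lift $(\gamma_1,\ldots,\gamma_g)\co\sone\to\Sigma^{\times g}$ of a cycle $\gamma\subset\symg$ in general position is a path that closes up only after a permutation of the entries; the corresponding boundary contributions cancel in the sum $\gamma_1+\cdots+\gamma_g$, so the result is a genuine $1$-cycle in $\Sigma$. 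Any two lifts differ by an element of $S_g$ acting on factors, and this does not change the sum; homotopies and boundaries lift accordingly, so $\Phi$ descends to homology.

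The composition $\Phi\circ\Psi$ is straightforward: a representative of $\Psi([\sigma])$ lifts obviously as $\sigma$ in the first factor and the constant paths at $p_i$ in the others, so $\Phi\circ\Psi([\sigma])=[\sigma]+0+\cdots+0=[\sigma]$.

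The main content is $\Psi\circ\Phi=\id$. Here I would use Lemma \ref{fundone} to switch to $\pi_1$: since $\pi_1(\symg)$ is abelian, $H_1(\symg)=\pi_1(\symg)$ and it suffices to argue in based-loop language. Given a based loop $\gamma\subset\symg$ with lift $(\gamma_1,\ldots,\gamma_g)$, one can homotope $\gamma$ inside $\symg$ to a loop that moves only one coordinate at a time, by interpolating the $g$ component paths sequentially (keeping the remaining coordinates pinned at the basepoints); this uses only that $\symg$ is the quotient of the product. The resulting loop is precisely the concatenation $\prod_i j_*(\gamma_i)$, where $j\co\Sigma\hookrightarrow\symg$ is the inclusion-with-basepoints used to define $\Psi$. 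In the abelian group $\pi_1(\symg)=H_1(\symg)$ this product equals $\sum_i j_*[\gamma_i]=\Psi(\Phi([\gamma]))$.

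The subtle point I expect to be the main obstacle is that the components $\gamma_i$ are not individually closed loops in $\Sigma$ (their endpoints are permuted by the monodromy of the lift), so the one-coordinate-at-a-time concatenation is \emph{a priori} only a concatenation of paths, not of loops. I would handle this by first connecting the endpoints through the fixed basepoints via auxiliary paths that cancel pairwise in the concatenation, or equivalently by replacing $\gamma$ with an appropriate power so that the monodromy permutation becomes trivial and then dividing by that integer at the end (permissible since $H_1(\Sigma;\Z)$ and $H_1(\symg;\Z)$ are finitely generated abelian groups and the argument respects multiplication by any positive integer).
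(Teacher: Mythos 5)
Note first that the paper does not prove this lemma at all --- it is quoted from \cite{OsZa01} --- so there is no in-paper proof to compare with; your route (inverse induced by the slice inclusion $j\co x\lmt\{x,p_2,\dots,p_g\}$, Lemma \ref{fundone} to identify $H_1(\symg)$ with $\pi_1(\symg)$, and the move-one-coordinate-at-a-time homotopy) is the standard one, and the computation $\Phi\circ\Psi=\id$ is indeed unproblematic. The genuine gap sits in your final paragraph, where $\Psi\circ\Phi=\id$ is supposed to be completed. Your second fix --- replace $\gamma$ by a power so that the monodromy permutation becomes trivial and then ``divide by that integer at the end'' --- is not permissible: from $n\bigl(\Psi\Phi([\gamma])-[\gamma]\bigr)=0$ you may conclude $\Psi\Phi([\gamma])=[\gamma]$ only if $H_1(\symg)$ is torsion free, and torsion freeness of $H_1(\symg)$ is not available at this stage; it is part of what the lemma asserts, so the appeal to ``finitely generated abelian groups'' is circular. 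All the power trick yields is that $\im(\Psi)$ has finite index, i.e.\ that $\Phi_*$ is an isomorphism modulo torsion.

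Your first fix is the right idea but is stated too loosely, and the preceding sentence ``the resulting loop is precisely the concatenation $\prod_i j_*(\gamma_i)$'' is not literally true: in the one-coordinate-at-a-time decomposition the stationary coordinates are parked at the points of the configuration $\gamma(0)$ (and are permuted as one passes from one piece to the next), not at the chosen basepoints $p_2,\dots,p_g$, and the pieces $\gamma_i$ are paths, not loops; so the pieces lie in varying slices $\{x\}\cup D$ rather than in the image of $j$. This can be repaired, but the repair is where the content lies: place $p_1,\dots,p_g$ and all auxiliary closing arcs inside a small disc $D_0\subset\Sigma$, observe that all the correction terms are $1$-chains supported in $\mbox{\rm Sym}^g(D_0)$, which has vanishing $H_1$, and use that any two slice inclusions $x\lmt\{x\}\cup D$ are freely homotopic and hence induce the same map on $H_1$. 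A cleaner organization avoids most of this bookkeeping: since you already have $\Phi_*\circ\Psi=\id$, the identity $\Psi\Phi_*(\Psi(c))=\Psi(c)$ is formal, so it suffices to show that $\Psi$ is surjective; that is exactly what the one-coordinate-at-a-time argument (with the disc trick handling the monodromy permutation) gives, without having to identify the outcome with $\Psi\Phi_*([\gamma])$ on the nose. Finally, a smaller but real gap: the well-definedness of $\Phi_*$ via ``homotopies and boundaries lift accordingly'' is too quick, because a homotopy between general-position cycles will in general cross the diagonal and therefore does not lift to $\Sigma^{\times g}$; one needs the branched pull-back construction (exactly as in diagram (\ref{wdisc}) for Whitney discs) or an argument at the level of $\pi_1$ using Lemma \ref{fundone}.
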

By surgery theory (see \cite{GoSt}, p.\! 111) we know that
\begin{equation}
\frac{H_1(\Sigma;\Z)}
{[\alpha_1],\ldots,[\alpha_g],
[\beta_1],\ldots,[\beta_g]}
\cong H_1(Y;\Z)\label{hom01}
\end{equation}
The curve $\gamma_{xy}$
is homotopically trivial in the symmetric product if and only
if $\Phi_*([\gamma_{xy}])$ is trivial. If we pick different curves 
$a$ and $b$ to define another curve $\eta_{xy}$, the difference
\[
  \Phi(\gamma_{xy})-\Phi(\eta_{xy})
\]
is a sum of $\alpha$-and $\beta$-curves. Thus, interpreted as a
cycle in $H_1(Y;\Z)$, the class
\[
  [\Phi(\gamma_{xy})]\in H_1(Y;\Z)
\]
does not depend on the choices made in its definition. 
We get a map
\begin{equation*}
\begin{array}{rccl}
  \epsilon\co&(\talpha\cap\tbeta)^{\times 2}&\lra& H_1(Y;\Z)\\
             &(x,y)&\lmt & [\Phi(\gamma_{xy})]_{H_1(Y;\Z)}
\end{array}
\end{equation*}
with the following property.
\begin{lem} If $\epsilon(x,y)$ is non-zero the set $\pitwo(x,y)$ is
empty.
\end{lem}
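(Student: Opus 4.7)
The plan is to prove the contrapositive: if $\pitwo(x,y)$ is non-empty, then $\epsilon(x,y) = 0 \in H_1(Y;\Z)$. The strategy is to use a Whitney disc to produce a null-homotopy of the curve $\gamma_{xy}$ in $\symg$, and then to transport this vanishing statement to $H_1(\Sigma;\Z)$ via Lemma \ref{fundtwo}, where it is strictly stronger than vanishing in the quotient $H_1(Y;\Z)$.

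Concretely, I would fix $\phi \in \pitwo(x,y)$ and observe that the boundary of $\phi$ supplies a specific pair of paths $a \co I \to \talpha$ and $b \co I \to \tbeta$ with $\partial a = y - x$ and $-\partial b = y - x$, giving a closed curve $\gamma_{xy} = b + a$ in $\symg$ based at $x$. The disc $\phi$ itself is a null-homotopy of $\gamma_{xy}$ in $\symg$, so $[\gamma_{xy}] = 0 \in \pi_1(\symg)$. Because $\pi_1(\symg)$ is abelian by Lemma \ref{fundone}, the Hurewicz map is an isomorphism, so the homology class $[\gamma_{xy}] \in H_1(\symg)$ is also zero. Applying the isomorphism $\Phi_*$ of Lemma \ref{fundtwo} yields $\Phi(\gamma_{xy}) = 0$ in $H_1(\Sigma;\Z)$.

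This is already a stronger statement than what is needed. Indeed, under the surgery-theoretic identification (\ref{hom01}), passing from $H_1(\Sigma;\Z)$ to $H_1(Y;\Z)$ is a quotient by $[\alpha_1],\dots,[\alpha_g],[\beta_1],\dots,[\beta_g]$. Hence $\Phi(\gamma_{xy}) = 0$ in $H_1(\Sigma;\Z)$ implies $[\Phi(\gamma_{xy})] = 0$ in $H_1(Y;\Z)$, which is exactly $\epsilon(x,y) = 0$. Note that the earlier remark that $\epsilon(x,y)$ is independent of the choices of $a$ and $b$ is used only to see that the particular curve arising from $\partial\phi$ computes $\epsilon(x,y)$; without that well-definedness one would have to exhibit $\gamma_{xy}$ from $\phi$ as in the preceding paragraph and argue directly.

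There is no real obstacle to speak of: the proof is essentially a chain of definition-chasing combined with the two lemmas on $\pi_1(\symg)$ and $H_1(\symg)$. The only subtlety worth attention is the initial identification of the curve coming from $\partial\phi$ with a valid choice of $\gamma_{xy}$ in the definition of $\epsilon$, together with the fact that since $\epsilon$ is choice-independent, any such representative computes it.
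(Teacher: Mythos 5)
Your proof is correct and follows essentially the same route as the paper: use the disc $\phi$ to see that $\gamma_{xy}$ is null-homotopic, hence null-homologous in $\symg$, push through $\Phi_*$ and the quotient to $H_1(Y;\Z)$ to get $\epsilon(x,y)=0$. The only cosmetic difference is that you invoke abelianness of $\pi_1(\symg)$ and Hurewicz, whereas null-homotopic loops are null-homologous without that; the paper's proof is the same computation written more tersely.
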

\begin{proof} Suppose there is a connecting disc $\phi$ then
with $\gamma_{xy}=\partial(\phi(\D^2))$ we have
\[
  \epsilon(x,y)=
  [\Phi(\gamma_{xy})]_{H_1(Y;\Z)}
  =
  \frac{
  \Phi_*([\gamma_{xy}]_{H_1({\tiny \symg})})
  }
  {[\alpha_1],\ldots,[\alpha_g],
[\beta_1],\ldots,\beta_g]}
  =0
\]
since $[\gamma_{xy}]_{\pi_1({\tiny \symg})}=0$.
\end{proof}
As a consequence we can split up the chain complex 
$\cfhat(\Sigma,\alpha,\beta,z)$ into subcomplexes. It is
important to notice that there is a map
\begin{equation}
  s_z\co
  \talpha\cap\tbeta
  \lra
  \mbox{\rm Spin}^c_3(Y)
  \cong H^2(Y;\Z),\label{szmap}
\end{equation}
such that $\mbox{\rm PD}(\epsilon(x,y))=s_z(x)-s_z(y)$. We point the reader 
interested in the definition of $s_z$ to \cite{OsZa01}. Thus, fixing 
a $\spinc$-structure $s$, the $\Z$-module (or $\ztwo$-module) 
$\cfhat(\Sigma,\alpha,\beta,z;s)$ generated by $(s_z)^{-1}(s)$
defines a subcomplex of $\cfhat(\Sigma,\alpha,\beta,z)$. The
associated homology is denoted by $\hfhat(Y,s)$, and it is a submodule
of $\hfhat(Y)$. Especially note that
\[
  \hfhat(Y)=\bigoplus_{s\in{\tiny\spiny}(Y)}\hfhat(Y,s).
\]
Since $\talpha\cap\tbeta$ consists of finitely many
points, there are just finitely many groups in this splitting
which are non-zero. In general this splitting will depend on
the choice of base-point. If $z$ is chosen in a different
component of $\Sigma\backslash\{\alpha\cup\beta\}$ there will
be a difference between the $\spinc$-structure associated to
an intersection point. For details we point 
to \cite{OsZa01}.
\begin{example} The Heegaard diagram given by the 
data $(T^2,\{\mu\},\{\lambda\})$ (cf.~\S\ref{heegdiag}) is 
the $3$-sphere. 
To make use of Lemma \ref{howsphere} we add two stabilizations
to get a Heegaard surface of genus $3$, i.e.
\[
  D=(T^2\#T^2\#T^2,\{\mu_1,\mu_2,\mu_3\},\{\lambda_1,\lambda_2,\lambda_3\}),
\]
where $\mu_i$ are meridians of the tori, and $\lambda_i$ are longitudes. 
The complement of the attaching curves is connected. Thus, we can arbitrarily
choose the base point $z$. The chain complex
$\cfhat(D,z)$ equals one copy of $\Z$ since
it is generated by one single intersection point which we denote by
$x$. We claim that $\parhat_z x=0$. Denote by $[\phi]$ a
homotopy class of Whitney discs connecting $x$ with itself. This 
is a holomorphic sphere which can be seen with Lemma \ref{holspheres},
Lemma \ref{kernz} and the fact that $H^1(\sthree)=0$. By Lemma \ref{howsphere} the set $\pitwo(\symg)$ 
is generated by $S$ with the property $n_z(S)=1$. The additivity
of $n_z$ under concatenation shows that $[\phi]$ is a trivial holomorphic
sphere and $\mu([\phi])=0$. Thus, the space 
$\mathcal{M}(x,x)^1_{n_z=0}$, i.e.~the space of holomorphic Whitney discs connecting $x$ with itself, with
$\mu=1$ and $n_z=0$, is empty.
Hence
\[
  \hfhat(\sthree)\cong\Z.
\]
\end{example}
\subsubsection{A Low-Dimensional Model for Whitney Discs}
The exact sequence in Lemma \ref{holspheres} combined with
Lemma \ref{kernz} and $(\ref{pbident})$ gives an interpretation 
of Whitney discs as homology classes. Given a disc $\phi$, we 
define its associated homology class by $\homology(\phi)$, i.e.
\begin{equation}
  0\lra
  \pitwo(\symg)
  \lra\pitwo(x,x)
  \overset{\mathcal{H}}{\lra}
  H_2(Y;\Z)
  \lra 0.\label{nice}
\end{equation}
In the following we intend to give a description of the map
$\mathcal{H}$. Given a Whitney disc $\phi$, we can lift this
disc to a map $\phitilde$ by pulling back the branched covering
$\pi$ (cf.~diagram (\ref{wdisc})).
\begin{equation}
\begin{diagram}[size=1.5em,labelstyle=\scriptstyle]
F/S_{g-1}=& \Dhat& & \rTo^{\phibar} & &\Sigma\times\symgmo &\rTo &\Sigma\\
  & \uTo & & & &\uTo & &\\
\phi^*\Sigma^{\times g}=&F & & \rTo^\phitilde & & \Sigma^{\times g} & &\\
&\dTo  & &      & & \dTo^\pi & &\\
&\D^2  & & \rTo^\phi & & \symg & &
\end{diagram}\label{wdisc}
\end{equation}
Let $S_{g-1}\subset S_g$ be the subgroup of permutations fixing the
first component. Modding out $S_{g-1}$ we obtain the map $\phibar$
pictured in (\ref{wdisc}). Composing it with the projection onto
the surface $\Sigma$ we define a map
\[
  \phihat\co\Dhat\lra\Sigma.
\]
The image of this map $\phihat$ defines what is called a domain.
\begin{definition}
Denote by $\dom_1,\ldots, \dom_m$ the closures of 
the components of the complement of the attaching circles
$\Sigma\backslash\{\alpha\cup\beta\}$. 
Fix one point $z_i$ in each component. A \textbf{domain} 
is a linear combination
\[
  \mathcal{A}=\sum_{i=1}^m\lambda_i\cdot\dom_i
\] 
with $\lambda_1,\ldots,\lambda_m\in\Z$.
\end{definition}
For a Whitney disc $\phi$ we define its {\bf associated domain} by
\[
  \dom(\phi)=\sum_{i=1}^m n_{z_i}(\phi)\cdot\dom_i.
\]
The map $\phihat$ and $\dom(\phi)$ are related by the equation
\[
  \phihat(\Dhat)=\dom(\phi)
\]
as chains in $\Sigma$ relative to the set $\alpha\cup\beta$. We 
define $\homology(\phi)$ as the associated homology
class of $\phihat_*[\Dhat]$ in $H_2(Y;\Z)$. The correspondence is
given by closing up the boundary components by using the core discs of
the $2$-handles represented by the $\alpha$-curves and the
$\beta$-curves.
\begin{lem} Two Whitney discs $\phi_1,\phi_2\in\pitwo(x,x)$ are 
homotopic if and only if their domains are equal.
\end{lem}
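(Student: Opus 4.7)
One direction is immediate. If $\phi_1$ is homotopic to $\phi_2$ as Whitney discs, then for each region $\dom_i$ the intersection number $n_{z_i}(\phi_j)$ with the complex submanifold $V_{z_i}=\{z_i\}\times\symgmo$ is a homotopy invariant. Hence the coefficients $n_{z_1}(\phi_1),\dots,n_{z_m}(\phi_1)$ and $n_{z_1}(\phi_2),\dots,n_{z_m}(\phi_2)$ agree, so $\dom(\phi_1)=\dom(\phi_2)$.

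For the converse I will reduce to showing that a Whitney disc with vanishing domain is nullhomotopic. Given $\phi_1,\phi_2\in\pitwo(x,x)$ with $\dom(\phi_1)=\dom(\phi_2)$, I form the concatenation $\phi:=\phi_1*\overline{\phi_2}\in\pitwo(x,x)$ using the group structure from $(\ref{pbident})$. Since $n_{z_i}$ is additive under concatenation and changes sign under reversal, $\dom(\phi)=\dom(\phi_1)-\dom(\phi_2)=0$, so $n_{z_i}(\phi)=0$ for every region.

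It remains to prove: if $\phi\in\pitwo(x,x)$ has $\dom(\phi)=0$, then $[\phi]=0$. From $n_z(\phi)=0$ and the $n_z$-splitting in Lemma~\ref{holspheres}, the class $[\phi]$ lies in $\ker(n_z)\subset\pitwo(x,x)$. The exact sequence $(\ref{nice})$ combined with the splitting gives an isomorphism $\mathcal{H}|_{\ker(n_z)}\co \ker(n_z)\lra H_2(Y;\Z)$ (this also matches Lemma~\ref{kernz} via Poincar\'e duality). Now, by construction of $\mathcal{H}$, its value $\mathcal{H}(\phi)$ is represented by closing up the relative $2$-chain $\phihat_*[\Dhat]=\dom(\phi)$ on the $\alpha$- and $\beta$-core discs of the handlebodies. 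Since $\dom(\phi)=0$, this produces the zero class in $H_2(Y;\Z)$. Injectivity of $\mathcal{H}|_{\ker(n_z)}$ forces $[\phi]=0$, and hence $\phi_1$ is homotopic to $\phi_2$.

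The only nontrivial step is the identification of the domain of $\phi$ with the relative $2$-chain $\phihat_*[\Dhat]$, which was recorded immediately after the definition of $\dom(\phi)$, together with the injectivity of $\mathcal{H}$ on $\ker(n_z)$. This injectivity is not really a separate obstacle: it is built into the exact sequence $(\ref{nice})$ and Lemma~\ref{kernz}, so once one believes the lifting diagram $(\ref{wdisc})$ and the naturality of $n_z$ under concatenation, the argument is essentially formal.
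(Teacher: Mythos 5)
Your proposal is correct and follows essentially the same route as the paper: both arguments rest on the split exact sequence $(\ref{nice})$, the additivity of the intersection numbers $n_{z_i}$, and the fact that equal domains force equal values of $n_z$ and equal classes under $\mathcal{H}$. The paper merely organizes it as writing $\phi_1=\phi_2+k\cdot S$ and using $n_z(S)\neq 0$ to kill $k$, which is the same computation as your vanishing of the difference class in both factors of the splitting.
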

\begin{proof} Given two discs $\phi_1$, $\phi_2$ whose domains are equal, by definition
$\homology(\phi_1)=\homology(\phi_2)$. By $(\ref{nice})$ they
can only differ by a holomorphic sphere, i.e.~$\phi_1=\phi_2+k\cdot S$.
The equality $\dom(\phi_1)=\dom(\phi_2)$ implies 
that $n_z(\phi_1)=n_z(\phi_2)$. The equation 
\[
  0=n_z(\phi_2)-n_z(\phi_1)=n_z(\phi_2)-n_z(\phi_2+k\cdot S)=2k
\]
forces $k$ to vanish.
\end{proof}
The interpretation of Whitney discs as domains is very useful in
computations, as it provides a low-dimensional model. The symmetric
product is $2g$-dimensional, thus an investigation of holomorphic
discs is very inconvenient. However, not all domains are carried
by holomorphic discs. Obviously, the equality 
$[\dom(\phi)	]=\phihat_*[\Dhat]$ connects the boundary conditions
imposed on Whitney discs to boundary conditions of the domains.
It is not hard to observe that the definition of $\phihat$ follows
the same lines as the construction of the isomorphism $\Phi_*$ of
homology groups discussed earlier (cf.~Lemma \ref{fundtwo}). Suppose
we have fixed two intersections $x=\{x_1,\dots,x_g\}$ and 
$y=\{y_1,\dots,y_g\}$ connected by a Whitney disc $\phi$. The boundary
$\partial(\phi(\D^2))$ defines a connecting curve $\gamma_{xy}$. It is easy
to see that
\[
  \mbox{\rm im}(\left.\phihat\right|_{\partial\Dhat})
  =
  \Phi(\gamma_{xy})=\gamma_1+\dots+\gamma_g.
\]
Restricting the $\gamma_i$ to the $\alpha$-curves we get a chain
connecting the set $x_1,\dots,x_g$ with $y_1,\dots,y_g$, and restricting
the $\gamma_i$ to the $\beta$-curves we get a chain connecting
the set $y_1,\dots,y_g$ with $x_1,\dots,x_g$. This means each boundary
component of $\Dhat$ consists of a set of arcs alternating
 through $\alpha$-curves and $\beta$-curves.
\begin{definition} A domain is called {\bf periodic} if its boundary
is a sum of $\alpha$-and $\beta$-curves and $n_z(\dom)=0$, i.e.~the
multiplicity of $\dom$ at the domain $\dom_z$ containing $z$ vanishes. 
\end{definition}
Of course a Whitney disc is called {\bf periodic} if its associated
domain is a periodic domain. The subgroup of periodic classes in
 $\pitwo(x,x)$ is denoted by $\Pi_x$.
\begin{theorem}[see \cite{OsZa01}]\label{calcdim} 
For a $\spinc$-structure $s$
and a periodic class $\phi\in\Pi_x$ we have the equality
\[
  \mu(\phi)=\left<c_1(s),\homology(\phi)\right>.
\]
\end{theorem}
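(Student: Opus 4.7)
The plan is to exploit the additive structure of both sides and then carry out an index-theoretic matching argument.

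First, I would observe that both $\mu$ and the map $\phi \mapsto \langle c_1(s), \homology(\phi)\rangle$ are group homomorphisms from the abelian group $\Pi_x$ of periodic classes into $\Z$. The Maslov index is additive under concatenation of Whitney discs, since Fredholm indices add under the corresponding gluing of $\bar\partial$-operators. The Chern pairing is additive because $\homology : \Pi_x \to H_2(Y;\Z)$ is a homomorphism by construction (concatenation of periodic discs corresponds to sum of capped-off 2-cycles). Combining the exact sequence of Lemma \ref{holspheres} with Lemma \ref{kernz} and Poincar\'e duality identifies $\Pi_x$ with $H_2(Y;\Z)$ through $\homology$, so it suffices to verify the asserted equality on a convenient generating set.

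Next, I would analyze $\mu(\phi)$ via the Cauchy--Riemann operator on $\phi^* T\symg$ with totally real boundary conditions along $T\talpha$ and $T\tbeta$. Because $\phi \in \Pi_x$ starts and ends at the same intersection point $x$, the boundary trajectory is a loop in $\talpha \cup \tbeta$ based at $x$, and the resulting family of totally real subspaces along this loop is classified by a Maslov-type index. Trivializing $\phi^* T\symg$ over the contractible disc and reading off the winding of the totally real loop expresses $\mu(\phi)$ as the evaluation of a Chern class on a closed oriented surface $\widehat\phi$ obtained from $\phi$ by attaching the core discs of the 2-handles (i.e.~precisely the capping 2-chains used to realize $\homology(\phi)$ as an element of $H_2(Y;\Z)$). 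Since $n_z(\phi) = 0$, no holomorphic sphere correction appears in this identification.

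Finally, I would match this Chern class with $\langle c_1(s_z(x)), \homology(\phi)\rangle$. This uses the description of $s_z$ via vector fields on $Y$: the intersection point $x$ singles out gradient trajectories of a Morse function compatible with $(\Sigma,\alpha,\beta)$, and the homotopy class of the resulting nonvanishing vector field determines $s_z(x)$. The restriction of $T\symg$ to $\talpha \cup \tbeta$, combined with the capping data from the 2-handles of the Heegaard decomposition, translates into exactly the obstruction class defining $c_1(s_z(x))$. The main obstacle is carrying out this last translation rigorously: reconciling the symplectic data on $\symg$ (tangent bundle restricted to the Lagrangian tori $\talpha, \tbeta$) with the three-dimensional topological data on $Y$ (vector field underlying $s_z(x)$) is delicate, and constitutes the technical core of the argument given in \cite{OsZa01}.
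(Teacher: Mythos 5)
First, note that the paper does not actually prove Theorem \ref{calcdim}: it is quoted from \cite{OsZa01} ("a deep result\dots"), so there is no in-paper argument to measure your proposal against. Judged on its own, your write-up is an outline of a plausible strategy rather than a proof, and the gap sits exactly where the theorem lives. Your first two steps are fine in spirit: $\mu$ is additive under concatenation, $\homology$ is a homomorphism, and via the sequence (\ref{nice}) together with $n_z=0$ the group $\Pi_x$ is identified with (a subgroup of) $H_2(Y;\Z)$; likewise, expressing $\mu(\phi)$ as a Maslov-type index of the loop of totally real boundary conditions in a trivialization of $\phi^*T\symg$, and then as a relative Chern number of the capped-off surface, is the standard index-theoretic picture. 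But the third step --- showing that this Chern-number is computed by $c_1$ of the \emph{specific} $\spinc$-structure $s_z(x)$ (note the statement is only true for $s=s_z(x)$; for an arbitrary $s$ the two sides differ by $\langle s-s_z(x),\homology(\phi)\rangle$-type terms) --- is precisely the content of the theorem, and you explicitly defer it to \cite{OsZa01} ("constitutes the technical core of the argument given in \cite{OsZa01}"). Deferring the core identification to the reference means the proposal does not establish the statement; it reformulates it.

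Two smaller points. The reduction "it suffices to verify the equality on a convenient generating set" is never used: you do not exhibit generators of $\Pi_x$ on which either side is computed, so the additivity discussion, while correct, does no work. And the passage from the winding of the totally real loop to a Chern class of a capped surface needs care about which capping is used: the closing-up is done with the core discs of the handles determined by the $\alpha$- and $\beta$-curves, and one must check that the resulting complex (or $\spinc$) structure on the capped object is the one entering the vector-field definition of $s_z(x)$; this bookkeeping is exactly the delicate comparison you set aside. If you want a complete argument you will need to either reproduce that comparison (as in \cite{OsZa01}) or find an independent route, e.g.\ a direct formula computing $\mu$ of a periodic class from the combinatorics of its domain, which again requires proof.
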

This is a deep result connecting the expected dimension of a periodic disc
with a topological property. Note that, because of the additivity of the
expected dimension $\mu$, the homology groups $\hfhat(Y,s)$ can be endowed with
a relative grading defined by
\[
  \grading(x,y)=\mu(\phi)-2\cdot n_z(\phi),
\]
where $\phi$ is an arbitrary element of $\pitwo(x,y)$. In
the case of homology spheres this defines a relative 
$\Z$-grading because by Theorem \ref{calcdim} the expected dimension
vanishes for all periodic discs. In case of non-trivial homology 
they just vanish modulo $\delta(s)$, where 
\[
  \delta(s)
  =
  \underset{A\in H_2(Y;\Z)}{\mbox{\rm gcd}}
  \left<c_1(s),A\right>,
\]
i.e.~it defines a relative $\Z_{\delta(s)}$-grading. 
\begin{definition}\label{admissintro} A pointed Heegaard 
diagram $(\Sigma,\alpha,\beta,z)$ is called 
{\bf weakly admissible} for the $\spinc$-structure $s$ if 
for every non-trivial periodic domain $\dom$ such that
$\left<c_1(s),\homology(\dom)\right>=0$ the domain has positive 
and negative coefficients.
\end{definition}
With this technical condition imposed the $\parhat_z$ is a 
well-defined map on the subcomplex $\cfhat(\Sigma,\alpha,\beta,s)$. 
From admissibility it follows that for every 
$x,y\in (s_z)^{-1}(s)$ and $j,k\in\Z$ there exists just a 
finite number of $\phi\in\pitwo(x,y)$ with
$\mu(\phi)=j$, $n_z(\phi)=k$ and $\dom(\phi)\geq 0$. The last
condition means that all coefficients in the associated domain
are greater or equal to zero. 
\begin{proof}[Proof of Theorem \ref{wdefined} for $b_1(Y)\not=0$]
Recall that holomorphic discs are either contained in a complex 
submanifold $C$ or they intersect $C$ always transversely and 
always positive. The definition of the 
path $(\com_s)_{s\in[0,1]}$ (cf.~\S\ref{structmoduli}) includes that
all the $\{z_i\}\times\symgmo$ are complex submanifolds. Thus, holomorphic 
Whitney discs always satisfy $\dom(\phi)\geq 0$.
\end{proof}
We close this paragraph with a statement that appears to be useful
for developing intuition for Whitney discs. It helps imagining the
strong connection between the discs and their associated domains.
\begin{theorem}[see \cite{OsZa01}] Consider a domain 
$\dom$ whose coefficients are all greater than or equal to zero. There 
exists an oriented $2$-manifold $S$ with boundary and a map 
$\phi\co S\lra\Sigma$ with $\phi(S)=\dom$ with the property 
that $\phi$ is nowhere orientation-reversing and the restriction 
of $\phi$ to each boundary component of $S$ is a diffeomorphism 
onto its image.
\end{theorem}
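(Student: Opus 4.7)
The plan is to construct $S$ by gluing together, along their boundary arcs, $\lambda_i$ disjoint oriented copies of the closure of each region $\dom_i$. Writing $\dom=\sum_{i=1}^m \lambda_i\dom_i$ with $\lambda_i\geq 0$, I form the disjoint union
\[
S_0 \;=\; \bigsqcup_{i=1}^m\bigsqcup_{k=1}^{\lambda_i}\dom_i^{(k)},
\]
where each $\dom_i^{(k)}$ is an oriented copy of $\dom_i$, equipped with the tautological orientation-preserving map $\phi_0\co S_0\lra\Sigma$ sending $\dom_i^{(k)}$ identically onto $\dom_i\subset\Sigma$. By construction $\phi_0$ is nowhere orientation-reversing and $\phi_{0,*}[S_0]=\dom$.

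Next I would glue pieces of $S_0$ together. A boundary arc of some $\dom_i$ lies either in the open part of an $\alpha$- or $\beta$-curve (away from the intersection set $\alpha\cap\beta$) or has an endpoint at some $p\in\alpha\cap\beta$. For each open arc $a$ separating two regions $\dom_i,\dom_j$ with multiplicities $\lambda_i\leq\lambda_j$, I choose an arbitrary bijection between the $\lambda_i$ copies of $a$ appearing on the $\dom_i$-side and $\lambda_i$ of the copies of $a$ on the $\dom_j$-side, and identify them; the remaining $\lambda_j-\lambda_i$ copies are left unpaired and become part of $\partial S$. The resulting object is an oriented surface with corners, equipped with a map to $\Sigma$ that is still nowhere orientation-reversing and whose image, counted with multiplicity, is exactly $\dom$.

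The only delicate point is the treatment of corners. Near $p\in\alpha\cap\beta$ the four surrounding regions carry multiplicities $a,b,c,d$, and each preimage of $p$ in $S$ inherits an alternating cyclic sequence of as-yet unglued $\alpha$- and $\beta$-boundary arcs from the previous step. At each such copy I must pair these $\alpha$-arcs with $\beta$-arcs to form honest corners of $S$. The main obstacle, and the reason the theorem is non-trivial, is to arrange these local matchings so that the boundary components of the resulting $S$ each map injectively onto their image in $\alpha\cup\beta$, rather than doubling back on themselves. This is handled by a local fan-out: at every preimage of $p$, one orders the incoming $\alpha$- and $\beta$-arcs cyclically and pairs them so that the boundary components close up into embedded simple arcs.

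A clean way to verify that a globally consistent choice of corner matchings exists is by induction on $\sum_i\lambda_i$. If $\dom\neq 0$, positivity of all coefficients and the fact that $\partial\dom$ is supported on $\alpha\cup\beta$ lets one locate an outermost embedded sub-arc of $\partial\dom$ bounding a sub-domain $\dom'$ with $0\leq\dom'\leq\dom$ and $\sum_i\lambda'_i<\sum_i\lambda_i$; one realizes $\dom'$ by a surface $S'$ via the inductive hypothesis, realizes $\dom-\dom'$ by a surface $S''$ the same way, and glues them along the common boundary arc. Tracking orientations through the gluing shows that $\phi$ remains nowhere orientation-reversing, that $\phi(S)=\dom$, and that the injective boundary property is preserved, completing the construction.
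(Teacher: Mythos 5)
First, note that the paper you are working from does not prove this statement at all: it is quoted from \cite{OsZa01}, where the construction is organized not by gluing copies of the regions $\dom_i$, but by the level sets of the multiplicity function, $S_k=\overline{\{x\in\Sigma\,:\,\mathrm{mult}_x(\dom)\geq k\}}$, so your cut-and-paste scheme is in any case a different route. The trouble is that the decisive step of your route — the corner matching — is exactly the content of the theorem, and the ``local fan-out'' you invoke cannot supply it, because whether two boundary arcs of $S$ lying over the same arc of $\alpha\cup\beta$, or two corners lying over the same intersection point $p$, end up on the \emph{same} boundary component of $S$ is a global question, not a local one. Concretely, take a domain with all coefficients $0$ or $1$ whose support is a disc-like union of regions meeting some $p\in\alpha\cap\beta$ in two \emph{opposite} quadrants (the other two quadrants having coefficient $0$). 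In your construction there are then no choices anywhere: every shared arc is glued, the two sheets at $p$ cannot be joined there without destroying the surface structure, so $S$ is forced to be the normalization of the support, and its single boundary circle passes through $p$ twice — the restriction of $\phi$ to that component is not injective, hence not a diffeomorphism onto its image. No pairing rule at $p$ repairs this. In fact any surface produced by your scheme has $\phi(\partial S)\subset\alpha\cup\beta$, and for such ``diagonal corner'' configurations one is forced to let part of $\partial S$ map into the interior of the domain (e.g.\ cut the normalization along a properly embedded interior arc separating the two offending corners); this idea is absent from your proposal. The level-set decomposition, by contrast, guarantees for free that each arc of $(\alpha\cup\beta)$ minus the intersection points is covered at most once by $\partial S_k$, and isolates the diagonal-corner phenomenon as the only remaining issue.

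The fallback induction does not close this gap either. The existence of an ``outermost embedded sub-arc of $\partial\dom$ bounding a sub-domain $0\leq\dom'\leq\dom$'' is unjustified: for a nonnegative periodic domain $\partial\dom$ is a union of complete $\alpha$- and $\beta$-circles and there are no outermost arcs; on a positive-genus $\Sigma$ an embedded piece of $\partial\dom$ need not bound at all; and when it does bound, the bounded $2$-chain need not satisfy $\dom'\leq\dom$. Moreover, even granting the decomposition, the inductive hypothesis is not preserved by the final gluing: identifying $S'$ and $S''$ along a common boundary arc concatenates boundary components, and the resulting component can traverse some arc of $\alpha\cup\beta$, or pass through some intersection point, twice, even though each piece had embedded boundary beforehand. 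So on both routes the crucial injectivity property of the boundary components is asserted rather than proved, and in the first route it actually fails.
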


\subsection{The Structure of the Moduli Spaces}\label{structmoduli}
The material in this paragraph is presented without any details.
The exposition pictures the bird's eye view of the material. Recall 
from the last paragraphs that we have to choose a path of 
almost complex structures appropriately to define Heegaard 
Floer theory. So, a discussion of these structures is inevitable.
However, a lot of improvements have been made the last years and
we intend to mention some of them.\vspace{0.3cm}\\
Let $(j,\eta)$ be a K\"{a}hler structure on the Heegaard surface 
$\Sigma$, i.e.~$\eta$ is a symplectic form and $j$ an 
almost-complex structure that tames $\eta$. Let $z_1,\dots,z_m$ be points, one in each component
of $\Sigma\backslash\{\alpha\cup\beta\}$. Denote by $V$ an open
neighborhood in $\symg$ of
\[
  D\cup\bigl(\bigcup_{i=1}^m\{z_i\}\times\symgmo\bigr),
\]
where $D$ is the diagonal in $\symg$.
\begin{definition}\label{defnearlysym} An almost complex structure 
$J$ on $\symg$ is called $(j,\eta,V)$-{\bf nearly symmetric} if $J$ 
agrees with $sym^g(j)$ over $V$ and if $J$ tames 
$\pi_*(\eta^{\times g})$ over $\overline{V}^c$. The set of
 $(j,\eta,V)$-nearly symmetric almost-complex structures will be 
denoted by $\mathcal{J}(j,\eta,V)$.
\end{definition}
The almost complex structure $sym^g(j)$ on $\symg$ is the natural 
almost complex structure induced by the structure $j$. Important for 
us is that the structure $J$ agrees with $sym^g(j)$ on $V$. This makes the
$\{z_i\}\times\symgmo$ complex submanifolds with respect 
to $J$. This is necessary to guarantee positive intersections 
with Whitney discs. Without this property the proof of 
Theorem \ref{wdefined} would break down in the case the manifold 
has non-trivial topology. \vspace{0.3cm}\\
We are interested in holomorphic Whitney discs, i.e.~discs in the 
symmetric product which are solutions of (\ref{holomeq}). Denote by
 the $\riemop$ the Cauchy-Riemann type operator  defined by 
 equation (\ref{holomeq}). Define
$\banbund(x,y)$ as the space of Whitney discs connecting $x$ and $y$
such that the discs converge to $x$ and $y$ exponentially with respect
to some Sobolev space norm in a neighborhood of $i$ and $-i$ 
(see \cite{OsZa01}). With these assumptions the solution 
$\riemop\phi$ lies in a space of $L^p$-sections 
\[
  L^p([0,1]\times\R,\phi^*(T\symg)).
\]
These fit together to form a bundle $\mathcal{L}$ over the base
$\banbund(x,y)$.
\begin{theorem} The bundle $\mathcal{L}\lra\banbund(x,y)$ is a
Banach bundle.
\end{theorem}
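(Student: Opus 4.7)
The plan is to verify the two defining properties of a Banach bundle: first, that the base $\banbund(x,y)$ itself carries the structure of a Banach manifold; second, that $\mathcal{L}$ admits local trivializations by Banach spaces with smooth transition maps between overlapping charts.

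First I would set up the base as a Banach manifold. The space $\banbund(x,y)$ consists of continuous maps $\phi\co[0,1]\times\R\lra\symg$ with the prescribed Lagrangian boundary conditions on $\talpha,\tbeta$ and with exponential decay at the two punctures $\pm i$ (where $\phi$ approaches the constant maps $x$ and $y$). The standard device is to fix a smooth reference map $\phi_0\in\banbund(x,y)$ and use the exponential map of some Riemannian metric on $\symg$ to identify a neighborhood of $\phi_0$ with an open set in the weighted Sobolev space $W^{k,p}_\delta([0,1]\times\R,\phi_0^*T\symg)$ of sections with Lagrangian boundary values along $\talpha$ and $\tbeta$. Here the weight $\delta>0$ encodes the exponential decay at the punctures. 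Transition between two such charts centered at $\phi_0$ and $\phi_1$ is governed by geodesic reparametrization, which is smooth on the overlap, so $\banbund(x,y)$ is a smooth Banach manifold modelled on the weighted Sobolev spaces.

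Next I would build the bundle itself. Over a chart centered at $\phi_0$, the fibers are $L^p_\delta$-sections of the pulled-back tangent bundle, and we use parallel transport along the short geodesic connecting $\phi_0(s,t)$ to a nearby $\phi(s,t)$ to identify $L^p_\delta([0,1]\times\R,\phi^*T\symg)$ with the model fiber $L^p_\delta([0,1]\times\R,\phi_0^*T\symg)$. This parallel transport is pointwise linear and smooth, and standard Sobolev multiplication/composition estimates show that it defines a bounded linear isomorphism between the fibers that depends smoothly on the base point. Thus the local trivialization
\[
\bigsqcup_{\phi\text{ near }\phi_0} L^p_\delta([0,1]\times\R,\phi^*T\symg)\;\cong\;U_{\phi_0}\times L^p_\delta([0,1]\times\R,\phi_0^*T\symg)
\]
is a homeomorphism that is fiberwise linear.

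Finally I would check that transition maps between two such trivializations, built from parallel transport along different reference maps, are smooth. This reduces to the statement that composition of a fixed smooth bundle automorphism with an $L^p_\delta$-section yields a smooth map of Banach spaces, which is the content of the standard $C^\infty$-smoothness of the Nemytskii (composition) operator in weighted Sobolev settings, provided $kp>2$ so that $W^{k,p}$ embeds into continuous functions.

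The main obstacle I expect is bookkeeping for the exponential weights $\delta$ at the punctures: one must choose $\delta$ small enough to exclude nontrivial asymptotic operator eigenvalues but large enough to capture the honest decay rate, and one must verify that parallel transport and the exponential map preserve the weighted norms. Once the weights are fixed consistently, the argument is a routine adaptation of the Floer-theoretic Banach bundle construction (as carried out in detail in \cite{OsZa01}).
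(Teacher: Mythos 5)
Your outline is essentially correct, but note that the paper itself offers no proof of this statement: the theorem sits in \S\ref{structmoduli}, which is explicitly a bird's-eye survey, and the analytic details are deferred to \cite{OsZa01}. What you wrote is the standard Floer-theoretic construction that the cited reference carries out, so there is nothing to compare against in the paper beyond the bare statement; your proposal fills the omitted details in the expected way (exponential-map charts modelled on weighted Sobolev spaces for the base, parallel-transport trivializations with $L^p_\delta$ fibers, smoothness of transitions via composition operators).

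One technical point you should make explicit: for the chart map to land in $\banbund(x,y)$, the exponential map has to be compatible with the totally real boundary conditions, i.e.~it must send tangent vectors along $\talpha$ (resp.~$\tbeta$) at boundary points into $\talpha$ (resp.~$\tbeta$). This is arranged by choosing a metric on $\symg$ for which the tori are totally geodesic, or by using a suitably modified exponential map; without such a choice the "chart" built from an arbitrary Riemannian metric does not parametrize maps satisfying the boundary conditions. The parallel-transport trivialization of the fibers is unproblematic, since the fiber $L^p$-sections carry no boundary condition. With that adjustment, and your remark on fixing the weight $\delta$ below the spectral gap of the (nondegenerate, since $\talpha$ and $\tbeta$ intersect transversely) asymptotic operators, the argument is the routine one.
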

By construction the operator $\riemop$ is a section of that Banach
bundle. Let us define $\banbund_0\hookrightarrow\banbund(x,y)$ as the
zero section, then obviously
\[
  \moduli=(\riemop)^{-1}(\banbund_0).
\]
Recall from the Differential Topology of finite-dimensional manifolds
that if a smooth map intersects a submanifold transversely then its
preimage is a manifold. There is an analogous result in the 
infinite-dimensional theory. The generalization to infinite dimensions
requires an additional property to be imposed on the map. We will now define
this property.
\begin{definition} A map $f$ between Banach manifolds is called 
{\bf Fredholm} if for every point $p$ the differential $T_pf$ 
is a Fredholm operator, i.e.~has finite-dimensional kernel and 
cokernel. The difference $\dim\ker T_pf-\dim\mbox{\rm coker } T_pf$ is 
called the {\bf Fredholm index} of $f$ at $p$.
\end{definition}
Fortunately the operator $\riemop$ is an elliptic operator, and 
hence it is Fredholm for a generic choice of path 
$(\com_s)_{s\in[0,1]}$ of almost complex structures.
\begin{theorem}(see \cite{OsZa01})\label{modmani} For a dense set of 
paths $(\com_s)_{s\in[0,1]}$ of $(j,\eta,V)$-nearly symmetric  
almost complex structures the moduli spaces $\moduli$ are smooth 
manifolds for all $x,y\in\talpha\cap\tbeta$.
\end{theorem}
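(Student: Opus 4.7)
The plan is to prove this by the standard universal moduli space technique from Floer theory, adapted to the nearly symmetric setting. First I would enlarge the picture by letting the almost complex structure vary: let $\mathcal{P}$ denote a Banach manifold of paths $(\com_s)_{s\in[0,1]}$ of $(j,\eta,V)$-nearly symmetric almost complex structures (after passing to a suitable completion, e.g.\ in the $C^\varepsilon$ topology of Floer) and form the universal Banach bundle $\widetilde{\mathcal{L}} \to \banbund(x,y)\times \mathcal{P}$ whose fiber over $(\phi,\com_s)$ is the space of $L^p$-sections of $\phi^*(T\symg)$. The Cauchy-Riemann operator $\riemop$ now becomes a smooth section $\widetilde{F}(\phi,\com_s) = \partial_{\com_s}\phi$ of this bundle, and I would set
\[
  \mathcal{M}^{\text{univ}}(x,y) = \widetilde{F}^{-1}(\widetilde{\banbund}_0),
\]
the universal moduli space of all pairs consisting of a Whitney disc and a path for which the disc is holomorphic.

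The first and main step is to show that $\widetilde{F}$ is transverse to the zero section, so that $\mathcal{M}^{\text{univ}}(x,y)$ is a smooth Banach submanifold. At a zero $(\phi,\com_s)$, the linearization splits as $D_\phi\widetilde{F} + D_{\com_s}\widetilde{F}$, where $D_\phi\widetilde{F}$ is the usual (Fredholm) linearized $\overline\partial$-operator and $D_{\com_s}\widetilde{F}$ records the infinitesimal change of almost complex structure. Since $D_\phi\widetilde{F}$ has finite-dimensional cokernel, it suffices to show that the image of $D_{\com_s}\widetilde{F}$ fills up a complement of its image in $L^p$. Here one chooses a point $(s_0,t_0)\in[0,1]\times\R$ at which $\phi$ is an injective immersion whose image lies in the open region $\overline{V}^c$ where $J$ is allowed to be perturbed freely; such a point exists by a unique continuation / somewhere-injectivity argument, using that $\phi$ cannot be constant (since $x\neq y$ after reparametrization) and that $V$ was chosen as only a neighborhood of the diagonal and of the $\{z_i\}\times\symgmo$. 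Any element of the cokernel that is $L^2$-orthogonal to all the variations produced by compactly supported perturbations of $\com_s$ near $(s_0,t_0)$ must vanish near $\phi(s_0,t_0)$, and then unique continuation for the adjoint elliptic equation forces it to vanish everywhere. This gives surjectivity of $D\widetilde{F}$ at every zero.

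With the universal space a smooth Banach manifold, I would apply the Sard-Smale theorem to the projection $\pi\co\mathcal{M}^{\text{univ}}(x,y)\to\mathcal{P}$. The projection is a Fredholm map of the same index as $\riemop$ itself, so its set of regular values is residual (hence dense) in $\mathcal{P}$. Over a regular value $(\com_s)$, the preimage is exactly $\moduli$, which by the implicit function theorem in Banach spaces is a smooth finite-dimensional manifold of dimension equal to the Fredholm index. Finally I would take a countable intersection over the (countably many) pairs $(x,y)\in\talpha\cap\tbeta$ and homotopy classes $[\phi]\in\pitwo(x,y)$, which is still dense since each of the constituent sets is residual.

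The main obstacle I expect is the transversality step, specifically producing the somewhere-injective point of $\phi$ landing in $\overline{V}^c$; the diagonal of the symmetric product and the submanifolds $\{z_i\}\times\symgmo$ are exactly where the almost complex structure is pinned down, and if a disc were to lie entirely inside $V$ one could not perturb $J$ to achieve surjectivity. Handling this carefully is the heart of the argument, and is the place where the precise definition of $\mathcal{J}(j,\eta,V)$ in Definition \ref{defnearlysym} (an \emph{open} neighborhood $V$, rather than the diagonal itself) is essential.
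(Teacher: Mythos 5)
Your proposal follows essentially the same route as the paper, which likewise sketches the proof by realizing the generic paths as regular values of the Fredholm projection from the universal moduli space of pairs $(\com_s,\phi)$ to the space of paths in $\mathcal{J}(j,\eta,V)$ and invoking the Sard--Smale theorem. In fact you supply more detail than the paper does (the transversality of the universal Cauchy--Riemann section via perturbations supported away from $V$, where the boundary conditions on $\talpha$ and $\tbeta$ guarantee the disc meets the perturbable region), and you correctly identify that step as the crux.
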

The idea is similar to the standard Floer homological
proof. One realizes these paths as regular values of 
the Fredholm projection 
\[
  \pi\co\mathcal{M}\lra\Omega(\mathcal{J}(j,\eta,V)),
\]
where $\Omega(\mathcal{J}(j,\eta,V))$ denotes the space of paths in
$\mathcal{J}(j,\eta,V)$ and $\mathcal{M}$ is the unparametrized moduli space consisting 
of pairs $(\com_s,\phi)$, where $\com_s$ is a path of 
$(j,\eta,V)$-nearly symmetric almost complex structures and 
$\phi$ a Whitney disc. By the Sard-Smale theorem the 
set of regular values is an open and dense set of 
$\mathcal{J}(j,\eta,V)$.\vspace{0.3cm}\\
Besides the smoothness of the moduli spaces we need the number
of one-dimensional components to be finite. This means we
require the spaces $\modhatxy^0_{n_z=0}$ to be compact. One 
ingredient of the compactness is the admissibility property 
introduced in Definition \ref{admissintro}. In (\ref{modsplit}) 
we observed that
\[
  \modhatxy^0_{n_z=0}=\bigsqcup_{\phi\in H(x,y,1)}\modhatphi,
\]
where $H(x,y,1)$ is the set of homotopy classes of Whitney discs
with $n_z=0$ and expected dimension $\mu=1$. Admissibility
guarantees that $H(x,y,1)$ is a finite set. Thus, compactness
follows from the compactness of the $\modhatphi$. The compactness
proof follows similar lines as the Floer homological approach.
It follows from the existence of an {\it energy bound} independent 
of the homotopy class
of Whitney discs. The existence of this energy bound shows that
the moduli spaces $\modhatxy$ admit a compactification by adding
solutions to the space in a controlled way.\vspace{0.3cm}\\
Without giving the precise definition we would like to give some
intuition of what happens at the boundaries. First of all there
is an operation called {\bf gluing} making it possible to
concatenate Whitney discs holomorphically. Given two Whitney
discs $\phi_1\in\pitwo(x,y)$ and $\phi_2\in\pitwo(y,w)$, gluing
describes an operation to generate a family of 
holomorphic solutions $\phi_2\#_t\phi_1$ in the homotopy 
class $\phi_2*\phi_1$. 
\begin{definition} We call the pair $(\phi_2,\phi_1)$ a
{\bf broken} holomorphic Whitney disc.\footnote{This might be a
sloppy and informal definition but appropriate for our intuitive
approach.}
\end{definition}
Moreover, one can think of this solution $\phi_2\#_t\phi_1$ 
as sitting in a small neighborhood of the boundary of the 
moduli space of the homotopy class $\phi_2*\phi_1$, i.e.~the 
family of holomorphic solutions as $t\to\infty$ converges 
to the broken disc $(\phi_2,\phi_1)$. There is a special notion
of convergence used here. The limiting objects can be described 
intuitively in the following way: Think of the disc, after removing 
the points $\pm i$, as a strip
 $\R\times[0,1]$. Choose a properly embedded arc or 
 an embedded $\sone$ in $\R\times[0,1]$. Collapse the curve or 
 the $\sone$ to a point. The resulting object is a potential limiting
 object. The objects at the limits of sequences can be derived by
 applying several knot shrinkings and arc shrinkings simultaneously 
 where we have to keep in mind that the arcs and knots have to be chosen 
 such that they do not intersect (for a detailed treatment 
 see \cite{DuffSal}).\vspace{0.3cm}\\
We see that every broken disc corresponds to a boundary component 
of the compactified moduli space, i.e.~there is an injection
\[
  \fglue\co\mathcal{M}_{\phi_2}\times\mathcal{M}_{\phi_1}
   \hookrightarrow\partial\mathcal{M}_{\phi_2*\phi_1}.
\]
But are these the only boundary components? If this is the case,
by adding broken discs to the space we would compactify it. This
would result in the finiteness of the $0$-dimensional spaces
$\modhatphi$. A compactification by adding broken flow lines 
means that the $0$-dimensional components are compact in the 
usual sense. A simple dimension count contradicts the existence 
of a family of discs in a $0$-dimensional moduli space
converging to a broken disc. But despite that there is a second
reason for us to wish broken flow lines to compactify the moduli
spaces. The map $\parhat_z$ should be a boundary operator. Calculating
$\parhat_z\circ\parhat_z$ we see that the coefficients in the
resulting equation equal the number of boundary components corresponding 
to broken discs 
at the ends of the $1$-dimensional moduli spaces. If the gluing map
is a bijection the broken ends generate all boundary components.
Hence, the coefficients vanish mod $2$.\vspace{0.3cm}\\
There are two further phenomena we have to notice. Besides breaking 
there might be {\bf spheres bubbling off}. This description can 
be taken literally to some point. Figure \ref{Fig:figThree}
\begin{figure}[ht!]
\centerline{\psfig{file=bubbling,height=3cm}}
\caption{Bubbling of spheres.}
\label{Fig:figThree}
\end{figure}
 illustrates the geometric
picture behind that phenomenon. Bubbling is some kind of breaking
phenomenon but the components here are discs and spheres. We do not 
need to take care of spheres bubbling off at all. Suppose that the 
boundary of the moduli space associated to the homotopy class 
$\phi$ we have breaking into a disc $\phi_1$ and a sphere $S_1$, 
i.e.~$\phi=\phi_1*S_1$. Recall that the spheres in the symmetric 
product are generated by $S$, described in \S\ref{symproduct}. 
Thus, $\phi=\phi_1*k\cdot S$ where $n_z(S)=1$. In consequence 
$n_z(\phi)$ is non-zero, contradicting the assumptions.
\newpage
\begin{definition} For a point $x\in\talpha\cap\tbeta$ an
 {\bf $\alpha$-degenerate} disc is a holomorphic disc
$\phi\co[0,\infty)\times\R\lra\symg$ with the following boundary
conditions $\phi(\{0\}\times\R)\subset\talpha$ and $\phi(p)\to x$
as $x\to\infty$.
\end{definition}
Given a degenerate disc $\psi$, the associated domain $\dom(\psi)$
equals a sphere with holes, i.e.~$\dom(\psi)$ equals a surface
in $\Sigma$ with boundary the $\alpha$-curves. Since the 
$\alpha$-curves do not disconnect $\Sigma$, the domain covers 
the whole surface. Thus, $n_z(\psi)$ is non-zero, showing that 
degenerations are ruled out by assuming that $n_z=0$.

\begin{proof}[Proof of Theorem \ref{differential} 
with $\ztwo$-coefficients] 
Fix an intersection $x\in\talpha\cap\tbeta$. We compute
\begin{eqnarray*}
  \parhat_z x 
  &=&
  \parhat_z
  \bigl(
  \sum_{y\in\talpha\cap\tbeta}\#\modhatxy_{n_z=0}^0\cdot y
  \bigr)\\
  &=&\sum_{y,w\in\talpha\cap\tbeta}
  \#\modhatxy_{n_z=0}^0
  \#\modhatyw_{n_z=0}^0\cdot w.
\end{eqnarray*}
We have to show that the coefficient in front of $w$, denoted by 
$c(x,w)$ vanishes. Observe that the coefficient precisely 
equals the number of components (mod $2$) in
\[
  \modhatxy_{n_z=0}^0\times\modhatyw_{n_z=0}^0
\]
Gluing gives an injection
\[
  \modhatxy_{n_z=0}^0
  \times
  \widehat{\mathcal{M}}(y,w)_{n_z=0}^0
  \hookrightarrow
  \partial\widehat{\mathcal{M}}(x,w)_{n_z=0}^1.
\] 
By the compactification theorem the gluing map is a bijection,  
since bubbling and degenerations do not appear due to the 
condition $n_z=0$. Thus, (mod $2$) we have
\begin{eqnarray*}
  c(x,w)
  &=&\#(\modhatxy_{n_z=0}^0\times\modhatyw_{n_z=0}^0)\\
  &=&\partial\widehat{\mathcal{M}}(x,w)_{n_z=0}^1\\
  &=&0,
\end{eqnarray*}
which shows the theorem.
\end{proof}
Obviously, the proof breaks down in $\Z$-coefficients. We need the
mod $2$ count of ends. There is a way to fix the proof. The goal
is to make the map
\[
  \fglue\co\mathcal{M}_{\phi_2}\times\mathcal{M}_{\phi_1}
   \hookrightarrow\partial\mathcal{M}_{\phi_2*\phi_1}
\]
orientation preserving. For this to make sense we need the moduli 
spaces to be oriented. An orientation is given by choosing a
section of the {\bf determinant line bundle} over the moduli
spaces. The determinant line bundle is defined as the bundle
$\det([\phi])\lra\mathcal{M}_\phi$ given by putting together the spaces
\[
  \det(\psi)
  =
  \bigwedge\,\!\!\!^{\mbox{\rm \tiny max}}
  \ker(D_\psi\riemop)
  \otimes
  \bigwedge\,\!\!\!^{\mbox{\rm \tiny max}}
  \ker((D_\psi\riemop)^*),
\]
where $\psi$ is an element of $\mathcal{M}_{\phi}$. If we achieve
transversality for $\riemop$, i.e.~it has transverse intersection with
the zero section $\banbund_0\hookrightarrow\mathcal{L}$ then
\[
  \begin{array}{rcccc}
  \det(\psi)
  &=&
  \bigwedge\,\!\!\!^{\mbox{\rm \tiny max}}
  \ker(D_\psi\riemop)
  &\otimes&
  \R^*\\
  &=&
  \bigwedge\,\!\!\!^{\mbox{\rm \tiny max}}
  T_\psi\mathcal{M}_{\phi}
  &\otimes&
  \R^*.
  \end{array}
\]
Thus, a section of the determinant line bundle defines an orientation
of $\mathcal{M}_{\phi}$. These have to be chosen in a coherent
fashion to make $\fglue$ orientation preserving. The gluing construction
gives a natural identification
\[
  \det(\phi_1)
  \wedge
  \det(\phi_2)
  \overset{\cong}{\lra}
  \det(\phi_2\#_t\phi_1).
\]
Since these are all line bundles, this identification makes it possible
to identify sections of $\det([\phi_1])\wedge\det([\phi_2])$ with sections
of $\det([\phi_2*\phi_1])$. With this isomorphism at hand we are 
able to define a coherence condition. Namely, let $\orient(\phi_1)$ 
and $\orient(\phi_2)$ be sections of the determinant line bundles 
of the associated moduli spaces, then we need that under 
the identification given above we have
\begin{equation}
  \orient(\phi_1)\wedge\orient(\phi_2)=\orient(\phi_2*\phi_1).
  \label{orientation}
\end{equation}
In consequence, a {\bf coherent system of orientations} is a 
section $\orient(\phi)$ of the determinant line bundle $\det(\phi)$ 
for each homotopy class of Whitney discs $\phi$ connecting two 
intersection points such that equation (\ref{orientation}) holds 
for each pair for which concatenation makes sense. It is not clear 
if these systems exist in general. By construction with respect
to these coherent systems of orientations the map $\fglue$ is
orientation preserving.\vspace{0.3cm}\\
In the case of Heegaard Floer theory there is an easy way giving a
construction for coherent systems of orientations. Namely, fix a
$\spinc$-structure $s$ and let $\{x_0,\dots,x_l\}$ be the points
representing $s$, i.e.~$(s_z)^{-1}(s)=\{x_0,\dots,x_l\}$.  Let $\phi_1,\dots,\phi_q$ be a set of periodic
classes in $\pitwo(x_0,x_0)$ representing a basis for $H^1(Y;\Z)$,
denote by $\theta_i$ an element of $\pitwo(x_0,x_i)$. A
coherent system of orientations is constructed by choosing sections
over all chosen discs, i.e.~$\orient(\phi_i)$, $i=1,\dots,q$ and
$\orient(\theta_j)$, $j=1,\dots,l$. Namely, for each homotopy 
class $\phi\in\pitwo(x_i,x_j)$ we have a presentation 
(cf.~Lemma \ref{holspheres}, Lemma \ref{kernz} and (\ref{pbident}))
\[
  \phi=a_1\phi_1+\dots+a_q\phi_q+\theta_j-\theta_i
\]
inducing an orientation $\orient(\phi)$. This definition clearly
defines a coherent system.\vspace{0.3cm}\\
To give a proof of Theorem \ref{differential} in case of $\Z$-coefficients
we have to translate orientations on the $0$-dimensional components of 
the moduli spaces $\modhat$ of 
connecting Whitney discs into signs. For $\phi$ with $\mu(\phi)=1$
the translation action naturally induces an orientation on
$\mathcal{M}_\phi$. Comparing this orientation with the coherent
orientation induces a sign. We define the {\bf signed count} as the
count of the elements by taking into account the signs induced
by the comparison of the action orientation with the coherent
orientation.
\begin{proof}[Proof of Theorem \ref{differential} for $\Z$-coefficients]
We stay in the notation of the earlier proof. With the coherent system
of orientations introduced we made the map
\[ \fglue\co
  \modhatxy_{n_z=0}^0
  \times
  \widehat{\mathcal{M}}(y,w)_{n_z=0}^0
  \hookrightarrow
  \partial\widehat{\mathcal{M}}(x,z)_{n_z=0}^1
\]
orientation preserving. Hence, we see that $c(x,w)$ equals 
\[
\#(\modhatxy_{n_z=0}^0
  \times
  \widehat{\mathcal{M}}(y,w)_{n_z=0}^0)
\]
which in turn equals the oriented count of boundary components of
$\partial\widehat{\mathcal{M}}(x,z)_{n_z=0}^1$. Since the space
is $1$-dimensional, this count vanishes.
\end{proof}
\subsubsection{More General Theories}
There are variants of Heegaard Floer homology which do not force 
the condition $n_z=0$. To make the compactification work in that
case we have to take care of boundary degenerations and spheres
bubbling off. Both can be shown to be controlled in the sense
that the proof of Theorem \ref{differential} for the general theories
works the same way with some slight additions due to bubbling and
degenerations. This article mainly focuses on the $\hfhat$-theory,
so we exclude these matters from our exposition. Note just
that we get rid of bubbling by a proper choice of almost complex
structure. By choosing $j$ on $\Sigma$ appropriately there is
a contractible open neighborhood of $sym^g(j)$ in 
$\mathcal{J}(j,\eta,V)$ for which all spheres miss the intersections
$\talpha\cap\tbeta$. Moreover, for a generic choice of
path $(\com_s)_{s\in[0,1]}$ inside this neighborhood the signed count
of degenerate discs is zero. With this information it is easy
to modify the given proof for the general theories. We leave this
to the interested reader or point him to \cite{OsZa01}.

\subsection{Choice of Almost Complex Structure}\label{compstruct}
Let $\Sigma$ be endowed with a complex structure $j$ and let 
$U\subset\Sigma$ be a subset diffeomorphic to a disc. 
\begin{theorem}[Riemann mapping theorem] There is a $3$-dimensional connected
family of holomorphic identifications of $U$ with the unit disc
$\D\subset\C$.
\end{theorem}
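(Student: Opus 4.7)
My plan is to derive this version of the Riemann mapping theorem from the classical existence statement (a single biholomorphism $U\to\D$) together with an identification of the set of all such biholomorphisms with the automorphism group $\mathrm{Aut}(\D)$ of the unit disc.

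First, I endow $U$ with the complex structure induced by $j$. Since $U$ is diffeomorphic to a disc, it becomes a simply connected Riemann surface. By the uniformization theorem, $U$ is biholomorphic to one of $\hat{\C}$, $\C$, or $\D$. The case $\hat{\C}$ is ruled out by non-compactness of $U$; the case $\C$ is ruled out because after possibly shrinking $U$ we may assume that $\overline U$ sits inside a holomorphic chart of $\Sigma$, so $U$ admits non-constant bounded holomorphic functions and must therefore be of hyperbolic type. Fix one such biholomorphism $f\co U\to\D$.

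Next, I describe $\mathrm{Aut}(\D)$ explicitly. An application of the Schwarz lemma to a biholomorphic self-map $\D\to\D$ and its inverse shows that every element of $\mathrm{Aut}(\D)$ has the form $z\mapsto e^{i\theta}\frac{z-a}{1-\bar a z}$ for a unique $(a,\theta)\in\D\times\R/2\pi\Z$. Hence $\mathrm{Aut}(\D)$ is a Lie group diffeomorphic to $\D\times\sone$, which is connected and of real dimension three. For any biholomorphism $g\co U\to\D$, the composition $g\circ f^{-1}$ lies in $\mathrm{Aut}(\D)$, so $g=\psi\circ f$ for a unique $\psi$. This realizes the set of biholomorphic identifications $U\to\D$ as a torsor over $\mathrm{Aut}(\D)$, giving it the structure of a connected $3$-dimensional family.

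The main obstacle is the uniformization step, which supplies the nontrivial analytic input; the parametrization, by contrast, is a formal consequence of the Schwarz lemma. Once a single biholomorphism $f$ is in hand, the structure of the full family is automatic from the explicit description of $\mathrm{Aut}(\D)$.
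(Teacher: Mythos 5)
The paper cites this as the classical Riemann mapping theorem and gives no proof of its own, so the only issue is whether your argument is sound. Its overall shape is the right one: produce a single biholomorphism $f\co U\lra\D$ and observe that the set of all holomorphic identifications is the coset $\mathrm{Aut}(\D)\circ f$, with $\mathrm{Aut}(\D)$ the group of disc automorphisms $z\mapsto e^{i\theta}\frac{z-a}{1-\bar{a}z}$, a connected $3$-dimensional Lie group diffeomorphic to $\D\times\sone$. That torsor argument is correct and is exactly how the "$3$-dimensional connected family" should be read.

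The gap is in your exclusion of the parabolic case. You are not free to "possibly shrink $U$": the statement (and the corollary the paper draws from it) concerns the conformal type of $U$ itself, and in the intended application $U=\dom(\phi)$ is a union of regions of the Heegaard diagram which in general does not lie inside any holomorphic chart. A bounded nonconstant holomorphic function on a shrunk subdomain says nothing about $U$ — every Riemann surface, including $\C$, contains small hyperbolic discs — so as written your argument does not rule out $U\cong\C$. The repair is easy in the paper's setting: since one may assume $g(\Sigma)>2$, the universal cover of $\Sigma$ is the unit disc, and because $U$ is simply connected the inclusion $U\hookrightarrow\Sigma$ lifts to an injective holomorphic map $U\lra\D$; the restriction of the coordinate is then a bounded nonconstant holomorphic function on $U$, so $U$ is hyperbolic. (For a genus-one surface one lifts instead to $\C$ and notes that the image is a proper simply connected subdomain, which the classical plane Riemann mapping theorem handles; only for the sphere, which does not occur here, can an embedded disc be parabolic.) With that step replaced, the rest of your proof — uniformization plus the $\mathrm{Aut}(\D)$ parametrization — goes through.
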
 
Consequently, suppose that all moduli spaces are compact manifolds 
for the path $(\com_s)_{s\in[0,1]}=sym^g(j)$. In this case we conclude 
from the Riemann mapping theorem the following corollary.
\begin{cor} Let $\phi\co\D^2\lra\symg$ be a holomorphic disc with
$\dom(\phi)$ isomorphic to a disc. Then the moduli space $\modhatphi$
contains a unique element.
\end{cor}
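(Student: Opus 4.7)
The plan is to reduce the problem, via the branched cover diagram (\ref{wdisc}), to an application of the Riemann mapping theorem on $\Sigma$.

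First I would unpack the hypothesis. Since the multiplicities $n_{z_i}(\phi)$ are non-negative integers and $\dom(\phi)$ is topologically a disc, exactly one region $\dom_{i_0}\subset\Sigma\setminus(\alpha\cup\beta)$ appears with coefficient $1$ and the rest with coefficient $0$, and the closure of $\dom_{i_0}$ is a bigon whose two corners lie on $\alpha\cap\beta$. The boundary conditions then force $x$ and $y$ to agree in $g-1$ coordinates and to differ in one coordinate by precisely these two corners; write $x=\{p_1,\dots,p_{g-1},x_g\}$ and $y=\{p_1,\dots,p_{g-1},y_g\}$ with $x_g,y_g$ the corners of $\dom_{i_0}$.

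Second, I would pass to diagram (\ref{wdisc}). Because the path of almost complex structures is $\com_s=sym^g(j)$, the projection $\pi$ is holomorphic off the diagonal, so $\phi$ lifts to a holomorphic $\phitilde\co F\to\Sigma^{\times g}$ and in turn to a holomorphic $\phihat\co\Dhat\to\Sigma$ with $\phihat(\Dhat)=\dom(\phi)$. Since $\Dhat\to\D^2$ is an unbranched $g$-to-$1$ cover (the disc $\phi$ avoids the diagonal off a set of measure zero) and $\D^2$ is simply connected, $\Dhat$ is a disjoint union of $g$ topological discs. Of these, $g-1$ are mapped by $\phihat$ to the constant points $p_1,\dots,p_{g-1}$: their $\phihat$-images contribute $0$ to $\dom(\phi)$ and have prescribed constant values at $\pm i$, and holomorphicity combined with the $\alpha$/$\beta$ boundary conditions forces them to be constant. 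The remaining component maps biholomorphically onto $\dom_{i_0}$ since the coefficient is $1$ and $\dom_{i_0}$ is a topological disc.

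Third, given any other holomorphic representative $\psi$ in the homotopy class of $\phi$, the same analysis produces a biholomorphism $\widehat{\psi}\co\Dhat\to\dom_{i_0}$ from the active component of its lift, sending the two distinguished boundary corners to $x_g$ and $y_g$ in the same cyclic order as $\phihat$. The composition $\widehat{\psi}^{-1}\circ\phihat$ is therefore a biholomorphic automorphism of $\D^2$ fixing both $i$ and $-i$. The subgroup of biholomorphisms of $\D^2$ fixing two prescribed boundary points is one-dimensional, and under the identification $\D^2\setminus\{\pm i\}\cong [0,1]\times\R$ used to write equation (\ref{holomeq}) it is precisely the group of translations in the $\R$-factor. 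Hence $\psi$ and $\phi$ are related by the free $\R$-action on $\mathcal{M}_\phi$, so they determine the same element of $\modhatphi$. The main obstacle is the second step: the $S_g$-permutation ambiguity in the branched cover means one must carefully match components of $\Dhat$ to coordinates of the tuples $x,y$ at $\pm i$ and then combine the vanishing of the domain off $\dom_{i_0}$ with a maximum-principle-style argument to conclude constancy. Once that is done, the rest is a clean application of the Riemann mapping theorem with two marked boundary points.
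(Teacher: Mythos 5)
Your argument is essentially the paper's own (implicit) proof: the text deduces this corollary directly from the Riemann mapping theorem for the path $sym^g(j)$, and the one place such an argument is written out in detail --- the proof of Lemma \ref{thetatransform} --- is precisely your scheme of lifting to $\phihat\co\Dhat\to\Sigma$, splitting $\Dhat$ into $g-1$ constant components plus one component uniformized onto the disc-shaped region, and then noting that two representatives differ by an automorphism of $\D^2$ fixing $\pm i$, i.e.\ by the $\R$-translation. Two small caveats: the parenthetical reason you give for $\Dhat\to\D^2$ being unbranched is not valid (branching occurs exactly at intersections with the diagonal, which are isolated and hence of measure zero anyway; in the bigon case one instead observes that the $g-1$ fixed coordinates lie off the closure of the disc-shaped region, and the component decomposition with the degree count over the region works even without assuming the cover unbranched), and your first step, reducing ``$\dom(\phi)$ isomorphic to a disc'' to a single bigon region with two corners, is really the intended hypothesis rather than a consequence of it --- an embedded disc domain with more corners (e.g.\ a square), where the active component is a branched cover of $\D^2$ rather than a biholomorphism, is not covered by your uniqueness argument.
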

There are several ways to achieve this special situation. We call a 
domain $\dom(\phi)$ {\bf $\alpha$-injective} if all its multiplicities are $0$ or $1$ and
its interior is disjoint from the $\alpha$-circles. We then say that the homotopy
class $\phi$ is {\bf $\alpha$-injective}.
\begin{theorem} Let $\phi\in\pitwo(x,y)$ be an $\alpha$-injective homotopy
class and $j$ a complex structure on $\Sigma$. For generic perturbations
of the $\alpha$-curves the moduli space $\mathcal{M}_{sym^g(j),\phi}$ is
a smooth manifold.
\end{theorem}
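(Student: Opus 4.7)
The plan is to follow the standard universal moduli space strategy: rather than varying the almost-complex structure (which is fixed as $\mathrm{sym}^g(j)$), we vary the $\alpha$-circles and then apply the Sard--Smale theorem to a projection from a universal moduli space onto the parameter space of such perturbations. The $\alpha$-injectivity hypothesis is exactly what is needed to make the linearization surjective, i.e.\ to achieve transversality generically.

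More precisely, I would fix a small Banach space $\mathcal{V}$ of perturbations of the $\alpha$-circles, say sections supported in a neighbourhood of $\alpha_1\cup\dots\cup\alpha_g$ in $\Sigma$ and of a suitable Sobolev class; a perturbation $v\in\mathcal{V}$ produces a new totally real submanifold $\mathbb{T}_{\alpha(v)}\subset\symg$, and correspondingly a new Banach bundle $\mathcal{L}^v\to\banbund^v(x,y)$ with Cauchy--Riemann section $\riemop^v$ as in \S\ref{structmoduli}. I would then assemble these into a single universal bundle over
\[
  \banbund^{\mathrm{univ}}(x,y) \;=\; \bigsqcup_{v\in\mathcal{V}}\banbund^v(x,y)
\]
with universal section $\riemop^{\mathrm{univ}}(v,\phi)=\riemop^v\phi$. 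Call its zero locus the universal moduli space $\mathcal{M}^{\mathrm{univ}}_\phi$.

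The heart of the argument is showing that the linearization $D(\riemop^{\mathrm{univ}})$ at any solution $(v,\phi)$ in the homotopy class $\phi$ is surjective, so that $\mathcal{M}^{\mathrm{univ}}_\phi$ is a smooth Banach manifold. The kernel of the fibrewise operator $D_\phi\riemop^v$ is finite-dimensional since the operator is Fredholm; the issue is to hit its cokernel by varying $v$. This is where $\alpha$-injectivity enters. Since $\dom(\phi)$ has multiplicity $0$ or $1$ and has interior disjoint from the $\alpha$-circles, the restriction $\phihat|_{\partial\widehat{\D}}$ maps each $\alpha$-arc of $\partial\widehat{\D}$ diffeomorphically onto its image in the $\alpha$-circles, and that image is an embedded arc; in particular $\phi$ cannot concentrate multiple pre-images on a single point of $\alpha$. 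A variation $v\in\mathcal{V}$ infinitesimally rotates the totally real boundary condition along this embedded arc, and one shows by a standard unique-continuation-plus-pointwise-patching argument (choose $v$ supported near a boundary point where a hypothetical cokernel element $\eta$ is nonzero; pair $\eta$ with the induced variation to get a nonzero value) that these variations span a space transverse to the cokernel. The $\alpha$-injectivity is what makes this pairing nonvanishing, because no other branch of $\phi$ could cancel the contribution; without injectivity, two branches mapping to the same boundary point would give coupled constraints that could force the variation to cancel.

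Having established that $\mathcal{M}^{\mathrm{univ}}_\phi$ is a smooth Banach manifold, the projection
\[
  \pi\co\mathcal{M}^{\mathrm{univ}}_\phi\lra\mathcal{V},\qquad (v,\phi)\lmt v,
\]
is Fredholm of the same index as $D_\phi\riemop^v$, and the Sard--Smale theorem gives a dense (in fact residual) set of regular values $v\in\mathcal{V}$. For any such regular $v$ the fibre $\pi^{-1}(v)=\mathcal{M}_{sym^g(j),\phi}$ computed with the perturbed $\alpha$-curves is a smooth manifold of the expected dimension. The main obstacle is the surjectivity argument in the previous paragraph; everything else is the routine Banach-manifold/Sard--Smale machinery already invoked in Theorem~\ref{modmani}.
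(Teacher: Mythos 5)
Your outline is essentially correct, but note that the paper itself states this theorem without proof (it is quoted from \cite{OsZa01}, where it is proved by exactly the boundary-perturbation scheme you describe, following Oh's argument for Lagrangian boundary conditions); so the comparison is really with that source, and your proposal reproduces its strategy: universal moduli space over a Banach space of perturbations of the $\alpha$-curves, surjectivity of the universal linearization, then Sard--Smale. Three points in your sketch deserve more care. First, since the almost complex structure $sym^g(j)$ is fixed, the surjectivity argument has to be run in the low-dimensional model: a $sym^g(j)$-holomorphic Whitney disc is equivalent to a $g$-fold branched cover $\Dhat\lra\D^2$ together with a $j$-holomorphic map $\phihat\co\Dhat\lra\Sigma$ with boundary on the attaching curves, and it is for $\phihat$ that one applies unique continuation to a putative cokernel element and patches in a variation of the boundary condition; the perturbations of the $\alpha$-curves act as isotopies of the totally real boundary condition of $\phihat$, not directly on $\talpha\subset\symg$. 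Second, your assertion that $\phihat$ maps each $\alpha$-arc of $\partial\Dhat$ diffeomorphically onto an embedded arc is stronger than what $\alpha$-injectivity provides and is not what is needed; the hypothesis (multiplicities $0$ or $1$, interior of the domain disjoint from the $\alpha$-circles) gives that no two boundary branches of $\phihat$ have overlapping image in the $\alpha$-curves, hence $\phihat$ is injective on an open dense subset of the $\alpha$-portion of $\partial\Dhat$, which is exactly what prevents the cancellation between branches that you worry about. Third, the routine technical caveat: the perturbations must form a Banach (or Floer $C^\varepsilon$) space and one returns to smooth perturbations by the usual Taubes-type argument, just as in Theorem \ref{modmani}. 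With these adjustments your argument is the standard proof.
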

In explicit calculations it will be nice to have all homotopy classes
carrying holomorphic representatives to be $\alpha$-injective. In 
this case we can choose the path of almost complex structures in such 
a way that homotopy classes of Whitney discs with disc-shaped domains 
just admits a unique element. This is exactly what can be achieved 
in general to make the $\hfhat$-theory combinatorial. For a class 
of Heegaard diagrams called {\bf nice diagrams} all moduli spaces 
with $\mu=1$ just admit one single element. In addition we have 
a precise description of how these domains look like. In $\Z_2$-coefficients 
with nice diagrams this results in a method of calculating the 
differential $\parhat_z$ by counting the number of domains that 
fit into the scheme. This is successfully done for instance for the
$\hfhat$-theory in \cite{sarwang}.
\begin{definition}[see \cite{sarwang}]\label{nicehd} A pointed 
Heegaard diagram $(\Sigma,\alpha,\beta,z)$ is called {\bf nice} if 
any region not containing $z$ is either a bigon or a square.
\end{definition}
\begin{definition}[see \cite{sarwang}] A homotopy class is called an
empty embedded {\bf $2n$-gon} if it is topologically an embedded disc
with $2n$ vertices at its boundary, it does not contain any $x_i$ or
$y_i$ in its interior, and for each vertex $v$ the average of the
coefficients of the four regions around $v$ is $1/4$.
\end{definition}
For a nice Heegaard diagram one can show that all homotopy classes
$\phi\in H(x,y,1)$ with $\mu(\phi)=1$ that admit holomorphic 
representatives are empty embedded bigons or empty embedded squares.
Furthermore, for a generic choice of $j$ on $\Sigma$ the moduli spaces
are regular under a generic perturbation of the $\alpha$-curves and
$\beta$-curves. The moduli space $\modhatphi$ contains one single element.
Thus, the theory can be computed combinatorially. We note the following
property.
\begin{theorem}[see \cite{sarwang}] Every $3$-manifold admits a 
nice Heegaard diagram.
\end{theorem}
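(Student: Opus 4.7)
My plan is to follow the Sarkar--Wang algorithm: start with an arbitrary pointed Heegaard diagram $(\Sigma,\alpha,\beta,z)$ of the given $3$-manifold $Y$ (which exists by \S\ref{heegdiag}) and iteratively modify it by isotopies of the $\beta$-curves (``finger moves'') until every region disjoint from $z$ is a bigon or a square. Since isotopies of the attaching circles do not change the underlying $3$-manifold, the output is still a Heegaard diagram of $Y$.

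First I would introduce two numerical invariants on the set of regions $\dom_1,\ldots,\dom_m$ of $\Sigma\setminus(\alpha\cup\beta)$. Let $R_z$ be the region containing $z$, and define the \emph{distance} $d(\dom_i)$ to be the minimal number of $\alpha\cup\beta$-edges one must cross in a path from $\dom_i$ to $R_z$. Define the \emph{badness} of $\dom_i$ to be $b(\dom_i)=\max\{\,0,\;e(\dom_i)/2-2\,\}$, where $e(\dom_i)$ is the number of edges on $\partial\dom_i$; thus $b=0$ exactly for bigons and squares. Then set the complexity of the diagram to be the lexicographic pair
\[
  c(\Sigma,\alpha,\beta,z)=\bigl(\,D,\;B\,\bigr),\qquad
  D=\max\{\,d(\dom_i)\,:\,b(\dom_i)>0\,\},\ \
  B=\sum_{d(\dom_i)=D} b(\dom_i).
\]
A diagram is nice precisely when this set is empty. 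After possibly isotoping to ensure the diagram is connected (so every region has finite distance), it suffices to exhibit an isotopy that strictly decreases $c$ whenever the diagram is not nice.

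The central operation is the finger move, carried out as follows. Pick a bad region $\dom_i$ realizing the maximal distance $D$. Among the edges of $\partial\dom_i$, choose a $\beta$-arc $e$ that separates $\dom_i$ from some adjacent region $\dom'$ with $d(\dom')=D-1$ (such an $e$ exists because $\dom_i$ is not at distance $0$ and distances change by at most $1$ across an edge). Push a small tongue of the $\beta$-curve containing $e$ across the opposite $\alpha$-edge of $\dom'$, extending the finger along the distance-decreasing direction until it terminates inside a region at distance strictly less than $D$ from $z$, being careful to enter $\dom_i$ only once. The finger subdivides $\dom_i$ into a bigon tip, a rectangular ``corridor,'' and a residual region with two fewer edges than $\dom_i$; it also subdivides the traversed intermediate regions, but each such piece is a bigon, a square, or a region at distance $\le D-1$.

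Now I would verify that $c$ strictly decreases after such a finger move. The new pieces of the residual part of $\dom_i$ contribute total badness $b(\dom_i)-1$ at distance $D$, while all other new bad regions lie at distance strictly less than $D$. Hence if $\dom_i$ was the unique bad region at distance $D$, then $D$ drops; otherwise $B$ drops by at least $1$. Since $(D,B)$ is well-ordered under the lexicographic order, the algorithm terminates after finitely many steps at a nice diagram. The main technical obstacle is exactly this bookkeeping: one must verify that the finger can always be routed through \emph{good} intermediate regions (or bad ones at distance $<D$) and that the new subdivisions introduced along its path do not create bad regions at distance $\ge D$. This is ensured by choosing, at each step of the finger's extension, to cross the $\alpha$-edge leading into the adjacent region of smallest distance, together with a case analysis on how a finger traverses a bigon or square (it splits them into pieces that remain bigons or squares). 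With this combinatorial verification in place, the algorithm produces the desired nice Heegaard diagram for $Y$.
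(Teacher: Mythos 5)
The paper itself gives no argument for this statement --- it is quoted from Sarkar--Wang \cite{sarwang} --- so what you are really reconstructing is the algorithm of that cited source, and the overall strategy (isotope the $\beta$-curves by finger moves and induct on a lexicographic complexity built from a distance to the region containing $z$ and a badness count) is indeed the right one. But as written there are two genuine gaps. First, your badness $b(\mathcal{D}_i)=\max\{0,e(\mathcal{D}_i)/2-2\}$ only sees the number of boundary edges, so a region that is an annulus or of higher genus with few boundary edges is declared good; your algorithm could then halt on a diagram that is not nice in the sense of Definition \ref{nicehd}. Sarkar--Wang first arrange, in a separate preliminary stage, that every $\alpha$- and $\beta$-curve meets a curve of the other family and that every region other than the one containing $z$ is a disc; only after that stage is ``bigon or square'' equivalent to ``at most four corners'', and this stage cannot be skipped. (Your remark about making the diagram ``connected'' does not address this: the adjacency graph of regions is always connected, the problem is regions that are not discs.)

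Second, the finger move as described does not do what you claim, and the claim is where all the work lies. If $e$ is a $\beta$-edge of the bad region $\mathcal{D}_i$ and you push a tongue of it \emph{away} from $\mathcal{D}_i$, across the smaller-distance neighbour $\mathcal{D}'$ and onward toward $z$, then $\mathcal{D}_i$ is not subdivided at all: it absorbs the swept corridor, gains two corners, and its badness goes \emph{up}; the regions that get cut are the intermediate ones. Your simultaneous assertion that the finger subdivides $\mathcal{D}_i$ into a bigon tip, a corridor and a residual region (and ``enters $\mathcal{D}_i$ only once'') describes the opposite move, in which the finger cuts across the bad region and exits through one of its $\alpha$-edges --- but that finger travels in the distance-\emph{increasing} direction, so your argument that all newly created bad regions lie at distance $<D$ no longer applies. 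Moreover the isotopy changes the set of regions and their adjacencies, hence the distance function itself, and you must also justify that a $\beta$-edge adjacent to a smaller-distance region exists at all (a $\beta$-finger may only cross $\alpha$-curves, so the type of the edge matters). Controlling exactly these effects --- how the finger may terminate (possibly only in the region containing $z$ or in a bad region of smaller distance), which piece of the cut bad region is a bigon or square, and why the chosen complexity strictly decreases --- is the substance of the Sarkar--Wang case analysis; in your proposal it is flagged as bookkeeping but not carried out, so the termination of the algorithm is asserted rather than proved.
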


\subsection{Dependence on the Choice of Orientation Systems}
From their definition it is easy to reorder the  orientation systems 
into equivalence classes. The elements in these 
classes give rise to isomorphic homologies. Let $\orient$ and $\orient'$
be two orientation systems. We measure their difference
\[
  \delta\co H^1(Y;\Z)\lra\ztwo
\]
by saying that, given a periodic class $\phi\in\pitwo(x,x)$, we
define $\delta(\phi)=0$ if $\orient(\phi)$ and $\orient'(\phi)$ coincide,
i.e.~define equivalent sections, and $\delta(\phi)=1$, if $\orient(\phi)$
and $\orient'(\phi)$ define non-equivalent sections. Thus, two systems
are equivalent if $\delta=0$. Obviously, there are $2^{b_1(Y)}$ different
equivalence classes of orientation systems. In general the Heegaard Floer
homologies will depend on choices of equivalence classes of orientation
systems. As an illustration we will discuss an example.
\begin{example}\label{example01} 
The manifold $\stwo\times\sone$ admits a Heegaard 
splitting of genus one, namely $(T^2,\alpha,\beta,z)$ where $\alpha$ 
and $\beta$ are two distinct meridians of $T^2$.\vspace{0.3cm}\\
Unfortunately this is not an admissible diagram. By the 
universal coefficient theorem 
\[
  H^2(\stwo\times\sone;\Z)
  \cong 
  Hom(H_2(\stwo\times\sone;\Z),\Z)
  \cong
  Hom(\Z,\Z).
\]
Hence we can interpret $\spinc$-structures as homomorphisms $\Z\lra\Z$.
For a number $q\in\Z$ define $s_q$ to be the $\spinc$-structure whose
associated characteristic class, which we also call $s_q$, is given 
by $s_q(1)=q$. The two curves $\alpha$ and $\beta$ cut the torus into 
two components, where $z$ is placed in one of them. Denote the other
 component with $\dom$. It is easy to see that the homology class
 $\homology(\dom)$ is a generator of $H_2(\stwo\times\sone;\Z)$.
Thus, we have
\[
  \left<c_1(s_q),\homology(\lambda\cdot\dom)\right>
  =
  \left<2\cdot s_q,\homology(\lambda\cdot\dom)\right>
  =2\cdot s_q(\lambda\cdot 1)
  =2\lambda q.
\]
This clearly contradicts the weak admissibility condition. We fix this
problem by perturbing the $\beta$-curve slightly to give a Heegaard diagram
as illustrated in Figure \ref{Fig:figNine}.
\begin{figure}[ht!]
\labellist\small\hair 2pt
\pinlabel {$\alpha$} [l] at 279 334
\pinlabel {$x$} [Bl] at 283 244
\pinlabel {$z$} [l] at 422 219
\pinlabel {$\dom_1$} [l] at 302 183
\pinlabel {$\dom_2$} [l] at 200 310
\pinlabel {$y$} [tl] at 284 121
\pinlabel {$\beta$} [r] at 146 44
\endlabellist
\centering
\includegraphics[width=6cm]{example4-1}
\caption{An admissible Heegaard diagram for $\stwo\times\sone$.}
\label{Fig:figNine}
\end{figure}
By boundary orientations $\Z\left<(\dom_1-\dom_2)\right>$ are all possible
periodic domains.\vspace{0.3cm}\\
Figure \ref{Fig:figNine} shows that the chain module is generated by the points
$x$ and $y$. A straightforward computation gives $\epsilon(x,y)=0$ (see \S\ref{symproduct} for a definition) 
and, hence, both intersections belong to the same $\spinc$-structure we will denote
by $s_0$. Thus, the chain complex $\cfhat(\Sigma,\alpha,\beta;s_0)$ equals 
$\Z\otimes\{x,y\}$. The regions $\dom_1$ and $\dom_2$ are both disc-shaped
and hence $\alpha$-injective. Thus, the Riemann mapping theorem 
(see \S\ref{compstruct}) gives
\[
  \#\widehat{\mathcal{M}}_{\phi_1}=1\;\;\mbox{\rm and }\#\widehat{\mathcal{M}}_{\phi_2}=1.
\]
These two discs differ by the periodic domain generating 
$H^1(\stwo\times\sone;\Z)$. Thus, we are free to choose the 
orientation on this generator (cf.~\S\ref{structmoduli}). Hence, we may choose the signs 
on $\phi_1$ and $\phi_2$
arbitrarily. Thus, there are two equivalence classes of orientation
systems. We define $\orient^*$ to be the system of orientations
where the signs differ and $\orient_0$ where they are equal. Thus,
we get two different homology theories
\begin{eqnarray*}
  \hfhat(\stwo\times\sone,s_0;\orient*)&=&\Z\oplus\Z\\
  \hfhat(\stwo\times\sone,s_0;\orient_0)&=&\ztwo.
\end{eqnarray*}
\end{example}
However, there is a special choice of coherent orientation
systems. We point the reader to \S\ref{genhomology} for a definition
of $\hfinfty$. Additionally, instead of using $\Z$-coefficients, we
can use the ring $\Z[H_1(Y)]$ as coefficients for defining this
Heegaard Floer group. The resulting group is denoted by $\hfinftwist$. We point 
the reader to \cite{OsZa01} for a precise definition. As
a matter or completeness we cite:
\begin{theorem}[see \cite{OsZa02}, Theorem 10.12]\label{cohblah} 
Let $Y$ be a closed
oriented $3$-manifold. Then there is a unique equivalence class of
orientation system such that for each torsion $\spinc$-structure $s_0$ 
there is an isomorphism
\[
  \hfinftwist(Y,s_0)\cong\Z[U,U^{-1}]
\] 
as $\Z[U,U^{-1}]\otimes_\Z\Z[H^1(Y;\Z)]$-modules.
\end{theorem}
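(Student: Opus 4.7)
The plan is to establish the two separate pieces: existence of at least one orientation system yielding the stated isomorphism for every torsion $\spinc$-structure, and uniqueness of its equivalence class.

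For existence, I would proceed by reduction. First I would verify the isomorphism directly in the model case $Y=S^3$: since $H^1(S^3;\Z)=0$, there is a unique coherent orientation system, the unique $\spinc$-structure is (vacuously) torsion, and the example computation in \S\ref{symproduct} shows $\hfhat(\sthree)\cong\Z$; propagating this through the tautological short exact sequences $0\to\cfminus\to\cfinfty\to\cfplus\to0$ (and the untwisted coincidence of $\hfinftwist$ with $\hfinfty$ in the case $H_1=0$) yields $\hfinfty(S^3)\cong\Z[U,U^{-1}]$. Next I would handle $\stwo\times\sone$, using the admissible diagram in Figure \ref{Fig:figNine}: by Example \ref{example01} the two disc-shaped domains $\dom_1,\dom_2$ give the full differential, and choosing the orientation system $\orient_0$ for which both contributions agree produces, with twisted coefficients recording the $H_1$-class of the difference, a chain complex whose $\hfinftwist$ is visibly free of rank one over $\Z[U,U^{-1}]\otimes\Z[H^1(Y;\Z)]$. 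Taking connected sums and applying a Künneth-type formula for $\hfinftwist$ at torsion $\spinc$-structures extends the result to all $\#^k(\stwo\times\sone)$. Finally I would invoke the surgery exact triangle (developed later in the paper) to propagate the isomorphism from $\#^k(\stwo\times\sone)$ to any closed oriented $3$-manifold, at each surgery step choosing the orientation system on the new manifold so that the triangle is compatible with the canonical generators. The careful bookkeeping here is what promotes a choice on each link-diagram presentation to a single equivalence class on $Y$.

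For uniqueness, I would argue as follows. Two coherent orientation systems $\orient,\orient'$ differ by a homomorphism $\delta\co H^1(Y;\Z)\to\ztwo$, as explained before Example \ref{example01}. Passing from $\orient$ to $\orient'$ alters the boundary operator by the sign character $(-1)^{\delta(\cdot)}$ on periodic classes, so at the level of chain complexes with twisted $\Z[H_1(Y)]$-coefficients the effect is to tensor with the rank-one $\Z[H_1(Y)]$-module $L_\delta$ on which each $h\in H_1(Y)$ acts as $(-1)^{\delta(\pd(h))}\cdot h$. Hence
\[
  \hfinftwist(Y,s_0;\orient')\cong\hfinftwist(Y,s_0;\orient)\otimes_{\Z[H_1(Y)]}L_\delta.
\]
If $\orient$ is the system produced by the existence argument, then the right-hand side is $\Z[U,U^{-1}]\otimes\Z[H^1(Y;\Z)]$ exactly when $L_\delta$ is the trivial module, i.e.\ when $\delta=0$. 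This singles out a unique equivalence class.

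The main obstacle I anticipate is the inductive step in existence: tracking the naturality of the coherent orientation under the surgery exact triangle, because the triangle is proved via a Heegaard triple diagram and each of the three resulting chain complexes carries its own independent coherent orientation system. Matching them so that the connecting maps preserve the distinguished generator of $\Z[U,U^{-1}]\otimes\Z[H^1;\Z]$ requires a functorial choice of sections of the determinant line bundles across cobordism-induced maps, and this is precisely where the construction of coherent systems from the data $(\orient(\phi_i),\orient(\theta_j))$ in \S\ref{structmoduli} has to be upgraded to behave well under gluing of Heegaard multi-diagrams. The uniqueness half, by contrast, is essentially formal once one accepts the effect of a sign twist on the twisted homology.
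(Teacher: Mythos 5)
You should know at the outset that the paper does not prove this statement: it is quoted from \cite{OsZa02} (Theorem 10.12) explicitly ``as a matter of completeness'', so there is no internal proof to compare against, and the relevant benchmark is the proof in the cited source. Measured against that, your uniqueness half is essentially the standard argument and is sound: changing the orientation system by a character $\delta\co H^1(Y;\Z)\lra\ztwo$ twists the $\Z[H_1(Y)]$-module structure of the twisted homology by the corresponding sign character, and the module $\Z[U,U^{-1}]$ with trivial $H^1$-action is not isomorphic to its twist by a nontrivial character, so $\delta=0$. (One small correction there and in your $\stwo\times\sone$ computation: the target module is $\Z[U,U^{-1}]$ with the \emph{trivial} action of $H^1$, i.e.\ a quotient of the group ring, not a free rank-one module over $\Z[U,U^{-1}]\otimes_\Z\Z[H^1(Y;\Z)]$; for $\stwo\times\sone$ the twisted differential is multiplication by $1-T$, giving $\Z[U,U^{-1},T,T^{-1}]/(1-T)$.)

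The existence half has a genuine gap, and it sits exactly where you flag ``the main obstacle''. An exact triangle does not propagate a computation: knowing $\hfinftwist$ at two vertices determines the third vertex neither as a group (only up to extension data governed by the maps) nor, a fortiori, as a $\Z[U,U^{-1}]\otimes_\Z\Z[H^1(Y;\Z)]$-module, so the step from $\#^k(\stwo\times\sone)$ to an arbitrary $Y$ is not a proof as stated. Moreover, the triangle available in this paper (\S\ref{parsurextri}) is for $\hfhat$ with untwisted coefficients; your argument needs an exact sequence for $\hfinfty$ with twisted coefficients, together with genuine control of the maps in it — this is precisely how the cited proof proceeds (an induction reducing $b_1(Y)$ by surgery, analysing the twisted sequence and resting on the separate, nontrivial computation of $\hfinfty$ for rational homology spheres), and none of that machinery is supplied by your sketch. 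Likewise the ``K\"unneth-type formula for $\hfinftwist$'' and the compatibility of coherent orientation systems with the triangle maps are asserted rather than proved, and the latter is exactly the content that singles out the distinguished equivalence class. So the outline points in the right direction, but the load-bearing steps of the existence argument are missing.
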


\section{The Homologies $\hfinfty$, $\hfplus$, $\hfminus$}\label{genhomology}
Given a pointed Heegaard diagram $(\Sigma,\alpha,\beta,z)$, we
define $\cfminus(\Sigma,\alpha,\beta,z;s)$ as the free 
$\Z[U^{-1}]$-module generated by the points of intersection
$(s_z)^{-1}(s)\subset\talpha\cap\tbeta$. For an intersection
$x$ we define
\[
  \parminus x
  =
  \sum_{y\in(s_z)^{-1}(s)}
  \sum_{\phi\in\mu^{-1}(1)}
  \#\modhatphi\cdot U^{-n_z(\phi)}y,
\]
where $\mu^{-1}(1)$ are the homotopy classes in $\pitwo(x,y)$ with expected
dimension equal to one. Note that in this theory we do not restrict to
classes with $n_z=0$. This means even with weak admissibility imposed
on the Heegaard diagram the proof of well-definedness as it was done
in \S\ref{roadto} breaks down.
\begin{definition} A Heegaard diagram $(\Sigma,\alpha,\beta,z)$ is called 
\textbf{strongly admissible} for the $\spinc$-structure $s$ if 
for every non-trivial periodic domain $\dom$ such that
$\left<c_1(s),H(\dom)\right>=2n\geq 0$ the domain $\dom$ has some 
coefficient greater than $n$.
\end{definition}
Imposing strong admissibility on the Heegaard diagram we can prove
well-definedness by showing that only finitely many homotopy classes
of Whitney discs contribute to the moduli 
space $\moduli$ (cf.~\S\ref{roadto}).
\begin{theorem}\label{newdifferential} The 
map $\parminus$ is a differential.
\end{theorem}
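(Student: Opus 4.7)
The plan is to mimic the proof of Theorem~\ref{differential} but now keep track of the $U$-weights and handle the boundary degenerations and sphere bubbling that were previously ruled out by the condition $n_z = 0$.

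The first step is to check that $\parminus$ is well-defined as a map into the $\Z[U^{-1}]$-module, i.e., that for each generator $y$ and each integer $k$ only finitely many homotopy classes $\phi\in\pitwo(x,y)$ with $\mu(\phi)=1$ and $n_z(\phi)=k$ admit holomorphic representatives. Since holomorphic discs have $\dom(\phi)\geq 0$ (the $\{z_i\}\times\symgmo$ are complex submanifolds), strong admissibility for the $\spinc$-structure $s$ implies that this set of classes is finite, which is exactly the upgrade of the $\hfhat$-argument needed here.

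Next, I would expand $\parminus\circ\parminus(x)$ and identify the coefficient $c(x,w;m)$ in front of $U^{-m}w$ as the signed count
\[
  c(x,w;m)=\sum_{\substack{y\\ k_1+k_2=m}}
  \#\modhatxy^0_{n_z=k_1}\cdot\#\modhatyw^0_{n_z=k_2}.
\]
By the gluing theorem this count equals the number of broken ends of the one-dimensional compactified moduli spaces $\widehat{\mathcal{M}}(x,w)^1_{n_z=m}$, summed over homotopy classes $\phi\in\pitwo(x,w)$ with $\mu(\phi)=2$ and $n_z(\phi)=m$ (which is a finite sum by strong admissibility).

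The main issue, and the place where the argument differs from the $\hfhat$-case, is that the compactification of $\modhatphi$ now has potentially three kinds of ends: honest broken flow lines, sphere bubbles and $\alpha$- or $\beta$-boundary degenerations. For sphere bubbling I would invoke the discussion of \S\ref{structmoduli}: by an appropriate choice of $j$ on $\Sigma$ one can work inside a contractible open neighborhood of $\mathrm{sym}^g(j)$ in $\mathcal{J}(j,\eta,V)$ for which every sphere in $\symg$ misses $\talpha\cap\tbeta$, so no sphere bubbling occurs at the limit of a family of Whitney discs. For boundary degenerations, a generic path in this neighborhood gives signed count zero of $\alpha$-degenerate (and similarly $\beta$-degenerate) discs; these contributions therefore cancel in pairs. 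Thus the only ends that contribute nontrivially are the broken flow lines, and the coherent orientation system ensures that the gluing map
\[
  \fglue\co\modhatxy^0_{n_z=k_1}\times\modhatyw^0_{n_z=k_2}
  \hookrightarrow\partial\widehat{\mathcal{M}}(x,w)^1_{n_z=m}
\]
is an orientation-preserving bijection onto the broken ends. Since the total signed count of boundary points of a compact oriented $1$-manifold is zero, we obtain $c(x,w;m)=0$ for every $m$, and hence $\parminus\circ\parminus=0$. The hardest part is the careful control of sphere bubbling and boundary degenerations, which I have handled by citing the choice of almost complex structure from \S\ref{structmoduli}; in a complete write-up this is the step that requires the most analytic input.
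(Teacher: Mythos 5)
Your proposal is correct and follows essentially the same route as the paper: well-definedness via strong admissibility and positivity of intersections with the $\{z_i\}\times\symgmo$, then the end-counting argument of Theorem \ref{differential} with the $U$-weights tracked, using the choice of almost complex structure from \S\ref{structmoduli} so that spheres miss $\talpha\cap\tbeta$ and the signed count of boundary degenerations vanishes. Your write-up is in fact more detailed than the paper's sketch, which simply defers these points to \S\ref{structmoduli} and the literature.
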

As mentioned in \S\ref{roadto}, in this more general case we have to
take a look at bubbling and degenerate discs. The proof follows the
same lines as the proof of Theorem \ref{differential}. With the
remarks made in \S\ref{roadto} it is easy to modify the given proof
to a proof of Theorem \ref{newdifferential} (see \cite{OsZa01}). We
define
\[
  \cfinfty(\Sigma,\alpha,\beta;s)
  =
  \cfminus(\Sigma,\alpha,\beta;s)\otimes_{\Z[U^{-1}]}\Z[U,U^{-1}]
\]
and denote by $\parinfty$ the induced differential. From the definition
we get an inclusion of $\cfminus\hookrightarrow\cfinfty$ whose 
cokernel is defined as $\cfplus(\Sigma,\alpha,\beta,s)$. Finally we
get back to $\cfhat$ by
\[
  \cfhat(\Sigma,\alpha,\beta;s)
  =
  \frac{U\cdot\cfminus(\Sigma,\alpha,\beta;s)}
  {\cfminus(\Sigma,\alpha,\beta;s)}.
\]
The associated homology theories are denoted by $\hfinfty$, $\hfminus$
and $\hfhat$. There are two long exact sequences which can be derived
easily from the definition of the Heegaard Floer homologies. To give
an intuitive picture look at the following illustration:
\[
  \begin{array}{lcccccccccc}
  \cfinfty & = & \dots & U^{-3} & U^{-2} &
  U^{-1} & U^{0} & U^{1} & U^{2} & U^{3} & \dots \\
  \cfminus & = & \dots & U^{-3} & U^{-2} &
  U^{-1} &       &       &       &       &      \\
  \cfhat   & = &       &        &        &
         & U^{0} &       &       &       &      \\
  \cfplus  & = &       &        &        &
         & U^{0} & U^{1} & U^{2} & U^{3} & \dots
  \end{array}
\]
We see why the condition
of weak admissibility is not strong enough to give a well-defined 
differential on $\cfinfty$ or $\cfminus$. However, weak admissibility 
is enough to make the differential on $\cfplus$ well-defined, since 
the complex is bounded from below with respect to the obvious
filtration given by the $U$-variable.
\begin{lem}\label{genseq} There are two long exact sequences
\begin{diagram}[size=1.5em,labelstyle=\scriptstyle]
\dots & \rTo & \hfminus(Y;s) 
& \rTo & \hfinfty(Y;s)&\rTo & \hfplus(Y;s)&\rTo & \dots\\
\dots & \rTo & \hfhat(Y;s) & \rTo & \hfplus(Y;s)&\rTo & 
\hfplus(Y;s)&\rTo & \dots,
\end{diagram}
where $s$ is a $\spinc$-structure of $Y$.
\end{lem}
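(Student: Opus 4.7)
The plan is to exhibit two short exact sequences of chain complexes and then invoke the standard zig-zag (snake) lemma to obtain the asserted long exact sequences. The first short exact sequence
\[
  0 \lra \cfminus(\Sigma,\alpha,\beta;s) \overset{i}{\lra} \cfinfty(\Sigma,\alpha,\beta;s) \overset{\pi}{\lra} \cfplus(\Sigma,\alpha,\beta;s) \lra 0
\]
is tautological from the definitions given just before the lemma: the inclusion $i$ is the one induced by the inclusion $\Z[U^{-1}] \hookrightarrow \Z[U,U^{-1}]$ of coefficients, and $\cfplus$ is \emph{defined} as the cokernel of $i$. Since $\parinfty$ is built by the same counts as $\parminus$, just extended $\Z[U,U^{-1}]$-linearly, both $i$ and $\pi$ commute with the differentials.

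For the second short exact sequence I would set up
\[
  0 \lra \cfplus(\Sigma,\alpha,\beta;s) \overset{U\cdot}{\lra} \cfplus(\Sigma,\alpha,\beta;s) \overset{q}{\lra} \cfhat(\Sigma,\alpha,\beta;s) \lra 0.
\]
Viewing $\cfplus$ as spanned by monomials $U^j x$ with $j\geq 0$, multiplication by $U$ is injective and its image is the subcomplex spanned by $U^j x$ with $j\geq 1$; the cokernel is therefore generated by the $j=0$ slice. Under the identification $\cfhat = (U\cdot\cfminus)/\cfminus$, one checks that $q$ sends the class $[U^0 x]$ to $[Ux]$. It remains to verify that $U\cdot$ is a chain map (clear, since $\parplus(U\cdot\xi)=U\cdot\parplus\xi$ because $U$ is a formal variable that commutes with the coefficients $U^{-n_z(\phi)}$) and that the induced differential on the quotient agrees with $\parhat$. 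For the latter: applied to $U\cdot x$, the differential $\parminus$ produces terms $\#\modhatphi \cdot U^{1-n_z(\phi)} y$; modding out by $\cfminus$ kills every term with $1-n_z(\phi) \leq 0$, i.e.\ with $n_z(\phi) \geq 1$, leaving exactly the $n_z=0$ count that defines $\parhat$. This is the one step that genuinely requires a little care, and it is where the compatibility between the general theory and the hat theory is visible.

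With both short exact sequences established and all maps verified to be chain maps, applying the homological algebra zig-zag lemma to each yields the two stated long exact sequences. The connecting homomorphisms are the usual ones produced by diagram chasing, and no further Floer-theoretic input is needed. The main (and only) conceptual obstacle is recognizing the correct short exact sequence to encode the passage from $\cfplus$ to $\cfhat$ via the $U$-action; once that is in place, everything else is formal.
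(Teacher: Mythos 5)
Your proposal is correct and is essentially the argument the paper has in mind (and leaves to the reader): read off the two short exact sequences from the $U$-power picture of $\cfminus\subset\cfinfty$ with quotient $\cfplus$, and of $\cfhat$ as a single $U$-slice of $\cfplus$, check the maps are chain maps and that the induced differential on the slice is $\parhat$, and apply the zig-zag lemma. The only cosmetic difference is that you realize $\cfhat$ as the cokernel of $U\cdot\co\cfplus\lra\cfplus$ rather than as the kernel of the opposite shift, so in your long exact sequence the map $\hfhat\lra\hfplus$ is the connecting homomorphism; since the lemma does not specify the maps, this yields the stated sequences all the same.
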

The explicit description illustrated above can be derived directly
from the definition of the complexes. We leave this to the interested
reader (see also \cite{OsZa01}).

\section{Topological Invariance}\label{topoinvariance}
Given two Heegaard diagrams $(\Sigma,\alpha,\beta)$ and 
$(\Sigma',\alpha',\beta')$ of a manifold $Y$, they are equivalent
after a finite sequence of isotopies of the attaching circles,
handle slides of the $\alpha$-curves and $\beta$-curves and
stabilizations/destabilizations. Two Heegaard diagrams are equivalent
if there is a diffeomorphism of the Heegaard surface interchanging
the attaching circles. Obviously, equivalent Heegaard diagrams
define isomorphic Heegaard Floer theories. To show that Heegaard
Floer theory is a topological invariant of the manifold $Y$
we have to see that each of the moves, i.e.~isotopies, handle
slides and stabilization/destabilizations yield isomorphic
theories. We will briefly sketch the topological invariance.
This has two reasons: First of all the invariance proof
uses ideas that are standard in Floer homology theories and
hence appear frequently. The ideas provided from the invariance proof
happen to be the standard techniques for proving 
exactness of sequences, proving invariance properties, and proving the
existence of morphisms between Floer homologies. Thus, knowing the
invariance proof, at least at the level of ideas, is crucial
for an understanding of most of the papers published in this
field. We will deal with the $\hfhat$-case and and point the reader to 
\cite{OsZa01} for a general treatment.\vspace{0.3cm}\\
The invariance proof contains several steps. We start showing invariance
under the choice of path of admissible almost complex structures.
Isotopies of the attaching circles are split up into two separate
classes: Isotopies that generate/cancel intersection points and
those which do not change the chain module. The invariance under the
latter Heegaard moves immediately follows from the independence of the choice of
almost complex structures. Such an isotopy is carried by an ambient
isotopy inducing an isotopy of the symmetric product. We perturb 
the almost complex structure and thus interpret the isotopy as 
a perturbation of the almost complex structure. The former Heegaard moves have
to be dealt with separately. We mimic the generation/cancellation
of intersection points with a Hamiltonian isotopy and with it 
explicitly construct an isomorphism of the respective homologies
by counting discs with dynamic boundary conditions. Stabilizations/
destabilizations is the easiest part to deal with: it follows
from the behavior of the Heegaard Floer theory under connected
sums. Finally, handle slide invariance will require us to define
 what can be regarded as the Heegaard Floer homological version of the
pair-of-pants product in Floer homologies. This product has two
nice applications. The first is the invariance under handle
slides and the second is the association of maps to cobordisms
giving the theory the structure of a topological field theory.\vspace{0.3cm}\\

\subsection{Stabilizations/Destabilizations}\label{stabilinvariance}
We determine the groups $\hfhat(\stwo\times\sone\#\stwo\times\sone)$ as a model
calculation for how the groups behave under connected sums.
\begin{figure}[ht!]
\labellist\small\hair 2pt
\pinlabel {$z$} [l] at 240 141
\pinlabel {$x_1$} [l] at 94 116
\pinlabel {$y_1$} [l] at 385 120
\pinlabel {$x_2$} [l] at 89 74
\pinlabel {$y_2$} [l] at 380 74
\pinlabel {$\dom_1$} [t] at 161 26
\pinlabel {$\dom_2$} [l] at 105 10
\pinlabel {$\dom_3$} [tr] at 265 68
\pinlabel {$\dom_4$} [r] at 320 12
\endlabellist
\centering
\includegraphics[width=12cm]{example5-1}
\caption{An admissible Heegaard diagram for $\stwo\times\sone\#\stwo\times\sone$.}
\label{Fig:figTen}
\end{figure}

\begin{example}\label{exam01} We fix admissible Heegaard 
diagrams $(T^2_i,\alpha_i,\beta_i)$ $i=1,2$ for $\stwo\times\sone$ 
as in Example \ref{example01}. To perform the 
connected sum of $\stwo\times\sone$ with itself we choose 
$3$-balls such that their intersection $D$ with the Heegaard 
surface fulfills the property
\[
  \left.\mathcal{J}_s^i\right|_{D}=\mbox{\rm sym}(j_i).
\]
Figure \ref{Fig:figTen} pictures the Heegaard diagram we get for
the connected sum. Denote by $T$ a small connected sum tube inside $\Sigma=T^2_1\#T^2_2$. 
By construction the induced almost complex structure equals
\[
  \left.(\com^1\#\com^2)_s\right|_{T\times\Sigma}=sym^2(j^1\#j^2).
\]
All intersection points belong to the same $\spinc$-structure $s_0$.
For suitable $\spinc$-structures $s_1$, $s_2$ on $\stwo\times\sone$
we have that $s_0=s_1\#s_2$ and
\[
  \cfhat(\Sigma,\alpha,\beta,s_1\#s_2)
  =
  \Z
  \otimes\{(x_i,y_j)\,|\,i,j\in\{1,2\}\}
  \cong\cfhat(T^2_1,s_1)
  \otimes
  \cfhat(T^2_2,s_2).
\]
The condition $n_z=0$ implies that for every holomorphic disc
$\phi\co\D^2\lra\symg$ the low-dimensional model (cf.~\S\ref{roadto})
$\phihat\co\Dhat\lra\Sigma$ stays away from the tube $T$. Consequently
we can split up $\Dhat$ into
\[
  \Dhat=\Dhat_1\sqcup\Dhat_2,
\]
where $\Dhat_i$ are the components containing the preimage
$(\phihat)^{-1}(T^2_i\backslash D)$. Restriction to these components
determines maps $\phihat_i\co\Dhat_i\lra T^2_i$ inducing Whitney discs
$\phi_i$ in the symmetric product $\mbox{\rm Sym}^1(T^2)$. Thus, the 
moduli spaces split:
\[
 \begin{array}{rcl}
 \M_{(\com^1\#\com^2)_s}((x_i,y_k),(x_j,y_l))_{n_z=0}&
 \overset{\cong}{\lra}&
 \M_{\com_s^1}(x_i,x_j)_{n_z=0}\times\M_{\com_s^2}(y_k,y_l)_{n_z=0}\\
 \phi &\lmt& (\phi_1,\phi_2).
 \end{array}
\]
For moduli spaces with expected dimension $\mu=1$, a dimension
count forces one of the factors to be constant. So, the differential 
splits, too, i.e.~for $a_i\in\cfhat(T^2_i,s_i)$, $i=1,2$
we see that
\[
  \parhat_{(\com^1\#\com^2)_s}(a_1\otimes a_2)
  =
  \parhat_{\com_s^1}(a_1)\otimes a_2 
  + 
  a_1\otimes\parhat_{\com_s^2}(a_2).
\]
And consequently
\[
  \hfhat(\stwo\times\sone\#\stwo\times\sone,s_1\#s_2;\orient_1\wedge\orient_2)
  \cong
  \hfhat(\stwo\times\sone,s_1;\orient_1)
  \otimes
  \hfhat(\stwo\times\sone,s_2;\orient_2).
\]
\end{example}
The same line of arguments shows the general statement.
\begin{theorem}[see \cite{OsZa02}]\label{consum} For 
closed, oriented $3$-manifolds $Y_i$, $i=1,2$ the 
Heegaard Floer homology of the connected sum 
$Y_1\#Y_2$ equals the tensor product of the 
Heegaard Floer homologies of the factors, i.e.
\[
  \hfhat(Y_1\#Y_2)=H_*(\cfhat(Y_1)\otimes\cfhat(Y_2)),
\]
where the chain complex on the right carries the natural 
induced boundary.
\end{theorem}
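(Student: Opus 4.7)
The plan is to extend the analysis of Example \ref{exam01} to arbitrary closed oriented $3$-manifolds $Y_1, Y_2$. I proceed in four steps: construct a convenient pointed Heegaard diagram for $Y_1\#Y_2$, identify the chain module as a tensor product, match up the moduli spaces using the low-dimensional domain model, and apply a dimension count to recover the tensor product differential.

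First I would fix weakly admissible pointed Heegaard diagrams $(\Sigma_i,\alpha^i,\beta^i,z_i)$ of $Y_i$, $i=1,2$, of genera $g_i$. Removing small disks around $z_1$ and $z_2$ and gluing in a connected sum tube $T$ yields a genus $g=g_1+g_2$ surface $\Sigma=\Sigma_1\#\Sigma_2$, on which $\alpha=\alpha^1\cup\alpha^2$ and $\beta=\beta^1\cup\beta^2$ are disjoint from $T$; place the new basepoint $z$ inside $T$. Standard handle-decomposition arguments show that this is a pointed Heegaard diagram for $Y_1\#Y_2$. Admissibility (in the sense needed for the $\hfhat$-theory) follows from admissibility of each factor: any periodic domain $\dom$ of the new diagram has $n_z(\dom)=0$, so $\dom$ is supported away from the big region containing $T$ and its restriction to each $\Sigma_i$ is a periodic domain there, whence positive and negative coefficients appear on each side.

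Since the $\alpha^1,\beta^1$ lie in $\Sigma_1$ and the $\alpha^2,\beta^2$ lie in $\Sigma_2$, every intersection point in $\talpha\cap\tbeta\subset\mbox{\rm Sym}^g(\Sigma)$ decomposes uniquely as a pair $(x^1,x^2)$, with $x^i$ an intersection point in the corresponding $\symgone$, respectively $\symgtwo$. This yields a canonical identification
\[
  \cfhat(\Sigma,\alpha,\beta,z)\cong\cfhat(Y_1)\otimes_\Z\cfhat(Y_2)
\]
of $\Z$-modules. To match differentials I would choose a path $(\com^1\#\com^2)_s$ of nearly symmetric almost complex structures on $\mbox{\rm Sym}^g(\Sigma)$ that equals $\mbox{\rm sym}^g(j^1\#j^2)$ on a neighborhood of $T$. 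The crucial observation is that any Whitney disc $\phi$ with $n_z(\phi)=0$ has associated domain $\dom(\phi)$ of multiplicity zero at the unique region of $\Sigma\setminus(\alpha\cup\beta)$ containing $z$; that region contains both $T$ and the old regions $\dom_{z_1}$ and $\dom_{z_2}$. Hence $\dom(\phi)$ is supported in $(\Sigma_1\setminus\dom_{z_1})\sqcup(\Sigma_2\setminus\dom_{z_2})$, and the low-dimensional model $\widehat{\phi}\co\Dhat\lra\Sigma$ splits as $\Dhat=\Dhat_1\sqcup\Dhat_2$; projecting back into the symmetric products produces Whitney discs $\phi_i$ in $\symgone$ and $\symgtwo$. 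This correspondence is a bijection, and both $\mu$ and $n_z$ are additive under the decomposition, so one obtains a homeomorphism
\[
  \mathcal{M}_{(\com^1\#\com^2)_s}((x^1,x^2),(y^1,y^2))_{n_z=0}
  \cong
  \mathcal{M}_{\com^1_s}(x^1,y^1)_{n_{z_1}=0}\times\mathcal{M}_{\com^2_s}(x^2,y^2)_{n_{z_2}=0}.
\]
For discs contributing to $\parhat_z$ we require $\mu(\phi)=1$; the dimension count forces exactly one factor to be an index-$0$ constant disc, which is precisely the Leibniz rule for the tensor product differential $\parhat_1\otimes1+1\otimes\parhat_2$.

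The main obstacle is the analytic setup around the tube: one must simultaneously achieve transversality on both factors while keeping $(\com^1\#\com^2)_s$ in the prescribed product form near $T$. This is handled by a parametric Sard--Smale argument applied only to perturbations of $\com^1_s$ and $\com^2_s$ away from the neck, showing that the set of paths making the three moduli spaces simultaneously smooth of the expected dimensions is dense. A parallel bookkeeping with coherent orientations, using the gluing identification of determinant line bundles to define a product system $\orient_1\wedge\orient_2$, verifies that the bijection above is orientation preserving; therefore the chain-level decomposition holds with signs. Passing to homology yields the isomorphism of the theorem.
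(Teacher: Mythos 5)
Your proposal is correct and follows essentially the same route as the paper: the paper carries out exactly this argument in the model calculation of Example \ref{exam01} (tensor decomposition of the chain module, splitting of the low-dimensional model $\phihat\co\Dhat\lra\Sigma$ away from the tube because $n_z=0$, the product decomposition of the moduli spaces, and the dimension count forcing one factor to be constant, giving the Leibniz rule), and then simply remarks that the same line of arguments gives the general statement. Your additional remarks on admissibility, transversality near the neck, and the product orientation system $\orient_1\wedge\orient_2$ are consistent with what the paper implicitly uses.
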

Stabilizing a Heegaard diagram of $Y$ means, on the manifold level, to 
do a connected sum with $\sthree$. We know that $\hfhat(\sthree)=\Z$.
By the classification of finitely generated abelian groups
and the behavior of the tensor product, invariance
follows.

\subsection{Independence of the Choice of Almost Complex Structures}
\label{acsinvariance}
Suppose we are given a $1$-dimensional family of paths of 
$(j,\eta,V)$-nearly symmetric almost complex 
structures $(\com_{s,t})$. Given a Whitney disc $\phi$, we
define $\modfamilyphi$ as the moduli space of Whitney discs
in the homotopy class of $\phi$ which satisfy the equation
\[
  \frac{\partial\phi}{\partial s}(s,t)
  +
  \com_{s,t}
  \bigl(
   \frac{\partial\phi}{\partial t}(s,t)
  \bigr)
  =0.
\]
Observe that there is no free translation action on the moduli
spaces as on the moduli spaces we focused on while discussing
the differential $\parhat_z$. We define a map $\Phihat_{\modfamily}$
between the theories $(\cfhat(\Sigma,\alpha,\beta,z),\parhat_{\com_{s,i}})$
for $i=0,1$ by defining for $x\in\talpha\cap\tbeta$
\[
  \Phihat_{\com_{s,t}}(x)
  =
  \sum_{y\in\talpha\cap\tbeta}
  \sum_{\phi\in H(x,y,0)}
  \#\modfamilyphi\cdot y,
\]
where $H(x,y,0)\subset\pitwo(x,y)$ are the homotopy classes with
expected dimension $\mu=0$ and intersection number $n_z=0$.
There is an energy bound for all holomorphic Whitney discs
which is independent of the particular Whitney disc or its
homotopy class (see \cite{OsZa01}). Thus, the moduli spaces 
are Gromov-compact manifolds, i.e.~can be compactified by 
adding solutions coming from broken discs, bubbling of spheres 
and boundary degenerations (cf.~\S\ref{structmoduli}).
Since we stuck to the $\hfhat$-theory we impose the condition
$n_z=0$ which circumvents bubbling of spheres and boundary degenerations
(see \S\ref{structmoduli}).\vspace{0.3cm}\\
To check that $\Phihat$ is a chain map, we compute
\begin{eqnarray*}
  \parhat_{J_{s,1}}\circ\Phihat_{J_{s,t},z}(x)
  -
  \Phihat_{J_{s,t}}\circ\parhat_{J_{s,0},z}(x)
  &=&
  \sum_{\underset{\phi\in H(x,y,0),\psi\in H(y,z,1)}{y,z}}
  \#\modulit(\phi)\#\modhatone(\psi) z\
  \\
  &&-\sum_{\underset{\phi\in H(x,y,1),\psi\in H(y,z,0)}{y,z}}
  \#\modhatzero(\phi)\#\modulit(\psi)z\\
  &=&
  \sum_z c(x,z)\cdot z.
\end{eqnarray*}
The coefficient $c(x,z)$ is given by
\begin{equation}
  \sum_{y,I}
  \bigl(\#\modfamilyphi\cdot\#\widehat{\modspace}_{\com_{s,1},\psi}
  -
  \#\widehat{\modspace}_{\com_{s,0},\widetilde{\psi}}
  \cdot
  \#\modspace_{\com_{s,t},\widetilde{\phi}}
  \bigr),\label{coefficients}
\end{equation}
where $I$ consists of pairs 
\[
  (\phi,\widetilde{\phi})\in H(x,y,0)\times H(y,z,0)
  \;\text{ and }
  (\psi,\widetilde{\psi})\in H(x,y,1)\times H(y,z,1).
\]
Looking at the ends of the moduli spaces $\modulit(\eta)$ for an 
$\eta\in H(x,z,1)$, the gluing construction (cf.~\S\ref{structmoduli})
together with the compactification argument mentioned earlier provides
the following ends:
\begin{equation}
  \Bigl(
  \bigsqcup_{\eta=\psi*\phi}
  (\modulit(\phi)\times\modhatone(\psi))
  \Bigr)
  \sqcup
  \Bigl(
  \bigsqcup_{\eta=\widetilde{\psi}*\widetilde{\phi}}
  (\modhatzero(\widetilde{\psi})\times\modulit(\widetilde{\phi}))
  \Bigr),\label{ends}
\end{equation}
where the expected dimensions of $\phi$ and $\widetilde{\phi}$ are $1$ and 
of $\psi$ and $\widetilde{\psi}$ they are $0$. A signed count of (\ref{ends}) precisely
reproduces (\ref{coefficients}) and hence $c(x,z)=0$ -- at least in 
$\ztwo$-coefficients. To make this work in general, i.e.~with coherent orientations,
observe that we have the following condition imposed on the sections:
\[
  \orient_{s,t}(\phi)
  \wedge
  \orient_1(\psi)
  =
  -\orient_0(\widetilde{\psi})
  \wedge
  \orient_{s,t}(\widetilde{\phi}).
\]
We get an identification of orientation systems, $\xi$ say, such that $\Phi$ is
a chain map between 
\[
  (\cfhat(\Sigma,\alpha,\beta,z),\parhat_{\com_{s,0}}^{\,\orient})
  \lra
  (\cfhat(\Sigma,\alpha,\beta,z),\parhat_{\com_{s,1}}^{\,\xi(\orient)}).
\]
We reverse the direction of the isotopy and define a
map $\Phihat_{\com_{s,1-t}}$. The compositions
\[
  \Phihat_{\com_{s,1-t}}
  \circ
  \Phihat_{\com_{s,t}}
  \;\;
  \mbox{\rm and }
  \;
  \;
  \Phihat_{\com_{s,t}}
  \circ
  \Phihat_{\com_{s,1-t}}
\]
are both chain homotopic to the identity. In the following we will discuss
the chain homotopy equivalence for the map  
$\Phihat_{\com_{s,t}}\circ\Phihat_{\com_{s,1-t}}$.\vspace{0.3cm}\\
Define a path
$\com_{s,t}(\tau)$ such that $\com_{s,t}(0)=\com_{s,t}*\com_{s,1-t}$ and
$\com_{s,t}(1)=\com_{s,0}$. The existence of this path follows from the
fact that we choose the paths inside a contractible set 
(cf.~\S\ref{structmoduli} or see \cite{OsZa01}). Define 
the moduli space
\[
  \modulittaubigphi=\bigcup_{\tau\in[0,1]}\modulittauphi.
\]
\begin{theorem}\label{fammoduli} Let 
$\com_{(t_1,\dots,t_n)}$ be an $n$-parameter family 
of generic almost complex structures and $\phi$ a homotopy class of 
Whitney discs with expected dimension $\mu(\phi)$. Then $\mathcal{M}$,
defined as the union of $\mathcal{M}_{\com_{(t_1,\dots,t_n)},\phi}$
over all $\com_{(t_1,\dots,t_n)}$ in the family, is a manifold of 
dimension $\mu(\phi)+n$.
\end{theorem}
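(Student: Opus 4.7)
The plan is to mimic the argument sketched for Theorem \ref{modmani}, but now with the parameter space enlarged by the $n$-dimensional family. First I would set up the universal moduli space: consider the Banach bundle $\mathcal{L} \to [0,1]^n \times \banbund(x,y)$ obtained by pulling back the bundle $\mathcal{L} \to \banbund(x,y)$ from Section \ref{structmoduli} under the second projection, and then look at the parametrized Cauchy--Riemann section
\[
  \overline{\partial}(\vec t,\phi)(s,u)
  = \frac{\partial\phi}{\partial s}(s,u)
  + \com_{(t_1,\ldots,t_n)}\!\Bigl(\frac{\partial\phi}{\partial u}(s,u)\Bigr).
\]
The moduli space $\M$ in the statement is precisely the intersection of this section with the zero section $\banbund_0$, restricted to the homotopy class $\phi$.

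Next I would compute the linearization of $\overline{\partial}$ at a zero $(\vec t_0,\phi_0)$. It has the form
\[
  L(\vec s,\xi)
  = D_{\phi_0}\riemop(\xi)
  + \sum_{i=1}^n s_i\,(\partial_{t_i}\com_{\vec t})|_{\vec t_0}\!\bigl(\tfrac{\partial\phi_0}{\partial u}\bigr),
\]
where $D_{\phi_0}\riemop$ is the usual Fredholm operator of index $\mu(\phi)$ used in the proof of Theorem \ref{modmani}. Adding the $n$ free parameter directions produces a Fredholm operator of index $\mu(\phi)+n$: kernel and cokernel are still finite-dimensional, and the index is additive under the direct summand $\R^n$ in the domain.

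The content of the theorem is that for the given generic $n$-parameter family the combined operator $L$ is surjective at every solution. This is the analogue of the genericity statement underlying Theorem \ref{modmani}, and is proved by the same Sard--Smale strategy: one enlarges the parameter space by considering all $n$-parameter families of $(j,\eta,V)$-nearly symmetric almost complex structures, forms the total universal moduli space, and observes that the Fredholm projection onto the space of such families has, by Sard--Smale, a residual set of regular values. Families $\com_{(t_1,\ldots,t_n)}$ in this residual set are exactly the ones for which $L$ is surjective at every zero; these are what ``generic'' means in the statement.

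Once transversality is in hand, the implicit function theorem in Banach spaces implies that $\M = \overline{\partial}^{-1}(\banbund_0)$ is locally the graph of a smooth map, hence a smooth manifold, and its dimension equals the Fredholm index of $L$, namely $\mu(\phi)+n$. The main obstacle is the transversality step, since one must check that the perturbations $\partial_{t_i}\com$ span enough directions in the cokernel of $D_{\phi_0}\riemop$; this is standard in Floer theory and proceeds exactly as in \cite{OsZa01}, using that the $\com_{\vec t}$ are allowed to vary freely inside the contractible open set $\mathcal{J}(j,\eta,V)$ away from $V$.
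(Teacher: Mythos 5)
Your proposal is correct and is exactly the intended argument: the paper states Theorem \ref{fammoduli} without proof, deferring to the same Sard--Smale scheme it sketches for Theorem \ref{modmani} and to \cite{OsZa01} --- universal moduli space over the space of $n$-parameter families, regular values of the Fredholm projection, index additivity giving $\mu(\phi)+n$, then the implicit function theorem --- which is precisely what you do. Two cosmetic remarks only: the members of the family are \emph{paths} of almost complex structures, so they also depend on the variable $s$ in the Cauchy--Riemann equation (your notation suppresses this), and since the parameter domain $[0,1]^n$ has boundary, $\M$ is strictly speaking a manifold with boundary (or corners), which is exactly what produces the ends ``coming from variations of the almost complex structure'' that the paper uses immediately after the theorem.
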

There are two types of boundary components:
the one type of boundary component coming from variations of the Whitney disc 
$\phi$ which are breaking, bubbling or degenerations
and the other type of ends coming from variations of the almost complex 
structure.\vspace{0.3cm}\\
We define a map
\[
  \Hhat_{\com_{s,t}(\tau)}(x)
  =
  \sum_{y\in\talpha\cap\tbeta}
  \sum_{\phi\in H(x,y,-1)}
  \#\modulittauphi
  \cdot
  y,
\]
where $H(x,y,-1)\subset\pitwo(x,y)$ are the homotopy classes $\phi$
with $n_z(\phi)=0$ and expected dimension $\mu(\phi)=-1$. According
to Theorem \ref{fammoduli}, the manifold $\modulittauphi$ is
$0$-dimensional. We claim that $\Hhat$ is a chain homotopy between
$\Phihat_{\com_{s,t}}\circ\Phihat_{\com_{s,1-t}}$ and the identity.
By definition, the equation
\begin{equation}
  \Phihat_{\com_{s,t}}
  \circ\Phihat_{\com_{s,1-t}}
  -
  \mbox{\rm id}
  -
  (\parhat_{\com_{s,0}}\circ 
  \Hhat_{\com_{s,t}(\tau)}
  +
  \Hhat_{\com_{s,t}(\tau)}\circ\parhat_{\com_{s,1}})
  =0\label{chheq}
\end{equation}
has to hold. 
Look at the ends of $\modulittaubig(\psi)$ for $\mu(\psi)=0$.
This is a $1$-dimensional space, and there are the ends 
\[
  \Bigl(\bigsqcup_{\psi=\eta*\phi}
  \widehat{\modspace}_{\com_{s,0},\eta}
  \times
  \modspace_{\com_{s,t}(\tau),\phi}
  \Bigr)
  \sqcup
  \Bigl(
  \bigsqcup_{\psi=\widetilde{\eta}*\widetilde{\phi}}
  \modspace_{\com_{s,t}(\tau),\widetilde{\eta}}
  \times
  \widehat{\modspace}_{\com_{s,1},\widetilde{\phi}}
  \Bigr)
\]
coming from variations of the Whitney disc, and the ends 
\[
  \modspace_{\com_{s,t}(0),\psi}
  \sqcup
  \modspace_{\com_{s,t}(1),\psi}
\]
coming from variations of the almost complex structure.
These all together precisely produce the coefficients in equation (\ref{chheq}).
Thus, the Floer homology is independent of the choice of $(j,\eta,V)$-nearly
symmetric path. Variations of $\eta$ and $V$ just change the contractible
neighborhood $\mathcal{U}$ around $\sym^g(j)$ containing the admissible
almost complex structures. So, the theory is independent of these 
choices, too. A $j'$-nearly symmetric path can be approximated by a 
$j$-symmetric path given that $j'$ is close to $j$. The set of complex
structures on a surface $\Sigma$ is connected, so step by step one can
move from a $j$-symmetric path to any $j'$-symmetric path.

\subsection{Isotopy Invariance}\label{isotopyinvariance}
Every isotopy of an attaching circle can be divided into two classes: 
creation/anhillation of pairs of intersection points and isotopies 
not affecting transversality. An isotopy of an $\alpha$-circle
of the latter type induces an isotopy of $\talpha$ in the symmetric
product. Compactness of the $\talpha$ tells us that there is an
ambient isotopy $\phi_t$ carrying the isotopy. With this isotopy
we perturb the admissible path of almost complex structures as
\[
  \widetilde{\com}_s=(\phi_1^{-1})_*\circ\com_s\circ(\phi_1)_*
\]
giving rise to a path of admissible almost complex structures.
The diffeomorphism $\phi_1$ induces an identification of the
chain modules. The moduli spaces defined by $\com_s$
and $\widetilde{\com}_s$ are isomorphic. Hence
\begin{equation}
  H_*(\cfhat(\Sigma,\alpha,\beta),\parhat_z^{\com_s})
  =
  H_*(\cfhat(\Sigma,\alpha',\beta),\parhat_z^{\widetilde{\com}_s})
  =
  H_*(\cfhat(\Sigma,\alpha',\beta),\parhat_z^{\com_s}),
  \label{eqjinvar}
\end{equation}
where the last equality follows from the considerations in 
\S\ref{acsinvariance}. This chain of equalities shows that 
the isotopies discussed can be interpreted as variations of
the almost complex structure.\vspace{0.3cm}\\
The creation/cancellation of pairs of intersection points is
done with an exact Hamiltonian isotopy supported in a small
neighborhood of two attaching circles. We cannot use the methods
from \S\ref{acsinvariance} to create an isomorphism between the associated
Floer homologies. At a 
certain point the isotopy violates transversality
as the attaching tori do not intersect transversely.  
Thus, the arguments of \S\ref{acsinvariance} for the right equality in (\ref{eqjinvar}) 
break down.\vspace{0.3cm}\\
Consider an exact Hamiltonian isotopy $\psi_t$ of an $\alpha$-curve
generating a canceling pair of intersections with a $\beta$-curve.
We will just sketch the approach used in this context, since the
ideas are similar to the ideas introduced 
in \S\ref{acsinvariance}.\vspace{0.3cm}\\
Define $\pitwoham(x,y)$ as the set of Whitney discs $\phi$ with dynamic
boundary conditions in the following sense:
\begin{eqnarray*}
  \phi(i) &=&x, \\
  \phi(-i)&=&y, \\
  \phi(0+it)&\in&\Psi_t(\talpha) \\
  \phi(1+it)&\subset&\tbeta
\end{eqnarray*}
for all $t\in\R$. Spoken geometrically, we follow the isotopy with
the $\alpha$-boundary of the Whitney disc. Correspondingly, we define
the moduli spaces of $\com_s$-holomorphic Whitney discs with dynamic 
boundary conditions as $\moduliiso(x,y)$. For $x\in\talpha\cap\tbeta$
define
\[
  \Isotopy(x)
  =
  \sum_{y\in\talpha\cap\tbeta}
  \sum_{\phi\in H_t(x,y,0)}
  \#\moduliiso_{\com_s,\phi}\cdot y,
\]
where $H_t(x,y,0)\subset\pitwoham(x,y)$ are the homotopy classes with
expected dimension $\mu=0$ and $n_z=0$. Using the low-dimensional
model introduced in \S\ref{roadto}, Ozv\'{a}th and Szab\'{o} prove
the following property.
\begin{theorem}[see \cite{OsZa01}, \S7.3] There exists a $t$-independent 
energy bound for holomorphic Whitney discs independent of its homotopy 
class.
\end{theorem}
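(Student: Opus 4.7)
My plan is to split the Floer energy of a disc with dynamic boundary into a topological piece, controlled by the homotopy class, and a Hamiltonian correction, controlled by the compactly supported perturbation.

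First I would straighten the moving boundary. The exact Hamiltonian isotopy $\Psi_t$ of $\talpha\subset\symg$ is generated by a Hamiltonian $H_t\co\symg\lra\R$ that is compactly supported uniformly in $t$, since the underlying isotopy on $\Sigma$ was supported in a small neighborhood of two attaching circles. Choose a cutoff $\rho\co[0,1]\lra[0,1]$ with $\rho(0)=1$ and $\rho(1)=0$, and define $\widetilde{\phi}(s,t)=\Psi^{-1}_{\rho(s)t}(\phi(s,t))$. Then $\widetilde{\phi}$ has the standard boundary conditions $\widetilde{\phi}(0+it)\in\talpha$ and $\widetilde{\phi}(1+it)\in\tbeta$, at the cost of turning the Cauchy-Riemann equation into a perturbed equation with a Hamiltonian source term built out of $H_t$ and $\rho$.

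For this perturbed equation, the usual Floer energy identity takes the form
\[E(\phi)=E(\widetilde{\phi})=\int_{\D^2}\widetilde{\phi}^{\,*}\omega\;+\;R(\widetilde{\phi}),\]
where $\omega$ is the tame symplectic form on $\symg$ induced by $(j,\eta)$ and $R$ is a Hamiltonian remainder. The symplectic area $\int\widetilde{\phi}^{\,*}\omega$ depends only on the homotopy class of $\phi$ (the gauge change through $\Psi_t$ does not alter the relative homology class), and by positivity of area equals the $\eta$-weighted area of the associated domain $\dom(\phi)\subset\Sigma$. The remainder $R$ is pointwise bounded by $\|H\|_{C^0([0,1]\times\symg)}$, which is finite and independent of $t$ and of the particular disc. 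To upgrade this to a bound independent of homotopy class, I would invoke weak admissibility of the Heegaard diagram: for fixed Maslov index $\mu$ and intersection number $n_z$, only finitely many homotopy classes admit holomorphic representatives. Since $\Isotopy$ counts only discs with $\mu=0$ and $n_z=0$, the associated $\eta$-areas range over a finite set, yielding the desired uniform, $t$-independent energy bound.

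The main obstacle is the rigorous derivation of the perturbed energy identity on the non-compact strip $[0,1]\times\R$ with dynamic boundary conditions: convergence of all integrals appearing in the identity, in particular of the Hamiltonian remainder and of the boundary terms produced by Stokes, has to be controlled. This is possible because the maps in $\banbund(x,y)$ converge exponentially to the intersection points $x,y$ at the ends of the strip (built into the Sobolev completion), and because $H_t$ is compactly supported away from these asymptotic regions once the cutoff $\rho$ has been arranged to vanish near $s=1$ and equal $1$ near $s=0$.
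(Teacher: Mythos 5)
First, note that the paper does not actually prove this statement: it is quoted from \cite{OsZa01}, \S 7.3, with the remark that Ozsv\'ath and Szab\'o obtain the bound \emph{via the low-dimensional model}, i.e.\ by bounding the energy of a disc with dynamic boundary conditions by (a constant times) the $\eta$-area of its shadow domain in $\Sigma$, observing that this area is a topological quantity which does not change as the exact isotopy proceeds, and then using admissibility/positivity to restrict to finitely many relevant classes. Your route --- straighten the moving boundary by a gauge transformation, pass to a Hamiltonian-perturbed Cauchy--Riemann equation, and split the energy into a topological area term plus a Hofer-type remainder --- is the standard symplectic-topology argument for moving Lagrangian boundary conditions and is a genuinely different, in principle viable, path to the same conclusion.

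However, as written there are two concrete gaps. (1) The remainder estimate does not follow from your gauge as stated: with $\widetilde{\phi}(s,t)=\Psi^{-1}_{\rho(s)t}(\phi(s,t))$ the perturbation term involves $\partial_s\Psi_{\rho(s)t}$, whose size is of order $|\rho'(s)|\,|t|$ and whose support $\{0\le\rho(s)t\le 1\}$ is \emph{not} compact in the strip $[0,1]\times\R$, so the remainder is not ``pointwise bounded by $\|H\|_{C^0}$''. What saves the argument is the curvature cancellation for this flat family of symplectomorphisms together with Stokes, which reduces the correction to boundary/asymptotic terms bounded by the Hofer length $\int(\max H_t-\min H_t)\,dt$; and this crucially uses that $\Psi_t$ is constant (so $H_t\equiv 0$) for $t$ outside a compact interval --- a \emph{temporal} support condition, which you conflate with the spatial compact support of $H_t$ and with the choice of cutoff $\rho$ in the $s$-variable, neither of which controls the boundary integral over $t\in\R$. (2) Your identification of the energy with $\int\widetilde{\phi}^{\,*}\omega$ for a globally taming form $\omega$ on $\symg$ is not available in the nearly symmetric framework: $\com_s$ tames $\pi_*(\eta^{\times g})$ only away from the neighborhood $V$ of the diagonal, and near the diagonal one only has $sym^g(j)$. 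Equating energy with the $\eta$-weighted area of $\dom(\phi)$ is therefore not a formality --- it is precisely the shadow-area lemma that constitutes the low-dimensional-model proof in \cite{OsZa01}, so your argument silently presupposes the key step of the argument it is meant to replace. Finally, a smaller point: the finiteness of homotopy classes with $\mu$ and $n_z$ fixed and non-negative domains must be run for the dynamic classes $\pitwoham(x,y)$, whose domains have boundary on the isotoped $\alpha$-curves, so the admissibility argument needs to be adapted rather than quoted.
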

The existence of this energy bound shows that there are 
Gromov compactifications of the moduli spaces of Whitney discs with 
dynamic boundary conditions. 
\begin{theorem} The map $\Isotopy$ is a chain map. Using the
inverse isotopy we define $\Isotopyinverse$ such that the compositions
$\Isotopy\circ\Isotopyinverse$ and $\Isotopyinverse\circ\Isotopy$ are
chain homotopic to the identity.
\end{theorem}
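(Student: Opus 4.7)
The plan is to transplant the argument from \S\ref{acsinvariance} to the Hamiltonian-perturbed setting, using the energy bound from the previous theorem to ensure the relevant Gromov compactifications exist.

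For the chain map property, I would compute $\parhat_z\circ\Isotopy-\Isotopy\circ\parhat_z$ on a generator $x$ and rewrite it as $\sum_z c(x,z)\cdot z$, where $c(x,z)$ is a signed count over pairs $(\phi,\psi)$ in which exactly one of the two factors is a dynamic-boundary disc of expected dimension $0$ and the other is a standard $\parhat_z$-type disc of expected dimension $1$ (both with $n_z=0$). These pairs are precisely the images of the gluing map into the boundary of $\moduliiso_{\com_s,\eta}$ as $\eta$ ranges over $H_t(x,z,1)$. Using the energy bound and the fact that $n_z=0$ rules out sphere bubbling and boundary degenerations exactly as in \S\ref{structmoduli}, the compactified moduli spaces are $1$-manifolds whose only ends are the broken configurations above. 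Hence each $c(x,z)$ is a count of the boundary of a compact oriented $1$-manifold and vanishes, yielding $\parhat_z\circ\Isotopy=\Isotopy\circ\parhat_z$ (after the usual identification of coherent orientation systems).

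For the chain homotopy, I would imitate the construction of $\Hhat_{\com_{s,t}(\tau)}$ in \S\ref{acsinvariance}, but with the $\tau$-parameter interpolating between isotopies of boundary conditions rather than between paths of almost complex structures. Choose a $1$-parameter family $\Psi_t(\tau)$ with $\Psi_t(0)=\Psi_t*\Psi_{1-t}$ and $\Psi_t(1)$ equal to the constant isotopy; this is possible because Hamiltonian isotopies supported near the relevant attaching circles form a contractible space. Let $\modulittauphi$ denote the moduli space of $\com_s$-holomorphic discs whose $\talpha$-boundary tracks $\Psi_t(\tau)$, and define
\[
  \Hhat(x)=\sum_{y\in\talpha\cap\tbeta}\sum_{\phi\in H_t(x,y,-1)}\#\modulittauphi\cdot y,
\]
where the index now refers to $\mu(\phi)=-1$ and $n_z(\phi)=0$. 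By a parametric version of Theorem \ref{fammoduli}, these spaces are $0$-dimensional for generic $\tau$, and the bigger union over $\tau$ is $1$-dimensional for $\mu(\phi)=0$.

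Checking the chain homotopy identity
\[
  \Isotopy\circ\Isotopyinverse-\mbox{\rm id}
  =
  \parhat_z\circ\Hhat+\Hhat\circ\parhat_z
\]
then reduces to enumerating the ends of $\modulittaubigphi$ for $\mu(\phi)=0$. Two types of ends appear: breaking ends in the $\modspace$-direction, which exactly contribute the right-hand side, and the $\tau$-endpoint ends at $\tau=0$ and $\tau=1$. At $\tau=0$ the composition $\Psi_t*\Psi_{1-t}$ together with a gluing/splitting argument gives the count $\Isotopy\circ\Isotopyinverse$, while at $\tau=1$ the constant isotopy leaves only the constant discs and contributes $\mbox{\rm id}$. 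Summing all ends of an oriented compact $1$-manifold to zero then yields the identity. The same argument with $t\mapsto 1-t$ handles $\Isotopyinverse\circ\Isotopy$.

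The main obstacle I expect is the analysis at the parameters $\tau$ where the Hamiltonian isotopy ceases to preserve transversality of $\talpha\cap\tbeta$, or where intersection points are created or cancelled: one must verify that even there the compactness coming from the $t$-independent energy bound suffices and that the moduli spaces $\modulittauphi$ glue coherently across such $\tau$. Once coherent orientations are propagated along the $\tau$-family in the manner of \S\ref{structmoduli}, the signed counts of ends match the coefficients of the chain homotopy equation, and the theorem follows.
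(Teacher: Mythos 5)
Your proposal is correct and follows exactly the route the paper intends: the paper's own proof consists of the remark that the argument ``follows the same lines as in \S\ref{acsinvariance},'' and your write-up is precisely that transplantation --- counting ends of $1$-dimensional moduli spaces of dynamic-boundary discs for the chain map property, and a $\tau$-parameter family interpolating the concatenated isotopy to the constant one (with $\mu=-1$ discs defining $\Hhat$) for the chain homotopies, with compactness supplied by the $t$-independent energy bound and the $n_z=0$ condition excluding bubbling and degenerations.
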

The proof follows the same lines as in \S\ref{acsinvariance}. We
leave the proof to the interested reader.

\subsection{Handle slide Invariance}\label{parhsinvar}
\subsubsection{The Pair-of-Pants Product}\label{popprod}
In this paragraph we will introduce the Heegaard Floer incarnation
of the pair-of-pants product and with it associate to cobordisms maps
between the Floer homologies of their boundary components.
In case the cobordisms are induced by handle slides the associated
maps are isomorphisms on the level of homology. The maps we will
introduce will count holomorphic triangles in the symmetric product
with appropriate boundary conditions. We have to discuss well-definedness
of the maps and that they are chain maps. To do that we have to
follow similar lines as it was done for the differential. Because
of the strong parallels we will shorten the discussion here. We
strongly advise the reader to first read \S\ref{roadto} before
continuing.
\begin{definition} A set of data $(\Sigma,\alpha,\beta,\gamma)$, where
$\Sigma$ is a surface of genus $g$ and $\alpha$, $\beta$, $\gamma$ three
sets of attaching circles, is called a {\bf Heegaard triple diagram}.
\end{definition}
We denote the $3$-manifolds determined be taking pairs of these attaching
circles as $Y_{\alpha\beta}$, $Y_{\beta\gamma}$ and $Y_{\alpha\gamma}$.
We fix a point $z\in\Sigma\backslash\{\alpha\cup\beta\cup\gamma\}$ and
define a product
\[
  \fhat_{\alpha\beta\gamma}
  \co
  \cfhat(\Sigma,\alpha,\beta,z)
  \otimes
  \cfhat(\Sigma,\beta,\gamma,z)
  \lra
  \cfhat(\Sigma,\alpha,\gamma,z)
\]
by counting holomorphic triangles with suitable boundary conditions: A
{\bf Whitney triangle} is a map $\phi\co\Delta\lra\symg$ with boundary
conditions as illustrated in Figure \ref{Fig:figFourteen}. We call the
respective boundary segments its {\bf $\alpha$-, $\beta$- and $\gamma$-boundary}.
The boundary points, as should be clear from the picture, are
$x\in\talpha\cap\tbeta$, $w\in\talpha\cap\tgamma$ and $y\in\tbeta\cap\tgamma$.
The set of homotopy classes of Whitney discs connecting 
$x$, $w$ and $y$ is denoted by $\pitwo(x,y,w)$.
\begin{figure}[ht!]
\labellist\small\hair 2pt
\pinlabel {$w$} [tr] at 7 34
\pinlabel {$x$} [B] at 126 235
\pinlabel {$y$} [tl] at 247 27
\pinlabel {$\talpha$} [Br] at 71 136
\pinlabel {$\tbeta$} [Bl] at 191 136
\pinlabel {$\tgamma$} [t] at 126 26
\endlabellist
\centering
\includegraphics[width=4cm]{triangleO}
\caption{A Whitney triangle and its boundary conditions.}
\label{Fig:figFourteen}
\end{figure}

Denote by $\modtriangle_\phi$ the moduli space of holomorphic triangles in
the homotopy class of $\phi$. Analogous to the case of discs we denote
by $\mu(\phi)$ its expected/formal dimension. For $x\in\talpha\cap\tbeta$
define
\[
  \fhat_{\alpha\beta\gamma}(x\otimes y)
  =
  \sum_{w\in\talpha\cap\tgamma}
  \sum_{\phi\in H(x,y,w,0)}\#\modtriangle_\phi\cdot w,
\]
where $H(x,y,w,0)\subset\pitwo(x,y,w)$ is the subset with $\mu=0$
and $n_z=0$. The set of homotopy classes of Whitney discs fits into an
exact sequence
\begin{equation}
  0
  \lra
  \pitwo(\symg)
  \lra
  \pitwo(x,y,w)
  \lra
  \ker(n_z)
  \lra 
  0, \label{exseq2}
\end{equation}
where $n_z$ provides a splitting for the sequence.
We define
\[
  X_{\alpha\beta\gamma}
  =
  \frac
  {
   (\Delta\times\Sigma)
   \cup 
   e_\alpha\times U_\alpha
   \cup 
   e_\beta\times U_\beta
   \cup 
   e_\gamma\times U_\gamma
  }
  {
   (e_\alpha\times\Sigma)
   \sim
   (e_\alpha\times\partial U_\alpha), 
   (e_\beta\times\Sigma)
   \sim
   (e_\beta\times\partial U_\beta),
   (e_\gamma\times\Sigma)
   \sim
   (e_\gamma\times\partial U_\gamma)
  },  
\]
where $U_\alpha$, $U_\beta$ and $U_\gamma$ are the handlebodies determined
by the $2-$handles associated to the attaching circles $\alpha$, $\beta$
and $\gamma$, and $e_\alpha$, $e_\beta$ and $e_\gamma$ are the edges of the
triangle $\Delta$. The manifold $X_{\alpha\beta\gamma}$ is $4$-dimensional
with boundary
\[
  \partial X_{\alpha\beta\gamma}
  =
  Y_{\alpha\beta}
  \sqcup
  Y_{\beta\gamma}
  \sqcup
  -Y_{\alpha\gamma}.
\]
\begin{lem}\label{boring} The kernel of $n_z$ equals 
$H_2(X_{\alpha\beta\gamma};\Z)$
\end{lem}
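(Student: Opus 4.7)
The strategy is to lift the identification of Lemma \ref{kernz} --- which identified the kernel of $n_z$ on $\pitwo(x,x)$ with $H_2(Y;\Z)$ by capping the domain of a Whitney disc with the cores of the $2$-handles of $Y$ --- one dimension up. The $4$-manifold $X_{\alpha\beta\gamma}$ is obtained from $\Delta\times\Sigma$ by attaching handlebodies along $e_i\times\Sigma$, so the natural $2$-cycle associated to a Whitney triangle should sit on a central fibre of $\Delta\times\Sigma$ and be capped off by compressing discs inside the three handlebodies $U_\alpha$, $U_\beta$, $U_\gamma$.

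The plan is first to build a map $\Phi\co\mbox{\rm ker}(n_z)\lra H_2(X_{\alpha\beta\gamma};\Z)$. Given a Whitney triangle $\phi\in\pitwo(x,y,w)$ with $n_z(\phi)=0$, the low-dimensional model of \S\ref{roadto} assigns a domain $\dom(\phi)\subset\Sigma$ whose boundary is a $1$-chain on $\alpha\cup\beta\cup\gamma$, splitting canonically into an $\alpha$-part, a $\beta$-part and a $\gamma$-part meeting at the distinguished intersection points $x,y,w$. Fix an interior point $c$ of $\Delta$ and regard $\dom(\phi)$ as a $2$-chain on $\{c\}\times\Sigma\subset\Delta\times\Sigma\subset X_{\alpha\beta\gamma}$. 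I then push each of the three boundary parts along an arc in $\Delta$ from $c$ to the corresponding edge $e_i$; these cylinders live in $\Delta\times\Sigma$ and end on $e_i$ times the corresponding attaching curves. Each attaching curve bounds a core disc of a $2$-handle inside $U_i$, so this final boundary can be capped off by $2$-chains inside $e_i\times U_i$. Gluing gives a closed $2$-cycle $C(\phi)$, and I set $\Phi([\phi]):=[C(\phi)]$.

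Well-definedness is then routine. Different choices of $c$, of connecting arcs in the contractible $\Delta$, and of capping discs in each $U_i$ change $C(\phi)$ by the boundary of an explicit $3$-chain in $\Delta\times\Sigma$ or in $e_i\times U_i$. A homotopy of Whitney triangles translates into a homology of domains, and hence of $C(\phi)$. The generator $S$ of $\pitwo(\symg)$, whose concatenation with a triangle adds a full copy of $[\Sigma]$ to the domain, produces a $2$-cycle that can be pushed entirely into one of the $U_i$ and bounded there; combined with the fact that $n_z(S)=1$, this ensures $\Phi$ descends to the quotient $\mbox{\rm ker}(n_z)$ appearing in (\ref{exseq2}).

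The main obstacle is producing an inverse. My plan is a Mayer--Vietoris argument with $A$ a regular neighborhood of $\Delta\times\Sigma$ and $B$ a regular neighborhood of $(e_\alpha\times U_\alpha)\cup(e_\beta\times U_\beta)\cup(e_\gamma\times U_\gamma)$, so that $A\cap B$ deformation retracts onto $\partial\Delta\times\Sigma\simeq\sone\times\Sigma$. This describes $H_2(X_{\alpha\beta\gamma};\Z)$ as a subquotient assembled from $H_*(\Sigma)$ and $H_*(U_i)$. The expected crux is then matching this algebraic description with the combinatorial group of triply periodic domains modulo $[\Sigma]$: given a class in $H_2(X_{\alpha\beta\gamma};\Z)$, its intersection (after general position) with the central fibre $\{c\}\times\Sigma$ is a $2$-chain in $\Sigma$ whose boundary must bound inside each $U_i$ and hence is a $\Z$-linear combination of full $\alpha$-, $\beta$- and $\gamma$-circles --- i.e.\ a triply periodic domain; Lemma \ref{fundtwo} together with (\ref{exseq2}) then identifies these, modulo $[\Sigma]$, with $\mbox{\rm ker}(n_z)$. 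Once the bijection of abstract groups is in place, that it agrees with $\Phi$ on cycles constructed from triangles is immediate from the construction.
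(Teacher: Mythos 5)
First, note that the paper itself never proves Lemma \ref{boring}: it is stated and used, with the actual argument living in the Ozsv\'{a}th--Szab\'{o} sources, so your proposal has to be judged on its own merits. The standard argument is indeed the one you aim at (cap domains off with compressing discs of the three handlebodies, and identify $H_2(X_{\alpha\beta\gamma};\Z)$ by a Mayer--Vietoris computation), but your construction of $\Phi$ is applied to the wrong objects, and this is a genuine gap. For a single Whitney triangle $\phi\in\pitwo(x,y,w)$ with $n_z(\phi)=0$, the boundary of $\dom(\phi)$ consists of \emph{arcs} on the $\alpha$-, $\beta$- and $\gamma$-curves meeting at the corner points $x$, $y$, $w$; the three parts are not $1$-cycles. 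Consequently the final boundaries of your cylinders are (point of $e_i$) times a collection of arcs, and an arc does not bound any $2$-chain in $e_i\times U_i$ (its boundary is nonempty), so the capping step fails; moreover the traces of the corner points, swept along the three different arcs in $\Delta$ from $c$ to the three edges, contribute additional boundary that does not cancel. So $C(\phi)$ is not a cycle. A concrete test case is the small triangle with $\mu=0$, $n_z=0$ appearing in the handle-slide discussion of \S\ref{parhsinvar}: its domain is a single embedded triangular region and visibly cannot be closed up. The disc-case Lemma \ref{kernz}, which you are imitating, works precisely because there the classes live in $\pitwo(x,x)$, so the boundary restricted to each attaching curve is automatically a cycle, hence an integer multiple of the whole curve; for triangles with three distinct vertices this fails. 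The correct domain of definition for your capping map is the group of \emph{triply periodic} domains, i.e.\ differences of two triangle classes with the same vertices (modulo the sphere class) --- equivalently, as in (\ref{pbident}), the identification with $H_2(X_{\alpha\beta\gamma};\Z)$ is only canonical on this difference group, or after fixing a reference triangle, and your write-up ignores this.

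Two smaller points. Your remark about the sphere class $S$ is fine ($\{pt\}\times\Sigma$ bounds $\{pt\}\times U_\alpha$ in $X_{\alpha\beta\gamma}$), and the Mayer--Vietoris strategy for surjectivity/injectivity is the right one; but the sentence ``its intersection with the central fibre $\{c\}\times\Sigma$ is a $2$-chain'' is dimensionally off --- a generic $2$-cycle in the $4$-manifold meets the $2$-dimensional fibre in points. What one actually does is run the Mayer--Vietoris (or pair) sequence algebraically, expressing $H_2(X_{\alpha\beta\gamma};\Z)$ through $H_1(\Sigma;\Z)$ and $H_1(U_\alpha)\oplus H_1(U_\beta)\oplus H_1(U_\gamma)$ via Lemma \ref{fundtwo}-type identifications, and then match that kernel with the group of triply periodic domains. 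If you restrict your capping construction to triply periodic domains and phrase the inverse this way, you recover the standard proof.
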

Combining (\ref{exseq2}) with Lemma \ref{boring} we
get an exact sequence
\begin{equation}
  0
  \lra
  \pitwo(\symg)
  \lra
  \pitwo(x,y,w)
  \overset{\mathcal{H}}{\lra}
  H_2(X_{\alpha\beta\gamma};\Z)
  \lra 
  0, \label{exseq3}
\end{equation}
where $\mathcal{H}$ is defined similarly as for discs (cf.~\S\ref{symproduct}). 
Of course there is a low-dimensional model for triangles and the discussion we
have done for discs carries over verbatim for triangles. The condition
$n_z=0$ makes the product $f_{\alpha\beta\gamma}$ well-defined
in case $H_2(X_{\alpha\beta\gamma};\Z)$ is trivial. Analogous to our
discussion for Whitney discs and the differential, we have to include 
a condition controlling the periodic triangles, i.e.~the triangles 
associated to elements in $H_2(X_{\alpha\beta\gamma};\Z)$. A domain 
$\dom$ of a triangle is called {\bf triply-periodic}
if its boundary consists of a sum of $\alpha$-,$\beta$- and $\gamma$-curves
such that $n_z=0$.
\begin{definition} A pointed triple diagram $(\Sigma,\alpha,\beta,\gamma,z)$
is called {\bf weakly admissible} if all triply-periodic domains $\dom$
which can be written as a sum of doubly-periodic domains have both positive 
and negative coefficients.
\end{definition}
This condition is the natural transfer of weak-admissibility from discs
to triangles. One can show that for given $j,k\in\Z$ there exist
just a finite number of Whitney triangles $\phi\in\pitwo(x,y,w)$ with
$\mu(\phi)=j$, $n_z(\phi)=k$ and $\dom(\phi)\geq0$.\vspace{0.3cm}\\
For a given homotopy class $\psi\in\pitwo(x,y,w)$ with $\mu(\psi)=1$ 
we compute the ends by shrinking a properly embedded arc to a 
point (see the description of convergence in \S\ref{structmoduli}). There 
are three different ways to do this in a triangle. Each time we get 
a concatenation of a disc with a triangle. By boundary orientations we 
see that each of these boundary components contributes to one of the 
terms in the following sum
\begin{equation}
  \fhat_{\alpha\beta\gamma}\circ(\parhat_{\alpha\beta}(x)\otimes y)
  +
  \fhat_{\alpha\beta\gamma}\circ(x\otimes\parhat_{\beta\gamma}(y))
  -\parhat_{\alpha\gamma}\circ\fhat_{\alpha\beta\gamma}(x\otimes y).
  \label{productchain}
\end{equation}
Conversely, the coefficient at any of these terms is given by a
product of signed counts of moduli spaces of discs and moduli
spaces of triangles and hence -- by gluing -- comes from one of 
these contributions. The sum in (\ref{productchain}) vanishes, showing
that $\fhat_{\alpha\beta\gamma}$ descends to a pairing 
$\fhat^*_{\alpha\beta\gamma}$ between the Floer homologies.
\subsubsection{Holomorphic rectangles}
Recall that the set of biholomorphisms of the unit disc is
a $3$-dimensional connected family. If we additionally 
fix a point we decrease the dimension of that family by one.
A better way to formulate this is to say that the set of
biholomorphishms of the unit disc with one fixed point is a
$2$-dimensional family. Fixing two further points reduces
to a $0$-dimensional set. If we additionally fix a fourth
point the rectangle together with these four points uniquely
defines a conformal structure. Variation of the fourth point
means a variation of the conformal structure. Indeed one can
show that there is a uniformization of a holomorphic 
rectangle, i.e.~a rectangle with fixed conformal structure,
which we denote by $\square$,
\[
  \square\lra[0,l]\times[0,h],
\]
where the ratio $l/h$ uniquely determines the conformal structure.
With this uniformization we see that $\modspace(\square)\cong\R$.
The uniformization is area-preserving and converging to one of the 
ends of $\modspace(\square)$ means to stretch the rectangle 
infinitely until it breaks at the end into a concatenation of two 
triangles.
\begin{theorem} Given another set of attaching circles 
$\delta$ defining a map $\fhat_{\alpha\gamma\delta}$, the 
following equality holds:
\begin{equation}
  \fhat^*_{\alpha\beta\gamma}
  (\fhat^*_{\alpha\gamma\delta}(\,\cdot\otimes\,\cdot)
  \otimes
  \,\cdot)
  -
  \fhat^*_{\alpha\beta\delta}
  (\,\cdot\otimes
  \fhat^*_{\beta\gamma\delta}
  (\,\cdot\otimes\,\cdot))
  =0. \label{associativity}
\end{equation}
This property is called {\bf associativity}. 
\end{theorem}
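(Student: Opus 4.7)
The plan is to exhibit a chain-level map $H$ which realizes the two triple compositions in (\ref{associativity}) as chain-homotopic, in direct analogy with the way $\fhat_{\alpha\beta\gamma}$ (counting triangles) intertwined the differentials that themselves count discs. Concretely, I would introduce a Heegaard quadruple diagram $(\Sigma,\alpha,\beta,\gamma,\delta,z)$, impose on it a weak-admissibility condition analogous to the triple case, and for intersections $x\in\talpha\cap\tbeta$, $y\in\tbeta\cap\tgamma$, $w\in\tgamma\cap\tdelta$, $v\in\talpha\cap\tdelta$ consider holomorphic Whitney rectangles with successive edges on $\talpha,\tbeta,\tgamma,\tdelta$ and corners $x,y,w,v$ as in Figure \ref{Fig:figSeven}. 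Since the conformal structure of a rectangle varies in a $1$-parameter family, the moduli $\modspace(\square,\phi)$ over a homotopy class $\phi$ with $n_z(\phi)=0$ has expected dimension $\mu(\phi)+1$. Define
\[
  H(x \otimes y \otimes w) \;=\; \sum_{v}\;\sum_{\phi,\;\mu(\phi)=-1,\;n_z(\phi)=0}
  \#\modspace(\square,\phi)\cdot v,
\]
where the inner sum runs over homotopy classes of rectangles from $(x,y,w)$ to $v$. Admissibility of the quadruple ensures the sum is finite.

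The heart of the proof is then to enumerate the ends of the compact $1$-dimensional moduli $\modspace(\square,\phi)$ for $\phi$ with $n_z(\phi)=0$ and $\mu(\phi)=0$. As in \S\ref{structmoduli}, $n_z=0$ rules out sphere bubbling and boundary degenerations, leaving two kinds of ends, both depicted in Figure \ref{Fig:figSix}. In the first, a holomorphic Whitney disc peels off along one of the four edges of the rectangle; the signed count of such ends reproduces
\[
  \parhat_{\alpha\delta}\circ H \;+\; H\circ\bigl(\parhat_{\alpha\beta}\otimes 1\otimes 1 + 1\otimes\parhat_{\beta\gamma}\otimes 1 + 1\otimes 1\otimes\parhat_{\gamma\delta}\bigr).
\]
In the second, the conformal modulus $l/h$ of the rectangle tends to $0$ or $\infty$ (via the uniformization $\square\to[0,l]\times[0,h]$ recalled just before the theorem), and the rectangle pinches along one of its two diagonals, breaking into a pair of holomorphic triangles; the two diagonals correspond to the two orders of association, so a signed count of these ends yields exactly
\[
  \fhat_{\alpha\gamma\delta}\bigl(\fhat_{\alpha\beta\gamma}(\,\cdot\otimes\,\cdot)\otimes\,\cdot\bigr) \;-\; \fhat_{\alpha\beta\delta}\bigl(\,\cdot\otimes\fhat_{\beta\gamma\delta}(\,\cdot\otimes\,\cdot)\bigr).
\]
Since the total signed boundary of a compact oriented $1$-manifold vanishes, adding these contributions produces the chain-homotopy identity $\parhat\circ H + H\circ\parhat \,=\, \fhat_{\alpha\gamma\delta}\circ(\fhat_{\alpha\beta\gamma}\otimes 1)\,-\,\fhat_{\alpha\beta\delta}\circ(1\otimes\fhat_{\beta\gamma\delta})$, which descends to the stated equation on homology.

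The main obstacle is the careful Gromov-compactness-plus-gluing analysis of $\modspace(\square,\phi)$ at the two conformal cusps: one must show that each such limit is really of the form claimed (a broken pair of triangles meeting at an interior vertex of $\square$) and that the gluing map from broken configurations back into the ends is a bijection, compatible with the coherent orientations from \S\ref{structmoduli}. This is parallel to, but strictly more involved than, the breaking analysis for triangles used to define $\fhat_{\alpha\beta\gamma}$. The remaining work---formulating the precise weak-admissibility hypothesis for quadruple diagrams that controls finiteness of the sums entering $H$ and equation (\ref{productchain})'s analogue, and tracking signs---follows the triple-diagram template essentially verbatim, and in $\ztwo$-coefficients can be bypassed entirely by working mod $2$.
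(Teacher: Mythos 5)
Your proposal is correct and follows essentially the same route as the paper: define $H$ by counting holomorphic rectangles (the conformal modulus supplying the extra parameter), then identify the ends of the next-dimensional moduli spaces as disc-breaking terms giving $\parhat\circ H + H\circ\parhat$ plus two conformal degenerations into pairs of triangles giving the associator, so that $H$ is the desired chain homotopy. The only differences are bookkeeping conventions (whether $\mu$ absorbs the conformal parameter) and your slightly more careful labeling of which triple composition each degenerate end produces.
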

\begin{figure}[ht!]
\centerline{\psfig{file=associativity,height=4cm}}
\caption{Ends of the moduli space of holomorphic rectangles.}
\label{Fig:figSix}
\end{figure} 

If we count holomorphic Whitney rectangles with boundary conditions
in $\alpha$, $\beta$, $\gamma$ and $\delta$ and with $\mu=1$ (see Definition \ref{maslovindex}) the
ends of the associated moduli space will look like pictured in Figure \ref{Fig:figSix}.
Note that we are talking about holomorphicity with respect to
an arbitrary conformal structure on the rectangle. There will be
two types of ends. We will have a degeneration into a concatenation
of triangles by variation of the conformal structure on the rectangle
and breaking into a concatenation of a rectangle with a disc by variation
of the rectangle. By Figure \ref{Fig:figSix} an appropriate count of
holomorphic rectangles will be a natural candidate for a chain homotopy
proving equation (\ref{associativity}). Define a pairing 
\[
  H\co
  \cfhat(\Sigma,\alpha,\beta,z)
  \otimes
  \cfhat(\Sigma,\beta,\gamma,z)
  \otimes
  \cfhat(\Sigma,\gamma,\delta,z)
  \lra
  \cfhat(\Sigma,\alpha,\delta,z)
\]
by counting holomorphic Whitney rectangles with boundary components
as indicated in Figure \ref{Fig:figSeven}
\begin{figure}[ht!]
\labellist\small\hair 2pt
\pinlabel {$\talpha$} [r] at 10 65
\pinlabel {$\tbeta$} [B] at 79 110
\pinlabel {$\tgamma$} [t] at 79 25
\pinlabel {$\tdelta$} [l] at 155 65
\endlabellist
\centering
\includegraphics[height=2cm]{countrectangle}
\caption{The boundary conditions of rectangles for the definition of $H$.}
\label{Fig:figSeven}
\end{figure}
and $\mu=0$. By counting ends of the moduli space of holomorphic
rectangles with $\mu=1$ we have six contributing ends. These ends
are pictured in Figure \ref{Fig:figSix}. The four ends coming from
breaking contribute to
\begin{equation}
  \parhat
  \circ
  H(\,\cdot\otimes\,\cdot\otimes\,\cdot) 
  +
  H
  \circ
  \parhat(\,\cdot\otimes\,\cdot\otimes\,\cdot). 
  \label{asso01}
\end{equation}
In addition there are two ends coming from degenerations of the 
conformal structure on the rectangle. These give rise to
\begin{equation}
  \fhat_{\alpha\beta\gamma}
  (\fhat_{\alpha\gamma\delta}(\,\cdot\otimes\,\cdot)
  \otimes
  \,\cdot)
  -
  \fhat_{\alpha\beta\delta}
  (\,\cdot\otimes
  \fhat_{\beta\gamma\delta}
  (\,\cdot\otimes\,\cdot)).
  \label{asso02}
\end{equation}
We see that the sum of (\ref{asso01}) and (\ref{asso02}) vanishes, showing that
$H$ is a chain homotopy proving associativity.

\subsubsection{Special Case -- Handle Slides}
Handle slides provide special Heegaard triple diagrams. Let 
$(\Sigma,\alpha,\beta,z)$ be an admissible pointed Heegaard diagram
and define $(\Sigma,\alpha,\gamma,z)$ by handle sliding $\beta_1$ over
$\beta_2$. We push the $\gamma_i$ off the $\beta_i$ to make them
intersect transversely in two cancelling points. This defines a 
triple diagram, and obviously $Y_{\beta\gamma}$ equals the connected 
sum $\#^g(\stwo\times\sone)$.\vspace{0.3cm}\\
A very important observation is that the Heegaard Floer groups of
connected sums of $\stwo\times\sone$ admit a top-dimensional generator.
By Example \ref{example01} and Theorem \ref{consum},
\[
  \hfhat(\#^{g-1}(\stwo\times\sone),\orient^*)
  \cong\Z^{2g-2}
  \cong H_*(T^g;\Z),
\]
where the last identification is done using the 
$\bigwedge\,\!\!\!^*(H_1/Tor)$-module structure (see \cite{OsZa01}).
We claim that the behavior of the Heegaard Floer groups
under connected sums can be carried over to the module 
structure, and thus it remains to show the assertion for
the case $g=1$. But this is not hard to see. \vspace{0.3cm}\\
Each pair $(\beta_i,\gamma_i)$ has two intersections 
$\xpi$ and $\xmi$. Which one is denoted how is determined by
the following criterion: there is a disc-shaped domain
connecting $\xpi$ with $\xmi$ with boundary in $\beta_i$ and 
$\gamma_i$. The point
\[
  \xp=\{x^+_1,\dots,x^+_g\}
\]
is a cycle whose associated homology class is the top-dimensional
generator we denote by $\hattheta_{\beta\gamma}$. For a detailed
treatment of the top-dimensional generator we point the reader 
to \cite{OsZa01}.\vspace{0.3cm}\\
Plugging in the generator we define a map
\[
  \Fhat_{\alpha\beta\gamma}
  =\fhat^*_{\alpha\beta\gamma}(\,\cdot\otimes\hattheta_{\beta\gamma})
  \co
  \hfhat(\Sigma,\alpha,\beta,z)
  \lra
  \hfhat(\Sigma,\alpha,\gamma,z)
\]

between the associated Heegaard Floer groups. Our intention is
to show that this is an isomorphism.\vspace{0.3cm}\\
We can slide the $\gamma_1$ back over $\gamma_2$ to give 
another set of attaching circles we denote by $\delta$.
Of course we make the curves intersecting all other sets of
attaching circles transversely and introduce pairs of intersections
points of the $\delta$-curves with the $\gamma$-and $\beta$-curves.
Let $\Fhat_{\alpha\gamma\delta}$ be the associated map. Then the
associativity given in (\ref{associativity}) translates into
\[
  \fhat^*_{\alpha\beta\gamma}
  (\fhat^*_{\alpha\gamma\delta}(\,\cdot\otimes\hattheta_{\gamma\delta})
  \otimes
  \hattheta_{\beta\gamma})
  -
  \fhat^*_{\alpha\beta\delta}
  (\,\cdot\otimes
  \fhat^*_{\beta\gamma\delta}
  (\hattheta_{\beta\gamma}\otimes\hattheta_{\gamma\delta}))
  =0.
  \label{assocobmaps}
\]
The proof of the following lemma will be done in detail. It is 
the first explicit calculation using the low-dimensional model in 
a non-trivial manner.
\begin{lem}\label{thetatransform} Given the map
$\fhat_{\alpha\gamma\delta}$, we have
\[
  \fhat_{\beta\gamma\delta}
  (\hattheta_{\beta\gamma}\otimes\hattheta_{\gamma\delta})
  =
  \hattheta_{\beta\delta}.
\]
Hence, we have $\Fhat_{\beta\gamma\delta}(\hattheta_{\beta\gamma})=\hattheta_{\beta\delta}$.
\end{lem}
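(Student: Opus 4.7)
The plan is to identify a single distinguished Whitney triangle connecting the three top-dimensional generators, compute its contribution via the Riemann mapping theorem, and then rule out every other contribution using weak admissibility and a local analysis at each handle.

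First I would set up the local model. Because $\delta$ arises from $\beta$ by two successive handle slides, the curves can be arranged as small Hamiltonian pushoffs of $\beta$, so that for each index $i$ the three curves $\beta_i,\gamma_i,\delta_i$ meet pairwise in the cancelling pairs $\{x_i^\pm\}$, $\{y_i^\pm\}$, $\{z_i^\pm\}$ appearing in the definition of $\hattheta_{\beta\gamma}$, $\hattheta_{\gamma\delta}$, $\hattheta_{\beta\delta}$. Near each $i$ there is a distinguished small disc-shaped triangle $T_i$ with vertices $x_i^+, y_i^+, z_i^+$ bounded by arcs in $\beta_i, \gamma_i, \delta_i$. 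Let $\psi_0 \in \pitwo(\hattheta_{\beta\gamma}, \hattheta_{\gamma\delta}, \hattheta_{\beta\delta})$ be the Whitney triangle with $\dom(\psi_0) = \sum_{i=1}^g T_i$. Placing the basepoint $z$ away from all the $T_i$ gives $n_z(\psi_0) = 0$, and the standard index formula for a small triangle gives $\mu(\psi_0)=0$.

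Second I would count $\#\modtriangle_{\psi_0}$. Since $\dom(\psi_0)$ is a disjoint union of $g$ disc-shaped pieces each supported in a separate connect-sum region, the argument of Example \ref{exam01} applies: any holomorphic representative factors through a product of holomorphic triangles in the $g$ local pictures. Each local factor has a disc-shaped domain on a genus-one piece, so by the Riemann mapping theorem every factor has a unique holomorphic representative. Hence $\#\modtriangle_{\psi_0} = \pm 1$, and the coherent orientation system of \S\ref{structmoduli} fixes the sign to be $+1$, so $\psi_0$ contributes exactly $\hattheta_{\beta\delta}$.

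The main obstacle is showing that no other homotopy class contributes. Suppose $\psi' \in \pitwo(\hattheta_{\beta\gamma}, \hattheta_{\gamma\delta}, w)$ has $\mu(\psi')=0$, $n_z(\psi')=0$, and admits a holomorphic representative, so that $\dom(\psi')\geq 0$. If $w = \hattheta_{\beta\delta}$, then $\psi' - \psi_0$ is a triply periodic domain of Maslov index zero; I would combine $\dom(\psi')\geq 0$ with weak admissibility of the triple diagram to force $\psi' = \psi_0$. If instead $w_i = z_i^-$ for some $i$, I would inspect the $i$th local picture to show that every non-negative domain with a corner at $z_i^-$ either sweeps across the region containing $z$, contradicting $n_z = 0$, or carries strictly positive Maslov index. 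Together these steps give $\fhat_{\beta\gamma\delta}(\hattheta_{\beta\gamma}\otimes\hattheta_{\gamma\delta}) = \hattheta_{\beta\delta}$.
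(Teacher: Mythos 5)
Your overall strategy (single out the small-triangle class $\psi_0$ with $\dom(\psi_0)=\sum_i T_i$, count it by the Riemann mapping theorem, then exclude everything else) is the same as the paper's, and the first two steps are fine. The genuine gap is in your exclusion of other classes ending at $\hattheta_{\beta\delta}$. You argue that if $\psi'\in\pitwo(\hattheta_{\beta\gamma},\hattheta_{\gamma\delta},\hattheta_{\beta\delta})$ has $\mu=0$, $n_z=0$ and $\dom(\psi')\geq 0$, then $\psi'-\psi_0$ is a triply periodic domain and weak admissibility ``forces $\psi'=\psi_0$.'' This does not follow. First, the weak admissibility condition for triple diagrams only constrains triply periodic domains that are sums of \emph{doubly} periodic domains, and $\dom(\psi')-\dom(\psi_0)$ need not a priori be of that form. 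Second, even when the difference $P=\dom(\psi')-\dom(\psi_0)$ is known to have both positive and negative coefficients, this is not yet a contradiction with $\dom(\psi')\geq 0$: since $\dom(\psi_0)$ has coefficient $1$ on the small triangles, a periodic domain whose negative part is $-1$ and supported exactly on those regions would still give a non-negative $\dom(\psi')$. Admissibility is designed to give \emph{finiteness} of positive classes, not uniqueness; ruling out such $P$ requires looking at the actual regions of this particular diagram, which your argument never does for the $\hattheta_{\beta\delta}$ endpoint. (Your treatment of endpoints with some coordinate $z_i^-$ is only a sketch, but it at least points at the right kind of local analysis.)

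For comparison, the paper gets uniqueness by a concrete domain analysis rather than by admissibility: it cuts $\Sigma$ open along the $\beta$-curves so that $(\Sigma,\beta,\gamma,\delta)$ becomes the explicit picture of Figure~\ref{Fig:figTwo}, observes that the only regions with the correct corner structure at $\hattheta_{\beta\gamma}$ and $\hattheta_{\gamma\delta}$ are $\dom_1$ and $\dom_3$, and notes $\dom_3=\dom_1+\dom_2$ with $\mu(\dom_1)=0$ and $\mu(\dom_2)=1$, so any class involving a $\dom_3$-region has $\mu\geq 1$ and cannot contribute; boundary conditions then force exactly one $\dom_1$-region in each component, giving a unique homotopy class. (This simultaneously disposes of the endpoints $\hattheta^-_{\beta\delta}$, which are corners of $\dom_3$.) Finally, instead of your localization-to-factors argument, the paper counts the representative by noting that the Riemann mapping theorem gives a unique biholomorphism $\phihat\co\Dhat\lra\Sigma$ onto the union of the $\dom_1$-regions and that the relevant $g$-fold branched covering $\Dhat\lra\D^2$ is unique because $\Dhat$ is a disjoint union of $g$ discs. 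If you replace your admissibility step by this explicit identification of the possible regions (or an equivalent local argument covering the $\hattheta_{\beta\delta}$ endpoint as well), your proof goes through.
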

\begin{proof} The complement of the $\beta$-circles in $\Sigma$
is a sphere with holes. We have a precise description of
how the sets $\gamma$ and $\delta$ look like relative to $\beta$.
The Heegaard surface cut open along the $\beta$-curves can be
identified with a sphere with holes by using an appropriate
diffeomorphism. Doing so, the diagram $(\Sigma,\beta,\gamma,\delta)$
will look like given in Figure \ref{Fig:figTwo}. In
\begin{figure}[ht!]
\labellist\small\hair 2pt
\pinlabel {$\hattheta_{\gamma\delta}$} [l] at 198 383
\pinlabel {$\hattheta_{\beta\delta}$} [b] at 188 318
\pinlabel {$\hattheta_{\beta\gamma}$} [l] at 175 235
\pinlabel {$\hattheta^-_{\beta\delta}$} [r] at 500 285
\pinlabel {$\dom_1$} [tr] at 24 328
\pinlabel {$\dom_2$} [l] at 392 377
\pinlabel {$\dom_3$} [r] at 526 385
\pinlabel {$z$} [l] at 469 214
\pinlabel {$\gamma$} [b] at 193 158
\pinlabel {$\delta$} [l] at 160 69
\pinlabel {$\beta$} [br] at 122 67
\endlabellist
\includegraphics[height=7cm]{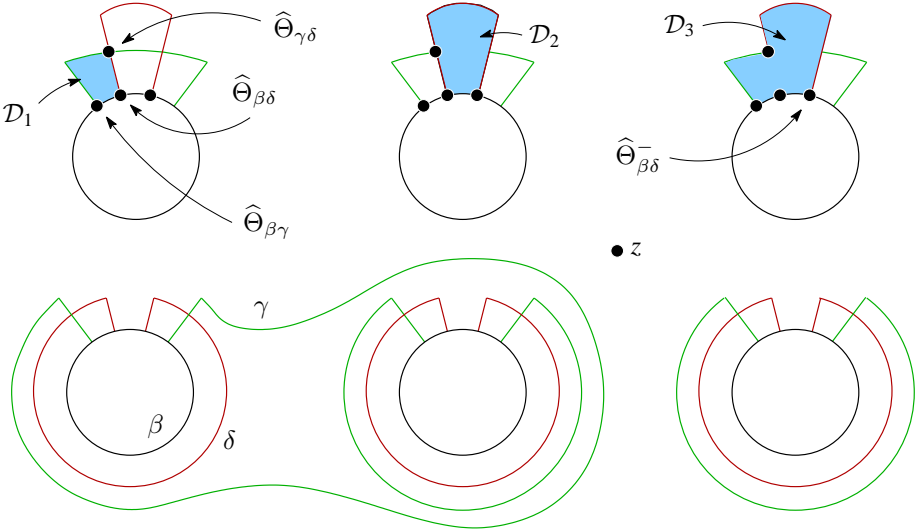}
\caption{The Heegaard surface cut open along the $\beta$-curves.}
\label{Fig:figTwo}
\end{figure}
 each component we have to have a close look at the domains
$\dom_1$, $\dom_2$ and $\dom_3$. To improve the illustration in
the picture we have separated them. There are exactly two domains
contributing to holomorphic triangles with boundary points in 
$\{\hattheta_{\beta\gamma},\hattheta_{\gamma\delta}\}$, namely
$\dom_1$ and $\dom_3$. The domain $\dom_3$ can be written as a
sum of $\dom_1$ and $\dom_2$, the former carrying $\mu=0$, the 
latter carrying $\mu=1$. Consequently, every homotopy class of triangles
using $\dom_3$-domains can be written as a concatenation of
a triangle with a disc with the expected dimensions greater than 
or equal to those mentioned. Consequently, the expected dimension
of the triangle using a $\dom_3$-domain is strictly bigger than
zero and thus does not contribute to
$\Fhat_{\beta\gamma\delta}(\hattheta_{\beta\gamma}\otimes\hattheta_{\gamma\delta})$.
All holomorphic triangles relevant to us have domains which are
a sum of $\dom_1$-domains. Taking boundary conditions into account we
see that we need a $\dom_1$-domain in each component. Thus, there is a unique homotopy
class of triangles interesting to us. By the Riemann mapping
theorem there is a unique holomorphic map 
$\phihat\co\Dhat\lra\Sigma$ from a surface with boundary $\Dhat$ 
whose associated domain equals the sum of $\dom_1$-domains. The map $\phihat$
is a biholomorphism and thus $\Dhat$ is a disjoint union of triangles. The
uniqueness of $\phihat$ tells us that the number of elements
in the associated moduli space equals the number of non-equivalent 
$g$-fold branched coverings $\Dhat\lra\D^2$. Since $\Dhat$ is a union of 
$g$ discs, this covering is unique, too (up to equivalence) and thus 
the associated moduli space is a one-point space.
\end{proof}
Lemma \ref{thetatransform} and (\ref{assocobmaps}) combine to
give the composition law
\[
  \Fhat_{\alpha\beta\delta}
  =
  \Fhat_{\alpha\gamma\delta}
  \circ
  \Fhat_{\alpha\beta\gamma}.
\]
We call a holomorphic triangle {\bf small} if it is 
supported within the thin strips of isotopy between 
$\beta$ and $\delta$.
\begin{lem}[see \cite{OsZa01}, Lemma 9.10]
\label{filtiso} Let $F\co A\lra B$ be a
map of filtered groups such that $F$ can be decomposed into
$F_0+l$, where $F_0$ is a filtration-preserving isomorphism
and $l(x)<F_0(x)$. Then, if the filtration on $B$ is bounded
from below, the map $F$ is an isomorphism of groups.
\end{lem}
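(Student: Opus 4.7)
The plan is to establish that $F$ is an isomorphism by handling injectivity and surjectivity separately. Injectivity will be essentially immediate from the strict filtration drop of $l$, while surjectivity will come from a successive-approximation scheme whose termination is forced by the lower bound on the filtration of $B$. Throughout, I interpret $l(x)<F_0(x)$ to mean that $l(x)$ lies in a strictly lower filtration level than $F_0(x)$, and "filtration-preserving isomorphism" to mean that $F_0$ identifies the filtration of $A$ with the filtration of $B$ level by level.

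For injectivity, suppose $F(x)=0$, so that $F_0(x)=-l(x)$. If $x\neq 0$, let $n$ be the filtration level of $x$ in $A$. Since $F_0$ preserves filtration levels, $F_0(x)$ has filtration level exactly $n$, but by the hypothesis $l(x)<F_0(x)$ the element $l(x)$ lies in filtration strictly below $n$. The equality $F_0(x)=-l(x)$ then places an element of filtration level $n$ into a strictly lower filtration subgroup, which is a contradiction. Hence $F_0(x)=0$, and since $F_0$ is an isomorphism we conclude $x=0$.

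For surjectivity, fix $y\in B$ at filtration level $n$. Set $x_0=F_0^{-1}(y)$ and define inductively
\[
  x_{k+1}=F_0^{-1}\bigl(y-F(x_0+\cdots+x_k)\bigr).
\]
A short computation using $F=F_0+l$ shows $y-F(x_0+\cdots+x_k)=-l(x_k)$, which by hypothesis has strictly smaller filtration than $F_0(x_k)=y-F(x_0+\cdots+x_{k-1})$. Thus the filtration of the residual decreases strictly at each step. Since the filtration of $B$ is bounded from below, there is some $N$ with $B_{n-N}=0$; for $k\geq N$ the residual is forced to be zero, hence $x_{k}=0$ for all $k>N$. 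Therefore $x=\sum_{k}x_k$ is a finite sum and satisfies $F(x)=y$.

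The main obstacle to making this argument rigorous is bookkeeping around the filtration drop: one must know that "$l$ strictly lowers filtration" is inherited by the iterated residuals and by their $F_0^{-1}$-preimages, which is exactly the content of $F_0$ being a filtration-preserving isomorphism rather than merely a filtration-preserving map. Once this is in place, the descent is automatic, and the lemma is essentially a formal statement that a lower-triangular perturbation of an isomorphism of bounded-below filtered groups is again an isomorphism.
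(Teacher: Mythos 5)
Your argument is correct, and in fact the paper itself gives no proof of this lemma at all --- it simply cites Ozsv\'{a}th--Szab\'{o} (Lemma 9.10 of \cite{OsZa01}), so there is nothing internal to compare against. Your two-step argument (injectivity from the strict filtration drop, surjectivity by successive approximation) is the iterative form of the standard argument in the literature: the recursion $x_{k+1}=F_0^{-1}\bigl(y-F(x_0+\dots+x_k)\bigr)$, with residual $-l(x_k)$, is exactly the term-by-term computation of the Neumann-series inverse $F^{-1}=\sum_{k\geq 0}(-1)^k\,F_0^{-1}\circ(l\circ F_0^{-1})^k$, which terminates on each element because $l\circ F_0^{-1}$ strictly drops filtration and the filtration on $B$ is bounded below; so your route is essentially the same, just unpacked. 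Two small points of bookkeeping, both at the same level of informality as the lemma statement itself: first, your injectivity step and your claim that the residuals keep dropping use that $F_0$ preserves filtration \emph{levels} (equivalently that $F_0^{-1}$ is also filtration preserving), which is the right reading of ``filtration-preserving isomorphism'' here and which you correctly flag; second, your termination step (``there is some $N$ with $B_{n-N}=0$'') tacitly assumes the filtration levels are discrete, which is harmless in the intended application (the area filtration on a finitely generated complex takes only finitely many relevant values), but is worth stating since a strictly decreasing real-valued sequence bounded below need not reach the bound.
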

There are two important observations to make. The first is
that we can equip the chain complexes with a filtration, called
the {\bf area filtration} (cf.~\cite{OsZa01}), which is indeed
bounded from below. In this situation the top-dimensional
generator $\hattheta_{\beta\delta}$ is generated by a single
intersection point $\xp\in\tbeta\cap\tdelta$. The map
$\Fhat_{\alpha\beta\delta}$ is induced by
\[
  \fhat_{\alpha\beta\delta}(\,\cdot\otimes\xp),
\]
which in turn can be decomposed into a sum of $f_0$ and 
$l$, where $f_0$ counts small holomorphic triangles and 
$l$ those triangles whose support is not contained in the 
thin strips of isotopy between $\beta$ and $\delta$. The 
map $f_0$ is filtration preserving and $l$, if the 
$\delta$-curves are close enough to the 
$\beta$-curves, strictly decreasing. By Lemma \ref{filtiso}
the map $\Fhat_{\alpha\beta\delta}$ is an isomorphism
between the associated Heegaard Floer homologies.\vspace{0.3cm}\\
To conclude topological invariance we have to see that the
following claim is true.
\begin{theorem} Two pointed admissible Heegaard diagrams
associated to a $3$-manifold are equivalent after a finite
sequence of Heegaard moves, each of them connecting two admissible
Heegaard diagrams, which can be done in the complement
of the base-point $z$.
\end{theorem}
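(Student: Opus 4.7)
The plan is to reduce to the classical Reidemeister--Singer theorem and then upgrade it in two ways: first make every intermediate diagram admissible, and second arrange all moves to happen off the basepoint. So I would organize the proof in three stages.

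First, I would invoke the classical Reidemeister--Singer theorem from Morse theory (cf.\ \cite{GoSt}), which asserts that any two Heegaard diagrams of a given closed oriented $3$-manifold $Y$ become equivalent after a finite sequence of isotopies of attaching curves, handle slides among the $\alpha$-curves, handle slides among the $\beta$-curves, and stabilizations/destabilizations. This gives the ``bare'' sequence of moves without any control over admissibility or the basepoint.

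Second, I would show that each elementary move can be performed away from the basepoint $z$. For an isotopy of an attaching circle, if at some moment a curve would cross $z$, we first move $z$ within its component of $\Sigma\setminus(\alpha\cup\beta)$ and replace the isotopy by a homotopic one supported in the complement; where a curve genuinely sweeps across a region containing $z$, we replace a single sweep by an isotopy together with a handle-slide relation that realizes the net effect off of $z$. For handle slides, we may choose the thin band realizing the slide to avoid $z$, since the $2$-dimensional complement of $\alpha\cup\beta$ gives enough room. For a stabilization, the new genus-one summand may be attached in a small disc disjoint from $z$, and conversely a destabilization only has to be realized in a neighborhood disjoint from $z$; the connected-sum decomposition of \S\ref{stabilinvariance} shows that this produces the same result.

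Third, and this is where the main obstacle lies, I have to guarantee that each intermediate diagram in the sequence is admissible (in the sense of Definition \ref{admissintro} for $\hfhat$, or strongly admissible more generally). Here I would insert between every two consecutive moves the standard \emph{winding trick} of Ozsv\'ath--Szab\'o: fix a complete system of curves $\gamma_1,\dots,\gamma_g$ on $\Sigma$ dual to the $\alpha_i$, and wind the $\alpha$-curves sufficiently many times along the $\gamma_i$ in small annular neighborhoods containing $z$. Since a non-trivial periodic domain $\dom$ is determined, up to the span of doubly-periodic contributions, by the multiplicities $n_{z_i}(\dom)$ on the components of $\Sigma\setminus(\alpha\cup\beta)$ adjacent to the winding region, each sufficiently large wind forces $\dom$ to acquire coefficients of both signs (unless $\dom$ is forbidden by the $\spinc$/first-Chern-class constraint). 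This produces an admissible diagram, and the winding itself is realized as a composition of isotopies which, by the argument of the previous paragraph, can be made to avoid $z$.

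The hard part is genuinely the third stage: one must verify that the winding procedure can be interleaved with each Heegaard move without destroying admissibility when the next move is performed, and that one can always \emph{un-wind} at the end to match the target admissible diagram. This is handled by observing that winding and un-winding are themselves sequences of admissible isotopies supported in annuli disjoint from $z$, so the full upgraded sequence has the form (unwind)$\circ$(move)$\circ$(wind), with admissibility and basepoint-avoidance verified segment by segment. The stabilization step requires slightly more care, since it enlarges the genus and hence the space of periodic domains; but a fresh winding along the new $\gamma$-curve in the stabilized region restores admissibility, completing the argument. For full details, I would refer the reader to the analogous treatment in \cite{OsZa01}.
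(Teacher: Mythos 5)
Your proposal is correct in outline and is in fact more thorough than the argument the paper gives. The paper's own proof addresses only the basepoint issue: it observes that the single problematic situation is an isotopy pushing an attaching curve across the region containing $z$, and disposes of it by noting that $\Sigma$ cut along the $\alpha$-curves (resp.\ the $\beta$-curves) is a sphere with holes, so the curve can be taken the other way around the surface, passing the holes by handle slides --- exactly the ``isotopy plus handle-slide relation'' you describe in your second stage. Everything else in your write-up --- the explicit appeal to Reidemeister--Singer, and the third stage in which the intermediate diagrams are made admissible by interleaving the Ozsv\'ath--Szab\'o winding trick (with un-winding at the end, and a fresh wind after each stabilization) --- is left implicit in the paper and deferred to \cite{OsZa01}; your treatment buys a self-contained argument for the admissibility clause of the statement, which is genuinely the delicate part, while the paper buys brevity at the cost of resting that clause entirely on the reference. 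One small correction: the winding annuli should be chosen disjoint from $z$ (in \cite{OsZa01} the basepoint is placed in a region adjacent to, not inside, the winding region); as written, winding ``in small annular neighborhoods containing $z$'' would contradict your own requirement that all isotopies avoid $z$. With that adjustment, your interleaved sequence of the form (unwind) followed by (move) followed by (wind) does what you claim.
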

The only situation where the point $z$ seems to be an obstacle
arises when trying to isotope an attaching circle, $\alpha_1$ say, 
over the base-point $z$. But observe that cutting the 
$\alpha$-circles out of $\Sigma$ we get a sphere with holes. We 
can isotope $\alpha_1$ freely and pass the holes by handle slides. Thus, 
the requirement not to pass $z$ is not an obstruction at all. Instead 
of passing $z$ we can go the other way around the surface by isotopies 
and handle slides.
\section{Knot Floer Homologies}\label{knotfloerhomology}
Knot Floer homology is a variant of the Heegaard Floer homology of a
manifold. Recall that the Heegaard diagrams used in Heegaard Floer
theory come from handle decompositions relative to a splitting
surface. Given a knot $K\subset Y$, we can restrict to a subclass of Heegaard diagrams by
requiring the handle decomposition to come from a handle decomposition 
of $\overline{Y\backslash\nu K}$ relative to its boundary.
Note that in the literature the knot Floer variants are 
{\bf defined for homologically trivial knots only}. However, 
the definition can
be carried over nearly one-to-one to give a well-defined topological
invariant for arbitrary knot classes. But the generalization comes
at a price. In the homologically trivial case it is possible
to subdivide the groups in a special manner giving rise to a
refined invariant, which cannot be defined in the non-trivial case.
Given a knot $K\subset Y$, we can specify a certain subclass of 
Heegaard diagrams.
\begin{definition} \label{knotdiagram} A Heegaard 
diagram $(\Sigma,\alpha,\beta)$ is said to
be {\bf subordinate} to the knot $K$ if $K$ is isotopic to a knot lying
in $\Sigma$ and $K$ intersects $\beta_1$ once, transversely and is
disjoint from the other $\beta$-circles.
\end{definition}
Since $K$ intersects $\beta_1$ once and is disjoint from the other 
$\beta$-curves we know that $K$
intersects the core disc of the $2$-handle, represented by $\beta_1$, once
and is disjoint from the others (after possibly isotoping the knot $K$).
\begin{lem} Every pair $(Y,K)$ admits a Heegaard diagram
subordinate to $K$.
\end{lem}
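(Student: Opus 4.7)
The plan is to construct a Heegaard splitting of $Y$ explicitly built from a tubular neighborhood of $K$, so that a meridian of $K$ becomes one of the $\beta$-curves by construction. Let $\nu K\cong S^1\times D^2$ be a tubular neighborhood of $K$ and let $X=\overline{Y\setminus\nu K}$ be its exterior, a compact $3$-manifold with torus boundary. The first step is to choose a relative handle decomposition of $X$ starting from a collar of $\partial\nu K$: attach $g-1$ one-handles $h^1_1,\ldots,h^1_{g-1}$, then $g$ two-handles, and one three-handle, where the count relation is forced by $\chi(X)=0$.

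Next, I set
\[
  H_\beta := \nu K \cup \bigl(\partial\nu K\times[0,1]\bigr) \cup \bigcup_{i=1}^{g-1} h^1_i,
\]
which is a genus-$g$ handlebody, since $\nu K$ is a genus-$1$ handlebody and each $1$-handle attached to its connected boundary raises the genus by one. The complementary piece $H_\alpha$, consisting of the $g$ two-handles and the three-handle, is then automatically a genus-$g$ handlebody, giving a Heegaard splitting $Y=H_\alpha\cup_\Sigma H_\beta$. I choose $\beta_1$ to be the meridian of $\nu K$ transported through the collar to $\Sigma$, and $\beta_{i+1}$ for $i=1,\ldots,g-1$ to be the boundary of the co-core of $h^1_i$; the $\alpha$-curves are the attaching circles of the $g$ two-handles.

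Now isotope $K$ from the core of $\nu K$ to a longitude $\lambda$ on $\partial\nu K$ via a radial isotopy in the solid torus, then transport $\lambda$ through the collar to $\partial\nu K\times\{1\}\subset\Sigma$. Viewing $\beta_1$ and $\lambda$ as meridian and longitude of the torus $\partial\nu K\times\{1\}$, they meet transversely in a single point, and since $\beta_{i+1}$ lies on the side tube of the $1$-handle $h^1_i$, the curve $\lambda$ is disjoint from $\beta_2,\ldots,\beta_g$ as soon as it lies in $\partial\nu K\times\{1\}$ away from the $1$-handle attaching discs.

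The main, and only, technical point is to arrange $\lambda$ to avoid the finitely many attaching discs of the $1$-handles on $\partial\nu K\times\{1\}$. This is a standard general-position argument: each arc of intersection of $\lambda$ with an attaching disc can be pushed across the boundary of the disc without changing the isotopy class of $\lambda$ on the torus, so after finitely many such pushes $\lambda$ lies in the complement of all attaching discs while still representing the longitude class. This produces a Heegaard diagram subordinate to $K$ in the sense of Definition \ref{knotdiagram}.
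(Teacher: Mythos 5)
Your construction is correct and is essentially the paper's proof: both start from a relative handle decomposition of the knot exterior built on a collar of $\partial\nu K$ (with $g-1$ one-handles, $g$ two-handles and one three-handle) and make the meridian of $K$ the distinguished curve, the paper packaging the $\nu K$-side as a $2$-handle/$3$-handle pair that is then dualized to a $0$- and $1$-handle, where you simply take the handlebody $\nu K\cup(\partial\nu K\times[0,1])\cup\{\text{1-handles}\}$ directly. Your explicit check that $K$, isotoped to a longitude on $\Sigma$ avoiding the $1$-handle attaching discs, meets $\beta_1$ once and is disjoint from the other $\beta$-curves just spells out a detail the paper leaves implicit.
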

\begin{proof}
By surgery theory (see \cite{GoSt}, p. 104) 
we know that there is a handle decomposition of $Y\backslash\nu K$, i.e.
\[
  Y\backslash\nu K
  \cong
  (T^2\times [0,1])
  \cupb 
  h^1_{2}\cupb\dots h^1_{g}
  \cupb 
  h^2_1\cupb\dots\cupb h^2_g
  \cupb h^3
\]
We close up the boundary $T^2\times\{0\}$ with an 
additional $2$-handle $h^{2*}_1$ and a $3$-handle $h^3$ to obtain
\begin{equation}
  Y\cong
  h^3\cupb h^{2*}_1
  \cupb
  (T^2\times I)
  \cupb 
  h^1_2\cupb\dots h^1_g
  \cupb 
  h^2_1\cupb\dots\cupb h^2_g
  \cupb h^3.\label{handledecomp02}
\end{equation}
We may interpret $h^3\cupb h^{2*}_1\cupb(T^2\times[0,1])$ as a $0$-handle $h^0$
and a $1$-handle $h^{1*}_1$. Hence, we obtain the following decomposition of
$Y$:
\[
  h^0
  \cupb
  h^{1*}_1
  \cupb
  h^{1}_2
  \cupb
  \dots
  \cupb
  h^1_g
  \cupb
  h^2_1
  \cupb
  \dots
  \cupb
  h^2_g
  \cupb
  h^3.
\]
We get a Heegaard diagram $(\Sigma,\alpha,\beta)$ where
$\alpha=\alpha_1^*\cup\{\alpha_2,\dots,\alpha_g\}$ are the co-cores 
of the $1$-handles and $\beta=\{\beta_1,\dots,\beta_g\}$ are 
the attaching circles of the $2$-handles.
\end{proof}
Having fixed such a Heegaard diagram $(\Sigma,\alpha,\beta)$ we can encode 
the knot $K$ in a pair of points. After isotoping $K$ onto $\Sigma$, 
we fix a small interval $I$ in $K$ containing the intersection point 
$K\cap\beta_1$. This interval should be chosen small enough such 
that $I$ does not contain any other intersections of $K$ with other 
attaching curves. The boundary $\partial I$ of $I$ determines two 
points in $\Sigma$ that lie in the complement of the attaching circles, 
i.e.~$\partial I=z-w$, where the orientation of $I$ is given by the 
knot orientation. This leads to a doubly-pointed Heegaard diagram 
$(\Sigma,\alpha,\beta,w,z)$. Conversely, a doubly-pointed Heegaard 
diagram uniquely determines a topological knot class: Connect 
$z$ with $w$ in the complement of the attaching circles $\alpha$ 
and $\beta\backslash\beta_1$ with an arc $\delta$ that crosses 
$\beta_1$ once. Connect $w$ with $z$ in the complement of $\beta$
using an arc $\gamma$. The union $\delta\cup\gamma$ is represents the 
knot klass $K$ represents. The orientation on $K$ is given by orienting $\delta$ such 
that $\partial\delta=z-w$. If we use a different path 
$\widetilde{\gamma}$ in the complement of $\beta$, we observe that 
$\widetilde{\gamma}$ is isotopic to $\gamma$ (in $Y$): Since  
$\Sigma\backslash\beta$ is a sphere with holes an isotopy can 
move $\gamma$ across the holes by doing handle slides. Isotope 
the knot along the core discs of the $2$-handles to cross the 
holes of the sphere. Indeed, the knot class does not depend
on the specific choice of $\delta$-curve.\vspace{0.3cm}\\
The knot chain complex $\cfkhat(Y,K)$ is the free $\ztwo$-module 
(or $\Z$-module) generated by the intersections $\talpha\cap\tbeta$. 
The boundary operator $\parhat^w$, for $x\in\talpha\cap\tbeta$, is 
defined by
\[
  \parhat^w(x)
  =
  \sum_{y\in\talpha\cap\tbeta}
  \sum_{\phi\in H(x,y,1)}
  \#\modhatphi\cdot y,
\]
where $H(x,y,1)\subset\pitwo(x,y)$ are the homotopy classes
with $\mu=1$ and $n_z=n_w=0$. We denote by $\hfkhat(Y,K)$
the associated homology theory $H_*(\cfkhat(Y,K),\parhat^w)$.
The crucial observation for showing invariance is, that two 
Heegaard diagrams subordinate to a given knot can be connected 
by moves that {\it respect the knot complement}.
\begin{lem}(\cite{OsZa04})\label{helplem} Let 
$(\Sigma,\alpha,\beta,z,w)$ and $(\Sigma',\alpha',\beta',z',w')$ 
be two Heegaard diagrams subordinate to a given knot $K\subset Y$. 
Let $I$ denote the interval inside $K$ connecting $z$ with $w$,
interpreted as sitting in $\Sigma$. Then these two diagrams 
are isomorphic after a sequence of the following moves:
\begin{enumerate}
  \item[($m_1$)] Handle slides and isotopies among the 
  $\alpha$-curves. These isotopies may not cross~$I$.
  \item[($m_2$)] Handle slides and isotopies among 
  the $\beta_2,\dots,\beta_g$. These isotopies may 
  not cross $I$.
  \item[($m_3$)] Handle slides of $\beta_1$ over 
  the $\beta_2,\dots,\beta_g$ and isotopies.
  \item[($m_4$)] Stabilizations/destabilizations.
\end{enumerate}
\end{lem}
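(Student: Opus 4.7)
The plan is to translate the problem into a question about relative handle decompositions of the knot exterior $E_K=\overline{Y\backslash\nu K}$, apply a Cerf-theoretic (Reidemeister--Singer) result for manifolds with boundary, and then translate the resulting interior moves back into Heegaard moves that respect the marked arc $I$.

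First I would set up the correspondence between Heegaard diagrams subordinate to $K$ and handle decompositions of $E_K$ relative to its boundary torus $T=\partial\nu K$. Given $(\Sigma,\alpha,\beta,z,w)$ subordinate to $K$, the construction recalled before Definition~\ref{knotdiagram} identifies $\alpha_1^*$ with the co-core of the distinguished $1$-handle $h_1^{1*}$ attached to the collar $T\times[0,1]$, the curves $\alpha_2,\dots,\alpha_g$ with the co-cores of further interior $1$-handles, and $\beta_2,\dots,\beta_g$ with attaching circles of interior $2$-handles. The remaining curve $\beta_1$ is the attaching circle of the $2$-handle $h_1^{2*}$ that seals the $T\times[0,1]$-collar into the $\beta$-handlebody, and the short arc $I\subset\Sigma$ connecting $w$ to $z$ is precisely the meridional segment of $K$ sitting inside this collar. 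Stripping off $h_1^{1*}$ and $h_1^{2*}$ recovers a handle decomposition of $E_K$ rel.\ $T$.

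Next I would invoke a relative Reidemeister--Singer theorem: any two handle decompositions of the compact manifold $E_K$ relative to $T$ are related by a finite sequence of (i) isotopies of attaching and belt circles, performed in the interior of $E_K$, (ii) handle slides among handles of equal index, and (iii) creation and cancellation of canceling $1/2$-handle pairs. This follows from ordinary Cerf theory applied to a path of Morse functions that is constant near $T$. Having this, I would translate each interior move back to the Heegaard surface: interior isotopies and slides of the $1$-handles $h_i^1$ become $\alpha$-moves whose supports avoid the collar of $T$, and hence avoid $I$, giving $(m_1)$; interior isotopies and slides among the $2$-handles $h_2^2,\dots,h_g^2$ become $(m_2)$; slides of $h_1^{2*}$ over the other $2$-handles become $(m_3)$; and creation or cancellation of canceling pairs becomes $(m_4)$.

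The main obstacle, and the place where one must be careful, is the behavior of the distinguished $2$-handle $h_1^{2*}$ and the marked pair $(w,z)$ on $\Sigma$. Even after the relative Cerf step, one is left comparing two preferred collar structures of $T$ inside the two Heegaard surfaces, and in this comparison $\beta_1$ and certain $\alpha$-arcs may appear on $\Sigma$ to cross the segment $I$. The key geometric fact that rescues us is the same one used at the end of \S\ref{parhsinvar} for base-point invariance: $\Sigma\setminus\alpha$ and $\Sigma\setminus(\beta_2\cup\dots\cup\beta_g)$ are each a sphere with holes, so any isotopy of a curve on such a sphere that would push it across a hole can be replaced by a handle slide over the boundary of that hole. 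Applying this to $\beta_1$ produces the slides of move $(m_3)$, and applying it to the $\alpha$-curves shows that offending $\alpha$-isotopies can be forced to avoid $I$ at the cost of additional $(m_1)$-moves. Once this reduction is carried out, the list $(m_1)$--$(m_4)$ exhausts all moves that arise, completing the argument.
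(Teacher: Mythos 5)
Your proposal takes essentially the same route as the paper: it likewise converts subordinate diagrams into relative handle decompositions of the knot exterior, invokes the relative Cerf-theoretic uniqueness statement (Theorem 4.2.12 of \cite{GoSt}) for the handles attached to the collar $T^2\times[0,1]$, and translates the resulting isotopies, handle slides and creations/cancellations into the moves $(m_1)$--$(m_4)$, with slides involving the distinguished meridian handle accounting for $(m_3)$. The only caveat is a bookkeeping slip (inherited from the source's own labeling): you assign the meridian simultaneously to $\alpha_1^*$ (co-core of $h^{1*}_1$) and to $\beta_1$ (attaching circle of $h^{2*}_1$), although these are the same curve; under the convention of Definition~\ref{knotdiagram} the meridian is $\beta_1$, the curves $\beta_2,\dots,\beta_g$ are the belt circles of the interior $1$-handles and the $\alpha$-curves correspond to the interior $2$-handles, but this relabeling does not affect the argument.
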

For the convenience of the reader we include a short proof of this lemma.
\begin{proof} 
By Theorem 4.2.12 of \cite{GoSt} we can transform two 
relative handle decompositions into each other by 
isotopies, handle slides and handle creation/annihilation of 
the handles written at the right of $T^2\times[0,1]$ in 
$(\ref{handledecomp02})$. Observe 
that the $1$-handles may be isotoped along the boundary 
$T^2\times\{1\}$. Thus, we can transform two Heegaard diagrams 
into each other by handle slides, isotopies, creation/annihilation 
of the $2$-handles $h^2_i$ and we may slide the $h^1_i$ over 
$h^1_j$ and over $h^{1*}_1$ (the latter corresponds to $h^1_i$ 
sliding over the boundary $T^2\times\{1\}\subset T^2\times I$ 
by an isotopy). But we are not allowed to move $h^{1*}_1$ off 
the $0$-handle. In this case we would lose the relative 
handle decomposition. In terms of Heegaard diagrams 
we see that these moves exactly translate into the moves given 
in ($m_1$) to ($m_4$). Just note that sliding the $h^1_i$ over $h^{1*}_1$,
in the dual picture, looks like sliding $h^{2*}_1$ over the $h^2_i$. 
This corresponds to move ($m_3$).
\end{proof}
\begin{prop}\label{knotfloer} Let $K\subset Y$ be an arbitrary knot. 
The knot Floer homology group $\hfkhat(Y,K)$ is a topological invariant
of the knot type of $K$ in $Y$. These homology groups split with 
respect to $\spinc(Y)$.
\end{prop}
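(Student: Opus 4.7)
The plan is to reduce invariance to the list of moves $(m_1)$--$(m_4)$ from Lemma \ref{helplem} and then re-run each step of the invariance proof of \S\ref{topoinvariance} with the two basepoints $z$ and $w$ in place of the single basepoint $z$. For the $\spinc$-splitting no new work is required: the map $s_z$ of (\ref{szmap}) already sorts intersection points into subcomplexes, because any disc contributing to $\parhat^w$ satisfies in particular $n_z = 0$, whence $s_z(x) = s_z(y)$ whenever $y$ appears with non-zero coefficient in $\parhat^w x$. This yields a decomposition $\cfkhat(Y,K) = \bigoplus_{s \in \spinc(Y)} \cfkhat(Y,K;s)$, and it suffices to establish invariance of the total homology under each Heegaard move.

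For moves $(m_1)$ and $(m_2)$, the arguments of \S\ref{acsinvariance}, \S\ref{isotopyinvariance}, and \S\ref{parhsinvar} transplant essentially verbatim. Isotopies preserving transversality are absorbed into perturbations of the almost-complex structure; pair-creation/annihilation isotopies are handled by counting holomorphic discs with dynamic boundary conditions; and handle slides produce a pair-of-pants product $\fhat_{\alpha\beta\gamma}$ together with a top-dimensional class $\hattheta_{\beta\gamma}$ whose induced map is an isomorphism via the filtration argument of Lemma \ref{filtiso}. The only novelty is that every moduli space now carries the extra constraint $n_w = 0$; since moves of type $(m_1)$ and $(m_2)$ are supported away from the arc $I$ and hence away from both $z$ and $w$, the chain maps and chain homotopies built in the standard proofs decompose along the value of $n_w$, and the $n_w = 0$ piece alone yields the required isomorphism on knot Floer homology.

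Move $(m_3)$, the handle slide of $\beta_1$ over some $\beta_i$ with $i \geq 2$, requires additional care because $\beta_1$ is the only $\beta$-curve meeting $K$. However, the $\beta_i$ with $i \geq 2$ are disjoint from $K$, so the resulting curve $\gamma_1$ still intersects $K$ transversely in a single point and the new diagram is again subordinate to $K$; by keeping the perturbation producing the $\gamma$-curves small and supported away from $I$, one arranges that the thin isotopy strips between $\beta$ and $\gamma$ contain neither $z$ nor $w$, so both basepoints lie outside the region in which small triangles occur. With this placement Lemma \ref{thetatransform}, the associativity statement of \S\ref{popprod}, and the area-filtration argument via Lemma \ref{filtiso} carry over word for word. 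Move $(m_4)$ is handled by the connected-sum argument of \S\ref{stabilinvariance}: the stabilising torus is placed far from $z$ and $w$ and contributes only a single intersection point, so the chain complex is unchanged. The principal technical obstacle throughout is the preservation of weak admissibility within the class of doubly-pointed diagrams subordinate to $K$ along the sequence of moves; this is arranged by the standard finger-move arguments of \cite{OsZa01}, now constrained so as to avoid the arc $I$.
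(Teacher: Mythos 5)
Your overall strategy is the paper's: reduce to the moves $(m_1)$--$(m_4)$ of Lemma \ref{helplem}, rerun the invariance machinery of \S\ref{topoinvariance} with the extra constraint $n_w=0$, and get the $\spinc$-splitting from the existence of connecting discs (your parenthetical appeal to $n_z=0$ is unnecessary but harmless -- mere non-emptiness of $\pitwo(x,y)$ gives $s_z(x)=s_z(y)$). Two points the paper makes explicit are, however, glossed in a way that leaves a genuine gap. First, your repeated claim that the chain maps and homotopies ``decompose along the value of $n_w$'' and that the $n_w=0$ piece is again a chain map/homotopy is not automatic: it requires that every holomorphic disc (and triangle) meets $\{w\}\times\symgmo$ non-negatively, i.e.\ positivity of intersections at $w$ together with additivity of $n_w$. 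For isotopies this is indeed arranged by keeping the attaching circles away from $w$, but for changes of almost complex structure ``supported away from $I$'' is not the relevant condition; one must perturb through nearly symmetric structures whose neighborhood $V$ in Definition \ref{defnearlysym} also contains $\{w\}\times\symgmo$, so that $\{w\}\times\symgmo$ stays holomorphic and the ends of the $n_w=0$ moduli spaces remain in the $n_w=0$ stratum.

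Second, and more substantively, your treatment of handle slides presupposes that the top-dimensional class $\hattheta_{\beta\gamma}$ still exists as a cycle in the doubly-pointed complex $\cfkhat(\Sigma,\beta,\gamma,w,z)$ with its usual properties, so that Lemma \ref{thetatransform}, associativity and the filtration argument ``carry over word for word.'' Placing $z$ and $w$ outside the thin isotopy strips controls the small triangles, but it does not by itself show that $\hattheta_{\beta\gamma}$ is a cycle for $\parhat^w$: a priori the cancellations in $\parhat_z\hattheta_{\beta\gamma}=0$ could involve discs with $n_w\neq 0$ and fail after imposing $n_w=0$. The paper's key observation here is that for a handle-slide triple the two basepoints $w$ and $z$ lie in the same component of $\Sigma\backslash\{\beta\cup\gamma\}$ (one connects them along $K$ outside the interval $I$), so $n_w=n_z$ for all $\beta\gamma$-discs and hence $\hfkhat(\Sigma,\beta,\gamma,w,z)=\hfhat(\#^{g}(\stwo\times\sone))$ on the nose; the top generator and the composition law then come for free. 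You need this (or an equivalent) observation for the handle slides in $(m_1)$, $(m_2)$ and $(m_3)$ alike; with it added, the rest of your argument matches the paper's proof.
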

\begin{proof} Given one of the moves $(m_1)$ to $(m_4)$, the associated
Heegaard Floer homologies are isomorphic, which is shown using one
of the isomorphisms given in \S\ref{topoinvariance}. Each of these
maps is defined by counting holomorphic discs with punctures, whose
properties are shown by defining maps by counting holomorphic discs
with punctures.\vspace{0.3cm}\\
{\bf Isotopies/Almost Complex Structure.} Denote by $J$ the path
of almost complex structures used in the definition of the Heegaard
Floer homologies. Let $M$ be an isotopy or perturbation of $J$. Let $\Phihat$ be the 
isomorphism induced by $M$. We split the isomorphism up into 
\[
  \Phihat=\Phihat^w+\Phihat^{\not=},
\]
where $\Phihat^w$ is defined by counting holomorphic discs with
punctures (for a precise definition look into \S\ref{acsinvariance} 
and \S\ref{isotopyinvariance}) that fulfill $n_w=0$. Let us denote 
with $\modspace_0$ the associated moduli space used to define the map $\Phihat$. 
The index indicates the value of the index $\mu$. The chain map property 
of $\Phihat$ was shown by counting ends of $\modspace_1$ which contains 
the same objects we needed to define $\Phihat$ but now with the index fulfilling
$\mu=1$ (see Definition \ref{maslovindex}). We 
restrict our attention to $\modspace^w_0$ and $\modspace^w_1$, the superscript $w$
indicates that we look at the holomorphic elements in $\modspace_0$ (or $\modspace_1$ respectively)
with intersection number $n_w=0$: The additivity of the intersection number $n_w$ 
and the positivity of intersections guarantees that the ends 
of $\modspace^w_1$ lie within the space $\modspace^w_0$ provided that $M$ respects the point $w$. 
If $M$ is an isotopy, respecting $w$ means, that no attaching circle crosses the point $w$. If
$M$ is a perturbation of $J$, respecting $w$ means, that we perturb $J$ through nearly 
symmetric almost complex structures such that $V$ (cf.~Definition \ref{defnearlysym}) also 
contains $\{w\}\times\symgmo$. Hence, we have the equality
\[
  (\partial\modspace_1)^w=\partial\modspace^w_1.
\] 
Thus, $\Phihat^w$ has to be a chain map between the respective knot 
Floer homologies. To show that $\Phihat$ is an isomorphism, we invert the 
move $M$ we have done and construct the associated morphism $\Psihat$. To show that
$\Psihat$ is the inverse, we construct a chain homotopy equivalence between
$\Psihat\circ\Phihat$ and the identity (or between $\Phihat\circ\Psihat$ and
the identity) by counting elements of $\modspace^{ch}_0$ which are defined by constructing a 
family of moduli spaces $\modspace^\tau_{-1}$, $\tau\in[0,1]$, and combining them to
\[
  \modspace^{ch}_0:=\bigsqcup_{\tau\in[0,1]}\modspace^\tau_{-1}.
\]
The spaces $\modspace^\tau_{-1}$ are defined like done in \S\ref{acsinvariance} and
\S\ref{isotopyinvariance}. We show the chain homotopy equation by counting ends 
of $\modspace^{ch}_1$. Restricting our attention to $\modspace^{ch,w}$, this space 
consists of the union of spaces $\modspace^{\tau,w}_{-1}$, $\tau\in[0,1]$ 
(cf.~\S\ref{acsinvariance} and \S\ref{isotopyinvariance}). We obtain the equality
\[
 (\partial\modspace^{ch}_0)^w=\partial\modspace^{ch,w}_0.
\]
And hence we see that $\Phihat^w$ is an isomorphism.\vspace{0.3cm}\\
{\bf Handle slides.} In case of the knot Floer homology we are able
to define a pairing 
\[
  \fhat_{\alpha\beta\gamma}
  \co
  \cfkhat(\Sigma,\alpha,\beta,w,z)
  \otimes
  \cfkhat(\Sigma,\beta,\gamma,w,z)
  \lra
  \cfkhat(\Sigma,\alpha,\gamma,w,z)
\]
induced by a doubly-pointed Heegaard triple diagram $(\Sigma,\alpha,\beta,\gamma,w,z)$.
We have to see, that in case the triple is induced by a
handle slide, the knot Floer homology $\hfkhat(\Sigma,\beta,\gamma,w,z)$
 carries a top-dimensional generator $\hattheta_{\beta\gamma}$, 
analogous to the discussion for the Heegaard Floer homologies,
with similar properties (recall the composition law). It is
easy to observe that, in case of a handle slide, the points 
$w$ and $z$ lie in the same component of 
$\Sigma\backslash\{\beta\cup\gamma\}$. Hence, we have an identification
\[
  \hfkhat(\Sigma,\beta,\gamma,w,z)
  =
  \hfhat(\#^{g}(\stwo\times\sone)).
\]
Counting triangles with $n_w=0$, the positivity of intersections and 
the additivity of the intersection number $n_w$ guarantees that the 
discussion carries over verbatim and gives invariance here. 
\end{proof}
\begin{rem} If a handle were slid over $\beta_1$, we would leave the
class of subordinate Heegaard diagrams. Recall that subordinate
Heegaard diagrams come from relative handle decompositions. 
\end{rem}
\subsubsection{Admissibility}\label{admsec}
The admissibility condition given in Definition \ref{admissintro} suffices to
give a well-defined theory. However, since we have an additional point $w$ in play, 
we can relax the admissibility condition.
\begin{definition}\label{extweakadm} We call a 
doubly-pointed Heegaard diagram
$(\Sigma,\alpha,\beta,w,z)$ {\bf extremely weakly admissible}
for the $\spinc$-structure $s$ if for every non-trivial periodic 
domain, with $n_w=0$ and $\left<c_1(s),\homology(\dom)\right>=0$, 
the domain has both positive and negative coefficients.
\end{definition}
With a straightforward adaptation of the proof of well-definedness in
the case of $\parhat_z$ we get the following result 
(see \cite{OsZa01}, Lemma 4.17, cf.~Definition \ref{admissintro} 
and cf.~proof of Theorem \ref{wdefined}).
\begin{theorem} Let $(\Sigma,\alpha,\beta,w,z)$ be an extremely
weakly admissible Heegaard diagram. Then $\parhat^w$ is
well-defined and a differential.\hfill$\square$
\end{theorem}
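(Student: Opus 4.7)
The plan is to follow the proofs of Theorem \ref{wdefined} and Theorem \ref{differential} in \S\ref{roadto} and \S\ref{structmoduli}, with the finiteness step replaced by an argument tailored to the relaxed condition of Definition \ref{extweakadm}. The overall shape of the proof is unchanged: first one establishes that the formula defining $\parhat^w$ is a locally finite sum, and then one verifies $\parhat^w\circ\parhat^w=0$ by a count of ends of $1$-dimensional moduli spaces. The only point of genuine novelty is that periodic classes with $n_w\neq 0$ are now allowed to be entirely positive, which forces one to check that such classes are automatically excluded by the $n_w=0$ constraint built into the definition of $\parhat^w$. I would begin by choosing the path $(\com_s)_{s\in[0,1]}$ of nearly-symmetric almost complex structures so that \emph{both} $\{z\}\times\symgmo$ and $\{w\}\times\symgmo$ are complex submanifolds of $\symg$, so that every holomorphic Whitney disc $\phi$ satisfies $n_z(\phi),\,n_w(\phi)\geq 0$ and $\dom(\phi)\geq 0$ by positivity of intersections.

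For well-definedness, fix $x,y\in\talpha\cap\tbeta$ lying in a common $\spinc$-structure $s$ and consider the set of $\phi\in\pitwo(x,y)$ with $\mu(\phi)=1$, $n_z(\phi)=n_w(\phi)=0$ and $\dom(\phi)\geq 0$. Any two such classes $\phi_1,\phi_2$ differ by a periodic class $P\in\pitwo(x,x)$ with $\mu(P)=0$, $n_z(P)=0$, $n_w(P)=0$. Theorem \ref{calcdim} converts the vanishing of $\mu(P)$ into $\langle c_1(s),\homology(P)\rangle=0$, placing $P$ precisely among the periodic domains controlled by Definition \ref{extweakadm}: if $P\neq 0$, then $P$ must have coefficients of both signs. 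A standard convex-geometry argument, identical in form to the one behind Theorem \ref{wdefined} for $b_1(Y)\neq 0$, then shows that the integer lattice points of the affine slice $\{\mu=1,\,n_z=0,\,n_w=0,\,\dom\geq 0\}$ form a finite set, so only finitely many homotopy classes can contribute to each matrix entry of $\parhat^w$.

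For the differential identity $\parhat^w\circ\parhat^w=0$, the argument proceeds exactly as in the proof of Theorem \ref{differential}: one analyses the ends of the $1$-dimensional moduli spaces $\widehat{\mathcal{M}}(x,z)^1$ cut out by $n_z=n_w=0$. Sphere bubbling and $\alpha$- or $\beta$-boundary degenerations are ruled out as in \S\ref{structmoduli}, since the sphere class satisfies $n_z(S)=1$ and any boundary degeneration sweeps out the entire Heegaard surface, so each would force $n_z\neq 0$ on the degenerating component. Since $n_w$ is additive under concatenation and nonnegative on every holomorphic factor, the constraint $n_w=0$ on the glued class propagates to each broken piece. Hence gluing again furnishes a bijection between broken-disc configurations with $n_z=n_w=0$ on each piece and the ends of the $1$-dimensional space, and an $\ztwo$-count (or, after fixing a coherent orientation system, a signed $\Z$-count) of these ends produces the coefficients of $\parhat^w\circ\parhat^w$, which therefore vanish. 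The main obstacle is the finiteness step of the previous paragraph, specifically the verification that relaxing admissibility to periodic domains with $n_w=0$ does not reintroduce non-compact families: this is exactly what Definition \ref{extweakadm} is designed to prevent, and once the Maslov-index calculation of Theorem \ref{calcdim} is in hand the remaining argument is a direct transcription of the singly-pointed case.
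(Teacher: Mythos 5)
Your proposal is correct and follows exactly the route the paper intends: the paper proves this theorem only by asserting it is a straightforward adaptation of the argument for $\parhat_z$ (citing \cite{OsZa01}, Lemma 4.17), and your write-up carries out precisely that adaptation, correctly isolating the one new point --- that the difference classes relevant to $\parhat^w$ have $n_w=0$ and $\mu=0$, hence by Theorem \ref{calcdim} fall under the relaxed condition of Definition \ref{extweakadm}, so the finiteness and ends-counting arguments of \S\ref{roadto} and \S\ref{structmoduli} go through verbatim. No discrepancy with the paper's approach.
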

Note that Ozsv\'{a}th and Szab\'{o} impose weak admissibility of the
Heegaard diagram $(\Sigma,\alpha,\beta,z)$. The introduction of
our relaxed condition is done since we there are setups (see \cite{Saha01}) where 
it is convenient to relax the admissibility condition
like introduced.

\subsubsection{Other knot Floer homologies}
By permitting variations of $n_z$ in the differential
we define the homology $\hfkminus$: Let $\cfkminus(Y,K)$ be 
the $\Z[U^{-1}]$-module (or $\ztwo[U^-{1}]$-module) generated by 
the intersection points $\talpha\cap\tbeta$. A differential 
$\parminus_w$ is defined by
\[
  \parminus_w(x)
  =
  \sum_{y\in\talpha\cap\tbeta}
  \sum_{\phi\in H(x,y,1)}
  \#\modhatphi\cdot y,
\]
where $H(x,y,1)\subset\pitwo(x,y)$ are the homotopy classes with
$n_w=0$ (possibly $n_z\not=0$) and $\mu=1$. To make this a well-defined
map we may impose the strong admissibility condition on the underlying
Heegaard diagram or relax it like it was done for weak admissibility in
Definition \ref{extweakadm}. Using this construction, and continuing like
in \S\ref{genhomology}, we define variants we denote by $\hfkinfty$ and
$\hfkplus$. The groups are naturally connected by exact sequences 
analogous to those presented in Lemma \ref{genseq}.

\subsection{Refinements}
If the knot $K$ is null-homologous, we get, using a Mayer-Vietoris computation, that
\begin{equation}
  \spinc(Y_0(K))=\spinc(Y)\times\Z.\label{spinceq}
\end{equation}
Alternatively, by interpretation of $\spinc$-structures as homology classes
of vector fields, i.e.~homotopy classes over the $2$-skeleton of $Y$,
we can prove this result and see that there is a very geometric realization
of the correspondence (\ref{spinceq}). Given a $\spinc$-structure $t$
on $Y_0(K)$, we associate to it the pair $(s,k)$, where $s$ is the restriction
of $t$ on $Y$ and $k$ an integer we will define in a moment. Beforehand,
we would like to say in what way the phrase {\it restriction of $t$ onto $Y$} 
makes sense. Pick a vector field $v$ in the homology class of $t$ and restrict
this vector field to $Y\backslash\nu K$. Observe that we may regard $Y\backslash\nu K$
as a submanifold of $Y_0(K)$. The restricted vector field may be 
interpreted as sitting on $Y$. We extend $v$ to the tubular
neighborhood $\nu K$ of $K$ in $Y$, which determines a $\spinc$-structure $s$ on $Y$. However, 
the induced $\spinc$-structure does not depend on the special
choice of extension of $v$ on $\nu K$, since $K$ is homologically trivial.\vspace{0.3cm}\\
To a $\spinc$-structure $t$ we can associate a link $L_t$ and its homology class
determines the $\spinc$-structure. Denote by $\mu_0$ a meridian of $K$ in $Y$,
interpreted as sitting in $Y_0(K)$. Then $L_t$ can be written as a sum
\[
  L_t=k\cdot\mu_0+\dots,
\]
and thus we can compute $k$ with
\[
  k
  =
  lk^Y(L,\lambda)
  =
  \#^Y(L,F)
  =
  \#^{Y_0(K)}(L,\Fhat)
  =
  \bigl<\frac{1}{2}c_1(t),[\Fhat]\bigr>,
\]
where $\lambda$ is a push-off of $K$ in $Y$ and $\Fhat$ is obtained 
by taking a Seifert surface $F$ of $K$ in $Y$ and capping it off with 
a disc in $Y_0(K)$. \vspace{0.3cm}\\
We can try to separate intersection points $\talpha\cap\tbeta$
with respect to $\spinc$-structures of $Y_0(K)$. This
defines a refined invariant $\cfkhat(Y,K,t)$, for $t\in\spinc(Y_0(K))$, and we
have 
\[
  \cfkhat(Y,K,s)=\bigoplus_{t\in H_s}\cfkhat(Y,K,t),
\]
where $H_s\subset\spinc(Y_0(K))$ are the elements extending 
$s\in\spinc(Y)$. We have to show that $\parhat^w$ preserves
this splitting. We point the interested reader to \cite{OsZa04}.

\section{Maps Induced By Cobordisms}\label{parcobmaps}
The pairing introduced in \S\ref{popprod} can be used to associate
maps to cobordisms. In general, every cobordism between two connected $3$-manifolds
$Y$ and $Y'$ can be decomposed into $1$-handles, $2$-handles and
$3$-handles (cf.~Proposition 4.2.13 in \cite{GoSt}). All cobordisms 
appearing through our work will be
induced by surgeries on a $3$-manifold. A surgery corresponds to
a $2$-handle attachment to the trivial cobordism $Y\times I$.
For this reason we will not discuss $1$-handles and $3$-handles.
We will give the construction for cobordisms obtained
by attachments of one single $2$-handle. For a definition of the general, 
very similar construction, we point the interested reader 
to \cite{OsZa03}.\vspace{0.3cm}\\
Given a framed knot $K\subset Y$, we fix an admissible 
Heegaard diagram subordinate to $K$. Without loss of generality, we 
can choose the diagram such that $\beta_1=\mu$ is a meridian
of the first torus component of $\Sigma$. The framing of $K$
is given, by pushing $K$ off itself onto the Heegaard surface.
The resulting knot on $\Sigma$ is determined by 
$\lambda+n\cdot \mu$, for a suitable $n\in\Z$. With this done, we 
can represent the surgery by the Heegaard triple
diagram $(\Sigma,\alpha,\beta,\gamma)$ where $\gamma_i$, $i\geq2$, are
isotopic push-offs of the $\beta_i$, perturbed, such that
$\gamma_i$ intersects $\beta_i$ in a pair of cancelling intersection
points. The curve $\gamma_1$ equals $\lambda+n\cdot \mu$.
\begin{prop} The cobordism 
$X_{\alpha\beta\gamma}\cup_{\partial}(\#^{g-1}D^3\times\sone)$
is diffeomorphic to the cobordism $W_K$ given by the framed 
surgery along $K$.
\end{prop}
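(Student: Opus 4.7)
The plan is to exhibit a handle decomposition of $X_{\alpha\beta\gamma}\cup_\partial\#^{g-1}(D^3\times\sone)$, relative to $Y_{\alpha\beta}=Y$, that after all cancellations reduces to $Y\times I$ with a single $2$-handle attached along $K$ with framing $n$; by definition this is $W_K$.

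The first step is a standard handle-theoretic description of $X_{\alpha\beta\gamma}$: it can be built from $Y_{\alpha\beta}\times[0,1]$ by attaching, to $Y_{\alpha\beta}\times\{1\}$, a $4$-dimensional $2$-handle along each $\gamma_i\subset\Sigma\subset Y_{\alpha\beta}$ with the surface framing induced by $\Sigma$, together with a single $3$-handle capping off the residual $\stwo$ that remains in $\Sigma$ after the $2$-handle attachments. The three boundary components of $X_{\alpha\beta\gamma}$ arise naturally from this picture: $Y_{\alpha\beta}$ at the bottom, $Y_{\alpha\gamma}$ at the top, and $Y_{\beta\gamma}$ as the ``side'' boundary created where the $\gamma$-handles meet the new $\gamma$-handlebody. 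This identification can be read off from the definition of $X_{\alpha\beta\gamma}$ by interpreting $e_\gamma\times U_\gamma$ as the trace of these handle attachments.

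Next, I would exploit the specific geometry of the $\gamma_i$. For $i\geq 2$, the curve $\gamma_i$ is a small isotopic push-off of $\beta_i$ in $\Sigma$. Since $\beta_i$ bounds a compressing disc in $U_\beta\subset Y$, so does $\gamma_i$, making $\gamma_i$ an unknot contained in a $3$-ball in $Y$. Moreover, $\gamma_i$ and $\beta_i$ cobound an embedded annulus in $\Sigma$, so the surface framing of $\gamma_i$ agrees with the $0$-framing coming from the disc it bounds. Attaching a $2$-handle along a $0$-framed unknot in a ball creates an $\stwo\times\sone$ summand on the outgoing end; the corresponding copy of $D^3\times\sone$ exactly caps this off, producing a $2$-handle/$3$-handle cancellation. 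Performing this cancellation for every $i=2,\dots,g$ consumes all $g-1$ copies of $D^3\times\sone$ and eliminates $g-1$ of the $\gamma$-handles; the leftover $3$-handle from the original decomposition cancels with the remaining residual $\stwo$, leaving only the $2$-handle attached along $\gamma_1=\lambda+n\cdot\mu$.

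It remains to identify this surviving $2$-handle with the $2$-handle of $W_K$. Since $\mu=\beta_1$ is a meridian of $K$, it bounds a disc in $Y\backslash K$; one can therefore isotope the $n$ meridional summands away from $\gamma_1$ in $Y$, leaving the attaching circle isotopic to $\lambda$, which by the subordinate-diagram condition is isotopic to $K$. The surface framing of $\lambda+n\cdot\mu$ differs from the surface framing of $\lambda$ by exactly $n$ twists (since $\mu$ is a meridian), and the surface framing of $\lambda$ is precisely the Seifert framing of $K$, because subordinacy of $\Sigma$ to $K$ ensures that the push-off $\lambda$ is null-homologous in $Y\backslash\nu K$. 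Consequently the remaining $2$-handle is attached along $K$ with framing $n$, and the cobordism is $W_K$. The principal obstacle is the framing computation (identifying surface framings with Seifert framings under the subordinacy hypothesis) and the justification of the initial handle-theoretic description of $X_{\alpha\beta\gamma}$; both are standard Kirby-calculus arguments but demand careful bookkeeping with the corners of $\Delta\times\Sigma$ and with orientation conventions.
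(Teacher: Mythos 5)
The paper states this proposition without proof (it is quoted from Ozsv\'{a}th--Szab\'{o}'s holomorphic-triangles paper), so your argument can only be judged on its own terms; the route you take --- identify $X_{\alpha\beta\gamma}$ with $Y_{\alpha\beta}\times[0,1]$ plus $2$-handles along the surface-framed $\gamma_i$ and a $3$-handle, then cancel everything but the $\gamma_1$-handle against the filling --- is the standard one, and your first step is correct. The cancellation bookkeeping, however, does not close up as written. The filling is glued along its \emph{entire} boundary $Y_{\beta\gamma}\cong\#^{g-1}(\stwo\times\sone)$, so relative to that boundary the boundary connected sum of $g-1$ copies of $D^3\times\sone$ contributes $(g-1)$ three-handles \emph{and one four-handle}; it is not $g-1$ separate ``caps''. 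The correct scheme is: the $(g-1)$ filling $3$-handles cancel the $2$-handles attached along $\gamma_2,\dots,\gamma_g$ --- for which one must check that each attaching sphere (the essential sphere in the $i$-th summand of $Y_{\beta\gamma}=U_\beta\cup_\Sigma U_\gamma$, built from the compressing discs of the parallel curves $\beta_i$ and $\gamma_i$) meets the belt circle of the corresponding $2$-handle in exactly one point --- and the \emph{original} $3$-handle cancels the $4$-handle. Your sentence ``the leftover $3$-handle from the original decomposition cancels with the remaining residual $\stwo$'' is not a handle cancellation (a $3$-handle can only cancel a $2$- or a $4$-handle), and the $4$-handle never appears in your account. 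Note also that the $\stwo\times\sone$ summands created by the $2$-handles sit, before the $3$-handle is attached, inside the connected boundary $Y_{\alpha\gamma}\#Y_{\beta\gamma}$; one must trace that their essential spheres land on the $Y_{\beta\gamma}$ side and coincide with the spheres along which the filling $3$-handles are attached. That tracing is precisely the content of the proof, and ``exactly caps this off'' skips it.

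The framing step is also flawed. The proposition concerns an arbitrary framed knot $K\subset Y$; no null-homology hypothesis is available, so there is no Seifert framing to invoke, and even when one exists, subordinacy does not make the chosen longitude $\lambda$ null-homologous in $Y\backslash\nu K$ --- the integer $n$ has no intrinsic meaning, it is merely the coordinate of the given framing in the chosen basis $(\mu,\lambda)$. What you need, and all you need, is: $\gamma_1=\lambda+n\mu$ meets the meridian $\mu=\beta_1$ once, hence is a push-off of $K$ and isotopic to $K$ in $Y$, and under this isotopy the $\Sigma$-surface framing of $\gamma_1$ is exactly the given framing of $K$, since the framing was \emph{defined} as this push-off onto $\Sigma$; hence the surviving $2$-handle is the handle of $W_K$. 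The phrase ``isotope the $n$ meridional summands away from $\gamma_1$'' is not a legitimate operation on an embedded curve and should be discarded along with the Seifert-framing detour.
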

We define 
\[
  \Fhat_{W_K}=\fhat_{\alpha\beta\gamma}^*
\]
as the map induced by the cobordism $W_K$. Of course, for this to make
sense, we have to show that $\Fhat_{W_K}$ does not depend
on the choices made in its definition. This is shown by the
following recipe: Suppose we are given maps $\Fhat_1$ and $\Fhat_2$, induced by
two sets of data that can be connected via a Heegaard move.
 Then these maps fit into a commutative box
\[
\begin{diagram}[size=1.5em,labelstyle=\scriptstyle]
\hfhat       & & \rTo^{\Fhat_1} & & \hfhat \\
\dTo^\cong   & &                & & \dTo_\cong \\
\hfhat       & & \rTo^{\Fhat_2} & & \hfhat
\end{diagram}
\]
where the associated Heegaard Floer homologies are connected by
the isomorphism induced by the move done to connect the diagrams.
If we did a handle slide, we use associativity together with a
conservation property analogous to Lemma \ref{thetatransform} to show a 
composition law reading
\[
  \Fhat_{\alpha\gamma\gamma'}\circ\Fhat_{\alpha\beta\gamma}
  =\Fhat_{\alpha\beta\gamma'}.
\]
In a similar vein one covers handle slides among the $\alpha$-circles. Invariance
under Isotopies and changes of almost complex structures is
shown by proving, that the isomorphisms induced by these moves make the
corresponding diagram commute.\vspace{0.3cm}\\
Given a framed link $L=K_1\sqcup\dots\sqcup K_m$, observe that
we can obviously define a map 
\[
  \Fhat_{L}
  \co
  \hfhat(Y)
  \lra
  \hfhat(Y_L),
\] 
where $Y_L$ is the manifold obtained by surgery along $L$ in $Y$,
in the same way we did for a single attachment. We claim that associativity, together 
with a conservation law like given in Lemma \ref{thetatransform}, 
will suffice to show that the map $\Fhat_L$ associated to multiple 
attachments is a composition 
\[
  \Fhat_{L}=\Fhat_{K_m}\circ\dots\circ\Fhat_{K_1}
\]
of the maps $\Fhat_{K_i}$ associated to the single attachments along the $K_i$. The 
associativity will prove that the maps in this chain {\it commute}. Although we
have to be careful by saying {\it they commute}. The maps, as we
change the order of the attachments, are defined differently and, thus,
differ depending on the attachment order.\vspace{0.3cm}\\
There is a procedure for defining maps associated to $1$-handle
attachments and $3$-handle attachments. Their construction is 
not very enlightening, and the cobordisms appearing in our
discussions will mostly be induced by surgeries.

\section{The Surgery Exact Triangle}\label{parsurextri}
Denote by $K$ a knot in $Y$ and let $n$ be a framing of that knot.
We will briefly recall the notion of framings to fix the notation.
Given a tubular neighborhood $\nu K\hookrightarrow Y$ of $K$, we
fix a meridian $\mu$ of the boundary $\partial\nu K$. A framing is
given by a push-off $n$ of $K$, sitting on $\partial\nu K$,
such that $\#(\mu,n)=1$. The pair $\mu,\lambda$ determines a basis
for $H_1(\partial\nu K;\Z)$. Any other framing $\lambda'$ can be
written as $\lambda'=m\cdot\mu+\lambda$, for an integer $m\in\Z$, and 
vice versa any of these linear combinations determines a framing 
on $K$. Thus, when writing $n$ as a framing for $K$ it makes sense 
to talk about the framing $n+\mu$. If the knot is homologically
trivial, it bounds a Seifert surface which naturally induces a
framing on the knot called {\bf the Seifert framing}. This serves
as a canonical framing, and having fixed this framing we can talk
about framings as an integer $n\in\Z$. This identification will be
done whenever it makes sense.\vspace{0.3cm}\\
There is a long exact sequence
\begin{equation}
  \dots
  \overset{\partial_*}{\lra}
  \hfhat(Y)
  \overset{\Fhat_1}{\lra}
  \hfhat(Y_K^n)
  \overset{\Fhat_2}{\lra}
  \hfhat(Y_K^{n+\mu})
  \overset{\partial_*}{\lra}
  \dots, \label{surextri}
\end{equation}
where $\Fhat_i$ denote the maps associated to the cobordisms
induced by the surgeries. The map $\Fhat_2$ is induced by
a surgery along a meridian of $K$ with framing $-1$.
The exactness of the sequence is proved by showing that $\Fhat_1$ -- on the chain level -- can 
be perturbed within its chain homotopy class to fit into a short exact sequence of chain 
complexes and chain maps (see \cite{OsZa02})
\begin{equation}
  0
  \lra
  \cfhat(Y)
  \overset{\widetilde{\Fhat_1}}{\lra}
  \cfhat(Y_K^n)
  \overset{\Fhat_2}{\lra}
  \cfhat(Y_K^{n+\mu})
  \lra
  0.\label{chainexact}
\end{equation}
The map $\partial_*$ in (\ref{surextri}) denotes the 
induced coboundary. This enables us to prove the existence 
of the surgery exact triangle.
\begin{theorem}\label{exacttriangle} In the situation described
above, let $\nu$ denote a meridian of $\mu$ and $\Fhat_3$ the map
induced by surgery along $\nu$ with framing $-1$. There is a long 
exact sequence
\begin{diagram}[size=1.5em,labelstyle=\scriptstyle]
\hfhat(Y) &       & \rTo^{\Fhat_1} &      & \hfhat(Y_K^n) \\
          & \luTo^{\Fhat_3} &                    & \ldTo_{\Fhat_2}& \\
          &       & \hfhat(Y_K^{n+\mu})  &      &
\end{diagram}
which is called {\bf surgery exact triangle}.
\end{theorem}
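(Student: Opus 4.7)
The plan is to bootstrap from the long exact sequence (\ref{surextri}), which is already in hand as the homological zig--zag of the short exact sequence of chain complexes (\ref{chainexact}). The whole content of Theorem \ref{exacttriangle} then reduces to identifying the connecting homomorphism $\partial_*\co\hfhat(Y_K^{n+\mu})\to\hfhat(Y)$ with the cobordism map $\Fhat_3$ induced by $(-1)$-framed surgery on the meridian $\nu$ of $\mu$.

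First I would record the topological input. Blowing down the $-1$-framed unknot $\nu$ in $Y_K^{n+\mu}$ changes the framing on $\mu$ from $-1$ to $0$, and a $0$-framed meridian then slam-dunks $K$ away, returning the manifold to $Y$. This is precisely the cyclic Kirby picture schematically recorded in Figure \ref{Fig:figFour}, and guarantees that $\Fhat_3$ genuinely closes the triangle.

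For the algebraic identification I would package the three surgeries into one Heegaard quadruple diagram $(\Sigma,\alpha,\beta,\gamma,\delta,z)$, in which the $\alpha$-curves are fixed throughout and the three consecutive pairs $(\beta,\gamma)$, $(\gamma,\delta)$, $(\delta,\beta)$ each differ by one of the handle slides of the type analysed in \S\ref{parhsinvar}. With this set-up
\[
  \Fhat_1=\fhat^*_{\alpha\beta\gamma}(\,\cdot\otimes\hattheta_{\beta\gamma}),\quad
  \Fhat_2=\fhat^*_{\alpha\gamma\delta}(\,\cdot\otimes\hattheta_{\gamma\delta}),\quad
  \Fhat_3=\fhat^*_{\alpha\delta\beta}(\,\cdot\otimes\hattheta_{\delta\beta}),
\]
the short exact sequence (\ref{chainexact}) is realised by the triangle-count underlying $\widetilde{\Fhat_1}$ and $\Fhat_2$, and $\partial_*$ admits the usual zig--zag description: lift a cycle along $\Fhat_2$, apply $\parhat$, and pull back along $\widetilde{\Fhat_1}$.

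The main obstacle, and technical heart of the argument, is to show $[\partial_*]=[\Fhat_3]$ on homology. The cleanest route is to construct an explicit chain homotopy $J$ with $\partial_*-\Fhat_3=\parhat\circ J+J\circ\parhat$ by counting holomorphic rectangles on the quadruple diagram with three of their four corners fed by the canonical generators $\hattheta_{\beta\gamma},\hattheta_{\gamma\delta},\hattheta_{\delta\beta}$. The ends of the resulting one-dimensional moduli space split, exactly as in the associativity argument producing (\ref{asso01})--(\ref{asso02}), into breaking ends that contribute $\parhat\circ J+J\circ\parhat$ and conformal-degeneration ends that reassemble into compositions of triangle maps. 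The crucial input is the cyclic analogue of Lemma \ref{thetatransform}, identifying $\fhat_{\beta\gamma\delta}(\hattheta_{\beta\gamma}\otimes\hattheta_{\gamma\delta})=\hattheta_{\beta\delta}$ together with its cyclic permutations, which is exactly what forces those degeneration ends to collapse to $\Fhat_3-\partial_*$. Arranging strong enough admissibility on the quadruple diagram to keep all relevant counts finite is an unavoidable but routine technicality.
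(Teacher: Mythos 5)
Your overall strategy diverges from the paper's, and the divergence is where the trouble lies. The paper's proof never identifies the connecting homomorphism $\partial_*$ of (\ref{surextri}) with $\Fhat_3$: instead it exploits the cyclic symmetry of the topological situation (Figure \ref{Fig:figFour}), so that each of the three arrows of the triangle sits as the \emph{first} map in its own exact sequence of type (\ref{surextri}); exactness at $\hfhat(Y_K^n)$ is read off from the sequence based at $Y$, exactness at $\hfhat(Y_K^{n+\mu})$ from the rotated sequence based at $Y_K^n$, and so on, and the three sequences are simply concatenated. Your plan makes the theorem rest entirely on the equality $[\partial_*]=[\Fhat_3]$, which is a genuinely harder statement and is not established by your sketch. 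The map $\partial_*$ is defined by the algebraic zig--zag through (\ref{chainexact}), and (\ref{chainexact}) only exists after $\Fhat_1$ is replaced by an unspecified chain-homotopic perturbation $\widetilde{\Fhat_1}$; without pinning down that chain-level model you cannot compute the zig--zag at all. Counting holomorphic rectangles on the quadruple diagram produces homotopies \emph{between compositions of triangle maps} (that is the content of (\ref{asso01})--(\ref{asso02})); it does not produce a homotopy between a triangle map and a connecting homomorphism of a short exact sequence, so the proposed $J$ with $\partial_*-\Fhat_3=\parhat\circ J+J\circ\parhat$ does not even have a well-defined meaning as stated. Carrying your plan through honestly would force you to build the explicit mapping-cone model of (\ref{chainexact}), at which point you are reproducing the paper's second proof via Lemma \ref{exlem} and Lemma \ref{Seidellem} rather than shortcutting it.

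There is also a concrete computational error. In the surgery-triangle configuration the sets $\beta,\gamma,\delta$ are \emph{not} related by handle slides of the type in \S\ref{parhsinvar} (the curves $\beta_1=\mu$, $\gamma_1=\lambda+n\mu$, $\delta_1=\lambda+(n+1)\mu$ give three different surgered manifolds), and the "cyclic analogue of Lemma \ref{thetatransform}" you invoke fails here: the model computation of Figure \ref{Fig:figNineteen} shows $\fhat_{\beta\gamma\delta}(\hattheta_{\beta\gamma}\otimes\hattheta_{\gamma\delta})=2\cdot\hattheta_{\beta\delta}=0$ in $\ztwo$ (two canceling triangle classes), not $\hattheta_{\beta\delta}$. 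This vanishing is exactly what makes $\fhat_2\circ\fhat_1$ null-homotopic; if the product were $\hattheta_{\beta\delta}$, associativity would instead give $\Fhat_2\circ\Fhat_1=\Fhat_{\alpha\beta\delta}$, a composition law of handle-slide type, which is incompatible with exactness. So the ingredient you rely on to make the degeneration ends "collapse to $\Fhat_3-\partial_*$" is false in this setting, and the argument as proposed does not close.
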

\begin{figure}[ht!]
\labellist\small\hair 2pt
\pinlabel {$n$} [Bl] at 3 182
\pinlabel {$n\!+\!\mu$} [Bl] at 93 182
\pinlabel {$n$} [Bl] at 175 182
\pinlabel {$n$} [Bl] at 323 182
\pinlabel {$n$} [Bl] at 463 182
\pinlabel {$n$} [Bl] at 624 182
\pinlabel {$n$} [Bl] at 723 182
\pinlabel {$K$} [l] at 3 8
\pinlabel {$K$} [l] at 93 8
\pinlabel {$K$} [l] at 175 8
\pinlabel {$K$} [l] at 323 8
\pinlabel {$K$} [l] at 463 8
\pinlabel {$K$} [l] at 624 8
\pinlabel {$K$} [l] at 723 8
\pinlabel {$-1$} [t] at 143 80
\pinlabel {$0$} [t] at 294 80
\pinlabel {$-1$} [t] at 431 80
\pinlabel {$-1$} [t] at 595 80
\pinlabel {$\mu$} [B] at 143 101
\pinlabel {$\mu$} [B] at 294 101
\pinlabel {$\mu$} [B] at 431 101
\pinlabel {$\mu$} [B] at 595 101
\pinlabel {$\nu$} [l] at 516 110
\pinlabel {$-1$} [Bl] at 495 127
\pinlabel {$\nu$} [l] at 679 110
\pinlabel {$0$} [Bl] at 663 127
\endlabellist
\centering
\includegraphics[height=3cm]{}
\caption{The topological situation in the exact triangle.}
\label{Fig:figFour}
\end{figure}

\begin{proof} Observe that the topological situation is very symmetric.
The long exact sequence (\ref{surextri}) corresponds to the topological
 situation pictured in Figure \ref{Fig:figFour}. Each arrow in Figure \ref{Fig:figFour} corresponds to an exact sequence
of type (\ref{surextri}). With the identifications given, we can 
concatenate the three sequences to give the surgery exact sequence
of Theorem \ref{exacttriangle}.
\end{proof}
A second proof, one more appealing to our aesthetic sense, although 
only valid for $\ztwo$-coefficients, 
was also developed by 
Ozsv\'{a}th and Szab\'{o}. We will discuss the proof
in the remainder of this paragraph. It contains a very interesting
algebraic approach for showing exactness of a sequence.\vspace{0.3cm}\\
The composition $\fhat_2\circ\fhat_1$ in the sequence
\begin{equation}
  \cfhat(Y)
  \overset{\fhat_1}{\lra}
  \cfhat(Y_K^n)
  \overset{\fhat_2}{\lra}
  \cfhat(Y_K^{n+\mu})
\label{chainex}
\end{equation}
is null-chain homotopic. Let $(\Sigma,\alpha,\beta,z)$ be a Heegaard diagram
subordinate to the knot $K\subset Y$. We can choose the data such
that $\beta_1$ is a meridian of the first torus component of $\Sigma$.
A Heegaard diagram of $Y_K^n$ can be described by $(\Sigma,\alpha,\gamma,z)$
where $\gamma_i$, $i\geq2$ are isotopic push-offs of the $\beta_i$ such that
$\beta_i$ and $\gamma_i$ meet in two cancelling intersections transversely.
The curve $\gamma_1$ equals $n\cdot\beta_1+\lambda$ where $\lambda$ is the
longitude of the first torus component of $\Sigma$ determining the
framing on $K$. We define a fourth set of attaching circles $\delta$ where
$\delta_i$, $i\geq2$ are push-offs of the $\gamma_i$ which meet the
 $\gamma_i$ and $\delta_i$ in two cancelling intersections. The 
 curve $\delta_1$ equals
$(n+1)\beta_1+\lambda$. Thus, $(\Sigma,\alpha,\delta)$ is a Heegaard diagram
of $Y_K^{n+\mu}$. By associativity (\ref{associativity}), the composition
$\fhat_2\circ\fhat_1$ is chain homotopic to
\[
  \fhat_{\alpha\beta\delta}
  (\,\cdot\otimes
  \fhat_{\beta\gamma\delta}
  (\hattheta_{\beta\gamma}
  \otimes\hattheta_{\gamma\delta})
  ),
\]
where the chain homotopy $H$ is given by counting holomorphic rectangles
with suitable boundary conditions (cf.~\S\ref{parhsinvar}). To compute
$\fhat_{\beta\gamma\delta}
  (\hattheta_{\beta\gamma}
  \otimes\hattheta_{\gamma\delta})$
we use a model calculation. Figure \ref{Fig:figNineteen} illustrates the
Heegaard triple diagram.
\begin{figure}[ht!]
\labellist\small\hair 2pt
\pinlabel {$\hattheta_{\beta\gamma}$} [B] at 268 202
\pinlabel {$\hattheta_{\gamma\delta}$} [B] at 348 228
\pinlabel {$z$} [l] at 233 162
\pinlabel {$\delta_1$} [t] at 4 52
\pinlabel {$\beta_1$} [l] at 95 16
\pinlabel {$\hattheta_{\gamma\delta}$} [l] at 145 10
\pinlabel {$\gamma_1$} [t] at 183 45
\pinlabel {$\beta_2$} [t] at 254 37
\pinlabel {$\gamma_2$} [t] at 275 32
\pinlabel {$\delta_2$} [t] at 294 20
\endlabellist
\centering
\includegraphics[width=12cm]{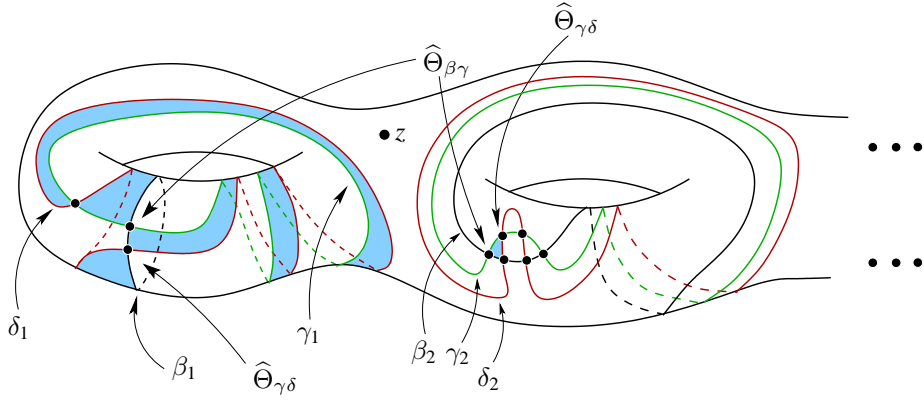}
\caption{Heegaard triple diagram for computation of 
$\fhat_{\beta\gamma\delta}(\hattheta_{\beta\gamma}\otimes
\hattheta_{\gamma\delta})$. }
\label{Fig:figNineteen}
\end{figure}

There are exactly two homotopy classes of Whitney triangles we have to
count. Each domain associated to the homotopy classes is given by a
disjoint union of triangles. Thus, the moduli spaces associated to these
homotopy classes each carry one single element 
(cf.~Lemma \ref{thetatransform}). Hence, in $\ztwo$-coefficients
\[
 \fhat_{\beta\gamma\delta}
  (\hattheta_{\beta\gamma}
  \otimes\hattheta_{\gamma\delta})=2\cdot\hattheta_{\beta\delta}=0.
\]
In general we have to see that we can choose the signs of the associated
elements differently. But observe that the domains of both homotopy 
classes contributing in our signed count differ by a triply-periodic 
domain. We can choose the signs on these 
elements differently.\vspace{0.3cm}\\
This discussion carries over verbatim for any of the maps in the
surgery exact sequence. The symmetry of the situation, as indicated
in Figure \ref{Fig:figFour}, makes it possible to carry over the proof
given here.\vspace{0.3cm}\\
There is an algebraic trick to show exactness on the homological
level. Let
\[
  H\co\cfhat(Y)\lra\cfhat(Y_K^{n+\mu})
\]
denote the null-homotopy of $\fhat_2\circ\fhat_1$ 
(cf.~\S\ref{parhsinvar}). Define 
the chain complex $A_{\fhat_1,\fhat_2}$ to be given by the module 
$A=\cfhat(Y)\oplus\cfhat(Y_K^n)\oplus\cfhat(Y_K^{n+\mu})$ with
the differential
\[
  \partial
  =
  \left(
  \begin{matrix}
  \parhat_Y & 0 	      & 0 \\
  \fhat_1   & \parhat_{Y_K^n} & 0 \\
  H         & \fhat_2         & \parhat_{Y_K^{n+\mu}}
  \end{matrix}
  \right).
\]
\begin{lem}\label{exlem} The 
sequence (\ref{chainex}) is exact on the homological
level at $\cfhat(Y_K^n)$ if $H_*(A_{\fhat_1,\fhat_2})=0$.
\end{lem}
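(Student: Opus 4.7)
My plan is to prove the lemma by a short diagram chase on the mapping cone-style complex $A_{\fhat_1,\fhat_2}$. First I would verify that $\partial$ really is a differential on $A$: the diagonal terms square to zero since the individual $\parhat$ are differentials, the off-diagonal blocks $\fhat_1,\fhat_2$ contribute zero because they are chain maps, and the bottom-left block gives the relation $\fhat_2\fhat_1+\parhat_{Y_K^{n+\mu}}H+H\parhat_Y=0$, which is precisely the condition that $H$ is a null-homotopy of $\fhat_2\fhat_1$. In particular this already shows that $\fhat_2^*\circ\fhat_1^*=0$, so the inclusion $\mathrm{Im}(\fhat_1^*)\subseteq\mathrm{Ker}(\fhat_2^*)$ is immediate.

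For the reverse inclusion, suppose $x\in\cfhat(Y_K^n)$ is a cycle with $[\fhat_2 x]=0$ in $\hfhat(Y_K^{n+\mu})$; choose $y\in\cfhat(Y_K^{n+\mu})$ with $\parhat_{Y_K^{n+\mu}}y=\fhat_2 x$. The plan is to assemble the triple $(0,x,-y)\in A$ and observe that it is a cycle: applying $\partial$ kills the first coordinate trivially, the second coordinate becomes $\parhat_{Y_K^n}x=0$, and the third coordinate becomes $\fhat_2 x-\parhat_{Y_K^{n+\mu}}y=0$. Here the choice of sign on the $y$-component (and, in $\Z$-coefficients, the sign conventions in the entries of $\partial$) is the one delicate bookkeeping point; in $\ztwo$-coefficients it is automatic.

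Now I would invoke the hypothesis $H_*(A_{\fhat_1,\fhat_2})=0$: the cycle $(0,x,-y)$ must be a boundary, so there exists $(a,b,c)\in A$ with $\partial(a,b,c)=(0,x,-y)$. Reading off the three coordinates yields $\parhat_Y a=0$, $\fhat_1 a+\parhat_{Y_K^n}b=x$, and a third equation involving $c$ that we will not need. The first equation says $a$ is a cycle in $\cfhat(Y)$, and the second says $x$ and $\fhat_1 a$ differ by a boundary in $\cfhat(Y_K^n)$. Thus $[x]=\fhat_1^*[a]$, which places $[x]$ in the image of $\fhat_1^*$ and completes the exactness argument.

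The main obstacle is conceptual rather than technical: one has to resist the temptation to confuse $A_{\fhat_1,\fhat_2}$ with the classical mapping cone of a single chain map, since here the presence of the null-homotopy $H$ in the lower-left corner is exactly what is needed to make $\partial^2=0$ and, simultaneously, what couples the three complexes tightly enough that acyclicity of $A$ forces exactness of the horizontal sequence. No analysis of holomorphic discs intervenes at this stage — everything reduces to linear algebra over the chain complexes, which is why the whole difficulty of Theorem~\ref{exacttriangle} is pushed into verifying the vanishing $H_*(A_{\fhat_1,\fhat_2})=0$ separately.
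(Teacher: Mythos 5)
Your proof is correct and takes essentially the same route as the paper: use the acyclicity of $A_{\fhat_1,\fhat_2}$ to exhibit a suitable cycle as a boundary and read off the middle coordinate, the inclusion $\mbox{\rm im}(\Fhat_1)\subseteq\ker(\Fhat_2)$ coming from the null-homotopy $H$ encoded in $\partial^2=0$. Your version is in fact slightly more complete than the paper's sketch, which only treats a chain-level kernel element $\fhat_2(b)=0$ via the cycle $(0,b,0)$, whereas your cycle $(0,x,-y)$ covers the general homological case where $\fhat_2(x)$ is merely a boundary; and, as you observe, sign bookkeeping is moot since this whole argument is carried out with $\ztwo$-coefficients.
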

\begin{proof}
Suppose we are given an element $b\in\cfhat(Y_K^n)\cap\ker(\fhat_2)$ 
with $\parhat_{Y_K^n} b=0$. Since 
$H_*(A_{\fhat_1,\fhat_2},\partial)$ is trivial 
there is an element $(x,y,w)\in A$ such that 
$(0,b,0)=\partial(x,y,w)$. Thus, we have
\[
  b=\fhat_1(x) + \parhat_{Y_K^n}(y)
\]
proving, that $[b]\in\mbox{\rm im}(\Fhat_1)$. 
\end{proof}
\begin{definition} For a chain map $f\co A\lra B$ between 
$\ztwo$-vector spaces we define its
{\bf mapping cone} to be the chain complex $M(f)$, given by
the module $A\oplus B$ with differential
\[
  \partial_f=
  \left(
  \begin{matrix}
  \partial_A & 0 \\
  f   & \partial_B
  \end{matrix}.
  \right)
\]
The mapping cone is a chain complex.
\end{definition}
From the definition of mapping cones there is a short exact
sequence of chain complexes
\[
  0
  \lra
  \cfhat(Y_K^{n+\mu})
  \overset{\fhat_1}{\lra}
  A_{\fhat_1,\fhat_2}
  \overset{\fhat_2}{\lra}
  M(\fhat_1)
  \lra
  0
\]
inducing a long exact sequence between the associated homologies.
The connecting morphism of this long exact sequence is induced by
\[
  (H,\fhat_2)
  \co
  M(\fhat_1)
  \lra
  \cfhat(Y_K^{n+\mu}).
\]
The triviality of $H_*(A_{\fhat_1,\fhat_2},\partial)$ is the 
same as saying that $(H,\fhat_2)_*$ is an isomorphism. 
\begin{lem}[\cite{OsZa07}, Lemma 4.2]\label{Seidellem} Let 
$\{A_i\}_{i\in\Z}$ be a collection of modules and let
\[
  \{f_i\co A_i\lra A_{i+1}\}_{i\in\Z}
\]
be a collection of chain maps such that $f_{i+1}\circ f_i$, $i\in\Z$ 
is chain homotopically trivial by a chain homotopy 
$H_i\co A_i\lra A_{i+2}$. The maps
\[
  \psi_i=f_{i+2}\circ H_i+H_{i+1}\circ f_i
  \co
  A_i
  \lra
  A_{i+3}
\]
should induce isomorphisms between the associated homologies. Then the maps
$(H_i,f_{i+1})\co M(f_i)\lra A_{i+2}$ induce isomorphisms on the 
homological level.
\end{lem}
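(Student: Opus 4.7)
The plan is to realise $(H_i,f_{i+1})$ as the connecting homomorphism of a short exact sequence of chain complexes and to force the associated mapping cone to be acyclic by exploiting the composition structure encoded in the $\psi_i$.

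Write $\Phi_i:=(H_i,f_{i+1})\co M(f_i)\to A_{i+2}$ and, dually, $\Phi'_i:=(f_i,H_i)\co A_i\to M(f_{i+1})$. The chain-homotopy relation $\partial H_i+H_i\partial=f_{i+1}f_i$, together with the chain-map properties of the $f_j$'s, verifies that both $\Phi_i$ and $\Phi'_i$ are chain maps and that their mapping cones coincide: both equal the three-term complex $C_i:=A_i\oplus A_{i+1}\oplus A_{i+2}$ with differential
\begin{equation*}
\begin{pmatrix}\partial & 0 & 0\\ f_i & \partial & 0\\ H_i & f_{i+1} & \partial\end{pmatrix}.
\end{equation*}
Thus showing $\Phi_{i,*}$ is an isomorphism is equivalent to showing $C_i$ is acyclic. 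Now $C_i$ sits in two natural short exact sequences of chain complexes
\begin{equation*}
0\to A_{i+2}\to C_i\to M(f_i)\to 0,\qquad 0\to M(f_{i+1})\to C_i\to A_i\to 0,
\end{equation*}
whose connecting homomorphisms on homology are $\Phi_{i,*}$ and $\Phi'_{i,*}$, respectively.

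A direct calculation yields the key composition identity $\Phi_{i+1}\circ\Phi'_i=\psi_i$. Since the hypothesis asserts that each $\psi_{i,*}$ is an isomorphism, $\Phi'_{i,*}$ must be injective and $\Phi_{i,*}$ must be surjective for every $i$. Feeding these half-isomorphism facts into the long exact sequences of the two SESs above produces natural identifications
\begin{equation*}
H_*(C_i)\;\cong\;\ker\Phi_{i,*}\;\cong\;\mathrm{coker}\,\Phi'_{i,*},
\end{equation*}
and the splitting $H_*(M(f_{i+1}))=\mathrm{im}(\Phi'_{i,*})\oplus\ker(\Phi_{i+1,*})$ induced by the iso $\psi_{i,*}=\Phi_{i+1,*}\circ\Phi'_{i,*}$ then delivers the recursive identity $H_*(C_i)\cong H_*(C_{i+1})$.

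The hard part will be closing this recursion. The bare identification $H_*(C_i)\cong H_*(C_{i+1})$ only propagates a common value, so one must produce additional data showing this value is zero. The strategy is to exhibit the isomorphism $H_*(C_i)\to H_*(C_{i+1})$ as induced by a concrete chain map built from the $\Phi_j$, $\Phi'_j$ and chain-level lifts of $\psi_j^{-1}$, and to show that an appropriate iterate of this map is null-homotopic, forcing the common value of $H_*(C_i)$ to vanish. This is the subtle step in which the full strength of the hypothesis on $\psi_i$ enters; concretely it can be organised as a five-term exact sequence argument comparing the two realisations of $H_*(C_i)$ as a subobject of $H_*(M(f_i))$ (via $\ker\Phi_{i,*}$) and a quotient of $H_*(M(f_{i+1}))$ (via $\mathrm{coker}\,\Phi'_{i,*}$), the comparison being mediated by $\psi_{i,*}$ itself. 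Granted the resulting vanishing, each $(H_i,f_{i+1})$ induces an isomorphism on homology, as claimed.
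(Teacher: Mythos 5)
Everything you actually establish is correct, and it is a sensible framework: $\Phi_i=(H_i,f_{i+1})$ and $\Phi'_i=(f_i,H_i)$ are chain maps, both have the three-term complex $C_i$ as mapping cone, the chain-level identity $\Phi_{i+1}\circ\Phi'_i=\psi_i$ holds, hence every $\Phi_{i,*}$ is surjective and every $\Phi'_{i,*}$ is injective, and your identifications $H_*(C_i)\cong\ker\Phi_{i,*}\cong\mathrm{coker}\,\Phi'_{i,*}\cong H_*(C_{i+1})$ all follow. (Note the paper does not prove this lemma; it quotes it from Ozsv\'ath--Szab\'o.) However, the proof is not finished: what is needed is $H_*(C_i)=0$, i.e.\ injectivity of $\Phi_{i,*}$, and the recursion $H_*(C_i)\cong H_*(C_{i+1})$ by itself is compatible with a nonzero common value. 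Your final paragraph is only a declaration of strategy: chain-level lifts of $\psi_j^{-1}$ are not available without first upgrading the quasi-isomorphisms to homotopy equivalences, and no construction or argument is offered for the claim that some iterate of the comparison map is null-homotopic. That is precisely where the content of the lemma lies, so there is a genuine gap.

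The standard way to close the argument avoids the recursion entirely and uses the composite in the other order. Over $\ztwo$ (the setting of this section) one checks directly that $\Phi'_{i+2}\circ\Phi_i\co M(f_i)\lra M(f_{i+3})$ is chain homotopic, via the homotopy $K(a,b)=(H_{i+1}b,0)$, to the map
\[
  T=\begin{pmatrix}\psi_i & 0\\ H_{i+2}\circ H_i & \psi_{i+1}\end{pmatrix},
\]
which one verifies is a chain map using $\partial H_j+H_j\partial=f_{j+1}f_j$. Since $T$ sends the subcomplex $A_{i+1}\subset M(f_i)$ into $A_{i+4}\subset M(f_{i+3})$ and induces $\psi_{i+1}$ there and $\psi_i$ on the quotients, the five lemma applied to the two short exact sequences of cones shows $T_*$, and hence $(\Phi'_{i+2}\circ\Phi_i)_*$, is an isomorphism. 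This gives injectivity of $\Phi_{i,*}$ for every $i$; combined with the surjectivity you already extracted from $\Phi_{i+1}\circ\Phi'_i=\psi_i$, each $(H_i,f_{i+1})$ induces an isomorphism on homology. If you want to keep your formulation, this same homotopy is the "additional data" that forces $H_*(C_i)=0$, but it must be exhibited; without it the argument does not conclude.
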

If we can show that the sequence
\[
  \dots
  \overset{\fhat_3}{\lra}
  \cfhat(Y)
  \overset{\fhat_1}{\lra}
  \cfhat(Y_K^n)
  \overset{\fhat_2}{\lra}
  \cfhat(Y_K^{n+\mu})
  \overset{\fhat_3}{\lra}
  \dots
\]
satisfies the assumptions of Lemma~\ref{Seidellem}, then for every pair
$\fhat_i$ and $\fhat_{i+1}$, the associated map $(H,\fhat_{i+1})_*$
is an isomorphism. With the arguments from above, i.e.~analogous to
Lemma \ref{exlem}, we conclude that 
$\mbox{\rm im}(\Fhat_i)=\ker(\Fhat_{i+1})$. Hence, Theorem \ref{surextri}
follows.

\section{The Contact Element and $\loss$}\label{parcontact}
\subsection{Contact Structures}\label{contactstruct}
A $3$-dimensional contact manifold is a pair $(Y,\xi)$ where $Y$ is a 
$3$-dimensional manifold and $\xi\subset TY$ a hyperplane bundle that 
can be written as the kernel of a $1$-form $\alpha$ with the property
\begin{equation}
  \alpha\wedge d\alpha\not=0. \label{contcond}
\end{equation}
Those $1$-forms satisfying $(\ref{contcond})$ are called {\bf contact
forms}. Given a contact manifold $(Y,\xi)$, the associated contact
form is not unique. Suppose $\alpha$ is a contact form of $\xi$ then, given 
a non-vanishing function $\lambda\co Y\lra\R^+$, we can change the contact form 
to $\lambda\alpha$ without affecting the contact 
condition (\ref{contcond}):
\[
  \lambda\alpha\wedge d(\lambda\alpha)
  =
  \lambda\alpha\wedge d\lambda\wedge\alpha
  +\lambda^2\alpha\wedge d\alpha
  =\lambda^2\alpha\wedge d\alpha
  \not=0.
\]
The existence of a contact form implies that the normal direction
$TY/\xi$ is trivial. We define a section $R_\alpha$ by
\[
  \alpha(R_\alpha)\not=0
  \;\mbox{\rm and }\;\;
  \iota_{R_\alpha}d\alpha=0.
\]
This vector field is called {\bf Reeb field} of the contact form $\alpha$.
The contact condition implies that $d\alpha$ is a non-degenerate form
on $\xi$. Thus, $\iota_{R_\alpha}d\alpha=0$ implies that for each 
point $p\in Y$ the vector $(R_\alpha)_p$ is an element of
$T_pY\backslash\xi_p$. Thus, $R_\alpha$ is a section of $TY/\xi$.
\begin{definition} Two contact manifolds $(Y,\xi)$ and $(Y',\xi')$ are
called {\bf contactomorphic} if there is a diffeomorphism 
$\phi\co Y\lra Y'$ preserving the contact structures, i.e.~such that
$T\phi(\xi)=\xi'$. The map $\phi$ is a {\bf contactomorphism}.
\end{definition}
It is a remarkable property of contact manifolds that there is a
unique standard model for these objects.
\begin{definition} The pair $(\R^3,\xistd)$, where $\xistd$ is the contact 
structure given by the kernel of the $1$-form $dz-y\,dx$, is called
{\bf standard contact space}.
\end{definition}
Every contact manifold is locally contactomorphic to the standard contact
space. This is known as {\bf Darboux's theorem}. As a consequence we
will not be able to derive contact invariants by purely local arguments,
in contrast to differential geometry where for instance curvature is
a constraint to the existing local model. 
\begin{theorem}[Gray Stability, cf.~\cite{Geiges}]
Each smooth homotopy of contact structures $(\xi_t)_{t_\in[0,1]}$ 
is induced by an ambient isotopy $\phi_t$, i.e.~the 
condition $T\phi_t(\xi_0)=\xi_t$ applies for all $t\in[0,1]$.
\end{theorem}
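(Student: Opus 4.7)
The plan is to apply Moser's trick, which reduces the existence of the ambient isotopy to a pointwise linear-algebra problem on each fiber of $\xi_t$. First, choose a smooth family of contact $1$-forms $\alpha_t$ with $\ker \alpha_t = \xi_t$; this is routine using a partition-of-unity argument. Rather than looking for $\phi_t$ with $\phi_t^*\alpha_t = \alpha_0$, which is too rigid, I would seek an isotopy $\phi_t$ (with $\phi_0 = \id$) generated by a time-dependent vector field $X_t$ satisfying the conformal condition
\[
\phi_t^*\alpha_t = \mu_t\,\alpha_0
\]
for some smooth family of positive functions $\mu_t$ on $Y$. Taking kernels then immediately gives $T\phi_t(\xi_0) = \xi_t$, which is what the theorem asks for.

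Differentiating the conformal condition in $t$ and using the Cartan formula $\mathcal{L}_{X_t} = d\iota_{X_t} + \iota_{X_t} d$ yields, after pulling back by $\phi_t^{-1}$,
\[
\dot{\alpha}_t + d\bigl(\alpha_t(X_t)\bigr) + \iota_{X_t} d\alpha_t = \nu_t\,\alpha_t,
\]
where $\nu_t = (\dot{\mu}_t/\mu_t)\circ\phi_t^{-1}$ is an auxiliary smooth function to be determined along with $X_t$. The key simplification is the ansatz $X_t \in \xi_t$, which forces $\alpha_t(X_t) = 0$ and collapses the equation to
\[
\iota_{X_t} d\alpha_t = \nu_t\,\alpha_t - \dot{\alpha}_t.
\]
Evaluating both sides on the Reeb field $R_{\alpha_t}$ kills the left-hand side (by definition of the Reeb field), which pins down $\nu_t = \dot{\alpha}_t(R_{\alpha_t})$, a quantity read off directly from the data.

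With $\nu_t$ fixed, the right-hand side $\nu_t\alpha_t - \dot{\alpha}_t$ is a $1$-form that vanishes on $R_{\alpha_t}$, so its restriction to $\xi_t$ is well defined. The contact condition $\alpha_t \wedge d\alpha_t \neq 0$ is equivalent to $d\alpha_t|_{\xi_t}$ being a fiberwise non-degenerate $2$-form, so there exists a unique smooth section $X_t$ of $\xi_t$ solving the equation. This pointwise solvability step is the real content of the proof, and the only place the contact condition is used. Finally, since $Y$ is (implicitly) closed, the time-dependent vector field $X_t$ is complete and integrates to a global isotopy $\phi_t$ for $t \in [0,1]$; defining $\mu_t$ by $\dot{\mu}_t = (\nu_t \circ \phi_t)\,\mu_t$ with $\mu_0 = 1$ makes the computation reversible, and one checks that $\phi_t^*\alpha_t = \mu_t\alpha_0$ holds. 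I expect no genuine obstacle beyond the linear-algebraic inversion of $d\alpha_t|_{\xi_t}$ just described; the compactness of $Y$ (needed to integrate $X_t$ on all of $[0,1]$) is the main point where the proof would fail in an open setting.
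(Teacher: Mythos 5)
Your argument is correct and is precisely the standard Moser-trick proof of Gray stability found in the reference the paper cites (Geiges); the paper itself states the theorem without proof. The only cosmetic point is that pinning down $\nu_t=\dot{\alpha}_t(R_{\alpha_t})$ uses the usual normalization $\alpha_t(R_{\alpha_t})=1$ of the Reeb field, which you should state since the paper only requires $\alpha(R_\alpha)\neq 0$.
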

An isotopy induced homotopy of contact structures is called {\bf contact isotopy}.
So, a homotopy of contact structures can be interpreted as an isotopy and, vice versa, an
isotopy induces a homotopy of contact structures. As in the case of 
vector fields, we have a natural connection to isotopies, i.e.~objects 
whose existence and form will be closely related to the manifold's 
topology.\vspace{0.3cm}\\
A {\bf contact vector field} $X$ is a vector field whose local flow preserves
the contact structure. An embedded surface
$\Sigma\hookrightarrow Y$ is called {\bf convex} if there is a neighborhood
of $\Sigma$ in $Y$ in which a contact vector field exists that is transverse to
$\Sigma$. The existence of a contact vector field immediately 
implies that there is a neighborhood $\Sigma\times\R\hookrightarrow Y$ 
of $\Sigma$ in which the contact structure is invariant in 
$\R$-direction. Thus, convex surfaces are the objects along 
which we glue contact manifolds together. 
\begin{definition} A knot $K\subset Y$ is called {\bf Legendrian} if
it is tangent to the contact structure.
\end{definition}
The contact condition implies that, on a 
$3$-dimensional contact manifold $(Y,\xi)$, only 
$1$-dimensional submanifolds, i.e.~knots and 
links, can be tangent to $\xi$. Every Legendrian knot admits a
tubular neighborhood with a convex surface as boundary. Hence, it is
possible to mimic surgical constructions to define the contact geometric 
analogue of surgery theory, called {\bf contact surgery}. Contact surgery in arbitrary dimensions 
was introduced by Eliashberg in \cite{eliash2}. His construction, in dimension $3$, corresponds to
$(-1)$-contact surgeries. For $3$-dimensional contact manifolds Ding and Geiges gave
in \cite{DiGei04} a definition of contact-$r$-surgeries (cf.~also \cite{DiGei}) for 
arbitrary $r\in\Q>0$. It is nowadays one of the most significant tools for $3$-dimensional 
contact geometry. Its importance relies in the following theorem.
\begin{theorem}[see \cite{DiGei}] Given a contact 
manifold $(Y,\xi)$, there is a link
$\mathbb{L}=\mathbb{L}^+\sqcup\mathbb{L}^-$ 
in $\sthree$ such that contact-$(+1)$-surgery along the link $\mathbb{L}^+$ 
and contact-$(-1)$-surgery along $\mathbb{L}^-$ in $(\sthree,\xistd)$
yields $(Y,\xi)$.
\end{theorem}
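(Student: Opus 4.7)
The plan is to use Giroux's correspondence between contact structures and open books as the main input, and then translate each elementary modification of the open book into a contact $(\pm 1)$-surgery on a Legendrian knot. To start, by Giroux's theorem every closed contact 3-manifold $(Y,\xi)$ admits a supporting open book decomposition with page $P$ (a compact surface with boundary) and monodromy $\phi\in\mathrm{Diff}^+(P,\partial P)$. The standard contact 3-sphere $(\sthree,\xistd)$ is supported by the trivial open book with disk page and identity monodromy (or equivalently the annulus open book with a single positive Dehn twist, depending on the normalization one prefers). So the problem reduces to travelling from one open book to the other through a sequence of moves that can be realized as contact $(\pm1)$-surgeries.

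The next step is to use the classical fact from surface theory that the mapping class group of a compact surface with boundary is generated by Dehn twists, so we can write $\phi=D_{c_1}^{\epsilon_1}\circ\cdots\circ D_{c_k}^{\epsilon_k}$ with simple closed curves $c_i\subset P$ and signs $\epsilon_i\in\{+1,-1\}$. After sufficiently many positive open-book stabilizations — each of which changes the underlying contact manifold only up to contactomorphism — one can arrange the page to be large enough that the $c_i$ sit inside it as non-isotopic essential curves. Each $c_i$ then lifts to a Legendrian knot $K_i$ on the page $P\times\{1/2\}$ of the open book, with contact framing equal to the page framing (cf.\ the discussion of open books and Legendrian realization used later in the article).

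The heart of the argument is the compatibility statement: inserting an additional positive Dehn twist $D_{c_i}$ into the monodromy of an open book supporting $(Y',\xi')$ yields an open book supporting the contact manifold obtained from $(Y',\xi')$ by contact $(-1)$-surgery (i.e.\ Legendrian surgery) on $K_i$, while inserting a negative Dehn twist $D_{c_i}^{-1}$ corresponds to contact $(+1)$-surgery on $K_i$. This is proved by exhibiting a contactomorphism between the two constructions in a tubular neighborhood of $K_i$, using the standard neighborhood theorem for Legendrian knots and the explicit model for Legendrian surgery as a Weinstein handle attachment; outside the neighborhood both constructions coincide. Iterating this correspondence, one builds $(Y,\xi)$ from $(\sthree,\xistd)$ by performing contact $(-1)$-surgery on the $K_i$ with $\epsilon_i=+1$ and contact $(+1)$-surgery on those with $\epsilon_i=-1$, giving the desired link $\mathbb{L}=\mathbb{L}^+\sqcup\mathbb{L}^-\subset\sthree$.

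The main obstacle is the compatibility step: one must check carefully that modifying the monodromy by a single Dehn twist really does match, up to contactomorphism, the contact surgery on the corresponding Legendrian knot; this is where the topological surgery picture, the contact framing convention and the sign conventions for Dehn twists must all be reconciled. A secondary technical point is that the Legendrian realization of the $c_i$ requires the page to be convex (or at least to carry a characteristic foliation of Morse–Smale type whose Legendrian divide contains the $c_i$), which is achieved after the preliminary positive stabilizations mentioned above. Once these two points are settled the theorem follows by induction on the number of Dehn twists in the factorization of $\phi$.
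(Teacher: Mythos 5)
The paper itself contains no proof of this statement: it is quoted from Ding--Geiges \cite{DiGei}, whose argument is surgery-theoretic (the cancellation lemma for contact $(\pm1)$-surgeries, the conversion of arbitrary contact $r$-surgeries into sequences of $(\pm1)$-surgeries on stabilized Legendrian knots, and a correction of the resulting contact structure) rather than open-book-theoretic. Your route -- Giroux correspondence, a Dehn-twist factorization of the monodromy, and the compatibility between composing the monodromy with $D^{\mp}_{\gamma}$ and contact $(\pm1)$-surgery along the curve Legendrian-realized on a page -- is the standard alternative proof (cf.\ \cite{Etnyre01}), and its ``heart'' is exactly Lemma \ref{obsurgery} of this paper, so you could simply quote that lemma instead of re-deriving it from Weinstein handle attachments.

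As written, however, your construction does not land on $(Y,\xi)$. You factor $\phi$ itself and insert those twists starting from an open book of $(\sthree,\xistd)$; but once the page has been enlarged from the disc to $P$ by positive stabilizations, the supporting open book of $(\sthree,\xistd)$ has monodromy $\psi$ equal to a nontrivial product of positive Dehn twists (one per stabilization), while $(P,\mathrm{id})$ with $P\neq D^2$ is not $\sthree$ at all (it is a connected sum of copies of $\stwo\times\sone$). So the curves you must twist along come from a factorization of $\phi\circ\psi^{-1}$ (in the appropriate order), not of $\phi$; this is easy to repair, but the argument as stated produces the wrong manifold. Two further points need explicit treatment: (i) iterating the twist/surgery correspondence a priori yields surgery curves sitting in the successively surgered manifolds, whereas the theorem demands one link in $\sthree$ -- the standard fix is to place the $k$ twist curves on $k$ distinct pages $P\times\{t_1\},\dots,P\times\{t_k\}$ of the open book of $\sthree$, so that they form a link there and all surgeries are performed at once; and (ii) Legendrian realization of a curve on a convex page requires the curve to be non-isolating (e.g.\ homologically essential) in the page, while a Dehn-twist factorization may involve separating curves, so you must either choose the generating twists accordingly or stabilize further; your remark about convexity gestures at this but does not resolve it.
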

Moreover, if we choose cleverly, we can accomplish $\mathbb{L}^+$ to have just 
one component. Using $(-1)$-contact surgeries only, we can transform an arbitrary
overtwisted contact manifold into an arbitrary (not necessarily overtwisted) contact
manifold. For a definition of overtwistedness we point the reader to \cite{Geiges}. 
Thus, starting with a knot $K$ so that $(+1)$-contact surgery along $K$
yields an overtwisted contact manifold $(Y',\xi')$, for any contact manifold $(Y,\xi)$, we can 
find a link $\mathbb{L}^-$, such that $(-1)$-contact surgery along $\mathbb{L}^-$ in $(Y',\xi')$
yields $(Y,\xi)$. An example for such a knot $K$ is the Legendrian shark, i.e.~the Legendrian
realization of the unknot with $tb=-1$ and $rot=0$.

\subsection{Open Books}\label{obvsaob}
For a detailed treatment of open books we point the reader 
to \cite{Etnyre01}.
\begin{definition} An {\bf open book} on a closed, oriented $3$-manifold
$Y$ is a pair $(B,\pi)$ defining a fibration
\[
  P\hookrightarrow Y\backslash B\overset{\pi}{\lra}\sone,
\]
where $P$ is an oriented surface with boundary $\partial P=B$.
For every component $B_i$ of $B$ there is a neighborhood 
$\iota\co D^2\times\sone\hookrightarrow\nu B_i\subset Y$ such that
the core $C=\{0\}\times\sone$ is mapped onto $B_i$ under $\iota$ and 
$\pi$ commutes with the projection $(D^2\times\sone)\backslash C\lra\sone$
given by $(r\cdot \exp(it),\exp(is))\lmt \exp(it)$. The submanifold $B$ is called 
{\bf binding} and $P$ the {\bf page of the open book}.
\end{definition}
An {\bf abstract open book} is a pair $(P,\phi)$ consisting of 
an oriented genus-$g$ surface $P$ with boundary and a homeomorphism 
$\phi\co P\lra P$ that is the identity near the boundary of $P$. 
The surface $P$ is called {\bf page} and $\phi$ 
the {\bf monodromy}. Given an abstract open book $(P,\phi)$, we
may associate to it a $3$-manifold. Let $c_1,\dots,c_k$ denote 
the boundary components of $P$. Observe that
\begin{equation}
  (P\times[0,1])/(p,1)\sim(\phi(p),0) \label{ob:01}
\end{equation}
is a $3$-manifold. Its boundary is given by the tori
\[
  \left((c_i\times[0,1])/(p,1)\sim(p,0)\right)\cong c_i\times\sone.
\]
Fill in each of the holes with a full torus $\disc^2\times\sone$: we glue
a meridional disc $\disc^2\times\{\star\}$ onto $\{\star\}\times\sone\subset c_i\times\sone$.
In this way we define a closed, oriented $3$-manifold $Y(P,\phi)$. 
Denote by $B$ the union of the cores of the tori 
$\disc^2\times\sone$. The set $B$ is called {\bf binding}. 
By definition of abstract open books we obtain an open book structure
\[
  P\hookrightarrow Y(P,\phi)\backslash B\lra\sone
\]
on $Y(P,\phi)$. Conversely, given an open book by cutting a small tubular neighborhood
$\nu B$ out of $Y$, we obtain a $P$-bundle over $\sone$. Thus, there
is a homeomorphism $\phi\co P\lra P$ such that
\[
  Y\backslash\nu B\cong (P\times[0,1])/(p,1)\sim(\phi(p),0).
\]
Inside the standard neighborhood $\nu B$, as given in the definition, the 
homeomorphism $\phi$ is the identity. So, the pair $(P,\phi)$ defines an 
abstract open book.
\begin{definition} Two abstract open books $(P,\phi)$ and $(P,\phi')$
are called {\bf equivalent} if there is a homeomorphism $h\co P\lra P$, which is 
the identity near the boundary, such that
$\phi\circ h=\phi'\circ h$. We denote by $\mbox{\rm ABS}(Y)$ the 
set of abstract open books $(P,\phi)$ with $Y(P,\phi)=Y$, up to equivalence.
\end{definition}
Two open books are called equivalent if they are diffeomorphic. The set 
of equivalence classes of open books is
denoted by $\mbox{\rm OB}(Y)$. An abstract open book defines an open 
book up to diffeomorphism. With the construction given above we
define a map
\[
  \Psi
  \co
  \mbox{\rm ABS}(Y)
  \lra
  \mbox{\rm OB}(Y)
\]
and its inverse. Thus, to some point, open books and abstract open books
are the same objects. Sometimes, it is more convenient to deal with 
abstract open books rather than open books themselves.

\subsection{Open Books, Contact Structures and Heegaard Diagrams}
\label{obcshd}
Given an open book $(B,\pi)$ or an abstract open book $(P,\phi)$, define
a surface $\Sigma$ by gluing together two pages at their boundary
\[
  \Sigma=P_{1/2}\cup_\partial P_{1}.
\]
The manifold $Y$ equals the union $H_0\cup H_1$ where
 $H_i=\pi^{-1}([i/2,(i+1)/2])$, $i=0,1$. Any curve $\gamma$ in $Y$ 
running from $H_0$ to $H_1$, when projected onto $\sone$, has to 
intersect $\{1/2,1\}$ at some point. Thus, the curve 
$\gamma$ is forced to intersect $\Sigma$. The submanifolds $H_i$
are handlebodies of genus $g(\Sigma)$ and
\[
  Y=H_0\cup_\partial H_1
\]
is a Heegaard decomposition of $Y$. 
\begin{definition}
A system $a=\{a_1,\dots,a_n\}$ of disjoint, properly embedded arcs on $P$
is called {\bf cut system} if $P\backslash\{a_1,\dots,a_n\}$ is
 topologically a disc.
\end{definition}
A system of arcs is a cut system if and only if it defines a basis for 
the first homology of $(P,\partial P)$.\vspace{0.3cm}\\
We interpret the curve $a_i$ as sitting on $P_{1/2}$ and 
$\overline{a_i}$, i.e.~the curve $a_i$ with reversed orientation, as 
sitting inside $P_1$. These two can be combined to 
$\alpha_i=a_i\cup_\partial\overline{a_i}$, $i=1,\dots,n$, which all sit in
$\Sigma$. 
Referring to the relation between open books and abstract open books
discussed in \S\ref{obvsaob}, observe that 
\[
  H_1
  =
  \pi^{-1}
  ([1/2,1])
  =
  (P\times[1/2,1])/\!\!\sim
\]
where $\sim$ identifies points $(p,0)$ with $(\phi(p),1)$ for $p\in P$
and points $(p,t)$ with $(p,t')$ for $p\in\partial P$ and $t,t'\in[1/2,1]$.
Thus $a_i\times[1/2,1]$ determines a disc in $H_1$ whose boundary is
$\alpha_i$.
This means we can interpret the set
$\{\alpha_1,\dots,\alpha_{n}\}$ as a set of attaching circles for
the handlebody $H_1$. The gluing of the two handlebodies $H_0$ 
and $H_1$ is given by the
pair $(id,\phi)$ where $id$ is the identity on $P_{1/2}$ and $\phi$
the monodromy, interpreted as a map $P_1\lra P_0$. These two maps
combine to a map $\partial H_1\lra\partial H_0$. Define $b_i$, 
$i=1,\dots, n$, as small push-offs of the $a_i$ that intersect 
these transversely in a single point (see Figure~\ref{Fig:figzpoint}). Then by the
gluing of the two handlebodies $H_0$ and $H_1$ the $\alpha$-curves define 
a Heegaard diagram with $\beta$-curves given by$\beta_i=b_i\cup\overline{\phi(b_i)}$, $i=1,\dots,n$. Thus the following lemma is immediate.
\begin{lem} The triple $(\Sigma,\alpha,\beta)$ is a Heegaard diagram of
$Y$.\hfill$\square$
\end{lem}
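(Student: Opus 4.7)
The plan is to verify the three data constituting a Heegaard diagram: that $\Sigma$ is a closed oriented surface splitting $Y$ into two handlebodies, that the $\alpha$-curves form a complete set of attaching circles for $H_1$, and that the $\beta$-curves form one for $H_0$. The first statement is essentially proved in the paragraphs preceding the lemma: writing $H_0 = \pi^{-1}([0,1/2])$ and $H_1 = \pi^{-1}([1/2,1])$ with the binding filled in on each side, one has $Y = H_0 \cup_\Sigma H_1$, and each $H_i$ is a handlebody of genus $g(\Sigma)$. A quick Euler-characteristic check confirms the numerology: if $P$ has genus $g_P$ with $k$ boundary components, then $\chi(\Sigma) = 2\chi(P) = 4 - 4g_P - 2k$, so $g(\Sigma) = 2g_P + k - 1$, which is exactly the number $n$ of arcs in any cut system of $P$.

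For the $\alpha$-curves I would argue as follows. Under the identification $H_1 \cong (P \times [1/2,1])/\!\sim$ discussed above, the rectangle $a_i \times [1/2,1]$ is properly embedded in $P \times [1/2,1]$ and descends to an embedded disc $D_i^{\alpha} \subset H_1$ after the collapsing identification on $\partial P \times [1/2,1]$. Its boundary consists of the copy of $a_i$ on $P_{1/2}$ together with the copy $\overline{a_i}$ on $P_1$, so $\partial D_i^{\alpha} = \alpha_i$. Disjointness of the arcs $a_1,\ldots,a_n$ on $P$ immediately gives disjointness of the discs $D_i^{\alpha}$ in $H_1$. Because $\{a_1,\ldots,a_n\}$ is a cut system, cutting $\Sigma = P_{1/2} \cup_\partial P_1$ along all the $\alpha_i$ yields two copies of the polygon $P \backslash \bigcup a_i$ glued along the $\partial P$-arcs in their boundaries; this surface is connected and planar, so the $\alpha_i$ are linearly independent in $H_1(\Sigma;\Z)$ and form a complete set of attaching circles for $H_1$.

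For the $\beta$-curves the argument is parallel but requires tracking the mapping-torus identification $P_1 \simeq P_0$ given by $\phi$. Each $b_i$ is an isotopic push-off of $a_i$, so $\{b_1,\ldots,b_n\}$ is again a cut system of $P$. The rectangle $b_i \times [0,1/2]$ sits inside $H_0 \cong (P \times [0,1/2])/\!\sim$ and descends to an embedded disc $D_i^{\beta}$ in $H_0$. One arc of $\partial D_i^{\beta}$ is $b_i \subset P_{1/2}$; the other arc is the copy of $b_i$ on the page $P_0 \subset \partial H_0$, which under the gluing $P_0 \leftrightarrow P_1$ supplied by $\phi$ becomes the curve $\overline{\phi(b_i)} \subset P_1 \subset \Sigma$. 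Thus $\partial D_i^{\beta} = \beta_i$, and the same cut-system argument as for the $\alpha$-side shows that $\{\beta_1,\ldots,\beta_n\}$ is a complete set of attaching circles for $H_0$. The main point requiring care is the monodromy bookkeeping when transporting the $P_0$-boundary of $D_i^{\beta}$ to $\Sigma = P_{1/2} \cup P_1$; once this identification is set up the remainder of the proof is routine.
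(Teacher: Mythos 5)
Your argument is correct and follows essentially the same route as the paper, which treats the lemma as immediate from the preceding construction: the rectangles $a_i\times[1/2,1]$ and $b_i\times[0,1/2]$ (transported by the gluing $(\mathrm{id},\phi)$) provide the compressing discs for $\alpha$ and $\beta$ in $H_1$ and $H_0$. You merely spell out the routine verifications the paper leaves implicit -- the Euler-characteristic count $n=g(\Sigma)$, the planar-complement/cut-system argument, and the monodromy bookkeeping -- so there is nothing substantively different to compare.
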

Given an abstract open book $(P,\phi)$, define $P'$ by attaching a
$1$-handle to $P$, i.e.~$P'=P\cup h^1$. Choose a knot $\gamma$ 
in $P'$ that intersects the co-core of $h^1$ once, transversely. 
The monodromy $\phi$ can be extended as the identity over $h^1$, 
and, thus, may be interpreted as a homeomorphism of $P'$. We denote by
$D_\gamma^\pm$ the positive/negative Dehn twist along~$\gamma$.
\begin{definition}\label{girsta} The abstract open book $(P',D_\gamma^\pm\circ\phi)$ is
called a {\bf positive/negative Giroux stabilization} of $(P,\phi)$.
\end{definition}
We will see that
open books, up to positive Giroux stabilizations, correspond one-to-one
to isotopy classes of contact structures. 
\begin{lem}\label{Stabilstabil} Stabilizations 
preserve the underlying $3$-manifold, i.e.~the manifolds
$Y(P',\phi')$ and $Y(P,\phi)$ are isomorphic.
\end{lem}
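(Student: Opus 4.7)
The plan is to construct, using the recipe of $\S\ref{obcshd}$, Heegaard diagrams for $Y(P,\phi)$ and $Y(P',D_\gamma^\pm\circ\phi)$, and to recognize the second as a Heegaard stabilization of the first. Since Heegaard stabilization corresponds on the manifold level to connected sum with $\sthree$ (cf.~$\S\ref{stabilinvariance}$), the underlying $3$-manifold is then preserved.

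I would first fix a cut system $\{a_1,\ldots,a_n\}$ on $P$ together with dual push-offs $\{b_1,\ldots,b_n\}$, giving the Heegaard diagram $(\Sigma,\alpha,\beta)$ of $Y(P,\phi)$. This extends to a cut system of $P'=P\cup h^1$ by taking $a_{n+1}$ to be the co-core of $h^1$ and $b_{n+1}$ a small push-off, both meeting $\gamma$ transversely in a single point inside $h^1$. Since $\phi$ is the identity on $h^1$ we have $\phi(b_{n+1})=b_{n+1}$, so in the diagram of $Y(P',D_\gamma^\pm\circ\phi)$ the new attaching curves are $\alpha_{n+1}=a_{n+1}\cup\overline{a_{n+1}}$ and $\beta_{n+1}=b_{n+1}\cup\overline{D_\gamma^\pm(b_{n+1})}$, while for $i\le n$ the curves $\alpha_i$ are unchanged.

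The core of the argument is the local intersection count in and near $h^1$. In the half $P'_{1/2}$ the arcs $a_{n+1}$ and $b_{n+1}$ are parallel and disjoint. In the half $P'_1$ the curve $D_\gamma^\pm(b_{n+1})$ differs from $b_{n+1}$ only by a detour that follows $\gamma$ once around, and this detour meets the co-core $a_{n+1}$ in exactly one transverse point, namely at $\gamma\cap a_{n+1}$; the residual part parallel to $b_{n+1}$ does not meet $a_{n+1}$ at all. Hence $\alpha_{n+1}\cap\beta_{n+1}$ consists of a single point, which is exactly the local picture of a Heegaard stabilization performed along the new handle of $\Sigma'$.

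The main obstacle is ensuring that the older $\beta$-curves $\beta_i = b_i\cup\overline{(D_\gamma^\pm\circ\phi)(b_i)}$, $i\le n$, are not perturbed in an uncontrolled way by the extra Dehn twist. One may isotope $\gamma$ inside $P'$ without changing the mapping class of $D_\gamma^\pm$, which lets us arrange $\gamma$ disjoint from the $\phi(b_i)$ whenever the homological obstruction vanishes; when forced intersections remain, the resulting extra intersections of $\beta_i$ with $\beta_{n+1}$ can be removed by handle slides of $\beta_i$ over $\beta_{n+1}$, operations that themselves preserve the underlying $3$-manifold. With this cleanup the Heegaard diagram for $Y(P',D_\gamma^\pm\circ\phi)$ is obtained from that of $Y(P,\phi)$ by one Heegaard stabilization followed by a finite sequence of handle slides, and the claimed diffeomorphism $Y(P',D_\gamma^\pm\circ\phi)\cong Y(P,\phi)$ follows.
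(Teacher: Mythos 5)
Your overall strategy is the same as the paper's (exhibit the Heegaard diagram coming from the stabilized open book as a Heegaard stabilization, i.e.~a cancelling pair $(\alpha_{n+1},\beta_{n+1})$ meeting in one point, so that $Y(P',\phi')=\sthree\#Y(P,\phi)$), but there is a genuine gap exactly at the step you call ``cleanup''. The paper avoids this step entirely by first invoking Lemma \ref{stabstab}: one can choose the cut system $\{a_1,\dots,a_n\}$ on $P$ to be disjoint from $\gamma\cap P$. With that choice the $b_i$, and hence the old $\beta$-curves, are untouched by the extra Dehn twist, and $\beta_{n+1}$, which looks like $\gamma$ outside $h^1$, is disjoint from all $\alpha_i$ with $i\leq n$; the diagram is then literally the old diagram plus a once-intersecting pair, and the cancelling-handle argument finishes the proof. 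You instead keep an arbitrary cut system and must then control two sets of unwanted intersections: the twisted curves $\overline{D_\gamma^\pm(\phi(b_i))}$ now run over the new handle and meet $\alpha_{n+1}$, and $\beta_{n+1}$ (isotopic to $\gamma$ in $\Sigma'$) may meet the old $\alpha_i$. Your description of the obstacle as ``extra intersections of $\beta_i$ with $\beta_{n+1}$'' misses the point: the $\beta$-curves are images of disjoint arcs under one homeomorphism and are automatically disjoint from each other; it is the $\beta$--$\alpha$ intersections above that prevent the diagram from being a stabilization.

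Moreover, the two tools you offer for the cleanup are not justified. The claim that $\gamma$ can be isotoped off the $\phi(b_i)$ ``whenever the homological obstruction vanishes'' is not usable: in general $\gamma$ cannot be made disjoint from the images of a cut system, and homology does not govern these geometric intersections. The remaining claim -- that the residual intersections can be removed by handle slides over $\beta_{n+1}$ -- is in fact the crux of a general-cut-system proof: one must argue that each Dehn-twist insertion of a $\gamma$-copy into $\phi(b_i)$ is precisely a band sum with a parallel copy of $\beta_{n+1}$ (using that $\beta_{n+1}\simeq\gamma$ in $\Sigma'$), and then run the standard argument that a diagram containing a pair meeting in a single point is, after sliding the remaining curves off that pair, a stabilization. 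You assert this but do not prove it, and without it the conclusion does not follow. So either supply that identification of twist insertions with handle slides, or, more economically, do what the paper does: prove and use the adapted cut system of Lemma \ref{stabstab} first, after which no cleanup is needed.
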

A priori, it is not clear that stabilizations preserve the 
associated $3$-manifold. A proof of this lemma can be found in
 \cite{Etnyre01}. But in the following we will discuss an alternative
proof. Our proof uses a construction introduced by Lisca, Ozsv\'{a}th, Stipsicz 
and Szab\'{o} (see \cite{LOSS}, Alternative proof of Theorem 2.11). 
\begin{lem}[\cite{LOSS}]\label{stabstab} There is a 
cut system $\{a_1,\dots,a_{n}\}$ on $(P,\phi)$ that is 
disjoint from $\gamma\cap P$. 
\end{lem}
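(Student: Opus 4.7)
The arc $\gamma\cap P$ is a single properly embedded arc $\alpha\subset P$ with endpoints on the two attaching arcs of $h^1$ in $\partial P$, because $\gamma$ is a simple closed curve in $P'=P\cup h^1$ that crosses the co-core of $h^1$ exactly once. My plan is to build the cut system by first cutting $P$ open along $\alpha$ to obtain a surface $\widetilde P$, then selecting a cut system of $\widetilde P$, and finally lifting back to $P$, possibly after adjoining one extra arc. Since every arc we work with lives in $\widetilde P$, it is automatically disjoint from $\alpha$.

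If $\alpha$ is separating, then $\widetilde P=P_1\sqcup P_2$, with $\alpha$ appearing on the boundary of each $P_i$. I would choose cut systems $\{a^i_j\}_j$ of $P_1$ and $P_2$; an Euler-characteristic count gives a total of exactly $2g(P)+b(P)-1$ arcs, the correct size. Cutting each $P_i$ along its cut system yields a disc $D_i$ with $\alpha$ an arc of $\partial D_i$, and the re-gluing $D_1\cup_\alpha D_2$ is itself a disc. Hence $\{a^1_j\}\cup\{a^2_j\}$ is a cut system of $P$ visibly disjoint from $\alpha$.

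If $\alpha$ is non-separating, $\widetilde P$ is connected and has cut-system size $2g(P)+b(P)-2$, one fewer than required. I would choose any cut system $\{a_1,\dots,a_{n-1}\}$ of $\widetilde P$ and cut to a disc $D$, in whose boundary the two copies $\alpha^+,\alpha^-$ of $\alpha$ appear as two disjoint arcs. Re-gluing $\alpha^+\sim\alpha^-$ inside $D$ produces (because $P$ is orientable) an annulus $A=P\setminus(\bigcup_i a_i)$ whose two boundary circles are obtained from the two arcs of $\partial D\setminus(\alpha^+\cup\alpha^-)$ by identifying their endpoints. To finish the cut system I would adjoin one more arc $a_0$, realized inside $D$ as any arc joining these two boundary pieces; it is then automatically disjoint from all $a_i$ and from $\alpha$. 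The arc $a_0$ cuts $A$ into a disc, so the collection $\{a_0,a_1,\dots,a_{n-1}\}$ is a cut system of the correct size $n=2g(P)+b(P)-1$, disjoint from $\alpha$.

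The main subtle point will be the bookkeeping in the non-separating case: verifying that $\alpha^+$ and $\alpha^-$ really do end up as two disjoint arcs on $\partial D$ (so that their identification yields an annulus rather than accidentally pinching $D$ into a disc), and checking that $a_0$ can always be arranged inside $D$ with endpoints on the two distinct $\beta$-pieces $\partial D\setminus(\alpha^+\cup\alpha^-)$. Both are straightforward since $D$ is a disc and $\alpha^\pm$ are disjoint arcs in $\partial D$; no essential geometric obstruction arises, and the result follows.
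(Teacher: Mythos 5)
Your argument is correct, and in the non-separating case it takes a genuinely different route from the paper's proof. The separating case is handled the same way in both: cut systems chosen on the two components of $P$ cut along $\gamma'=\gamma\cap P$, whose union cuts $P$ into a disc. For the non-separating case, however, the paper takes $a_1$ to be a parallel push-off of $\gamma'$ and then extends it to a cut system of $P$, invoking the homological characterization of cut systems as bases of $H_1(P,\partial P)$ together with the primitivity of $[a_1]$ in that torsion-free group (disjointness of the remaining arcs from the parallel copy $\gamma'$ being left implicit); your version avoids this homological detour entirely by cutting along $\gamma'$ first, choosing a cut system of the cut-open surface $\widetilde P$, and adding one spanning arc $a_0$ of the leftover annulus, with the Euler characteristic doing the bookkeeping. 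What the paper's argument buys is brevity and reuse of machinery it needs anyway (the same primitivity-and-extend reasoning reappears in the proof of Lemma \ref{LemOne}); what yours buys is self-containedness, a uniform treatment of the two cases, and an explicit picture of where the extra arc lives, and your two flagged subtleties are indeed harmless: gluing a disc to itself along two disjoint boundary arcs has Euler characteristic $0$, is orientable and has nonempty boundary, hence is an annulus, and the two complementary arcs $\beta_1,\beta_2\subset\partial D$ must close up into the two distinct boundary circles, so $a_0$ joining them is a spanning arc. The one small detail to add is that the arcs chosen in $\widetilde P$, and the endpoints of $a_0$, must meet the boundary in the $\partial P$-portions of the cut-open boundary rather than on the copies $\alpha^{\pm}$ of $\gamma'$ or on copies of the $a_i$ (otherwise they would not be properly embedded in $P$ or not disjoint from $\gamma'$); since the endpoints of $\gamma'$ lie on $\partial P$, each $\beta_i$ contains such a portion adjacent to $\alpha^{\pm}$, and a small boundary isotopy arranges this, so no genuine obstruction arises.
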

\begin{proof} Denote by $\gamma'$ the arc $\gamma\cap P$. If $P\backslash \gamma'$
is connected, we choose $a_1$ to be a push-off of $\gamma'$ and then
extend it to a cut system of $P$. This is possible since $H_1(P,\partial P)$
is torsion free and $[a_1]$ a primitive element in it. 
If $P\backslash\gamma'$ disconnects into the components $P_1$ and $P_2$, then 
we may choose cut systems on $P_i$, $i=1,2$, arbitrarily. The union
of these cut systems will be a cut system of $P$ and disjoint from 
$\gamma'$.
\end{proof}
The given cut system on $P$ can be extended to a cut system on $P'$.
We can choose $a_{n+1}$ as the co-core of $h^1$. The set of curves
$a_1,\dots,a_{n+1}$ is a cut system of $P'$. Choose the $b_i$,
$i=1,\dots,n+1$, as small isotopic push-offs of the $a_i$. Then, for
$i=1,\dots,n$, we have
\[\begin{array}{rclcl}
  \phi'(b_i)&=&\phi\circ D_\gamma^\pm(b_i)
  &=&
  \phi(b_i)\\
  \phi'(b_{n+1})
  &=&
  D_\gamma^\pm\circ\phi(b_{n+1})
  &=&
  D_\gamma^\pm(b_{n+1}).
  \end{array}
\]
Consequently, $\phi'(b_{n+1})$ looks like $\gamma$ outside the
handle $h^1$. The curve $\beta_{n+1}$ has to be disjoint from 
all $\alpha_i$, $i<n+1$.
\begin{proof}[Proof of Lemma \ref{Stabilstabil}]
On the level of cobordisms the pair $\alpha_{n+1}$ and $\beta_{n+1}$
which meet in a single point correspond to a cancelling pair of
handles attached to the boundary $Y(P,\phi)\times\{1\}$ of 
$Y(P,\phi)\times I$. Thus, we have 
\[
  Y(P',\phi')=\sthree\#Y(P,\phi).
\]
\end{proof}

A contact structure $\xi$ is {\bf supported} by an open book $(B,\pi)$
of $Y$ if $\xi$ is contact isotopic to a contact structure $\xi'$
which admits a contact form $\alpha$ such that $d\alpha$ is a
positive area form on each page $P_\theta=\pi^{-1}(\theta)$ and
$\alpha>0$ on $\partial P_\theta$. We gave the definition as a matter of completeness, 
but a detailed understanding of this definition will not be interesting 
to us. For a detailed treatment we 
point the reader to \cite{Etnyre01}. Every contact structure is 
supported by an open book decomposition.
\begin{theorem}[cf.~\cite{Etnyre01}]
There is a one-to-one correspondence between isotopy classes of 
contact structures and open book decompositions up to positive
Giroux stabilization.
\end{theorem}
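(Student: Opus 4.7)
The plan is to establish both directions of the correspondence, which I will denote as existence (every contact structure admits a supporting open book) and uniqueness (supporting open books for isotopic contact structures are related by positive stabilizations). I would first set up the easier ``reverse'' assignment: given an abstract open book $(P,\phi)$, the Thurston--Winkelnkemper construction produces a $1$-form $\alpha$ on $Y(P,\phi)$ which is $d\theta$-like near the binding $B$ and of the shape $d\theta + \epsilon\lambda$ on the mapping-torus piece (\ref{ob:01}), where $\lambda$ is a Liouville-type $1$-form on $P$ with $d\lambda$ a positive area form and $\lambda>0$ along $\partial P$. A direct calculation shows $\alpha\wedge d\alpha > 0$ for small $\epsilon$, and by construction $\alpha$ evaluates positively on $\partial P_\theta$ and $d\alpha$ restricts to a positive area form on each page, so $(B,\pi)$ supports $\ker\alpha$. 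Uniqueness of this $\xi(P,\phi)$ up to contact isotopy is then a standard convex interpolation argument: any two supporting contact forms for a fixed open book are connected by a path $\alpha_t$ of contact forms, and Gray stability gives a contact isotopy.

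Next I would treat the invariance under positive Giroux stabilization. Using Lemma \ref{stabstab} and the proof of Lemma \ref{Stabilstabil}, a positive stabilization corresponds, on the Heegaard picture, to inserting a cancelling $\alpha_{n+1}/\beta_{n+1}$ pair while attaching a $1$-handle to the page and composing with a positive Dehn twist along the co-core. The key contact geometric content is that plumbing on a new $1$-handle with the positive Dehn twist can be realized inside a Darboux ball, so it amounts to connected-summing with the standard $(\sthree,\xistd)$; equivalently, one shows that the Thurston--Winkelnkemper form on $(P',D_\gamma^+\circ\phi)$ can be deformed, supported in the new handle, to agree with the standard contact form on the $\sthree$-summand. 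Hence $\xi(P',D_\gamma^+\circ\phi)$ is contact-isotopic to $\xi(P,\phi)$, and the map $[(P,\phi)]\mapsto[\xi(P,\phi)]$ descends to open books modulo positive stabilization.

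For the existence direction, I would use Giroux's contact cell decomposition. Start from any open book of $Y$ (which exists by Alexander's theorem on branched covers of $\sthree$, together with the observation that the branching construction carries open books). Then modify it, via positive stabilizations, so that its binding is Legendrian and each page is convex with Legendrian boundary; at that stage one shows that the characteristic foliation on each page is adapted to $\xi$, yielding an open book supporting $\xi$. Alternatively, one fixes a contact triangulation of $(Y,\xi)$: a CW-decomposition with Legendrian $1$-skeleton whose $2$-cells are convex with $tb=-1$, and constructs a regular neighborhood of the $1$-skeleton as a handlebody $H_0$, declaring its complement to be $H_1$. A careful analysis shows this yields an open book whose pages carry a characteristic foliation for $\xi$, so $\xi$ and $\xi(P,\phi)$ are both supported and hence isotopic by the uniqueness portion of the previous paragraph.

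The genuinely hard part is the injectivity: if $\xi(P,\phi)$ and $\xi(P',\phi')$ are contact-isotopic, then $(P,\phi)$ and $(P',\phi')$ admit a common positive stabilization. My plan here is to invoke Giroux's theorem via the intermediary of contact cell decompositions: pick a contact cell decomposition $\Delta$ adapted to the given contact structure and simultaneously compatible with both open books (after enough positive stabilizations of each). The main technical obstacle is precisely this simultaneous-compatibility step --- given two open books supporting isotopic contact structures, one must produce a contact cell decomposition refining both, and show that each refinement corresponds to a sequence of positive Giroux stabilizations. This requires a careful isotopy extension theorem for convex pages and a Cerf-theoretic argument comparing two handle decompositions of the same contact manifold, which I would treat by citing Giroux's original work rather than attempting to reproduce the full combinatorial analysis.
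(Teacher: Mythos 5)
The paper does not actually prove this statement: it is quoted with a reference to Etnyre's lecture notes (and ultimately Giroux), so there is no internal proof to compare yours against. Judged on its own, your outline follows the standard literature argument --- Thurston--Winkelnkemper to go from $(P,\phi)$ to a contact form, convexity of the space of supporting forms plus Gray stability for well-definedness up to isotopy, the Darboux-ball/Murasugi-sum argument that a positive stabilization only connect-sums with $(\sthree,\xistd)$, and contact cell decompositions for existence --- and those parts are correct in outline.

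However, as a proof it has a genuine gap: the injectivity direction, namely that two open books supporting isotopic contact structures admit a common positive stabilization, is exactly the hard content of Giroux's theorem, and you do not prove it --- you explicitly defer it to Giroux's original work. Citing the result there is no different from what the paper itself does by citing the whole theorem, so your text is an (accurate) proof sketch of the easier half plus a citation of the essential half, not a proof. A second, more local problem is your first suggested existence route: starting from an arbitrary open book of $Y$ (e.g.~via Alexander's theorem) and ``modifying it via positive stabilizations'' until it supports $\xi$ cannot work, because positive Giroux stabilization preserves the isotopy class of the supported contact structure; an open book supporting one contact structure can never be positively stabilized into one supporting a non-isotopic $\xi$. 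The correct existence argument is the one you give as the alternative: build the open book directly from a contact cell decomposition of $(Y,\xi)$ (Legendrian $1$-skeleton, convex $2$-cells with $tb=-1$), take a ribbon of the $1$-skeleton as the page, and verify the support condition. If you rewrite the existence paragraph around that construction alone and either reproduce or honestly black-box Giroux's uniqueness argument, the write-up would be a faithful account of the standard proof.
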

Given a Legendrian knot $L\subset(Y,\xi)$, we know by definition that
its tangent vector at every point of $L$ lies in $\xi$. The tangent bundle of a closed, oriented 
$3$-manifold is orientable, which especially implies the triviality of  
$\left.TY\right|_L$. The coorientability of
$\xi$ implies that $\left.\xi\right|_L$ is trivial, too. By definition of
Legendrian knots the tangent vector of $L$ lies in $\xi$. The 
$2$-dimensionality implies that $\xi$, in addition, contains a normal 
direction. The triviality of the tangent bundle over $L$ implies that
this normal direction determines a framing of $L$. This framing which
is determined by the contact structure is called {\bf contact framing}.
In case of contact surgery it plays the role of the canonical $0$-framing,
i.e.~we measure contact surgery coefficients with respect to the 
contact framing. Note that if $L$ is homologically trivial, a Seifert surface
determines a second framing on $L$. Surgery coefficients in a surgery 
presentation of a manifold are usually determined by measuring the 
surgery framing with respect to this canonical Seifert framing 
(cf.~\S\ref{parsurextri}). Measuring the contact framing with respect to 
the Seifert framing determines a number $tb(L)\in\Z$ which is called
the {\bf Thurston-Bennequin invariant}. This is certainly an invariant
of $L$ under {\bf Legendrian isotopies}, i.e.~isotopies of $L$ through
Legendrian knots. By definition, the coefficients are related by
\[
  \mbox{\rm smooth surgery coefficient}
  =
  \mbox{\rm contact surgery coefficient}
  +
  tb(L).
\]
It is possible to find an open book decomposition which supports $\xi$
such that $L$ sits on a page of the open book. Furthermore, we can
arrange the page framing and the contact framing to coincide. This is
the most important ingredient for applications of Heegaard Floer homology
in the contact geometric world. The proof relies on the fact that
it is possible to find CW-decompositions of contact manifolds which are
adapted to the contact structure. These are called {\bf contact cell 
decompositions}. The $1$-cells in such a decomposition are Legendrian arcs.
With these decompositions it is possible to directly construct an open
book supporting the contact structure. Since the $1$-cells are Legendrian 
arcs we can include a fixed Legendrian knot into the decomposition and in 
this way modify the open book such that the result follows. For details we 
point the reader to \cite{Etnyre01}.
\begin{lem}[cf.~\cite{LOSS}]\label{obsurgery} Let 
$L\subset(Y,\xi)$ be a Legendrian knot and $(P,\phi)$
an abstract open book supporting $\xi$ such that $L$ sits on a page
of the underlying open book. Let $(Y_L^\pm,\xi_L^\pm)$ denote 
the $3$-manifold obtained by $(\pm1)$-contact surgery along $L$. 
Then $(P,D_\gamma^{\mp}\circ\phi)$ is an abstract open book supporting 
the contact structure $\xi_L^\pm$.
\end{lem}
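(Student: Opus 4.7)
The plan is to verify the statement in two stages: first establishing that the underlying smooth 3-manifolds agree, and then matching the contact structure supported by $(P, D_L^{\mp} \circ \phi)$ with $\xi_L^{\pm}$ (writing $L$ for the curve $\gamma$ on the page along which the Dehn twist is performed).

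First I would check that $Y(P, D_L^{\mp} \circ \phi)$ is diffeomorphic to the smooth 3-manifold obtained from $Y$ by Dehn surgery along $L$ with surgery coefficient $tb(L) \pm 1$. Because $L$ sits on a page of an open book supporting $\xi$, the page framing of $L$ coincides with the contact framing (this is precisely the special positioning guaranteed by the discussion preceding the lemma). By the relation \emph{smooth surgery coefficient} $=$ \emph{contact surgery coefficient} $+$ $tb(L)$ recalled in \S\ref{contactstruct}, contact $(\pm 1)$-surgery along $L$ amounts to smooth surgery with coefficient $\pm 1$ relative to the page framing. A direct local computation in the mapping torus model $(P\times[0,1])/{\sim}$ in a neighborhood of $L\times\{1/2\}$ shows that replacing $\phi$ by $D_L^{\mp}\circ\phi$ cuts along the annular neighborhood of $L$ sitting in the page $P_{1/2}$ and reglues by a $\pm 1$ twist relative to the page framing, which is exactly the Dehn surgery with the required coefficient.

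Next I would show that the contact structure supported by $(P, D_L^{\mp}\circ\phi)$ is actually $\xi_L^{\pm}$, and not merely some contact structure on the correct smooth manifold. For the $(-1)$-contact surgery case, this is the Weinstein/Eliashberg handle attachment. Working in a standard Legendrian neighborhood of $L$ thickened from a collar of $L$ in the page, one writes down an explicit contact form $\alpha$ on $(P\times[0,1])/{\sim}$ whose Reeb flow is tangent to the binding and for which $d\alpha$ is a positive area form on each page; carrying the construction through a model neighborhood of $L$ verifies that the contact structure produced by attaching the Weinstein 2-handle to $L$ is supported by the modified open book $(P, D_L^{+}\circ\phi)$. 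By Giroux's uniqueness theorem, supported contact structures are determined by the open book up to isotopy, so this local matching promotes to the global statement $\xi_L^{-}$. For the $(+1)$-case I would invoke the cancellation principle: $(+1)$-contact surgery along a Legendrian push-off of $L$ inverts a prior $(-1)$-contact surgery. Applying the established $(-1)$-statement to the push-off produces the open book $(P, D_L^{+}\circ D_L^{-}\circ\phi)\simeq(P,\phi)$ with its original supported contact structure $\xi$; running this argument in reverse forces $(P,D_L^{-}\circ\phi)$ to support $\xi_L^{+}$.

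The main obstacle is the explicit local contact-geometric identification in the second step: writing a contact form on the modified mapping torus that both restricts to the Weinstein model on a neighborhood of $L$ and verifies the Giroux supporting conditions (Reeb flow transverse to pages, $d\alpha > 0$ on pages, binding a transverse link) throughout. Once this local model is in place, sign conventions for $D_L^{\pm}$ from Definition \ref{girsta} must be carefully reconciled with the orientation conventions for the Weinstein handle, but the remainder of the argument reduces to an application of Giroux's correspondence and the $(-1)/(+1)$ cancellation of contact surgeries.
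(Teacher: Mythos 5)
The paper itself offers no proof of Lemma \ref{obsurgery}: it is quoted from the literature (cf.~\cite{LOSS}), so there is no argument of the paper's to compare yours against. Your two-stage plan is, however, essentially the standard proof one finds in the literature on open books and contact surgery: first the smooth identification (a composition of the monodromy with $D_\gamma^{\mp}$ amounts to cutting and regluing along an annular neighborhood of $L$ in a page, i.e.~Dehn surgery with framing $\pm1$ relative to the page framing, which equals the contact framing by the arrangement recalled just before the lemma), and then the contact identification for the $(-1)$-case via an explicit supported contact form in a Weinstein-handle model together with Giroux's uniqueness of the supported contact structure, with the $(+1)$-case deduced from the $(-1)$-case by the Ding--Geiges cancellation of $(\pm1)$-contact surgeries along a knot and its Legendrian push-off (\cite{DiGei}). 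As a plan this is sound, and the sign bookkeeping you state (negative Dehn twist $\leftrightarrow$ $+1$ relative to the page framing) is correct.

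Two points deserve more care than your sketch gives them. First, the explicit local model in the $(-1)$-case is the entire technical content of the lemma; you correctly flag it, but as written it is a promissory note rather than a proof. Second, in the $(+1)$-case the phrase ``running this argument in reverse'' hides a genuine step: cancellation tells you that $(+1)$-surgery along the Legendrian \emph{push-off} of $L$ inside the surgered manifold undoes the $(-1)$-surgery, so you must identify that push-off, under the Giroux correspondence identifying the result of the $(-1)$-surgery on $(P,D_L^{-}\circ\phi)$ with $(Y,\xi)=Y(P,\phi)$, with the curve $L$ sitting on the page of $(P,\phi)$ (page push-offs of a Legendrian curve on a page being Legendrian isotopic to its contact push-off). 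The cleanest organization is to start from the open book $(P,D_L^{-}\circ\phi)$, apply the established $(-1)$-statement to $L$ on its page to recover $(P,\phi)$, and only then invoke cancellation; with that rearrangement and the push-off identification made explicit, your outline becomes a complete argument.
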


\subsection{The Contact Class}\label{conclass}
Given a contact manifold $(Y,\xi)$, we fix an open book decomposition
$(P,\phi)$ which supports $\xi$. This open book defines a Heegaard
decomposition and, with the construction stated in the last paragraph, we 
are able to define a Heegaard diagram. We now put in an additional
datum. The curves $b_i$ are isotopic push-offs of the $a_i$. We
choose them like indicated in Figure \ref{Fig:figzpoint}: We push the $b_i$
off the $a_i$ by following with $\partial b_i$ the positive boundary 
orientation of $\partial P$. 
\begin{figure}[ht!]
\labellist\small\hair 2pt
\pinlabel {Page $P\!\times\!\{1/2\}$ of the open book} [bl] at 29 187
\pinlabel {$z$} [bl] at 189 112
\pinlabel {$a_i$} [t] at 76 22
\pinlabel {$b_i$} [t] at  153 22
\pinlabel {$\partial P$} [l] at 210 27
\pinlabel {$\partial P$} [l] at 210 153
\endlabellist
\centering
\includegraphics[height=3cm]{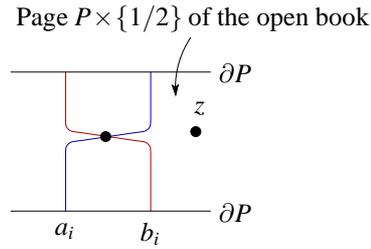}
\caption{Positioning of the point $z$ and choice of $b_i$.}
\label{Fig:figzpoint}
\end{figure}

The point $z$ is placed
outside the thin strips of isotopy between the $a_i$ and $b_i$.
We denote by $x_i$ the unique intersection point between $a_i$ 
and $b_i$. Define
\[
  EH(P,\phi,\{a_1,\dots,a_{n}\})=\{x_1,\dots,x_{n}\}.
\]
By construction of the Heegaard diagram $EH$ is a cycle in the Heegaard
Floer homology associated to the data $(-\Sigma,\alpha,\beta,z)$. We choose
the negative surface orientation since with this orientation there can be
no holomorphic Whitney disc emanating from $EH$ (cf.~Figure~\ref{Fig:figzpoint}).
\begin{lem}[see \cite{OsZa02}]\label{hfcoh} The 
Heegaard Floer cohomology $\hfhat\,^{\!*}(Y)$ is 
isomorphic to $\hfhat(-Y)$.
\end{lem}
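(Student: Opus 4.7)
The plan is to construct an explicit isomorphism on the level of chain complexes that identifies the boundary operator for $-Y$ with the dual of the boundary operator for $Y$. Since Heegaard Floer cohomology is defined by dualizing the chain complex and transposing the boundary, it suffices to find a Heegaard diagram for $-Y$ whose chain complex is literally the $\mbox{\rm Hom}$-dual of the chain complex for $Y$ under the natural identification of generators.

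First I would fix a pointed Heegaard diagram $(\Sigma,\alpha,\beta,z)$ for $Y$ and observe that swapping the roles of the two handlebodies gives a Heegaard diagram $(\Sigma,\beta,\alpha,z)$ for $-Y$ (this is the standard orientation-reversing trick: the same splitting surface, but with the labels of the compression bodies exchanged). The two chain modules $\cfhat(\Sigma,\alpha,\beta,z)$ and $\cfhat(\Sigma,\beta,\alpha,z)$ are generated by the same set $\talpha\cap\tbeta$, so there is a canonical identification of the underlying free modules. Under this identification I want to show that $\parhat^{-Y}$ is the transpose of $\parhat^Y$.

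The main geometric step is to compare the relevant moduli spaces. Given a Whitney disc $\phi\in\pitwo^Y(x,y)$ for $(\Sigma,\alpha,\beta)$, I would define $\widetilde\phi(w)=\phi(-w)$, the precomposition with rotation by $\pi$. A quick check of the boundary conditions in the definition of Whitney disc shows that $\widetilde\phi$ is a Whitney disc in $\pitwo^{-Y}(y,x)$ for $(\Sigma,\beta,\alpha)$: the points $\pm i$ get swapped (so $x$ and $y$ trade roles), and the half-circles with $\mbox{\rm Re}<0$ and $\mbox{\rm Re}>0$ also get swapped (so the $\talpha$- and $\tbeta$-boundary arcs trade roles, matching the swap $\alpha\leftrightarrow\beta$). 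Because rotation by $\pi$ is biholomorphic, $\widetilde\phi$ satisfies the holomorphic equation (\ref{holomeq}) for the same path of complex structures, has the same intersection number $n_z$, and has the same Maslov index (Definition \ref{maslovindex} is invariant under reparametrization). The $\R$-translation action is intertwined by $\phi\mapsto\widetilde\phi$, so this construction descends to a bijection of unparametrized moduli spaces $\modhatxy^Y\leftrightarrow\widehat\M^{-Y}(y,x)$, preserving the index and the $n_z=0$ condition. Hence the signed count matches.

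From this it follows that the coefficient of $y$ in $\parhat^Y_z\,x$ equals the coefficient of $x$ in $\parhat^{-Y}_z\,y$, i.e.~the boundary operator on $\cfhat(-Y)$ is literally the transpose of the boundary on $\cfhat(Y)$. Consequently $\cfhat(\Sigma,\beta,\alpha,z)$ is isomorphic to $\mbox{\rm Hom}(\cfhat(\Sigma,\alpha,\beta,z),\Z)$ with the dual differential, so the homology gives $\hfhat(-Y)\cong\hfhat\,\!\!^*(Y)$. The main obstacle will be the bookkeeping of signs in the $\Z$-coefficient case: one has to verify that the natural correspondence of moduli spaces intertwines coherent orientation systems up to an overall equivalence, so that the transpose relation holds on the nose and not merely up to an ambiguous sign; for $\ztwo$-coefficients the argument above is essentially complete.
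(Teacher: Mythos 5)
Your argument is correct and is essentially the approach the paper itself sketches in the remark following the lemma (and which goes back to \cite{OsZa02}): regard $(\Sigma,\beta,\alpha,z)$ as a diagram for $-Y$ and identify its chain complex with the dual of $\cfhat(\Sigma,\alpha,\beta,z)$ via the correspondence of Whitney discs that swaps the two boundary arcs and the two ends. The one detail to adjust is your claim that the reparametrized disc is holomorphic for the same path of almost complex structures: precomposing with the rotation $w\mapsto -w$, which in the strip model is $(s,t)\mapsto(1-s,-t)$, turns a solution of (\ref{holomeq}) for $\com_s$ into a solution for the reversed path $\com_{1-s}$, so you should either compute $\cfhat(\Sigma,\beta,\alpha,z)$ using the reversed path from the start or appeal to the independence of the choice of path established in \S\ref{acsinvariance}.
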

The Heegaard diagram $(-\Sigma,\alpha,\beta)$ is a Heegaard diagram
for $-Y$ and, thus, represents the Heegaard Floer cohomology of $Y$.
Instead of switching the surface orientation we can swap the boundary
conditions of the Whitney discs at their $\alpha$-boundary 
and $\beta$-coundary, i.e.~we will be interested in Whitney discs 
in $(\Sigma,\beta,\alpha)$. 
The element $EH$ can be interpreted as sitting in the Heegaard Floer
cohomology of $Y$. The push-off $b_i$ is chosen such that there is no
holomorphic disc emanating from $x_i$.
\begin{theorem} The class $EH(P,\phi,\{a_1,\dots,a_{n}\})$ is independent
of the choices made in its definition. Moreover, the associated 
cohomology class $c(Y,\xi)$ is an isotopy invariant of the contact
 structure $\xi$, up to sign. We call $c(Y,\xi)$ {\bf contact element}.
\end{theorem}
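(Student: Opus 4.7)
The statement decouples into two independence claims: (a) for a fixed supporting open book $(P,\phi)$, the chosen cut system $\{a_i\}$ (together with the auxiliary data of complex structure, base point $z$, and push-offs $b_i$) is immaterial; (b) any two open books supporting $\xi$ yield the same cohomology class. For (a), the almost complex structure and isotopies of the $\alpha_i$, $\beta_i$ that do not pass through $EH$ are handled by the invariance results of \S\ref{acsinvariance} and \S\ref{isotopyinvariance}, so the only substantive point is to compare two different cut systems. For (b), the Giroux correspondence (\S\ref{obvsaob}) reduces us to proving invariance under a single positive Giroux stabilization.

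For (a), I would use that any two cut systems on $P$ are related by a finite sequence of arc slides and isotopies rel $\partial P$. An arc slide of $a_i$ over $a_j$ inside a page induces, on $\Sigma=P_{1/2}\cupb P_1$, a handle slide of $\alpha_i$ over $\alpha_j$, leaving the $\beta$-curves (and the $b_i$, up to isotopy) unchanged. By \S\ref{parhsinvar} the resulting chain complexes are identified by $\fhat^*_{\alpha\beta\gamma}(\,\cdot\otimes\hattheta_{\beta\gamma})$, where $\gamma$ is the post-slide $\alpha$-system. I would then carry out a model calculation in the spirit of Lemma \ref{thetatransform}: with the $b_i$ placed as in Figure \ref{Fig:figzpoint} and $z$ placed outside the thin isotopy strips, the only domain contributing to $\fhat_{\alpha\beta\gamma}(EH\otimes\hattheta_{\beta\gamma})$ with $n_z=0$, $\mu=0$ and $\dom\geq 0$ is a disjoint union of small triangles supported in the thin strips, contributing exactly one element by the Riemann mapping theorem. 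The image is precisely the $EH$-cycle built from the new cut system.

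For (b), a positive Giroux stabilization attaches a one-handle $h^1$ to $P$ and modifies the monodromy to $D^{+}_\gamma\circ\phi$, leaving $Y$ unchanged (Lemma \ref{Stabilstabil}). By Lemma \ref{stabstab} I may choose a cut system of $P$ disjoint from $\gamma\cap P$ and enlarge it by $a_{n+1}=$ co-core of $h^1$, with small push-off $b_{n+1}$. Inspecting the resulting Heegaard diagram as in the proof of Lemma \ref{Stabilstabil}, the curves $\alpha_{n+1}$ and $\beta_{n+1}$ meet in a single transverse point $x_{n+1}$, and the pair $(\alpha_{n+1},\beta_{n+1})$ realises the new diagram as an ordinary Heegaard stabilization of the pre-stabilization diagram. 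Under the K\"unneth-type isomorphism of Theorem \ref{consum}, the new complex is identified with the old one tensored with $\cfhat(\sthree)\cong\Z$, and the new cocycle $\{x_1,\ldots,x_{n+1}\}$ corresponds to $\{x_1,\ldots,x_n\}$ tensored with the unique generator on the $\sthree$-summand. Hence the induced cohomology classes agree.

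The main obstacle is the model calculation in (a): one must enumerate all Whitney triangles in $\pitwo(EH,\hattheta_{\beta\gamma},\,\cdot)$ with $n_z=0$, $\mu=0$ and $\dom\geq 0$, and argue that positivity of multiplicities forces the domain to decompose component-by-component into the small triangles in the thin isotopy strips. This relies essentially on the placement of $z$ outside those strips and on the choice of $b_i$ on the positive-boundary side of $a_i$, which also ensures that no holomorphic disc can emanate from $EH$ in the first place. Finally, because the handle slide and stabilization isomorphisms are defined only up to a choice of coherent orientation system (cf.~\S\ref{structmoduli}), the resulting cocycle is canonical only up to an overall sign, which accounts for the \emph{up to sign} in the conclusion.
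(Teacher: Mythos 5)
Your part (b) (Giroux stabilization) is essentially the paper's argument: the paper also chooses, via Lemma \ref{stabstab}, a cut system disjoint from $\gamma\cap P$, extends it by the co-core of $h^1$, and identifies the new complex with the old one through the map $\Phi(x)=(x,q)$, which is an isomorphism by the connected-sum/splitting reasoning of Example \ref{exam01}; your K\"unneth-style phrasing via Theorem \ref{consum} is the same mechanism. Your attribution of the sign ambiguity to the orientation systems is also consistent with the paper.

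The gap is in part (a). In a Heegaard diagram induced by an open book the $\beta$-curves are \emph{not} independent of the cut system: $\beta_i=b_i\cup\overline{\phi(b_i)}$ where $b_i$ is a small push-off of $a_i$. Hence an arc slide $a_1\mapsto a_1+a_2$ changes the diagram by \emph{two} handle slides --- $\alpha_1$ over $\alpha_2$ \emph{and} $\beta_1$ over $\beta_2$ --- because the $\beta$-curve moves with its $\alpha$-curve; this is exactly the point the paper emphasizes. Your claim that the $\beta$-curves (and the $b_i$) are unchanged is false, and with it your model computation breaks down: after sliding only the $\alpha$-curves you have left the class of diagrams induced by open books, the target complex is not the complex attached to the new cut system, and so the asserted conclusion that the image of $EH$ is ``the $EH$-cycle built from the new cut system'' is not even well-posed in that complex. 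What must be checked instead is that the \emph{composition} of the two handle-slide maps (the $\alpha$-slide and the $\beta$-slide) carries the old $EH$-cycle to the new one; this is the ``straightforward computation'' the paper refers to, and it is where a Lemma \ref{thetatransform}-type small-triangle count enters. A further, smaller, slip: the triangle map you quote, $\fhat^*_{\alpha\beta\gamma}(\,\cdot\otimes\hattheta_{\beta\gamma})$ with ``$\gamma$ the post-slide $\alpha$-system'', is the map associated to sliding the \emph{second} set of attaching circles; for a handle slide among the $\alpha$-curves (and in the cohomological setting $(-\Sigma,\alpha,\beta)$ in which $EH$ lives) one needs the symmetric version with the top-dimensional generator inserted in the appropriate slot. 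With these corrections your outline would align with the paper's proof; as written, the cut-system-independence step does not go through.
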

The proof of this theorem relies on several steps we would like to sketch:
An {\bf arc slide} is a geometric move allowing us to change the cut 
system. Any two cut systems can be transformed into each other by a finite
 sequence of arc slides. Let $a_1$ and $a_2$ be two adjacent
  arcs. Adjacent means that in $P\backslash\{a_1,\dots,a_{n}\}$ one of the 
  boundary segments associated to $a_1$ and $a_2$ are connected via one 
  segment $\tau$ of $\partial P$. An arc slide of 
  $a_1$ over $a_2$ (or vice versa) is a curve in the isotopy class of 
 $a_1\cup\tau\cup a_2$. We denote it by $a_1+a_2$.
\begin{lem} Any two cut systems can be transformed into each other 
with a finite number of arc slides.
\end{lem}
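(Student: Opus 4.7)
The plan is to identify cut systems with handle decompositions of $P$ having a single $0$-handle, and then invoke standard uniqueness results for handle decompositions together with a realizability check for arc slides.

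First I would observe that a cut system $\{a_1,\dots,a_n\}$ on $P$ gives rise to a handle decomposition of $P$: the disc $P\setminus\nu(a_1\cup\cdots\cup a_n)$ is the unique $0$-handle $h^0$, and closed regular neighborhoods of the arcs $a_i$ are $1$-handles $h^1_i$ whose co-cores recover the $a_i$. Conversely every handle decomposition of $P$ with a single $0$-handle arises this way, and isotopies of cut systems correspond to isotopies of such handle decompositions. Moreover, under this correspondence the arc slide of $a_i$ across the boundary segment $\tau$ onto $a_j$, producing $a_i\cup\tau\cup a_j$, translates precisely to sliding the foot of $h^1_i$ adjacent to $\tau$ along $\partial h^0$ across the foot of $h^1_j$, i.e.~to a $1$-handle slide.

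Next I would apply the standard Cerf-theoretic fact that any two handle decompositions of the same compact surface with the same numerical type are connected by a finite sequence of handle slides and isotopies. Since every cut system contains exactly $n=2g(P)+|\partial P|-1$ arcs (by an Euler-characteristic count), the two associated handle decompositions share the numerical type $(1,n,0)$. A generic path of Morse functions on $P$ connecting two Morse functions inducing these decompositions --- each with a single minimum, no interior maxima, and $\partial P$ as a boundary level set --- can be taken to exhibit only handle-slide singularities, because the fixed count of critical points rules out birth/death events.

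Finally, I would reduce a general $1$-handle slide produced by Cerf theory to the elementary arc slides defined in the paper, which involve only a single boundary segment $\tau$ between adjacent arcs in the disc $P\setminus\nu(a_1\cup\cdots\cup a_n)$. A general slide pushes a foot of $h^1_i$ along an arc in $\partial h^0$ which may cross the feet of several other $1$-handles; subdividing this arc into segments each crossing exactly one adjacent foot decomposes the slide into a sequence of elementary arc slides of the required form. The main obstacle will be precisely this last realizability step: keeping track of the evolving adjacency pattern in $P\setminus\nu(a_1\cup\cdots\cup a_n)$ after each elementary slide, so that the sequence produced remains composable. This is a finite, combinatorial check, and once it is carried out the lemma follows.
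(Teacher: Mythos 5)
The paper states this lemma without proof, so there is nothing to compare against; judging your proposal on its own merits, the dictionary you set up is fine (a cut system is exactly the system of co-cores of the $1$-handles in a decomposition $P=h^0\cup h^1_1\cup\dots\cup h^1_n$ with a single $0$-handle, and an arc slide is a $1$-handle slide), and the final reduction of a general slide, in which a foot travels past several other feet, to a sequence of elementary slides over adjacent arcs is believable even though you defer it. The genuine gap is the middle step. The ``standard Cerf-theoretic fact'' you invoke is not standard in the form you need: what Cerf theory gives is that two handle decompositions are related by isotopies, handle slides \emph{and} creation/annihilation of cancelling handle pairs. Your argument for excluding births and deaths --- that the two endpoint Morse functions have the same number of critical points --- does not work: a generic path can (and in general will) contain a birth at one parameter value and a death at a later one, so that the counts at the endpoints agree while intermediate functions have extra critical points. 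That this cannot be purely formal is shown by dimension $3$, where two Heegaard decompositions of the same genus are in general \emph{not} related by slides and isotopies alone (Reidemeister--Singer requires stabilization); so the statement you are quoting is precisely the nontrivial, dimension-$2$-specific content of the lemma, and your proposal assumes it rather than proves it.

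To close the gap you either have to do the Cerf-theoretic work of cancelling the birth/death pairs in a $1$-parameter family on a surface with only index $0$ and index $1$ critical points (possible, but this is real work, not a consequence of counting), or argue more elementarily and bypass Cerf theory altogether: put the two cut systems $\{a_i\}$ and $\{b_j\}$ in minimal general position and induct on $\sum_{i,j}|a_i\cap b_j|$, using an outermost intersection arc in the disc $P\setminus\{a_1,\dots,a_n\}$ to find an arc slide (or isotopy) reducing the intersection number, and then, in the disjoint case, observing that each $b_j$ lies in the cut-open disc and is parallel to an iterated boundary sum of the $a_i$, which is realized by elementary arc slides. Either route is fine, but as written the key step of your proof is unsupported.
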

It is easy to observe that an arc slide affects the associated Heegaard
diagram by two handle slides. The change under the $\alpha$-circles is
given by a handle slide of $\alpha_1$ over $\alpha_2$. But the associated
$\beta$-curve moves with the $\alpha$-curve, i.e.~we have to additionally
slide $\beta_1$ over $\beta_2$. We have to see that these handle slides
preserve the contact element. 
To be more precise: After the first handle slide we moved out of the set of
Heegaard diagrams induced by open books. Thus, we cannot see the contact element
in that diagram. After the second handle slide, however, we move back into that
set and, hence, see the contact element again. We have to check that the composition
of the maps between the Heegaard Floer cohomologies induced by the handle 
slides preserves the contact element. This is a
straightforward computation.
\begin{definition}\label{DefOne} Let a Heegaard diagram $(\Sigma,\alpha,\beta)$ 
and a homologically essential, simple, closed curve $\delta$ on $\Sigma$ be given. The
Heegaard diagram $(\Sigma,\alpha,\beta)$ is called
{\bf $\delta$-adapted} if the following conditions hold. 
\begin{enumerate}
  \item It is induced by an open book and the pair $\alpha$, $\beta$ is 
  induced by a cut system (cf.~\S\ref{obcshd}) for this open book.
  \item The curve $\delta$ intersects $\beta_1$ once and does not intersect any 
  other of the $\beta_i$, $i\geq 2$.
\end{enumerate}
\end{definition}
We can always find $\delta$-adapted Heegaard diagrams. This is already
stated in \cite{HKM} and \cite{LOSS} but not proved.
\begin{lem}\label{LemOne} Let $(P,\phi)$ be an open book and 
$\delta\subset P$ a homologically essential closed curve. There is a 
choice of cut system on $P$ that induces a $\delta$-adapted 
Heegaard diagram.
\end{lem}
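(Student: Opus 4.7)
The plan is to construct the cut system in two stages: first choose an arc $a_{1}$ that realizes the required single transverse intersection with $\delta$, then complete $\{a_{1}\}$ to a full cut system of $P$ by applying Lemma \ref{stabstab} to $P$ cut open along $a_{1}$.

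For the first stage, I would use that $\delta$ is a homologically essential simple closed curve, so its class $[\delta] \in H_{1}(P;\Z)$ is primitive (a simple closed curve on a surface always represents a primitive class once it is essential). The Poincar\'e--Lefschetz intersection pairing $H_{1}(P;\Z) \times H_{1}(P,\partial P;\Z) \to \Z$ is non-degenerate, so there is a primitive class $\eta \in H_{1}(P,\partial P;\Z)$ with $\eta \cdot [\delta] = 1$. Any primitive relative class on a compact orientable surface with boundary is represented by a properly embedded arc; this yields $a_{1} \subset P$ meeting $\delta$ transversely in exactly one point.

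For the second stage, cut $P$ along $a_{1}$ to obtain a compact surface $P'$. Because $a_{1} \cdot \delta = 1$, the curve $\delta$ becomes a single properly embedded arc $\delta' \subset P'$ whose endpoints lie on the two new boundary segments produced by $a_{1}$. Now apply Lemma \ref{stabstab} to the pair $(P',\delta')$: this supplies a cut system $\{a_{2},\dots,a_{n}\}$ of $P'$ disjoint from $\delta'$ (the dichotomy in its proof --- push off $\delta'$ and extend, or split if $P'\setminus\delta'$ is disconnected --- applies verbatim). Since each $a_{i}$ for $i\geq 2$ lies in $P' = P\setminus a_{1}$, the collection $\{a_{1},a_{2},\dots,a_{n}\}$ is disjoint, and cutting $P$ along these arcs coincides with first cutting along $a_{1}$ and then cutting $P'$ along $\{a_{2},\dots,a_{n}\}$, yielding a disc. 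Hence $\{a_{1},\dots,a_{n}\}$ is a cut system on $P$.

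It then remains to verify that the induced Heegaard diagram is $\delta$-adapted. Interpret $\delta \subset P_{1/2}$ on the Heegaard surface $\Sigma = P_{1/2}\cup_{\partial} P_{1}$. Each $\beta_{i} = b_{i}\cup\overline{\phi(b_{i})}$ has its $P_{1/2}$-part equal to the push-off $b_{i}$ and its $P_{1}$-part equal to $\overline{\phi(b_{i})}$, so $\delta \cap \beta_{i} = \delta \cap b_{i}$. Taking the push-offs $b_{i}$ sufficiently close to the $a_{i}$, the single transverse intersection of $\delta$ with $a_{1}$ persists as a single transverse intersection with $b_{1}$, while disjointness of $\delta$ from $a_{i}$ for $i\geq 2$ persists as disjointness from $b_{i}$. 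This verifies both conditions of Definition \ref{DefOne}. The only delicate step is the first stage, where one must pass from a primitive relative homology class to an embedded arc realizing it; this is standard for surfaces but is the substantive topological input.
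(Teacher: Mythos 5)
Your overall two-stage plan (first an arc meeting $\delta$ once, then a completion to a cut system in the complement) is structurally the same as the paper's, but the first stage as you argue it has a genuine gap. From the nondegeneracy of the pairing you get a primitive class $\eta\in H_1(P,\partial P;\Z)$ with $\eta\cdot[\delta]=1$, and you may indeed represent $\eta$ by a properly embedded arc; but this only controls the \emph{algebraic} intersection number with $\delta$. An embedded representative of $\eta$ can perfectly well meet $\delta$ in three, five, \dots\ points with cancelling signs, and even in minimal position the geometric intersection number of an arc with a simple closed curve is in general strictly larger than the absolute value of the algebraic one. So the sentence ``this yields $a_1$ meeting $\delta$ transversely in exactly one point'' is a non sequitur: the property you need is not a property of the homology class but of a well-chosen representative, and you have not constructed one. (You flag the ``delicate step'' as realizing a primitive relative class by an embedded arc; the actual delicate step is realizing it by an arc whose \emph{geometric} intersection with $\delta$ is one.) The paper sidesteps homology entirely at this point: it cuts $P$ open along $\delta$, obtaining two new boundary circles $C_1,C_2$, and joins them to $\partial P$ by two arcs ending at the two copies of one and the same point of $\delta$; regluing produces a properly embedded arc $a_1$ crossing $\delta$ geometrically once by construction. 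Your stage one should be replaced by this (or by a change-of-coordinates argument putting $\delta$ in standard position), after which your observation that such an $a_1$ is automatically non-separating, so that $P'=P$ cut along $a_1$ is connected, goes through.

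There is also a smaller but real issue in stage two. Lemma \ref{stabstab} gives a cut system of $P'$ disjoint from the arc $\delta'$, but says nothing about where the endpoints of these arcs lie: they may lie on the two copies of $a_1$ in $\partial P'$, and then, back in $P$, the corresponding arcs end on $a_1$ rather than on $\partial P$, so $\{a_1,\dots,a_n\}$ is not a system of disjoint properly embedded arcs in $P$. One must additionally arrange the boundary points of $a_2,\dots,a_n$ to avoid the segments of $\partial P'$ coming from $a_1$ (and, when sliding endpoints off these segments, avoid crossing the endpoints of $\delta'$, which sit precisely on them). The paper handles exactly this by cutting along $\delta$ and $a_1$ simultaneously and explicitly requiring the boundary points of the remaining arcs to miss the four resulting segments $S_1,\dots,S_4$; with that adjustment, and with the corrected construction of $a_1$, your argument becomes a correct proof of Lemma \ref{LemOne} (minus the extra feature, built into the paper's proof via the band-sum arc, that one cut curve is essentially a parallel copy of $\delta$, which is used later for the Giroux stabilization discussion but not needed for the lemma itself).
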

Observe that  $a_1,\dots,a_{n}$ to be a cut system of a page $P$ 
essentially means to be a basis of $H_1(P,\partial P)$:
Suppose the curves are not linearly independent. In this
case we are able to identify a surface $F\subset P$, $F\not=P$,
bounding a linear combination of some of the curves $a_i$.
But this means the cut system disconnects the page $P$ in 
contradiction to the definition. Conversely, suppose the curves
in the cut system are homologically linearly independent.
In this case the curves cannot disconnect the page. If they
disconnected, we could identify a surface $F$ in $P$ with boundary
a linear combination of some of the $a_i$. But this contradicts
their linear independence. The fact that $\Sigma\backslash\{a_1,\dots,a_n\}$
is a disc shows that every element in $H_1(P,\partial P)$ can be
written as a linear combination of the curves $a_1,\dots,a_n$.
\begin{proof} Without loss of generality, we assume that $P$ 
has connected boundary: Suppose the boundary of $P$ has two
components. Choose a properly embedded arc connecting both
components of $\partial P$. Define this curve to be the first
curve $a_0$ in a cut system. Cutting out this curve $a_0$, we obtain a
surface with connected boundary. The curve $a_0$ determines two segments $S_1$
and $S_2$ in the connected boundary. We can continue using
the construction process for connected binding we state below. We 
just have to check the boundary points of the curves to 
remain outside of the segments $S_1$ and $S_2$. Given that $P$ has more
than two boundary components, we can, with this algorithm, inductively 
decrease the number of boundary components.\vspace{0.3cm}\\
The map $\phi$ is an element of the mapping 
class group of $P$. Thus, if $\{a_1,\dots,a_{n}\}$
is a cut system, then $\{\phi(a_1),\ldots,\phi(a_{n})\}$ is a 
cut system, too. It suffices to show that there is a cut system 
$\{a_1,\ldots,a_{n}\}$ such that $\delta$ intersects $a_i$ 
once if and only if $i=1$.
\begin{figure}[ht!]
\labellist\small\hair 2pt
\pinlabel $\gamma$ [bl] at 199 214
\endlabellist
\centering
\includegraphics[height=3cm]{}
\caption{Possible choice of curve $\gamma$.}
\label{Fig:figfour}
\end{figure}

We start by taking a band sum of $\delta$ with a small 
arc $\gamma$ as shown in Figure \ref{Fig:figfour}. We are 
free to choose the arc $\gamma$.
Denote the result of the band sum by $a_2$. The arc $a_2$ indeed bounds a compressing disc 
in the respective handlebody because its boundary lies 
on $\partial P$. Because of our prior observation it suffices
to show that $a_2$ is a primitive class 
in $H_1(P,\partial P)$.
Since $H_1(P,\partial P)$ is torsion free the primitiveness of
$a_2$ implies that we can extend $a_2$ to a basis of $H_1(P,\partial P)$.
The curves defining this basis can easily be chosen to be
not closed, with their boundary lying on $\partial P$.\vspace{0.3cm}\\
Writing down the long exact sequence of the pair $(P,\partial P)$
\begin{diagram}[size=2em,labelstyle=\scriptstyle]
H_2(P)&\rTo&H_2(P,\partial P)&\rTo^{\partial_{*}}&H_1(\partial P)      
      &\rTo &H_1(P)&\rTo^{\iota_{*}}&H_1(P,\partial P)&\rTo&0\\
\br{=}&    &  \br{$\cong$}   &                   & \br{$\cong$}        
      &     &      &                &                 &    &\\
  0   &\rTo&\Z\bigl<[P]\bigr>     &\rTo^{\partial_{*}}
      &\Z\bigl<[\partial P]\bigr>&\rTo &H_1(P)&\rTo^{\iota_{*}}
      &H_1(P,\partial P)&\rTo&0
\end{diagram}
we see that $\partial_*$ is surjective since $\partial_*[P]=[\partial P]$. Hence, exactness of the 
sequence implies that the inclusion $\iota\co P\lra(P,\partial P)$ 
induces an isomorphism on homology. Note that the zero at the end 
of the sequence appears because $\partial P$ is assumed to be 
connected. Let $g$ denote the genus of $P$. Of course $H_1(P;\Z)$ is $\Z^{2g}$, which can be seen 
by a Mayer-Vietoris argument or from handle decompositions of 
surfaces (compute the homology using a handle decomposition). Since 
$\delta$ was embedded it follows from the lemma below
that it is a primitive class in $H_1(P;\Z)$. The isomorphism 
$\iota_*$ obviously sends $\delta$ to $a_2$, i.e.~$\iota_*[\delta]=[\gamma]$. 
Thus, $a_2$ is primitive in $H_1(P,\partial P)$.

Cut open the surface along $\delta$. We obtain two new boundary components, $C_1$
and $C_2$ say, which we can connect with the boundary of $P$ with two arcs. These
two arcs, in $P$, determine a properly embedded curve, $a_1$ say, whose boundary
lies on $\partial P$. Furthermore, $a_1$ intersects $\delta$ in one single point, transversely.
The curve $a_1$ is primitve, too. To see, that we can extend to a cut system such that $\delta$ is
disjoint from $a_3,\dots,a_n$, cut open the surface $P$ along $\delta$ and $a_1$.
We obtain a surface $P'$ with one boundary component. The curves $\delta$ and $a_1$ determine
$4$ segments, $S_1,\dots, S_4$ say, in this boundary. We extend $a_2$ to a cut system 
$a_2,\dots,a_n$ of $P'$ and arrange the boundary points of the curves $a_3,\dots, a_n$ to 
be disjoint from $S_1,\dots,S_4$. The set $a_1,\dots,a_n$ is a cut system of $P$ with the
desired properties.
\end{proof}
As a consequence of the proof we may arrange $\delta$
to be a push-off of $a_2$ outside a small neighborhood where the
band sum is performed. Geometrically spoken, we cut open $\delta$
at one point, and move the boundaries to $\partial P$ to get
$a_2$. Given a positive Giroux stabilization, we can find a special 
cut system which is adapted to the curve $\gamma$. It is not hard to 
see that there is only one homotopy class of triangles that connect
the old with the new contact element and that the associated moduli 
space is a one-point space.
\begin{lem}\label{primitive} An embedded circle $\delta$ in an 
orientable, compact surface $\Sigma$ which is homologically essential 
is a primitive class of $H_1(\Sigma,\Z)$.
\end{lem}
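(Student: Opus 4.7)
The plan is to exhibit an explicit homological dual to $\delta$, namely a class $[\gamma]\in H_1(\Sigma;\Z)$ whose algebraic intersection with $[\delta]$ equals $\pm 1$. Once this is in hand, the bilinearity of the intersection pairing forces $[\delta]$ to be primitive: if $[\delta]=n\cdot\alpha$ for some $\alpha\in H_1(\Sigma;\Z)$, then $\pm 1=n\cdot\langle\alpha,[\gamma]\rangle$, so $n=\pm1$.

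First I would reduce to the case that $\delta$ is non-separating. Suppose, for contradiction, that $\delta$ separates $\Sigma$ into two components $\Sigma_1$ and $\Sigma_2$. Then $\partial\Sigma_1$ consists of $\delta$ together with some collection of boundary circles of $\Sigma$, so $[\delta]$ is a sum of boundary components of $\Sigma$ in $H_1(\Sigma;\Z)$. But each boundary component of $\Sigma$ bounds the compact surface obtained from $\Sigma$ by capping it off, and in any case one checks directly using the long exact sequence of $(\Sigma,\partial\Sigma)$ (as already written in the proof of Lemma \ref{LemOne}) that boundary components represent classes in $\im(H_1(\partial\Sigma)\to H_1(\Sigma))$ of a controlled form; in fact a separating simple closed curve in an orientable surface is null-homologous, contradicting that $\delta$ is essential. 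Hence $\delta$ is non-separating.

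Next I would construct the dual curve $\gamma$. Cut $\Sigma$ open along $\delta$ to obtain a compact orientable surface $\Sigma'$; since $\delta$ is non-separating, $\Sigma'$ is connected and its boundary contains two new components $\delta_+$ and $\delta_-$ corresponding to the two sides of $\delta$. Choose a properly embedded arc $a\subset\Sigma'$ running from a point on $\delta_+$ to a point on $\delta_-$ (possible by connectedness of $\Sigma'$). Under the identification $\Sigma'\to\Sigma$ that glues $\delta_\pm$ back together, the arc $a$ closes up to a simple closed curve $\gamma$ in $\Sigma$ which, by construction, meets $\delta$ transversely in exactly one point. Hence $\langle[\delta],[\gamma]\rangle=\pm1$.

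Finally, combining the two steps, the conclusion about primitivity is immediate from bilinearity of the intersection pairing on $H_1(\Sigma;\Z)$ (for a surface with boundary, taken as the pairing $H_1(\Sigma;\Z)\otimes H_1(\Sigma;\Z)\to\Z$ induced by passing one factor to $H_1(\Sigma,\partial\Sigma;\Z)$ via the natural map, which is legitimate since both $\delta$ and $\gamma$ are closed curves in the interior). The only genuine obstacle is the first step: making sure that ``homologically essential'' really does imply ``non-separating,'' which as noted follows from a standard argument using the exact sequence of the pair $(\Sigma,\partial\Sigma)$ already deployed in the proof of Lemma \ref{LemOne}. The rest of the argument is the classical dual-curve construction.
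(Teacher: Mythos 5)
Your construction is essentially the paper's own proof: cut $\Sigma$ open along $\delta$, use connectedness of the cut surface to produce a closed curve $\gamma$ (the paper's $K$) meeting $\delta$ transversely in one point, and deduce primitivity by pairing against it --- you phrase this through the intersection pairing obtained by pushing one factor into $H_1(\Sigma,\partial\Sigma;\Z)$, the paper through $\bigl<PD[K],[\delta]\bigr>=\pm1$ together with $H^1(\Sigma;\Z)\cong\mbox{\rm Hom}(H_1(\Sigma;\Z),\Z)$; these are the same computation.

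The step you describe as ``the only genuine obstacle'' is, however, justified by a false statement. A separating simple closed curve in a compact orientable surface is null-homologous only when $\partial\Sigma$ is empty or connected; in the generality of the lemma (and in the setting where it is applied, namely a page $P$ with boundary) it fails: the core of an annulus, or more generally a curve parallel to a proper sub-collection of the boundary circles, separates and yet is homologically essential. For such a curve your argument cannot be repaired by a cleverer choice of $\gamma$, because its image in $H_1(\Sigma,\partial\Sigma;\Z)$ is zero, so every algebraic intersection number with a closed curve vanishes. To be fair, the paper's proof makes the same tacit assumption (it asserts the cut surface is connected ``since $\delta$ is homologically essential''), and in the intended application this is harmless because the proof of Lemma \ref{LemOne} first reduces to the case of connected $\partial P$. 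If you want the lemma as stated, handle the separating case separately: there $[\delta]$ is, up to sign, a sum of distinct boundary classes $[b_i]$, and since the boundary classes with one of them deleted extend to a basis of $H_1(\Sigma;\Z)$, any nonzero such sum is already primitive; combined with your dual-curve argument in the non-separating case this closes the gap.
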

\begin{proof}
Cut open the surface $\Sigma$ along $\delta$. We obtain a connected surface $S$
with two boundary components since $\delta$ is homologically essential in $\Sigma$. We can recover the surface $\Sigma$ by connecting 
both boundary components of $S$ with a $1$-handle and then capping off with a disc. There is 
a knot $K\subset S\cup h^1$ 
intersecting the co-core of $h^1$ only once and intersecting 
$\delta$ only once, too. To construct this knot take a union of 
two arcs in $S\cup h^1$ in the following way: Namely, define  $a$ as the core 
of $h^1$, i.e.~as $D^1\times\{0\}\subset D^1\times D^1\cong h^1$ and let $b$ be 
a curve in $S$, connecting the two components of the attaching sphere $h^1$ in $\partial S$. We 
define $K$ to be $a\cup b$. 
Obviously,
\[
  \pm1
  =
  \#(K,\delta)
  =
  \bigl<PD[K],[\delta]\bigr>.
\]
Since $H_1(\Sigma;\Z)$ is torsion, free 
$H^1(\Sigma;\Z)\cong\mbox{\rm Hom}(H_1(\Sigma;\Z),\Z)$. Thus, 
$[\delta]$ is primitive.
\end{proof}
Recall that a positive/negative Giroux stabilization of an open book $(P,\phi)$ 
is defined as the open book $(P',D^\pm_\gamma\circ\phi)$ where $P'$ is defined
by attaching a $1$-handle to $P$ and $\gamma$ is a embedded, simple closed curve
in $P'$ that intersects the co-core of $h^1$ once (see Definition \ref{girsta}). Using the proofs of 
Lemma \ref{Stabilstabil} and Lemma \ref{stabstab}, we see that there is a cut system
$\{a_1,\dots,a_{n+1}\}$ of the
stabilized open book such that $\gamma$ intersects only $a_{n+1}$ which
is the co-core of $h^1$. Denote by $\alpha=\{\alpha_1,\dots,\alpha_n\}$ 
the associated attaching
circles. We define a map
\[
  \Phi
  \co
  \cfhat(\Sigma,\alpha,\beta,z)
  \lra
  \cfhat(\Sigma\#T^2,\alpha\cup\{\alpha_{n+1}\},\beta\cup\{\beta_{n+1}\},z)
\]
by assigning to $x\in\talpha\cap\tbeta$ the element $\Phi(x)=(x,q)$ where 
$q$ is the unique intersection point $\gamma\cap a_{n+1}$. This is an 
isomorphism by reasons similar to those given in 
Example \ref{exam01}.\vspace{0.3cm}\\
With our preparations done, we can easily prove one of the most 
significant properties of the contact element: Its 
functoriality under $(+1)$-contact surgeries. We will outline the
proof since it can be regarded as a model proof.
\begin{theorem}[\cite{OsZa05}] Let $(Y',\xi')$ be obtained 
from $(Y,\xi)$ by $(+1$)-contact surgery along a Legendrian knot $L$. 
Denote by $W$ the associated cobordism. Then the map
\[
  \Fhat_{-W}
  \co
  \hfhat(-Y)
  \lra
  \hfhat(-Y')
\]
preserves the contact element, i.e.~$\Fhat_{-W}(c(Y,\xi))=c(Y',\xi')$.
\end{theorem}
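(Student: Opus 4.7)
The plan is to realize $(Y,\xi)$ and $(Y',\xi')$ by open books that differ by a single negative Dehn twist, to set up a Heegaard triple diagram whose triangle pairing with the top generator computes $\Fhat_{-W}$, and then to identify a single small holomorphic triangle that carries the contact cycle of $(Y,\xi)$ to the contact cycle of $(Y',\xi')$.

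Using Lemma~\ref{obsurgery}, I would first fix an abstract open book $(P,\phi)$ supporting $\xi$ in which $L$ lies on a page and the page framing agrees with the contact framing; then $(P, D_L^{-}\circ\phi)$ is an abstract open book supporting $\xi'$. After one or two positive Giroux stabilizations (which preserve $c(Y,\xi)$ by the arc-slide and stabilization computations sketched in \S\ref{conclass}) I may assume that $L$ is a homologically essential simple closed curve on the new page. Lemma~\ref{LemOne} then produces a cut system $\{a_1,\dots,a_n\}$ such that the induced Heegaard diagram $(\Sigma,\alpha,\beta,z)$ is $L$-adapted in the sense of Definition~\ref{DefOne}: by a further isotopy on the page I can arrange that $L$ meets $b_1$ once transversely, that $L$ is disjoint from $b_2,\dots,b_n$, and also that $\phi^{-1}(L)$ is disjoint from $b_2,\dots,b_n$. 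Applying the same cut system to $(P, D_L^{-}\circ\phi)$ produces a Heegaard diagram $(\Sigma,\alpha,\gamma,z)$ in which $\gamma_i=\beta_i$ for every $i\geq 2$, while $\gamma_1$ differs from $\beta_1$ exactly by the negative Dehn twist along $L$ acting on the $\overline{\phi(b_1)}$-half of $\beta_1$.

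I would then assemble $(\Sigma,\alpha,\beta,\gamma,z)$ into a Heegaard triple diagram. Because only $\gamma_1$ differs nontrivially from $\beta_1$, the associated four-manifold $X_{\alpha\beta\gamma}$, after filling off its $\#^{g-1}(\stwo\times\sone)$ boundary with $3$-handles, is diffeomorphic to the two-handle cobordism obtained by attaching along $L$ with framing one larger than the page framing, i.e.~precisely $W$. Reversing surface orientation and passing to $-Y$, $-Y'$ via Lemma~\ref{hfcoh}, the map $\Fhat_{-W}$ is represented on chains by $\fhat_{\alpha\beta\gamma}(\,\cdot\otimes\hattheta_{\beta\gamma})$. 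The $L$-adapted structure forces $EH(P,D_L^{-}\circ\phi)=\{x'_1,x_2,\dots,x_n\}$ to differ from $EH(P,\phi)=\{x_1,x_2,\dots,x_n\}$ only in the first coordinate, and $\hattheta_{\beta\gamma}$ is the top intersection point in the thin isotopy strips between $\beta_i$ and $\gamma_i$. I would then exhibit a single homotopy class of Whitney triangles whose domain is a disjoint union of $n-1$ small triangles at the unchanged coordinates (each forced by the thin-strip geometry to be trivial) together with one embedded triangle supported in the small bigon carved out of $(\Sigma,\beta_1,\gamma_1,\alpha_1)$ by the Dehn twist along $L$, with output vertex exactly $EH(P,D_L^{-}\circ\phi)$. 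By the Riemann mapping argument used in the proof of Lemma~\ref{thetatransform}, the associated moduli space consists of a single point, so this class contributes $\pm\, EH(P,D_L^{-}\circ\phi)$ to $\fhat_{\alpha\beta\gamma}(EH(P,\phi)\otimes\hattheta_{\beta\gamma})$.

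The main obstacle will be ruling out contributions from all other homotopy classes of Whitney triangles with $\mu=0$ and $n_z=0$. I would handle this through the area filtration introduced in \S\ref{parhsinvar}: in the limit where $\gamma$ is a sufficiently $C^\infty$-small Hamiltonian perturbation of $\beta$, the triangle I have identified is strictly of minimal area among all homotopy classes mapping the tuple $EH(P,\phi)$ to an $EH$-tuple, while any competing class either absorbs a nontrivial multiple of a triply-periodic domain (ruled out by $n_z=0$ together with weak admissibility) or acquires strictly positive area at an unchanged coordinate, hence strictly decreases the filtration. Invoking Lemma~\ref{filtiso} in the spirit of the handle slide invariance argument reduces the chain-level equation to its filtration-preserving, \emph{small}, part, which by the triangle count above gives $\fhat_{\alpha\beta\gamma}(EH(P,\phi)\otimes\hattheta_{\beta\gamma}) \equiv EH(P,D_L^{-}\circ\phi)$ modulo lower filtration. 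Passing to cohomology and invoking the sign ambiguity already inherent in the definition of the contact class then yields $\Fhat_{-W}(c(Y,\xi))=c(Y',\xi')$.
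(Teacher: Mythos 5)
Your setup coincides with the paper's: an open book adapted to $(Y,\xi,L)$, Lemma~\ref{obsurgery} to realize the surgery as composition with a negative Dehn twist, an $L$-adapted cut system so that only $\gamma_1$ differs from $\beta_1$, the identification of $\Fhat_{-W}$ with $\fhat_{\alpha\beta\gamma}(\,\cdot\otimes\hattheta_{\beta\gamma})$ on the orientation-reversed diagram, and the Riemann-mapping argument of Lemma~\ref{thetatransform} showing that the distinguished small-triangle class has a one-point moduli space. Up to that point the proposal is the paper's proof.

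The divergence, and the genuine gap, is in how you exclude all other $\mu=0$, $n_z=0$ contributions. The paper does this by a positivity-of-domains argument tied to the placement of the basepoint: because of where $z$ sits relative to the intersection points $x_i$ of the contact generator (Figure~\ref{Fig:figEleven}), there is literally only one domain with nonnegative coefficients and $n_z=0$ whose incoming corners are $\{x_1,\dots,x_n\}$ and $\hattheta_{\beta\gamma}$, so one obtains the exact chain-level identity $\fhat_{\alpha\beta\gamma}(EH\otimes\hattheta_{\beta\gamma})=\pm EH'$ and the statement follows immediately. Your replacement of this step by the area filtration and Lemma~\ref{filtiso} does not close the argument. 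First, Lemma~\ref{filtiso} is a statement that a map of the form $F_0+l$ with $F_0$ a filtration-preserving isomorphism and $l$ strictly filtration-decreasing is an isomorphism of groups; it does not track where a specific class goes, so your conclusion ``$\fhat_{\alpha\beta\gamma}(EH\otimes\hattheta_{\beta\gamma})\equiv EH'$ modulo lower filtration'' does not yield $\Fhat_{-W}[EH]=[EH']$ in homology: the lower-filtration error terms need not vanish and need not be exact. Second, the minimal-area claim itself is not justified in your setting: $\gamma_1$ is not a $C^\infty$-small perturbation of $\beta_1$ (it differs by a Dehn twist along $L$), so the thin-strip reasoning only controls the coordinates $i\geq 2$, and your remark that competing classes absorbing triply-periodic domains are ``ruled out by $n_z=0$ together with weak admissibility'' is not correct as stated—triply-periodic domains have $n_z=0$ by definition, and weak admissibility only gives finiteness of the count, not the vanishing of other contributions. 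What is needed (and what the paper supplies) is the pointwise observation that any nonnegative domain with $n_z=0$ emanating from the contact tuple is forced, by the position of $z$ next to the $x_i$ and the reversed orientation, to be the union of the small triangle at the first coordinate with trivial triangles elsewhere.
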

\begin{proof}
\begin{figure}[ht!]
\labellist\small\hair 2pt
\pinlabel {$x'_1$} [B] at 149 230
\pinlabel {$z$} [r] at 201 239
\pinlabel {$\dom_z$} [l] at 255 242
\pinlabel {$\gamma_1$} [B] at 272 167
\pinlabel {$\beta_1$} [t] at 12 129
\pinlabel {$\hattheta_1$} [t] at 97 129
\pinlabel {$x_1$} [t] at 147 73
\pinlabel {$\alpha_1$} [t] at 283 97
\pinlabel {Domain of a holomorphic triangle} [t] at 133 23
\pinlabel {$_1$} [t] at 38 208
\pinlabel {$_2$} [l] at 70 236
\endlabellist
\centering
\includegraphics[height=4.5cm]{}
\caption{Significant part of the Heegaard triple diagram.}
\label{Fig:figEleven}
\end{figure}
 Let an open book $(P,\phi)$ adapted to $(Y,\xi,L)$ be 
given. By Lemma \ref{obsurgery}, a $(+1)$-contact surgery acts on the
monodromy as a composition with a negative Dehn twist. Without loss
of generality, the knot $L$ just intersects $\beta_1$ once, transversely
and is disjoint from the other $\beta$-circles. Moreover, we can arrange
the associated Heegaard triple to look as indicated in 
Figure \ref{Fig:figEleven}.
The contact element $c(Y,\xi)$ is represented by the point
 $\{x_1,\dots,x_n\}$. Obviously, there is only 
one domain which carries a holomorphic triangle. It is the small holomorphic
triangle connecting $x_1$ and $x_1'$ (cf.~\S\ref{parhsinvar}). Thus, 
there is only one domain with positive coefficients, with $n_z=0$, 
connecting the points $\{x_1,\dots,x_n\}$ with $\{x_1',\dots,x_n'\}$. 
By considerations similar to those given at the end of the proof of 
Lemma \ref{thetatransform}, we see that the associated moduli space 
is a one-point space. Hence, the result follows.
\end{proof}

\subsection{The Invariant $\loss$}\label{invariantLOSS}
Ideas very similar to those used to define the contact element can
be utilized to define an invariant of Legendrian knots we will 
briefly call LOSS. This invariant is due to
{\bf L}isca, {\bf O}zsv\'{a}th, {\bf S}tipsicz and {\bf S}zab\'{o}
and was defined in \cite{LOSS}. It is basically the contact element
but now it is interpreted as sitting in a filtered Heegaard Floer complex.
The filtration is constructed with respect to a fixed Legendrian 
knot:
\begin{figure}[ht!]
\labellist\small\hair 2pt
\pinlabel {Page $P\!\times\!\{1/2\}$ of the open book} [bl] at 131 189
\pinlabel $w$ [l] at 95 116
\pinlabel $z$ [b] at 167 104
\pinlabel $w$ [tl] at 332 53
\pinlabel $z$ [t] at 408 85
\endlabellist
\centering
\includegraphics[height=3cm]{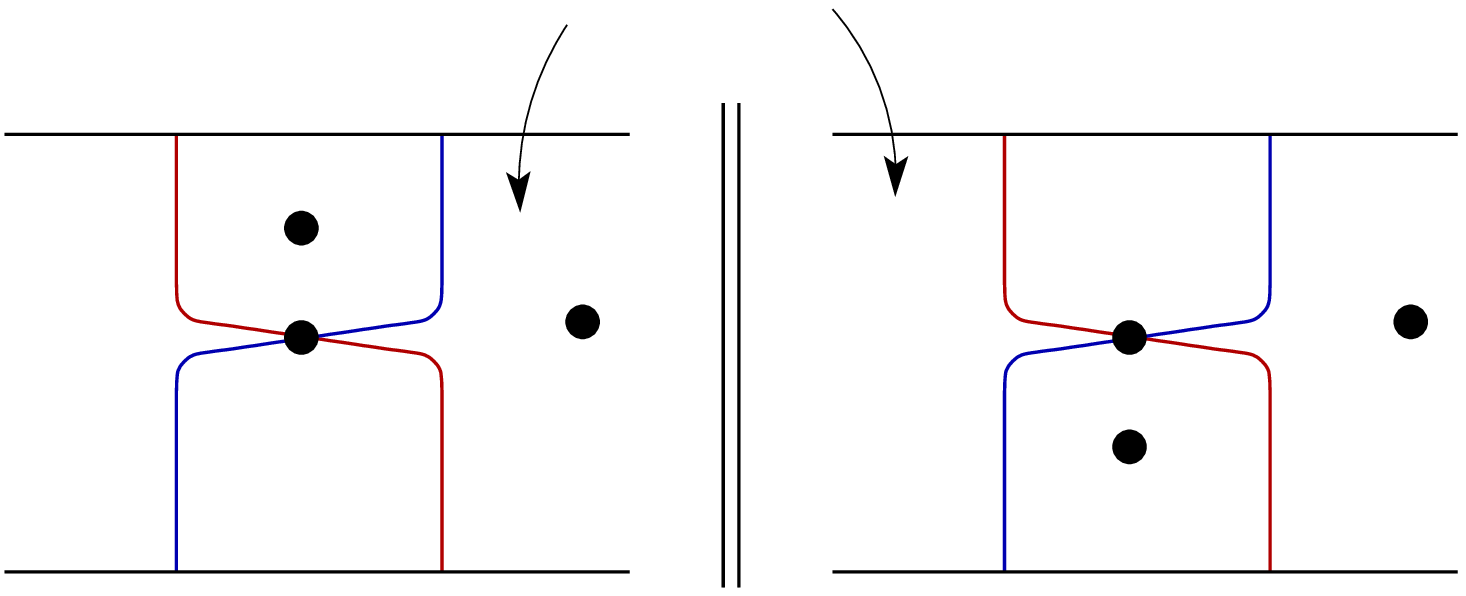}
\caption{Positioning of the point $w$ depending on the knot orientation.}
\label{Fig:wpointpos}
\end{figure}

Let $(Y,\xi)$ be a contact manifold and $L\subset Y$ a Legendrian 
knot. There is an open book decomposition of $Y$, subordinate to $\xi$, such 
that $L$ sits on the page $P\times\{1/2\}$ of the open book (cf.~\S\ref{obcshd}).
Choose a cut system that induces an $L$-adapted Heegaard diagram (cf.~\S\ref{conclass}, Definition \ref{DefOne} and Lemma \ref{LemOne}).
Figure \ref{Fig:wpointpos} illustrates the positioning of a point $w$ in
the Heegaard diagram induced by the open book. 
Similar to the case
of the contact element those intersection points $\alpha_i\cap\beta_i$ who sit
on $P\times\{1/2\}$ determine one specific generator of $\cfhat(-Y)$. This element may be interpreted as
sitting in $\cfkhat(-Y,L)$, and it is a cycle there, too. The induced
element in the knot Floer homology is denoted by $\loss(L)$.
\begin{rem} \begin{enumerate}
\item[(1)] Since this is an important issue we would like to recall the
relation between the pair $(w,z)$ and the knot orientation. In homology
we connect $z$ with $w$ in the complement of the $\alpha$-curves and 
$w$ with $z$ in the complement of the $\beta$-curves (oriented as is
obvious from the definition). In {\bf cohomology} we orient in the opposite
manner, i.e.~we move from $z$ to $w$ in the complement of the $\beta$-curves
and from $w$ to $z$ in the complement of the $\alpha$-curves.
\item[(2)] Observe, that the definition of the invariant $\loss$ as well as
the contact element always comes with a specific presentation of the groups
$\hfhat$ and $\hfkhat$. If we want to compare for instance invariants of
two different Legendrian knots, we have to get rid of the presentation in
the background. This can be done by modding out a certain mapping class group action on the
homologies. We point the reader to \cite{OzSti}.
\end{enumerate}
\end{rem}
Analogous to the properties of the contact element the invariant $\loss$ is
preserved when a $(+1)$-contact surgery is performed in its complement (see~\cite{OzSti}):
Suppose we are given a contact manifold $(Y,\xi)$ with two Legendrian knots $L$ and $S$ sitting in it.
Performing a $(+1)$-contact surgery along $S$, denote by $W$ the associated cobordism. Furthermore, we
denote by $(Y_S,\xi_S)$ the result of the contact surgery. The cobordism $-W$ induces a map
\[
  \Fhat_{-W}
  \co
  \hfkhat(-Y,L)
  \lra
  \hfkhat(-Y_S,L_S)
\]
such that $\Fhat_{-W}(\loss(L))=\loss(L_S)$. Here, $L_S$ denotes the knot $L$ in the manifold
$Y_S$. Observe, that the cobordism maps constructed for the hat-theory can be defined the same
way for knot Floer homologies.

Finally, the contact element and the invariant $\loss$ are connected, too. Performing
a $(+1)$-contact surgery along the knot $L$, denote by $W$ the associated cobordism and by $(Y_L,\xi_L)$ 
the result of the contact surgery. The cobordism $-W$ induces a map
\[
  \Gamma_{-W}
  \co
  \hfkhat(-Y,L)
  \lra
  \hfhat(-Y_L^+)
\] 
such that $\Gamma_{-W}(\loss(L))=c(\xi_L^+)$ (see~\cite{Saha01}). This map is not defined by
counting holomorphic triangles. It needs a specific construction we do not outline here. We
point the interested reader to \cite{Saha01}.

\addcontentsline{toc}{section}{Bibliography}
\bibliographystyle{amsplain}
\providecommand{\bysame}{\leavevmode\hbox to3em{\hrulefill}\thinspace}
\providecommand{\MR}{\relax\ifhmode\unskip\space\fi MR }
\providecommand{\MRhref}[2]{%
  \href{http://www.ams.org/mathscinet-getitem?mr=#1}{#2}
}
\providecommand{\href}[2]{#2}

\end{document}